\newcommand{\enumcite}[1]{%
  \nocite{#1}
  \addtocategory{enumpapers}{#1}%
  \defbibcheck{key#1}{
    \iffieldequalstr{entrykey}{#1}
      {}
      {\skipentry}}%
  \printbibliography[heading=none,check=key#1]%
}
\newcommand{\py}[1]{{\color{brown}\mintinline{python}{#1}}}
\tikzstyle{node}=[minimum size=0.3cm]
\tikzstyle{Z}=[fill={rgb,255:red,230; green,254; blue,230}, draw={rgb,255: red,61; green,77; blue,61}, shape=circle]
\tikzstyle{X}=[fill={rgb,255:red,255; green,135; blue,136}, draw={rgb,255: red,102; green,54; blue,54}, shape=circle]
\tikzstyle{Y}=[fill=zxblue, draw=zxdblue, shape=circle]
\tikzstyle{Z_big}=[fill={rgb,255:red,230; green,254; blue,230}, draw={rgb,255: red,61; green,77; blue,61}, shape=circle, minimum width=1.6em, font={\small}]
\tikzstyle{X_big}=[fill={rgb,255:red,255; green,135; blue,136}, draw={rgb,255: red,102; green,54; blue,54}, shape=circle, minimum width=1.6em, font={\small}]
\tikzstyle{Y_big}=[fill=zxblue, draw=zxdblue, shape=circle, minimum width=1.6em, font={\small}]
\tikzstyle{Z_tri}=[fill={rgb,255:red,230; green,254; blue,230}, draw={rgb,255: red,61; green,77; blue,61}, regular polygon, regular polygon sides=3, draw, shape border rotate=0, inner sep=0pt, minimum width=15pt, line width=0.75]
\tikzstyle{X_tri}=[fill={rgb,255:red,255; green,135; blue,136}, draw={rgb,255: red,102; green,54; blue,54}, regular polygon, regular polygon sides=3, draw, shape border rotate=0, inner sep=0pt, minimum width=15pt, line width=0.75]
\tikzstyle{Z_tri_inv}=[fill={rgb,255:red,230; green,254; blue,230}, draw={rgb,255: red,61; green,77; blue,61}, regular polygon, regular polygon sides=3, draw, shape border rotate=180, inner sep=0pt, minimum width=15pt, line width=0.75, font={\footnotesize}]
\tikzstyle{X_tri_inv}=[fill={rgb,255:red,255; green,135; blue,136}, draw={rgb,255: red,102; green,54; blue,54}, regular polygon, regular polygon sides=3, draw, shape border rotate=180, inner sep=0pt, minimum width=15pt, line width=0.75]
\tikzstyle{Z_tri_l}=[fill={rgb,255:red,230; green,254; blue,230}, draw={rgb,255: red,61; green,77; blue,61}, regular polygon, regular polygon sides=3, draw, shape border rotate=90, inner sep=0pt, minimum width=15pt, line width=0.75]
\tikzstyle{X_tri_l}=[fill={rgb,255:red,255; green,135; blue,136}, draw={rgb,255: red,102; green,54; blue,54}, regular polygon, regular polygon sides=3, draw, shape border rotate=90, inner sep=0pt, minimum width=15pt, line width=0.75]
\tikzstyle{H}=[fill=yellow, draw=black, shape=rectangle, minimum width=2mm, minimum height=2mm]
\tikzstyle{Y_box}=[draw=black, shape=rectangle, minimum width=2mm, minimum height=2mm, tikzit fill={rgb,255: red,255; green,128; blue,0}]
\tikzstyle{Z_med}=[fill={rgb,255:red,230; green,254; blue,230}, draw={rgb,255: red,61; green,77; blue,61}, shape=circle, minimum width=1.1em, font={\footnotesize}]
\tikzstyle{X_med}=[fill={rgb,255:red,255; green,135; blue,136}, draw={rgb,255: red,102; green,54; blue,54}, shape=circle, minimum width=1.1em, font={\footnotesize}]
\tikzstyle{Y_med}=[fill=zxblue, draw=zxdblue, font={\footnotesize}, minimum width=1.1em, font={\footnotesize}, shape=circle]
\tikzstyle{ZP}=[fill={rgb,255:red,230; green,254; blue,230}, draw={rgb,255: red,61; green,77; blue,61}, regular polygon, regular polygon sides=3, shape border rotate=30]
\tikzstyle{ZM}=[fill={rgb,255:red,230; green,254; blue,230}, draw={rgb,255: red,61; green,77; blue,61}, regular polygon, regular polygon sides=3, shape border rotate=-30]
\tikzstyle{XP}=[fill={rgb,255:red,255; green,135; blue,136}, draw={rgb,255: red,102; green,54; blue,54}, regular polygon, regular polygon sides=3, shape border rotate=30]
\tikzstyle{XM}=[fill={rgb,255:red,255; green,135; blue,136}, draw={rgb,255: red,102; green,54; blue,54}, regular polygon, regular polygon sides=3, shape border rotate=-30]
\tikzstyle{small_box}=[fill=white, draw=black, shape=rectangle, minimum width=1.5cm, minimum height=1.5cm, font={\footnotesize}]
\tikzstyle{med_rectangle}=[fill=white, draw=black, shape=rectangle, minimum width=2.8cm, minimum height=1.5cm, font={\footnotesize}]
\tikzstyle{big_rectangle}=[fill=white, draw=black, shape=rectangle, minimum width=4.5cm, minimum height=1.5cm, font={\footnotesize}]
\tikzstyle{smol_box}=[fill=white, draw=black, shape=rectangle, minimum width=1cm, minimum height=1cm]
\tikzstyle{poo}=[minimum height=0.7cm, minimum width=0.7cm, path picture={\node at (path picture bounding box.center) {\includegraphics[width=0.7cm] {figures/poo}};}]
\tikzstyle{Z_long}=[fill={rgb,255:red,230; green,254; blue,230}, draw={rgb,255: red,61; green,77; blue,61}, shape=rectangle, rounded corners=0.25cm, minimum height=0.5cm, inner sep=0.25em, font={\scriptsize}]
\tikzstyle{X_long}=[fill={rgb,255:red,255; green,135; blue,136}, draw={rgb,255: red,102; green,54; blue,54}, shape=rectangle, rounded corners=0.25cm, minimum height=0.5cm, inner sep=0.25em, font={\scriptsize}]
\tikzstyle{med_rectv}=[fill=white, draw=black, shape=rectangle, minimum width=1.1cm, minimum height=2.4cm, font={\scriptsize}, inner sep=0.2cm]
\tikzstyle{Z dot}=[fill={rgb,255: red,0; green,127; blue,0}, draw=black, shape=circle, minimum width=1.5mm]
\tikzstyle{X dot}=[fill={rgb,255:red,255; green,21; blue,0}, draw=black, shape=circle, minimum width=1.5mm]
\tikzstyle{Z phase dot}=[fill={rgb,255: red,0; green,127; blue,0}, draw=black, shape=circle, font={\footnotesize}]
\tikzstyle{X phase dot}=[fill={rgb,255:red,255; green,21; blue,0}, draw=black, shape=circle, font={\footnotesize}]
\tikzstyle{ZYa}=[draw=black, shape=rectangle, rectangle split, rectangle split parts=2, rectangle split horizontal, rectangle split part fill={zxgreen, zxblue}, rectangle split draw splits=false, minimum height=2mm, font={\tiny}]
\tikzstyle{YZ}=[draw=black, shape=rectangle, rectangle split, rectangle split parts=2, rectangle split horizontal, rectangle split part fill={zxblue, zxgreen}, rectangle split draw splits=false, minimum height=2mm, font={\tiny}]
\tikzstyle{XYa}=[draw=black, shape=rectangle, rectangle split, rectangle split parts=2, rectangle split horizontal, rectangle split part fill={zxred, zxblue}, rectangle split draw splits=false, minimum height=2mm, font={\tiny}]
\tikzstyle{YX}=[draw=black, shape=rectangle, rectangle split, rectangle split parts=2, rectangle split horizontal, rectangle split part fill={zxblue, zxred}, rectangle split draw splits=false, minimum height=2mm, font={\tiny}]
\tikzstyle{tiny_box}=[fill=white, draw=black, shape=rectangle, minimum width=1cm, minimum height=1cm, font={\footnotesize}]
\tikzstyle{scalar}=[fill=white, draw=black, shape=diamond, font={\scriptsize}]
\tikzstyle{dashs}=[-, dashed, line width=0.15mm]
\tikzstyle{thick}=[-, line width=0.5mm]
\tikzstyle{arrow}=[->]
\tikzstyle{invisible}=[-, draw=none]
\tikzstyle{functor}=[-, fill={rgb,255: red,240; green,240; blue,240}]
\tikzstyle{boxedge}=[-, fill=white]
\newcommand{\justepigraph}[2]{
\epigraph{\footnotesize \justifying #1}{#2}}
\DeclareMathOperator*{\argmin}{arg\,min}
\newcommand{\injects}{\hookrightarrow}
\newcommand{\sub}{\subseteq}
\newcommand{\s}{\enspace}
\newcommand{\eval}[1]{[ \! [ #1 ] \! ]}
\newcommand{\bra}[1]{\langle#1|}
\newcommand{\ket}[1]{|#1\rangle}
\newcommand{\braket}[2]{\langle#1|#2\rangle}
\newcommand{\dom}{\mathtt{dom}}
\newcommand{\cod}{\mathtt{cod}}
\newcommand{\ttcap}{\mathtt{cap}}
\newcommand{\ttcup}{\mathtt{cup}}
\newcommand{\id}{\mathtt{id}}
\newcommand{\then}{\mathtt{then}}
\newcommand{\tensor}{\mathtt{tensor}}
\newcommand{\len}{\mathtt{len}}
\newcommand{\boxes}{\mathtt{boxes}}
\newcommand{\spider}{\mathtt{spider}}
\newcommand{\shift}{\mathtt{shift}}
\newcommand{\merge}{\mathtt{merge}}
\newcommand{\unit}{\mathtt{unit}}
\newcommand{\ttsplit}{\mathtt{split}}
\newcommand{\counit}{\mathtt{counit}}
\renewcommand{\S}{\mathbb{S}}
\newcommand{\B}{\mathbb{B}}
\newcommand{\N}{\mathbb{N}}
\newcommand{\Z}{\mathbb{Z}}
\newcommand{\R}{\mathbb{R}}
\newcommand{\C}{\mathbb{C}}
\newcommand{\G}{\mathbf{G}}
\newcommand{\D}{\mathbb{D}}
\newcommand{\F}{\mathbb{F}}
\def\fcmp{\mathbin{\raise 0.6ex\hbox{\oalign{\hfil$\scriptscriptstyle      \mathrm{o}$\hfil\cr\hfil$\scriptscriptstyle\mathrm{9}$\hfil}}}}
\newcommand{\downmapsto}{\rotatebox[origin=c]{-90}{$\mapsto$}\mkern2mu}
\newtheorem{definition}{Definition}[section]
\newtheorem{proposition}[definition]{Proposition}
\newtheorem{theorem}[definition]{Theorem}
\newtheorem{lemma}[definition]{Lemma}
\newtheorem{corollary}[definition]{Corollary}
\newtheorem{example}[definition]{Example}
\newtheorem{remark}[definition]{Remark}
\newtheorem{python}[definition]{Listing}
\title{Category Theory for Quantum\\Natural Language Processing}
\author{Alexis TOUMI} \college{Wolfson College} \degree{Doctor of Philosophy}
\begin{document}

\maketitle
\begin{abstract}
This thesis introduces quantum natural language processing (QNLP) models based on a simple yet powerful analogy between computational linguistics and quantum mechanics: grammar as entanglement.
The grammatical structure of text and sentences connects the meaning of words in the same way that entanglement structure connects the states of quantum systems.
Category theory allows to make this language-to-qubit analogy formal: it is a monoidal functor from grammar to vector spaces.
We turn this abstract analogy into a concrete algorithm that translates the grammatical structure onto the architecture of parameterised quantum circuits.
We then use a hybrid classical-quantum algorithm to train the model so that evaluating the circuits computes the meaning of sentences in data-driven tasks.

The implementation of QNLP models motivated the development of DisCoPy (Distributional Compositional Python), the toolkit for applied category theory of which the first chapter gives a comprehensive overview.
String diagrams are the core data structure of DisCoPy, they allow to reason about computation at a high level of abstraction.
We show how they can encode both grammatical structures and quantum circuits, but also logical formulae, neural networks or arbitrary Python code.
Monoidal functors allow to translate these abstract diagrams into concrete computation, interfacing with optimised task-specific libraries.

The second chapter uses DisCopy to implement QNLP models as parameterised functors from grammar to quantum circuits.
It gives a first proof-of-concept for the more general concept of functorial learning: generalising machine learning from functions to functors by learning from diagram-like data.
In order to learn optimal functor parameters via gradient descent, we introduce the notion of diagrammatic differentiation: a graphical calculus for computing the gradients of parameterised diagrams.

\end{abstract}
\tableofcontents


\chapter*{Introduction}
\addcontentsline{toc}{chapter}{Introduction}
\markboth{Introduction}{Introduction}


\section*{What are quantum computers good for?}
\addcontentsline{toc}{section}{What are quantum computers good for?}

\justepigraph{
Nature isn't classical, dammit, and if you want to make a simulation of nature, you'd better make it quantum mechanical, and by golly it's a wonderful problem, because it doesn't look so easy.
}{\textit{Simulating Physics with Computers}, Feynman (1981)}

Quantum computers harness the principles of quantum theory such as superposition and entanglement to solve information-processing tasks.
In the last 42 years, quantum computing has gone from theoretical speculations to the implementation of machines that can solve problems beyond what is possible with classical means.
This section will sketch a brief and biased history of the field and of its future challenges.

In 1980, Benioff~\cite{Benioff80} takes the abstract definition of a computer and makes it physical: he designs a quantum mechanical system whose time evolution encodes the computation steps of a given Turing machine.
In retrospect, this may be taken as the first proof that quantum mechanics can simulate classical computers.
The same year, Manin~\cite{Manin80} looks at the opposite direction: he argues that it should take exponential time for a classical computer to simulate a generic quantum system.
Feynman~\cite{Feynman82,Feynman85} comes to the same conclusion and suggests a way to simulate quantum mechanics much more efficiently: building a quantum computer!

So what are quantum computers good for? Feynman's intuition gives us a first, trivial answer: at least quantum computers could simulate quantum mechanics efficiently. Deutsch~\cite{Deutsch85a} makes the question formal by defining quantum Turing machines and the circuit model.
Deutsch and Jozsa~\cite{DeutschJozsa92} design the first quantum algorithm and prove that it solves \emph{some} problem exponentially faster than any classical \emph{deterministic} algorithm.\footnote
{A classical \emph{randomised} algorithm solves the problem in constant time with high probability.}
Simon~\cite{Simon94} improves on their result by designing a problem that a quantum computer can solve exponentially faster than any classical algorithm.
Deutsch-Jozsa and Simon relied on oracles\footnote{An oracle is a black box that allows a Turing machine to solve a certain problem in one step.}
and promises\footnote{The input is promised to satisfy a certain property, which may be hard to check.} and their problems have little practical use.
However, they inspired Shor's algorithm~\cite{Shor94} for prime factorisation and discrete logarithm. These two problems are believed to require exponential time for a classical computer and their hardness is at the basis of the public-key cryptography schemes currently used on the internet.

In 1997, Grover provides another application for quantum computers: ``searching for a needle in a haystack''~\cite{Grover97}.
Formally, given a function $f : X \to \{0, 1\}$ and the promise that there is a unique $x \in X$ with $f(x) = 1$, Grover's algorithm finds $x$ in $O(\sqrt{|X|})$ steps, quadratically faster than the optimal $O(|X|)$ classical algorithm.
Grover's algorithm may be used to brute-force symmetric cryptographic keys twice bigger than what is possible classically~\cite{BernsteinEtAl09}.
It can also be used to obtain quadratic speedups for the exhaustive search involved in the solution of NP-hard problems such as constraint satisfaction~\cite{Ambainis04}.
Independently, Bennett et al.~\cite{BennettEtAl97} prove that Grover's algorithm is in fact optimal, adding evidence to the conjecture that quantum computers cannot solve these NP-hard problems in polynomial time.
Chuang et al.~\cite{ChuangEtAl98} give the first experimental demonstration of a quantum algorithm, running Grover's algorithm on two qubits.

Shor's and Grover's discovery of the first real-world applications sparked a considerable interest in quantum computing.
The core of these two algorithms has then been abstracted away in terms of two
subroutines: phase estimation~\cite{Kitaev95} and amplitude
amplification~\cite{BrassardEtAl02}, respectively.
Making use of both these subroutines, the HHL\footnote{Named after its discoverers Harrow, Hassidim and Lloyd.} algorithm~\cite{HarrowEtAl09} tackles one of the most ubiquitous problems in scientific computing: solving systems of linear equations.
Given a matrix $A \in \mathbb{R}^{n \times n}$ and a vector ${b} \in \mathbb{R}^{n}$, we want to find a vector ${x}$ such that $A {x} = {b}$.
Under some assumptions on the sparsity and the condition number of $A$, HHL finds (an approximation of) $x$ in time logarithmic in $n$ when a classical algorithm would take quadratic time simply to read the entries of $A$.
This initiated a new wave of enthusiasm for quantum computing with the promise of exponential speedups for machine learning tasks such as regression~\cite{WiebeEtAl12}, clustering~\cite{LloydEtAl13}, classification~\cite{RebentrostEtAl14}, dimensionality reduction~\cite{LloydEtAl14} and recommendation~\cite{KerenidisPrakash16}.
The narrative is appealing: machine learning is about finding patterns in large amounts of data represented as high-dimensional vectors and tensors, which is precisely what quantum computers are good at.
The argument can be formalised in terms of complexity theory: HHL is \texttt{BQP}-complete\footnote
{A \texttt{BQP}-complete problem is one that is polynomial-time  equivalent to the circuit model, the hardest problem that a quantum computer can solve with bounded error in polynomial time.}
hence if there is an exponential advantage for quantum algorithms at all there must be one for HHL.

However, the exponential speedup of HHL comes with some caveats, thoroughly analysed by Aaronson~\cite{Aaronson15}.
Two of these challenges are common to many quantum algorithms:
1) the efficient encoding of classical data into quantum states and
2) the efficient extraction of classical data via quantum measurements.
Indeed, what HHL really takes as input is not a vector ${b}$ but a quantum state $\ket{b} = \sum_{i=1}^n b_i \ket{i}$ called its amplitude encoding.
Either the input vector ${b}$ has enough structure that we can describe it with a simple, explicit formula.
This is the case for example in the calculation of electromagnetic scattering cross-sections~\cite{CladerEtAl13}.
Or we assume that our classical data has been loaded onto a quantum random-access memory (qRAM) that can prepare the state in logarithmic time~\cite{GiovannettiEtAl08}.
Not only is qRAM a daunting challenge from an engineering point of view, in some cases it also requires too much error correction for the state preparation to be efficient \cite{ArunachalamEtAl15}.
Symmetrically, the output of HHL is not the solution vector ${x}$ itself but a quantum state $\ket{x}$ from which we can measure some observable $\bra{x} M \ket{x}$.
If preparing the state $\ket{b}$ requires a number of gates exponential in the number of qubits, or if we need exponentially many measurements of $\ket{x}$ to compute our classical output, then the quantum speedup disappears.

Shor, Grover and HHL all assume \emph{fault-tolerant} quantum computers~\cite{Shor96}.
Indeed, any machine we can build will be subject to noise when performing quantum operations, errors are inevitable: we need an error correcting code that can correct these errors faster than they appear.
This is the content of the \emph{quantum threshold theorem}~\cite{AharonovBen-Or08} which proves the possibility of fault-tolerant quantum computing given physical error rates below a certain threshold.
One noteworthy example of such a quantum error correction scheme is Kitaev's toric code~\cite{Kitaev03} and the general idea of topological quantum computation~\cite{FreedmanEtAl03} which offers the long-term hope for a quantum computer that is fault-tolerant ``by its physical nature''.
However this hope relies on the existence of quasi-particles called Majorana zero-modes, which as of 2021 has yet to be experimentally demonstrated~\cite{Ball21}.

The road to large-scale fault-tolerant quantum computing will most likely be a long one.
So in the meantime, what can we do with the noisy intermediate-scale quantum machines we have available today, in the so-called NISQ era~\cite{Preskill18}?
Most answers involve a hybrid classical-quantum approach where a classical algorithm is used to optimise the preparation of quantum states~\cite{McCleanEtAl16}.
Prominent examples include the quantum approximate optimisation algorithm (QAOA~\cite{FarhiEtAl14}) for combinatorial problems such as maximum cut and the variational quantum eigensolver (VQE~\cite{PeruzzoEtAl14}) for approximating the ground state of chemical systems.
These variational algorithms depend on the choice of a parameterised quantum circuit called the \emph{ansatz}, based on the structure of the problem and the resources available.
Some families of ansätze such as instantaneous quantum polynomial-time (IQP) circuits are believed to be hard to simulate classically even at constant depth \cite{ShepherdBremner09}, opening the door to potentially near-term NISQ speedups.

Although the hybrid approach first appeared in the context of machine learning~\cite{BangEtAl08}, the idea of using parameterised quantum circuits as machine learning models went mostly unnoticed for a decade~\cite{BenedettiEtAl19}.
It was rediscovered under the name of quantum neural networks~\cite{FarhiNeven18} then implemented on two-qubits~\cite{HavlicekEtAl19}, generating a new wave of attention for quantum machine learning.
The idea is straightforward: 1) encode the input vector ${x} \in \mathbb{R}^n$ as a quantum state $\ket{\phi_{x}}$ via the ansatz of our choice, 2) initialise a random vector of parameters ${\theta} \in \mathbb{R}^d$ and encode it as a measurement $M_\theta$, again via some choice of ansatz 3) take the probability $y = \bra{\phi({x})} M_\theta \ket{\phi({x})}$ as the prediction of the model.
A classical algorithm then uses this quantum prediction as a subroutine to find the optimal parameters $\theta$ in some data-driven task such as classification.

One of the many challenges on the way to solving real-world problems with parameterised quantum circuits is the existence of \emph{barren plateaus}~\cite{McCleanEtAl18}:
with random circuits as ansatz, the probability of non-zero gradients is exponentially small in the number of qubits and our classical optimisation gets lost in a flat landscape.
One cannot help but notice the striking similarity with the vanishing gradient
problem for classical neural networks, formulated twenty years earlier~\cite{Hochreiter98}.
Barren plateaus do not appear in circuits with enough structure such as quantum convolutional networks~\cite{PesahEtAl21}, they can also be mitigated by structured initialisation strategies~\cite{GrantEtAl19}.
Another direction is to avoid gradients altogether and use \emph{kernel methods}~\cite{SchuldKilloran19}:
instead of learning a measurement $M_\theta$, we use our NISQ device to estimate the distance $|\braket{\phi_{x'}}{\phi_x}|^2$ between pairs of input vectors $x, x' \in \mathbb{R}^n$ embedded in the high-dimensional Hilbert space of our ansatz.
We then use a classical support vector machine to find the optimal hyperplane that separates our data, with theoretical guarantees to learn quantum models at least as good as the variational approach~\cite{Schuld21}.

Random quantum circuits may be unsuitable for machine learning, but they play a crucial role in the quest for \emph{quantum advantage}, the experimental demonstration of a quantum computer solving a task that cannot be solved by classical means in any reasonable time.
We are back to Feynman's original intuition: sampling from a random quantum circuit is the perfect candidate for such a task.
The end of 2019 saw the first claim of such an advantage with a 53-qubit computer~\cite{AruteEtAl19}.
The claim was almost immediately contested by a classical simulation of 54 qubits in two and a half days~\cite{PednaultEtAl19} then in five minutes~\cite{YongEtAl21}.
Zhong et al.~\cite{ZhongEtAl20} made a new claim with a 76-photon linear optical quantum computer followed by another with a 66-qubit computer~\cite{WuEtAl21,ZhuEtAl21}.
They estimate that a classical simulation of the sampling task they completed in a couple of hours would take at least ten thousand years.

Now that quantum computers are being demonstrated to compute something beyond classical, the question remains: can they compute something \emph{useful}?


\section*{Why should we make NLP quantum?}
\addcontentsline{toc}{section}{Why should we make NLP quantum?}

\justepigraph{
A girl operator typed out on a keyboard the following Russian text in English characters: ``Mi pyeryedayem mislyi posryedstvom ryechi''.
\linebreak
The machine printed a translation almost simultaneously: ``We transmit thoughts by means of speech.''
\linebreak
The operator did not know Russian.
}{
\textit{New York Times} (8th January 1954)
}

The previous section hinted at the fact that quantum computing cannot
simply solve any problem faster.
There needs to be some structure that a quantum computer can exploit:
its own structure in the case of physics simulation or the group-theoretic structure of cryptographic protocols in Shor's algorithm.

So why should we expect quantum computers to be any good at natural language processing (NLP)?
This section will argue that natural language shares a common structure with quantum theory, in the form of two linguistic principles: \emph{compositionality}
and \emph{distributionality}.

We start our history of artificial intelligence (AI) in 1950 with a philosophical question from Turing~\cite{Turing50}: ``Can machines think?'' reformulated in terms of a game, now known as the Turing test, in which a machine tries to convince a human interrogator that it is human too.
In order to put human and machine on an equal footing, Turing suggests to let them communicate only via written language: his thought experiment actually defined an NLP task.
Only four years later, NLP goes from philosophical speculation to experimental demonstration: the IBM 701 computer successfully translated sentences from Russian to English such as ``They produce alcohol out of potatoes.''~\cite{Hutchins04}.
With only six grammatical rules and a 250-word vocabulary taken from organic chemistry and other general topics, this first experiment generated a great deal of public attention and the overly-optimistic prediction that machine translation would be an accomplished task in ``five, perhaps three'' years.

Two years later, Chomsky~\cite{Chomsky56, Chomsky57} proposes a hierarchy of models for natural language syntax which hints at why NLP would not be solved so fast.
In the most expressive model, which he argues is the most appropriate for studying natural language, the parsing problem is in fact Turing-complete.
Let alone machine translation, merely deciding whether a given sequence of words is grammatical can go beyond the power of any physical computer.
Chomsky's parsing problem is a linguistic reinterpretation of an older problem from Thue~\cite{Thue14}, now known as the \emph{word problem for monoids}\footnote
{Historically, Thue, Markov and Post were working with \emph{semigroups}, i.e. unitless monoids.
} and proved undecidable by Post~\cite{Post47} and Markov~\cite{Markov47} independently.
This reveals a three-way connection between theoretical linguistics, computer science and abstract algebra which will pervade much of this thesis.
But if we are interested in solving practical NLP problems, why should we care about such abstract constructions as formal grammars?

Most NLP tasks of interest involve natural language \emph{semantics}: we want machines to compute the \emph{meaning} of sentences.
Given the grammatical structure of a sentence, we can compute its meaning as a function of the meanings of its words.
This is known as the \emph{principle of compositionality}, usually attributed to Frege.\footnote
{Compositionality does not appear in any of Frege's published work~\cite{Pelletier01}.
Frege did state what is known as the \emph{context principle}:
``it is enough if the sentence as whole has meaning; thereby also its parts obtain their meanings''.
This can be taken as a kind of dual to compositionality: the meanings of the words are functions of the meaning of the sentence.}
It was already implicit in Boole's \emph{laws of thought}~\cite{Boole54} and then made explicit by Carnap~\cite{Carnap47}.
Montague~\cite{Montague70,Montague70a,Montague73} then formalised this principle as a \emph{homomorphism} from the algebra of syntax (i.e. grammar) to that of semantics (i.e. logic).
He applied compositionality to linguistics for the first time, arguing that there is ``no important theoretical difference between natural languages and the artificial languages of logicians''.
Compositionality became the basis of the symbolic approach to NLP, also known as \emph{good old-fashioned AI} (GOFAI)~\cite{Haugeland89}.
Word meanings are first encoded in a machine-readable format, then the machine can compose them to answer complex questions.
This approach culminated in 2011 with IBM Watson defeating a human champion at \emph{Jeopardy!}~\cite{LallyFodor11}.

The same year, Apple deploy their virtual assistant in the pocket of millions of users, soon followed by internet giants Amazon and Google.
While Siri, Alexa and their competitors have made NLP mainstream, none of them make any explicit use of formal grammars.
Instead of the complex grammatical analysis and knowledge representation of expert systems like Watson, the AI of these next-generation NLP machines is powered by deep neural networks and machine learning of big data.
Although their architecture got increasingly complex, these neural networks implement a simple statistical concept: \emph{language models}, i.e. probability distributions over sequences of words.
Instead of the compositionality of symbolic AI, these statistical methods rely on another linguistic principle, \emph{distributionality}: words with similar distributions have similar meanings.

This principle may be traced back to Wittgenstein's \emph{Philosophical Investigations}: ``the meaning of a word is its use in the language''~\cite{Wittgenstein53}, usually shortened into the slogan \emph{meaning is use}.
It was then formulated in the context of computational linguistics by Harris~\cite{Harris54}, Weaver~\cite{Weaver55} and Firth~\cite{Firth57}, who coined the famous quotation: ``You shall know a word by the company it keeps!''
Before deep neural networks took over, the standard way to formalise distributionality had been \emph{vector space models}~\cite{SaltonEtAl75}.
We have a set of $N$ words appearing in a set of $M$ documents and we simply count how many times each word appears in each document to get a $M \times N$ matrix.
We normalise it with a weighting scheme like tf-idf (term frequency by inverse document frequency), factorise it (via e.g. singular value decomposition or non-negative matrix factorisation) and we're done!
The columns of the matrix encode the meanings of words, taking their inner product yields a measure of word similarity which can then be used in tasks such as classification or clustering.
This method has the advantage of simplicity and it works surprisingly well in a wide range of applications from spam detection to movie recommendation~\cite{TurneyPantel10}.
Its main limitation is that a sentence is represented not as a sequence but as a \emph{bag of words}, the word vectors will be the same whether the corpus contained ``dog bites man'' or ``man bites dog''.
A standard way to fix this is to compute vectors not for words in isolation but for $n$-grams, windows of $n$ consecutive words for some fixed size $n$.
However the fix has its own limits: if $n$ is too small we cannot detect any long-range correlations, if it is too big then the matrix is so sparse that we cannot detect anything at all.

In contrast, the recurrent neural networks (RNNs) of Rumelhart, Hinton and Williams~\cite{RumelhartEtAl86} are inherently sequential and their internal state can encode arbitrarily long-range correlations.
At each step, the network processes the next word in a sequence and updates its internal state.
This internal memory can then be used to predict the rest of the sequence, or fed as input to another network e.g. for translation into another language.
Once the obstacles to training were overcome (such as the vanishing gradients mentioned above), RNN architectures such as long short-term memory (LSTM)~\cite{HochreiterSchmidhuber97} set records in a variety of NLP tasks such as language modeling~\cite{SutskeverEtAl11}, speech recognition~\cite{GravesEtAl13} and machine translation~\cite{SutskeverEtAl14}.
The purely sequential approach of RNNs turned out to be limited: when the network is done reading, the information from the first word has to propagate through the entire text before it can be translated.
Bidirectional RNNs~\cite{SchusterPaliwal97} fix this issue by reading both left-to-right and right-to-left.
Nonetheless, it is somewhat unsatisfactory from a cognitive perspective (humans manage to understand text without reading backward, why should a machine do that?) and also harder to use in online settings where words need to be processed one at a time.

Attention mechanisms provide a much more elegant solution: instead of assuming that the ``company'' of a word is its immediate left and right neighbourhood, we let the neural network itself learn which words are relevant to which.
First introduced as a way to boost the performance of RNNs on translation tasks~\cite{BahdanauEtAl15}, attention has then become the basis of the \emph{transformer model}~\cite{VaswaniEtAl17}: a stack of attention mechanisms which process sequences without recurrence altogether.
Starting with BERT~\cite{DevlinEtAl19}, transformers have replaced RNNs as the state-of-the-art NLP model, culminating with the GPT-3 language generator authoring its own article in \emph{The Guardian}~\cite{GPT-320}:
``A robot wrote this entire article. Are you scared yet, human?''

Indeed \emph{why} should we be scared?
Because we are ignorant of \emph{how} the robot wrote the article and we cannot explain what in its billions of parameters made it write the way it did.
Transformers and neural networks in general are \emph{black boxes}: we can probe the way they map inputs to outputs, but if we look at the terabytes of weights in between, we find no interpretation of the mapping.
Moreover without explainability there can be no fairness: if we cannot explain how its decisions are made, we can hardly prevent the network from reproducing the discriminations present both in the datasets and in the assumptions of the data scientist.
We argue that explainable AI requires to make the distributional black boxes transparent by endowing them with a compositional structure: we need \emph{compositional distributional} (DisCo) models that reconcile symbolic GOFAI with deep learning.

DisCo models have their roots in neuropsychology rather than AI.
Indeed, they first appeared as models of the brain rather than architectures of learning machines.
In their seminal work~\cite{McCullochPitts43}, McCullogh and Pitts give the first formal definition of neural networks and show how their ``all-or-nothing'' behaviour\footnote
{A neuron's response is either maximal or zero, regardless of the stimulus strength.}
allow them to encode a fragment of propositional logic.
Hebb~\cite{Hebb49} then introduced the first biological mechanism to explain learning and structured perception: ``neurons that fire together, wire together''.
These computational models of the brain became the basis of \emph{connectionism}~\cite{Smolensky87,Smolensky88} and the \emph{neurosymbolic}~\cite{Hilario97} approach to AI: high-level symbolic reasoning emerges from low-level neural networks.
An influential example is Smolensky's \emph{tensor product representation}~\cite{Smolensky90}, where discrete structures such as lists and trees are embedded into the tensor product of two vector spaces, one for variables and one for values.
Concretely, a list $x_1, \dots, x_n$ of $n$ vectors of dimension $d$ is represented as a tensor $\sum_{i \leq n} \ket{i} \otimes x_i \in \mathbb{R}^n \otimes \mathbb{R}^d$.
Smolensky~\cite{Smolensky90} is also the first to make the analogy between the distributional representations of compositional structures in AI and the group representations of quantum physics.
He argues that symbolic structures embed in neural networks in the same way that the symmetries of particles embed in their state space: via \emph{representation theory}, a precursor of \emph{category theory} which we discuss in the next section.

Clark and Pulman~\cite{ClarkPulman07} propose to apply this tensor product representation to NLP, but they note its main weakness: lists of different lengths do not live in the same space, which makes it impossible to compare sentences with different grammatical structures.
The categorical compositional distributional (DisCoCat) models of Clark, Coecke and Sadrzadeh~\cite{ClarkEtAl08,ClarkEtAl10} overcome this issue by taking the analogy with quantum one step further.
Word meanings and grammatical structure are to linguistics what quantum states and entanglement structure are to physics.
DisCoCat word meanings live in vector spaces and they compose with tensor products: the states of quantum theory do too.
Grammar tells you how words are connected and how information flows in a sentence and in the same way, entanglement connects quantum states and tells you how information flows in a complex quantum system.
This analogy allows to borrow well-established mathematical tools from quantum theory, and it was implemented on classical hardware with some empirical success on small-scale tasks such as sentence comparison~\cite{GrefenstetteEtAl11} and word sense disambiguation~\cite{GrefenstetteSadrzadeh11,KartsaklisEtAl13}.
However representing the meaning of sentences as quantum processes comes at a price: they can be exponentially hard to simulate classically.

If DisCoCat models are intractable for classical computers, why not use a quantum computer instead?
Zeng and Coecke~\cite{ZengCoecke16} answered this question with the first quantum natural language processing (QNLP) algorithm\footnote
{We exclude previous algorithms that are inspired by quantum theory but run on classical computers such as the frameworks of Chen~\cite{Chen02} and Blacoe et al.~\cite{BlacoeEtAl13}.}
and the proof of a quadratic speedup on a sentence classification task.
Wieber et al.~\cite{WiebeEtAl19} later defined a QNLP algorithm based on a generalisation of the tensor product representation and proved it is \texttt{BQP}-complete: if any quantum algorithm has an exponential advantage, then in principle there must be one for QNLP.
However promising they may be, both algorithms assume fault-tolerance and they are at least as far away from solving real-world problems as Grover and HHL.

This is where the work presented in this thesis comes in: we show it is possible to implement DisCoCat models on the machines available today.
The author and collaborators~\cite{MeichanetzidisEtAl20a,CoeckeEtAl20} introduced the first NISQ-friendly framework for QNLP by translating DisCoCat models into variational quantum algorithms.
We then implemented this framework and demonstrated the first QNLP experiment on a toy question-answering task~\cite{MeichanetzidisEtAl20} and more recent experiments showed empirical success on a larger-scale classification task~\cite{LorenzEtAl21}.
Our framework was later applied to machine translation~\cite{AbbaszadeEtAl21,VicenteNieto21}, word-sense disambiguation~\cite{Hoffmann21} and even to generative music~\cite{MirandaEtAl21}.
Future experiments will have to demonstrate that QNLP is more than a mere analogy and that it can achieve \emph{quantum advantage on a useful task}.
But before we can discuss our implementation in detail, we have to make the DisCoCat analogy formal.


\section*{How can category theory help?}
\addcontentsline{toc}{section}{How can category theory help?}

\justepigraph{
I should still hope to create a kind of \emph{universal symbolistic} (\emph{spécieuse générale}) in which all truths of reason would be reduced to a kind of calculus.
}{\emph{Letter to Nicolas Remond}, Leibniz (1714)}

``Every sufficiently good analogy is yearning to become a functor''~\cite{Baez06} and we will see that the analogy behind DisCoCat models is indeed a functor.
Coecke et al.~\cite{CoeckeEtAl13} make a meta-analogy between their models of natural language and \emph{topological quantum field theories} (TQFTs).
Intuitively, there is an analogy between regions of spacetime and quantum processes: both can be composed either in sequence or in parallel.
TQFTs formalise this analogy: they assign a quantum system to each region of space and a quantum process to each region of spacetime, in a way that respects sequential and parallel composition.
In the same structure-preserving way, DisCoCat models assign a vector space to each grammatical type and a linear map to each grammatical derivation.
Both TQFTs and DisCoCat can be given a one-sentence definition in terms of category theory: they are examples of \emph{functors into the category of vector spaces}.

How can the same piece of general abstract nonsense (category theory's nickname) apply to both quantum gravity and natural language processing?
And how can this nonsense be of any help in the implementation of QNLP algorithms?
This section will answer with a history of category theory and its applications to quantum physics and computational linguistics, from an abstract framework for meta-mathematics to a concrete toolbox for NLP on quantum hardware.
First, a short philosophical digression on the etymology of the words ``functor'' and ``category'' shall bring some light to their divergent meanings in mathematics and linguistics.

The word ``functor'' first appears in Carnap's \emph{Logical syntax of language}~\cite{Carnap37} to describe what would be called a \emph{function symbol} in a modern textbook on first-order logic.
He introduces them as a way to reduce the laws of empirical sciences like physics to the pure syntax of his formal logic, taking the example of a \emph{temperature functor} $T$ such that $T(3) = 5$ means ``the temperature at position 3 is 5''\footnote
{MacLane~\cite{MacLane38} would later remark that Carnap's formal language cannot express the coordinate system for positions, nor the scale in which temperature is measured.}.
This meaning has then drifted to become synonymous with \emph{function words} such as ``such'', ``as'', ``with'', etc. These words do not refer to anything in the world but serve as the grammatical glue between the \emph{lexical words} that describe things and actions.
They represent less than one thousandth of our vocabulary but nearly half of the words we speak~\cite{ChungPennebaker07}.

Categories (from the ancient Greek \emph{\foreignlanguage{greek}{κατηγορία}}, ``that which can be said'') have a much older philosophical tradition.
In his \emph{Categories}, Aristotle first makes the distinction between the simple forms of speech (the things that are ``said without any combination'' such as ``man'' or ``arguing'') and the composite ones such as ``a man argued''.
He then classifies the simple, atomic things into ten categories: ``each signifies either substance or quantity or qualification or a relative or where or when or being-in-a-position or having or doing or being-affected''.
A common explanation~\cite{Ryle37} for how Aristotle arrived at such a list is that it comes from the possible \emph{types of questions}: the answer to ``What is it?'' has to be a substance, the answer to ``How much?'' a quantity, etc.
Although he was using language as a tool, his system of categories aims at classifying things in the world, not forms of speech: it was meant as an \emph{ontology}, not a grammar.
In his \emph{Critique of Pure Reason}~\cite{Kant81}, Kant revisits Aristotle's system to classify not the world, but the mind: he defines categories of understanding rather than categories of being.
The idea that every object (whether in the world or in the mind) is an object of a certain type has then become foundational in mathematical logic and Russell's \emph{theory of types}~\cite{Russell03}.
The same idea has also had a great influence in linguistics and especially in the \emph{categorial grammar} tradition initiated by Ajdukiewicz~\cite{Ajdukiewicz35} and Bar-Hillel~\cite{Bar-Hillel53,Bar-Hillel54}, where categories have now become synonymous with \emph{grammatical types}.
As we shall see in section~\ref{subsection:lambek-discocat}, the key innovation from Aristotelian categories to categorial grammars is that the grammatical types now come with some structure: we can compose \emph{atomic categories} together to form complex types, confusingly called \emph{functor categories}.

Independently of their use in linguistics, a series of papers from Eilenberg and MacLane~\cite{EilenbergMacLane42,EilenbergMacLane42a,EilenbergMacLane45} gave categories and functors their current mathematical definition.
Inspired by Aristotle's categories of things and Kant's categories of thoughts, they defined categories as types of \emph{mathematical structures}: sets, groups, spaces, etc.
Their great insight was to focus not on the content of the objects (elements, points, etc.) but on the composition of the \emph{arrows} between them: functions, homomorphisms, continuous maps, etc.
Applying the same insight to categories themselves, what really matters are the arrows between them: \emph{functors}, maps from one category to another that preserve the form of arrows.\footnote
{We can play the same game again: what matters are not so much the functors themselves but the \emph{natural transformations} between them, which is what category theory was originally meant to define.
To keep playing that game is to fall in the rabbit hole of infinity category theory~\cite{RiehlVerity16}.}
A prototypical example is Poincaré's construction of the fundamental group of a topological space~\cite{Poincare95}, which can be defined as a functor from the category of (pointed) topological spaces to that of groups: every continuous map between spaces induces a homomorphism between their fundamental groups, in a way that respects composition and identity.
Thus, the abstraction of category theory allowed to formalise the analogies between topology and algebra, proving results about one using methods from the other.
It was then used as a tool for the foundation of algebraic geometry by the school of Grothendieck~\cite{GrothendieckDieudonne60}, which brought the analogy between geometric shapes and algebraic equations to a new level of abstraction and led to the development of \emph{topos theory}.

The establishment of category theory as an independent discipline and as a foundation for mathematics owes much to the work of Lawvere.
His influential Ph.D. thesis~\cite{Lawvere63} on \emph{functorial semantics} set up a framework for model theory where logical theories are categories and their models are functors.
He then undertook the axiomatisation of the category of sets~\cite{Lawvere64} and the category of categories~\cite{Lawvere66}.
The resulting notion of elementary topos~\cite{Lawvere70a} subsumed Grothendieck's definition and emphasised the foundational concept of \emph{adjunction}~\cite{Lawvere69a,Lawvere70}.
``Adjoint functors arise everywhere'' became the slogan of MacLane's classic textbook \emph{Categories for the working mathematician}~\cite{MacLane71}.
Lambek~\cite{Lambek68,Lambek69,Lambek72} used the related notion of \emph{cartesian closed categories} to extend the Curry-Howard correspondance between logic and computation into a trinity with category theory: proofs and programs are arrows, logical formulae and data types are objects.
The discovery of this three-fold connection resulted in a wide range of applications of category theory to theoretical computer science, surveyed in Scott~\cite{Scott00}.

This unification of mathematics, logic and computer science has been followed by a program for the categorical foundations for physics, initiated by Lawvere's topos-theoretic treatment of classical dynamics~\cite{Lawvere79} and continuum physics~\cite{LawvereSchanuel86} with Schanuel.
As we mentioned at the start of this section, the work of Atiyah~\cite{Atiyah88}, Baez and Dolan~\cite{BaezDolan95} on TQFTs showed categories and functors to be essential tools in the grand unification project of quantum gravity~\cite{Baez06}.
This now quaternary analogy between physics, mathematics, logic and computation was popularised by Baez and Stay in their \emph{Rosetta Stone}~\cite{BaezStay10}.
On more concrete grounds, this connection between category theory and quantum physics appeared in Selinger's proposal of a quantum programming language~\cite{Selinger04} and the development of a quantum lambda calculus~\cite{VanTonder04,SelingerValiron06,SelingerEtAl09}.
The same insight blossomed in the school of \emph{categorical quantum mechanics} (CQM) led by Abramsky and Coecke~\cite{AbramskyCoecke04}, where quantum processes are arrows in \emph{compact closed categories}.
This approach culminated in the \emph{ZX calculus} of Coecke and Duncan~\cite{CoeckeDuncan08,CoeckeDuncan11}, a categorical axiomatisation which was proved complete for qubit quantum computing~\cite{JeandelEtAl18a,HadzihasanovicEtAl18}
with applications including error correction \cite{ChancellorEtAl18,GidneyFowler19}, circuit optimisation~\cite{KissingerVanDeWetering20,DuncanEtAl20,DeBeaudrapEtAl20}, compilation \cite{CowtanEtAl20,DeGriendDuncan20} and extraction \cite{BackensEtAl21}.

In quantum computing as well, adjunction is fundamental: it underlies the definition of entanglement and the proof of correctness for the \emph{teleportation protocol}.
Back in 2004 when Coecke first presented this result at the McGill category theory seminar, Lambek immediately pointed out the analogy with his \emph{pregroup grammars}~\cite{Lambek99,Lambek01} where adjunction is the only grammatical rule\footnote
{See \cite{Coecke21} for a first-hand account of this story and a praise of Jim Lambek.}.
Half a century beforehand, the \emph{Lambek calculus}~\cite{Lambek58,Lambek59,Lambek61} revealed an analogy between the derivations in categorial grammars and proof trees in mathematical logic.
He then extended this analogy in \emph{Categorial and categorical grammar}~\cite{Lambek88} where he showed that these grammatical derivations are in fact arrows in \emph{closed monoidal categories} and proposed to cast Montague semantics as a topos-valued functor.
Later, he argued not ``that categories should play a role in linguistics, but rather that they already do''~\cite{Lambek99b}.
Indeed, Hotz~\cite{Hotz66} had already proved that Chomsky's generative grammars were \emph{free monoidal categories}, although his original German article was never translated to English.
The idea of using functors as semantics had appeared implicitly in Knuth~\cite{Knuth68a} in the context-free case and was made explicit by Benson~\cite{Benson70a} for unrestricted grammars.
From this categorical formulation of linguistics, Lambek~\cite{Lambek10} first suggested the analogy between linguistics and physics which is the basis of this thesis: \emph{pregroup reductions as quantum processes}.

It is remarkable that Lambek could foresee QNLP without \emph{string diagrams}\footnote
{String diagrams do not appear in any of Lambek's published work.
Instead, he either uses lines of equations, proof trees or ``underlinks'' for pregroup adjunctions~\cite{Lambek08}.
He admits ``not having had the patience to absorb'' the topological definition of Joyal-Street string diagrams~\cite{Lambek10}.
}, probably the most powerful tool in the hands of the applied category theorist.
They first appeared in another article from Hotz~\cite{Hotz65} as a formalisation of the diagrams commonly used in electronics.
Penrose~\cite{Penrose71} then used the same notation as an informal shortcut for tedious tensor calculations, and later applied it to relativity theory with Rindler~\cite{PenroseRindler84}.
Joyal and Street~\cite{JoyalStreet88,JoyalStreet91,JoyalStreet95} gave the first topological definition of string diagrams and characterised them as the arrows of free monoidal categories.
A generalisation of string diagrams called \emph{proof nets} were introduced by Girard~\cite{Girard87} as a way to free the proofs of his \emph{linear logic} from ``the bureaucracy of syntax'', they were then applied to the Lambek calculus~\cite{Roorda92} and to its multimodal extensions~\cite{MootPuite02}.

At first a piece of mathematical folklore that was hand-drawn on blackboards and rarely included in publications, string diagrams were published at a much bigger scale with the advent of typesetting tools like \LaTeX \ and Ti\emph{k}Z.
Selinger's survey~\cite{Selinger10}, makes the hierarchy of categorical structures (symmetric, compact closed, etc.) correspond to a hierarchy of graphical gadgets (swaps, wire bending, etc.).
In \emph{Picturing Quantum Processes}~\cite{CoeckeKissinger17}, Coecke and Kissinger introduce quantum theory with over a thousand diagrams.
And the list of applications keeps growing:
electronics~\cite{BaezFong15} and chemistry~\cite{BaezPollard17},
control theory~\cite{BaezErbele14} and concurrency~\cite{BonchiEtAl14a},
databases~\cite{BonchiEtAl18} and knowledge representation \cite{Patterson17},
Bayesian inference~\cite{CoeckeSpekkens12,ChoJacobs19} and causality~\cite{KissingerUijlen19},
cognition~\cite{BoltEtAl17} and game theory~\cite{GhaniEtAl18},
functional programming \cite{Riley18} and machine learning~\cite{FongEtAl17}.

If they are a great tool for writing scientific papers, string diagrams can also be a powerful data structure for developing software applications:
quantomatic~\cite{KissingerZamdzhiev15} and its successor PyZX~\cite{KissingerVanDeWetering19} perform automatic rewriting of diagrams in the ZX calculus,
globular~\cite{BarEtAl18} and its successor homotopy.io~\cite{ReutterVicary19} are proof assistants for higher category theory,
cartographer~\cite{SobocinskiEtAl19} and catlab~\cite{PattersonEtAl21} implement diagrams in symmetric monoidal categories, which are also implicit in the circuit data structure of the t$|$ket$\rangle$ compiler~\cite{SivarajahEtAl20}.
String diagrams are the main data structure of our QNLP algorithms: we translate the diagrams of sentences into diagrams of quantum circuits.
As none of the existing category theory software was flexible enough, we had to implement our own: DisCoPy~\cite{FeliceEtAl20}, a Python library for computing with functors and diagrams in monoidal categories.
DisCoPy then became the engine underlying lambeq~\cite{KartsaklisEtAl21}, a high-level library for experimental QNLP.
Although its development was driven by the implementation of DisCoCat models on quantum computers, DisCoPy was designed as a general-purpose toolkit for applied category theory.
It is freely available\footnote{\url{https://github.com/oxford-quantum-group/discopy}} (as in free beer and in free speech), reliable (with 100\% code coverage) and extensively documented\footnote{\url{https://discopy.readthedocs.io/}}.

In conclusion, category theory can really be a \emph{theory of anything}: from algebraic geometry and quantum gravity to natural language processing.
There is a striking analogy between category theory and string diagrams as a universal graphical language and the \emph{characteristica universalis} and \emph{calculus ratiocinator} dreamt by Leibniz three hundred years ago, a formal language and computational framework that would be able to express all of mathematics, science and philosophy.
Indeed, not only can categories be tools for the working mathematicians and scientists, they can also be of help to the philosophers.
In the footsteps of Grassmann's \emph{Ausdehnungslehre}~\cite{Grassmann44} and his project of an algebraic formalisation of Hegel, Lawvere~\cite{Lawvere89,Lawvere91,Lawvere92,Lawvere96} set out to formulate Hegelian dialectics in terms of adjunctions.
This led to the ongoing effort of Schreiber, Corfield and their collaborators on the nLab~\cite{SchreiberEtAl21} to translate \emph{Wissenschaft Der Logik}~\cite{Hegel12} in terms of category theory.
Not only can it accommodate the absolute idealism of Hegel, category theory can also deal with the pragmatism of Peirce~\cite{Peirce06},
who developed first-order logic independently of Frege using what was later recognised as the first string diagrams~\cite{BradyTrimble98,BradyTrimble00,MelliesZeilberger16,HaydonSobocinski20}.
String diagrams have also been used to model Wittgenstein's language games as functors from a grammar to a category of games~\cite{HedgesLewis18}.
In recent work~\cite{FeliceEtAl20a}, we applied these functorial language games to question answering, going from philosophy to NLP via category theory.


\section*{Contributions}
\addcontentsline{toc}{section}{Contributions}

The first chapter is an extended version of the DisCoPy paper~\cite{FeliceEtAl20a}.
It emerged from a dialectic teacher-student collaboration with Giovanni de Felice: implementing our own category theory library was a way to teach him Python programming.
Bob Coecke then added the capital letters to the name of DisCoPy.
We list the contributions of each section.

\begin{enumerate}
\item We\footnote
{The ``we'' of this section refers to the author of this thesis.
Although we believe that science is collaboration and that the notion of personal contribution is obsolete, it is in fact required by university regulations: ``Where some part of the thesis is not solely the work of the candidate or has been carried out in collaboration with one or more persons, the candidate shall submit a clear statement of the extent of his or her own contribution.''}
give an introduction to elementary category theory for the Python programmer which is at the same time an introduction to object-oriented programming for the applied category theorist.
This includes an implementation of:
\begin{itemize}
    \item the category $\mathbf{Pyth}$ with Python types as objects and functions as arrows (listing~\ref{listing:Function}),
    \item the category $\mathbf{Mat}_\S$ with natural numbers as objects and matrices with entries in a rig $\S$ as arrows (listing~\ref{listing:matrix}),
    \item free categories (listing~\ref{listing:cat.py}) with quantum circuits as example (\ref{example:Circuit}),
    \item the category $\mathbf{Cat}$ with categories as objects and functors as arrows (listing~\ref{listing:Functor}),
    \item quotient categories (section~\ref{subsection:quotient-categories}),
    \item categories with a dagger structure, i.e. an identity-on-objects contravariant involutive endofunctor, and categories enriched in commutative monoids (section~\ref{subsection:dagger-sums-bubbles}),
    \item categories with bubbles, i.e. arbitrary unary operators on homsets, with the example of neural networks (\ref{example:neural-net}) and propositional logic (\ref{example:propositional-logic}).
\end{itemize}

\item We give an elementary definition of string diagrams for monoidal categories.
Our construction decomposes the free monoidal category construction into three basic steps: 1) a layer endofunctor on the category of monoidal signatures, 2) the free premonoidal category as a free category of layers and 3) the free monoidal category as a quotient by interchangers.
To the best of our knowledge, this \emph{premonoidal approach} had been relegated to mathematical folklore: it was known by those who knew it, yet it never appeared in print.
This includes:
\begin{itemize}
    \item $\mathbf{Pyth}$ with lists of types as objects and tupling as tensor (listing~\ref{listing:monoidal.Function}),
    \item $\mathbf{Tensor}_\S \simeq \mathbf{Mat}_\S$ with lists of natural numbers as objects and Kronecker product as tensor (listing~\ref{listing:tensor}),
    \item free monoidal categories (listing~\ref{listing:monoidal.Diagram}) with quantum circuits as example (\ref{example:circuit-diagrams}),
    \item quotient monoidal categories (listing~\ref{subsection:quotient-monoidal}) with quantum circuit optimisation as example (\ref{example:simplify-circuits}),
    \item monoidal categories with daggers, sums and bubbles (section~\ref{subsection:monoidal-daggers-sums-bubbles}) with the example of post-processed quantum circuits (\ref{example:postprocessed-circuit}) and first-order logic à la Peirce (\ref{example:monoidal-formula}).
\end{itemize}
DisCoPy uses a \emph{point-free} or \emph{tacit programming} style where diagrams are described only by composition and tensor.
We discuss how to go from tacit to explicit programming, defining diagrams using the standard syntax for Python functions (section~\ref{subsection:tacit-to-explicit}).

\item We prove the equivalence between our elementary definition of diagrams in terms of list of layers and the topological definition in terms of \emph{labeled generic progressive plane graphs}.
One side of this equivalence underlies the drawing algorithm of DisCoPy, the other side is the basis of a prototype for an automatic diagram recognition algorithm.
We then discuss how to extend this to non-generic, non-progressive, non-planar, non-graph-like diagrams, which opens the door to the next section.

\item We describe our object-oriented implementation of monoidal categories with extra structure.
The hierarchy of categorical structures (monoidal, closed, rigid, etc.) is encoded in a hierarchy of Python classes and an inheritance mechanism implements the free-forgetful adjunctions between them.
This includes an implementation of:
\begin{itemize}
\item free rigid categories, for which we introduce the \emph{snake removal} algorithm to compute normal forms (section~\ref{subsection:rigid}),
\item the syntax for diagrams in free braided, symmetric, tortile and compact-closed categories (section~\ref{subsection:symmetric}),
\item the syntax for diagrams in free hypergraph categories, i.e. with coherent special commutative Frobenius algebras on each object (section~\ref{subsection:hypergraph}),
\item the syntax for diagrams in free cartesian and cocartesian diagrams (section~\ref{subsection:cartesian}) with $\mathbf{Pyth}$ as an example of a \emph{rig category} with two monoidal structures (listing~\ref{listing:python-co-cartesian}),
\item the free biproduct completion as the category of matrices with arrows as entries (section~\ref{subsection:biproducts}), taking quantum measurements as example (\ref{example:biproduct-measurement}),
\item the syntax for diagrams in closed monoidal categories (section~\ref{subsection:closed}) with currying of functions in $\mathbf{Pyth}$ as example (\ref{example:closed-function}),
\item an implementation of $\mathbf{Pyth}$ as a traced cartesian and cocartesian category (listing~\ref{listing:traced-python}) and $\mathbf{Mat}_\B \simeq \mathbf{FinRel}$ as a traced biproduct category (listing~\ref{listing:traced-matrix}).
\end{itemize}

\item We discuss the relationship between our premonoidal approach and the existing graph-based data structures for diagrams in symmetric monoidal categories.
This includes:
\begin{itemize}
\item a comparison between our definition of \emph{premonoidal diagrams} as lists of layers and the free premonoidal category as a state construction over a monoidal category (section~\ref{subsection:state-construction}),
\item an implementation of \emph{hypergraph diagrams}, i.e. the arrows of free hypergraph categories, and the subcategories of compact, traced and symmetric diagrams (section~\ref{subsection:hypergraph-vs-premonoidal}),
\item an implementation of free sesquicategories (i.e. 2-categories without interchangers) with \emph{coloured diagrams} as 2-cells (listing~\ref{listing:free-sesquicategory}),
\item an implementation of $\mathbf{Cat}$ as a sesquicategory with (not-necessarily-natural) transformations as 2-cells (listing~\ref{listing:Transformation}),
\item an implementation of free monoidal 2-categories with diagrams as 1-cells and rewrites as 2-cells (listing~\ref{listing:free-monoidal-2-category}).
\end{itemize}
\end{enumerate}
The second chapter deals with QNLP, building on joint work with Bob Coecke, Giovanni de Felice and Konstantinos Meichanetzidis \cite{MeichanetzidisEtAl20,CoeckeEtAl20,MeichanetzidisEtAl20a}.
\begin{itemize}
\item Section~\ref{section:NLP} gives a short introduction to formal grammars and ambiguity (\ref{subsection:chomsky}), the Lambek calculus, Montague semantics and DisCocat models (\ref{subsection:lambek-discocat}).
We conclude with a discussion of previous work on anaphora and the quantum complexity of language (\ref{section:anaphora}).
\item Section~\ref{section:discocat-qnlp} defines QNLP models as functors from grammar to quantum circuits and show that any DisCoCat model can be implemented in this way.
We discuss our implementation of classical-quantum channels and mixed quantum circuits (\ref{section:mixed-circuits}) and the use of our snake removal algorithm to reduce both the number of qubits and the amount of post-selection required for QNLP models (\ref{subsection:snake-removal}).
\item We review previous implementations of DisCoCat models and study their relationship with \emph{knowledge graph embeddings} (\ref{subsection:kge}) and  hybrid classical-quantum algorithm to train QNLP models on a question-answering task (\ref{subsection:vqqa}).
The underlying idea of \emph{functorial learning}, i.e. learning structure-preserving functors from diagram-like data, provides a theoretical framework for machine learning on structured data.
\end{itemize}
The last section has been published in joint work with Richie Yeung and Giovanni de Felice~\cite{ToumiEtAl21}.
It introduces \emph{diagrammatic differentiation}, a graphical calculus for computing the gradients of parameterised diagrams which applies to the training of QNLP models but also to functorial learning in general.
\begin{itemize}
\item In section~\ref{2-dual-diagrams}, we generalise the dual number construction from rings to monoidal categories. Dual diagrams are formal sums of a string diagram (the real part) and its derivative with respect to some parameter (the epsilon part).
We use bubbles to encode differentiation of diagrams and express the standard rules of calculus (linearity, product, chain) entirely in terms of diagrams.
\item In section~\ref{2b-differentiating-zx}, we study diagrammatic differentiation for the ZX calculus.
This allows to compute the gradients of linear maps with respect to phase parameters.
\item In section~\ref{3-dual-circuits}, we look at the diagrammatic differentiation of mixed quantum circuits, this yields a definition of the parameter-shift rules used in quantum machine learning.
\item In section~\ref{4-bubbles}, we define the gradient of diagrams with bubbles in terms of the chain rule. This allows to differentiate quantum circuits with neural networks as classical post-processing.
\end{itemize}

\pagebreak

\section*{Publications}
\addcontentsline{toc}{section}{Publications}

The material presented in this thesis builds on the following publications.
\begin{itemize}[label={}]
\item \enumcite{FeliceEtAl19}\vspace{-10pt}
\item \enumcite{MeichanetzidisEtAl20a}\vspace{-10pt}
\item \enumcite{FeliceEtAl20a}\vspace{-10pt}
\item \enumcite{CoeckeEtAl20}\vspace{-10pt}
\item \enumcite{MeichanetzidisEtAl20}\vspace{-10pt}
\item \enumcite{KartsaklisEtAl21}\vspace{-10pt}
\item \enumcite{ToumiEtAl21}
\item \enumcite{ToumiKoziell-Pipe21}\vspace{5pt}
\end{itemize}\vspace{10pt}
During his DPhil, the author has also published the following articles.
\begin{itemize}[label={}]
\item \enumcite{BorosEtAl19}\vspace{-10pt}
\item \enumcite{FeliceEtAl20}\vspace{-10pt}
\item \enumcite{ShieblerEtAl20}\vspace{-10pt}
\item \enumcite{CoeckeEtAl21}\vspace{-10pt}
\item \enumcite{McPheatEtAl21}\vspace{-10pt}
\end{itemize}

\pagebreak

\section*{Outreach}
\addcontentsline{toc}{section}{Outreach}

The content of this thesis has also been the subject of science popularisation aimed at a wide audience.

\begin{itemize}
\item A blog post summarising our first experiment and two podcasts with long discussions on the topic.
\end{itemize}
\begin{itemize}[label={}]
\item \enumcite{CoeckeEtAl20b}\vspace{-10pt}
\item \enumcite{FuturatiPodcast21}\vspace{-10pt}
\item \enumcite{MachineLearningStreetTalk21}\vspace{-10pt}
\end{itemize}

\begin{itemize}
\item Two invited lectures at the \emph{Compositional Systems and Methods} group in TalTech, Estonia.
Lecture notes are available as Jupyter~\cite{KluyverEtAl16} notebooks.
\end{itemize}
\begin{itemize}[label={}]
\item \enumcite{ToumiFelice21}\vspace{-10pt}
\item \enumcite{ToumiFelice21a}\vspace{-10pt}
\end{itemize}

\begin{itemize}
\item A presentation at an educational event for programmers and data scientists.
\end{itemize}
\begin{itemize}[label={}]
\item \enumcite{PyD20}
\end{itemize}

\begin{itemize}
\item A hackathon where students implemented QNLP experiments with DisCoPy.
\end{itemize}
\begin{itemize}[label={}]
\item \enumcite{Molina21}\vspace{-10pt}
\end{itemize}

\begin{itemize}
\item A press release explaining QNLP in plain English.
\end{itemize}
\begin{itemize}[label={}]
\item \enumcite{TQI20}\vspace{-10pt}
\end{itemize}

\begin{itemize}
\item Press releases introducing lambeq~\cite{KartsaklisEtAl21} to a business audience.
\end{itemize}
\begin{itemize}[label={}]
\item \enumcite{HPC21}\vspace{-10pt}
\item \enumcite{Smith-Goodson21}\vspace{-10pt}
\end{itemize}


\chapter{DisCoPy: Python for the applied category theorist}\label{chapter:discopy}

Python has become the programming language of choice for most applications in both natural language processing (e.g. Stanford NLP~\cite{ManningEtAl14}, NLTK~\cite{LoperBird02} and SpaCy~\cite{HonnibalMontani17}) and quantum computing (with development kits like Qiskit~\cite{Cross18} and PennyLane~\cite{BergholmEtAl20} and interfaces to compilers like pytket~\cite{SivarajahEtAl20}).
Thus, it was the obvious choice of language for an implementation of QNLP.
However, unlike functional programming languages like Haskell, Python has little support for category theory.
Indeed, before the release of DisCoPy, the only existing Python framework for category theory was a module of SymPy~\cite{MeurerEtAl17} that can draw commutative diagrams in finite categories.
Hence, the first step in implementing QNLP was to develop our own framework for applied category theory in Python: DisCoPy.
Its main feature are the drawing of string diagrams (e.g. the grammatical structure of sentences) and the application of functors (e.g. to quantum circuits, either executed on quantum hardware or classically simulated).

String diagrams have become the lingua franca of applied category theory.
However, the definitions one can find in the literature usually fall into one of two extremes: either definitions by general abstract nonsense or definitions by example and appeal to intuition.
On one side of the spectrum, the standard technical reference has become the \emph{Geometry of tensor calculus}~\cite{JoyalStreet91} where Joyal and Street define string diagrams as equivalence classes of labeled topological graphs embedded in the plane and then characterise them as the arrows of free monoidal categories.
On the other, \emph{Picturing quantum processes}~\cite{CoeckeKissinger17} contains over a thousand string diagrams but their formal definition as well as any mention of category theory are relegated to mere appendices.

The aims of this chapter are three-fold: 1) it gives an overview of the DisCoPy package and its design principles, 2) it introduces elementary category theory to the Python programmer and 3) it introduces object-oriented programming to the applied category theorist.
The first section introduces categories and functors with no mathematical prerequisites apart from sets and monoids.
The second section introduces monoidal categories, defining string diagrams from first principles.
The third section defines the drawing and reading algorithms for string diagrams, which arise as the two sides of the equivalence between the premonoidal and the topological definitions.
The fourth section introduces monoidal categories with extra structure and the inheritence mechanism which implements this hierarchy of structure.
The fifth section gives the category theoretic foundations for our definition of diagrams, which we call the premonoidal approach, it discusses the relationship between this approach and the exisiting graph-based data structures for diagrams in symmetric monoidal categories.

\paragraph{Too long; didn't read.}
We provide a brief summary for the reader who wishes to skip the category theory of this chapter and go straight to the QNLP of chapter~\ref{chapter-2:qnlp}.
String diagrams are defined with respect to a \emph{monoidal signature} $\Sigma$: a set of \emph{objects} $\Sigma_0$, a set of \emph{boxes} $\Sigma_1$ and a pair of functions $\dom, \cod : \Sigma_1 \to \Sigma_0^\star$ assigning input and output \emph{types} (i.e. lists of objects) to each box.
A \emph{layer} is a triple $(u, f, v) \in L(\Sigma) = \Sigma_0^\star \times \Sigma_1 \times \Sigma_0^\star$ of a box $f \in \Sigma_1$ with types $u \in \Sigma_0^\star$ on the left and $v \in \Sigma_0^\star$ on the right, where we define $\dom(x, f, y) = x \dom(f) y$ and $\cod(x, f, y) = x \cod(f) y$.
Finally, a \emph{diagram} $d$ is given by a domain $\dom(d) \in \Sigma_0^\star$ and a list of layers $d_1, \dots, d_n \in L(\Sigma)$ such that $\dom(d_1) = \dom(d)$ and $\dom(d_{i + 1}) = \cod(d_i)$ for $i \leq n$.

Given a type $x \in \Sigma_0^\star$, we define the \emph{identity} diagram $\id(x)$ with $\dom(\id(x)) = x$ and an empty list of layers.
Given two diagrams $d$ and $d'$ with $\cod(d) = \dom(d')$, we define their \emph{composition} $d \fcmp d'$ by concatenating of their layers.
Given a diagram $d$ and a type $x \in \Sigma_0^\star$, we define the left and right \emph{whiskering} $x \otimes d$ and $d \otimes x$ by concatenating $x$ to the left and right of each layer in $d$.
Every diagram can be written in terms of boxes, identity, composition and whiskering.
Given two diagrams $d$ and $d'$, we can define their \emph{tensor} as $d \otimes d' = d \otimes \dom(d') \ \fcmp \ \cod(d) \otimes d'$.
The \emph{interchanger} relation induced by $d \otimes \dom(d') \ \fcmp \ \cod(d) \otimes d' \s \sim \s \dom(d) \otimes d' \ \fcmp \ d \otimes \cod(d')$ relates this biased definition of tensor to the one in the opposite direction.

A \emph{monoidal category}\footnote
{What we call a monoidal category technically is a \emph{coloured PRO}, see remark~\ref{remark:coloured-PRO}.} is a monoidal signature $C$ together with an identity function $\id : C_0 \to C_1$, a (partial) composition function $\then : C_1 \times C_1 \to C_1$ and a tensor function $\tensor : C_1 \times C_1 \to C_1$ subject to some axioms spelled out in section~\ref{section:monoidal}.
Diagrams up to interchanger are the \emph{free monoidal category} $C_\Sigma$ generated by the signature $\Sigma$ (see section~\ref{subsection:free-monoidal}).
In practice, this means that we can define a \emph{monoidal functor} $F : C_\Sigma \to D$ as a \emph{signature homorphism} $F : \Sigma \to D$, i.e. a pair of functions $F_0 : \Sigma_0 \to D_0$ and $F_1 : \Sigma_1 \to D_1$ compatible with $\dom$ and $\cod$.
Intuitively, once we have specified the interpretation of each object and each box, the interpretation of every diagram is fixed.
If we take $\Sigma$ to encode the rules of our grammar and $D$ to be a monoidal category of quantum circuits, we get our definition of QNLP models: they are monoidal functors $F : C_\Sigma \to D$.


\section{Categories in Python} \label{section:cat}

What are categories and how can they be useful to the Python programmer?
This section will answer this question by taking the standard mathematical definitions and breaking them into \emph{data}, which can be translated into Python code, and \emph{axioms}, which cannot be formally verified in Python, but can be translated into test cases.
The data for a category is given by a tuple $C = (C_0, C_1, \dom, \cod, \id, \then)$ where:
\begin{itemize}
\item $C_0$ and $C_1$ are classes of \emph{objects} and \emph{arrows} respectively,
\item $\dom, \cod : C_1 \to C_0$ are functions called \emph{domain} and \emph{codomain},
\item $\id : C_0 \to C_1$ is a function called \emph{identity},
\item $\then : C_1 \times C_1 \to C_1$ is a partial function called \emph{composition}, denoted by $(\fcmp)$.
\end{itemize}
Given two objects $x, y \in C_0$, the set\footnote
{We assume this is a set rather than a proper class, i.e. we work with \emph{locally small} categories.}
$C(x, y) = \{f \in C_1 \s \vert \s \dom(f), \cod(f) = x, y \}$ is called a \emph{homset} and we write $f : x \to y$ whenever $f \in C(x, y)$.
We denote the composition $\then(f, g)$ by $f \fcmp g$, translated to \py{f >> g} or \py{g << f} in Python.
The axioms for the category $C$ are the following:
\begin{itemize}
\item $\id(x) : x \to x$ for all objects $x \in C_0$,
\item for all arrows $f, g \in C_1$, the composition $f \fcmp g$ is defined iff $\cod(f) = \dom(g)$, moreover we have $f \fcmp g : \dom(f) \to \cod(g)$,
\item $\id(\dom(f)) \fcmp f = f = f \fcmp \id(\cod(f))$ for all arrows $f \in C_1$,
\item $f \fcmp (g \fcmp h) = (f \fcmp g) \fcmp h$ whenever either side is defined for $f, g, h \in C_1$.
\end{itemize}

Note that we play with the overloaded meaning of the word \emph{class}: we use it to mean both a mathematical collection that need not be a set, and a Python class with its methods and attributes.
Reading it in the latter sense, $\dom$ and $\cod$ are \emph{attributes} of the arrow class, $\then$ is a \emph{method}, $\id$ is a \emph{static method}.
Thus, the Python implementation of a category is nothing but a pair of classes \py{Ob} and \py{Arrow} for objects and arrows, together with four methods $\dom, \cod, \then$ and $\id$ .
The \py{Category} class is nothing but a named tuple with two attributes \py{ob} and \py{ar} for its object and arrow class respectively.
The axioms can be implemented as (necessarily non-exhaustive) software tests, however Python has no formal semantics so there is no hope to formally verify them.

The data for a \emph{functor} $F : C \to D$ between two categories $C$ and $D$ is given by a pair of overloaded functions $F : C_0 \to D_0$ and $F : C_1 \to D_1$ such that:
\begin{itemize}
    \item $F(\dom(f)) = \dom(F(f))$ and $F(\cod(f)) = \cod(F(f))$ for all $f \in C_1$,
    \item $F(\id(x)) = \id(F(x))$ and $F(f \fcmp g) = F(f) \fcmp F(g)$ for all $x \in C_0$ and $f, g \in C_1$.
\end{itemize}
Thus, implementing a functor in Python amounts to implementing a class with the magic method \py{__call__} of the appropriate type, and then implementing software tests to check that the axioms hold.

The data for a \emph{transformation} $\alpha : F \to G$ between two parallel functors $F, G : C \to D$ is given by a function from objects $x \in C_0$ to components $\alpha(x) : F(x) \to G(x)$ in $D$.
A \emph{natural transformation} is one where $\alpha(x) \fcmp G(f) = F(f) \fcmp \alpha(y)$ for all arrows $f : x \to y$ in $C$.
Again, implementing a transformation amounts to implementing a class with a \py{__call__} method of the appropriate type, checking that a transformation is natural cannot be done formally in Python.
The class templates listed below summarise the required data for categories, functors and transformations.

\begin{python}\label{listing:abstract classes}
{\normalfont Class templates for categories, functors and transformations.}

\begin{minted}{python}
from __future__ import annotations
from dataclasses import dataclass
from typing import overload

class Ob: ...

class Arrow:
    dom: Ob
    cod: Ob

    @staticmethod
    def id(x: Ob) -> Arrow: ...
    def then(self, other: Arrow) -> Arrow: ...

@dataclass
class Category:
    ob: type = Ob
    ar: type = Arrow

class Functor:
    dom: Category
    cod: Category

    @overload
    def __call__(self, x: Ob) -> Ob: ...

    @overload
    def __call__(self, f: Arrow) -> Arrow: ...

class Transformation:
    dom: Functor
    cod: Functor

    def __call__(self, x: Ob) -> Arrow: ...
\end{minted}
\end{python}

\begin{remark}
Throughout the thesis we use the postponed evaluation of annotations introduced in Python \py{3.7}~\cite{Langa17}.
Python cannot statically check that arrow composition is well-typed as this would require some form of dependent types, the best we can do is raise an \py{AssertionError} at runtime.
\end{remark}

\begin{example}
When the class of objects and arrows are in fact sets, $C$ is called a \emph{small category}.
For example, the category $\mathbf{FinSet}$ has the set of all finite sets as objects and the set of all functions between them as arrows.
This time equality of functions between finite sets is decidable, so we can write unit tests that check that the axioms hold on specific examples.
\end{example}

\begin{example}
When the class of objects and arrows are finite sets, we can draw the category as a directed multigraph with objects as nodes and arrows as edges, together with the list of equations between paths.
A functor $F : C \to D$ from such a finite category $C$ is called a \emph{commutative diagram} in $D$.
One commutative diagram can state a large number of equations, which can be read by \emph{diagram chasing}.
\end{example}

\begin{example}
A monoid is the same as a category with one object, i.e. every arrow (element) can be composed with (multiplied by) every other.
A preorder, i.e. a set with a reflexive transitive relation, is the same as a category with at most one arrow $x \leq y$ between any two objects $x$ and $y$.
Functors between monoids are the same as homomorphisms, functors between preorders are monotone functions.
\end{example}

\begin{example}
Just about any class of mathematical structures will be the objects of a category with the transformations between them as arrows: the category $\mathbf{Set}$ of sets and functions, the category $\mathbf{Mon}$ of monoids and homomorphisms, the category $\mathbf{Preord}$ of preorders and monotone functions, the category $\mathbf{Cat}$ of small categories and functors, etc.
There are embedding (i.e. injective on objects and arrows) functors from $\mathbf{Mon}$ and $\mathbf{Preord}$ to $\mathbf{Cat}$, i.e. preorders and monoids form a subcategory of $\mathbf{Cat}$.
There is a functor from $\mathbf{Cat}$ to $\mathbf{Preord}$ called the preorder collapse which sends a category $C$ to the preorder given by $x \leq y$ iff there is an arrow $f \in C(x, y)$, i.e. we forget the difference between parallel arrows.
There is a faithful (i.e. injective on homsets) functor $U : \mathbf{Mon} \to \mathbf{Set}$ called the \emph{forgetful functor} which sends monoids to their underlying set and homomorphisms to functions.
\end{example}

\begin{example}
In the same way that there is a set $Y^X$ of functions $X \to Y$ for any two sets $X$ and $Y$, for any two categories $C$ and $D$ there is a category $D^C$ with functors $C \to D$ as objects and natural transformations as arrows.
\end{example}

\begin{example}\label{ex:python categories}
We can define the category $\mathbf{Pyth}$ with objects the class of all Python types and arrows the class of all Python functions.
Domain and codomain could be extracted from type annotations, but instead we implement a class \py{Function} with attributes \py{inside: Callable} as well as \py{dom: Ty} and \py{cod: Ty}.
Identity and composition is given by \py{lambda x: x} and \py{lambda f, g:} \py{lambda x:} \py{g(f(x)))}.

As discussed by Milewski~\cite{Milewski14} in the case of Haskell, endofunctors $\mathbf{Pyth} \to \mathbf{Pyth}$ can be thought of as data containers.
For example, we can define a $\mathbf{List}$ functor which sends a type \py{t} to \py{List[t]} and a function \py{f} to \py{lambda *xs: map(f, xs)}.
There is a natural transformation $\eta : \mathbf{Id} \to \mathbf{List}$ from the obvious identity functor, implemented by the built-in function \py{id}.
Its components send objects \py{x : t} of any type \py{t} to the singleton list \py{[x] : List[t]}.
\end{example}

\begin{remark}\label{remark:category-Pyth}
It's not entirely clear what we mean by equality of Python function and hence we can ask whether $\mathbf{Pyth}$ even is a category at all.
We can define a notion of \emph{contextual equivalence}, an instance of Leibniz's \emph{identity of indiscernibles}: two functions are equal if they are interchangeable in all observable contexts.
However, the associativity and unitality axioms may fail in the presence of non-terminating programs.
In the same informal way as in ``platonic'' $\mathbf{Hask}$~\cite{HaskellWikiContributors12}, we may think of a ``platonic'' $\mathbf{Pyth}$, a subset of Python functions where we exclude non-termination and where the axioms of categories hold.
See Danielsson et al. \emph{Fast and loose reasoning is morally correct}~\cite{DanielssonEtAl06} for a theoretical justification of such informal reasoning.
\end{remark}

\begin{python}
{\normalfont Syntactic sugar for composition.}

\py{Composable} implements the syntactic sugar \py{>>} and \py{<<} for composition in diagrammatic order and its opposite, as well as \py{n * self} for $n$-fold composition.
It is an abstract class, i.e. it is subclassed but never instantiated.
The higher-order function \py{inductive} takes a binary method and makes it $n$-ary by using a left fold, we use it as a decorator.

\begin{minted}{python}
class Composable:
    __rshift__ = __llshift__ = lambda self, other: self.then(other)
    __lshift__ = __lrshift__ = lambda self, other: other.then(self)

def inductive(method):
    def result(self, *others):
        if not others: return self
        if len(others) == 1: return method(self, others[0])
        if len(others) > 1: return result(method(self, others[0]), *others[1:])
    return result
\end{minted}
\end{python}

\begin{python}\label{listing:Function}
{\normalfont Implementation of the category $\mathbf{Pyth}$ with \py{type} as objects and \py{Function} as arrows.}

\begin{minted}{python}
from typing import Callable

@dataclass
class Function(Composable):
    inside: Callable
    dom: type
    cod: type

    @staticmethod
    def id(dom: type) -> Function:
        return Function(lambda x: x, dom, dom)

    @inductive
    def then(self, other: Function) -> Function:
        assert self.cod == other.dom
        return Function(lambda xs: other(self(xs)), self.dom, other.cod)

    def __call__(self, x):
        return self.inside(x)
\end{minted}
\end{python}

\begin{example}
The following commutative diagram denotes a functor $3 \to \mathbf{Pyth}$ from the finite category $3$ with three objects $\{ 0, 1, 2 \}$ and three non-identity arrows $f : 0 \to 1, g : 1 \to 2$ and $h : 0 \to 2$, with the only non-trivial composition $f \fcmp g = h$.
\[ \begin{tikzcd}
\py{int}
\ar{rrrrrr}{\py{lambda n: n * (n - 1) // 2}}
\ar{rrrd}[']{\py{range}}
& & & & & & \py{int} \\
& & & \py{Iterable}
\ar{urrr}[']{\py{sum}} & & &
\end{tikzcd}
\]
It is read as the equation \py{sum(range(n)) = n * (n - 1) // 2}.
\end{example}

\begin{example}
The category $\mathbf{Mat}_\S$ has natural numbers as objects and $n \times m$ matrices with values in $\S$ as arrows $n \to m$.
The identity and composition are given by the identity matrix and matrix multiplication respectively.
In order for matrix multiplication to be well-defined and for $\mathbf{Mat}_\S$ to be a category, the scalars $\S$ should have at least the structure of a \emph{rig} (a riNg without Negatives, also called a \emph{semiring}): a pair of monoids $(\S, +, 0)$ and $(\S, \times, 1)$ with the first one commutative and the second a homomorphism for the first, i.e. $a \times 0 = 0 = 0 \times a$ and $(a + b) \times (c + d) = a c + a d + b c + b d$.

The category $\mathbf{Mat}_\C$ is equivalent to the category of finite dimensional vector spaces and linear maps.
When the scalars are Booleans with disjunction and conjunction as addition and multiplication, the category $\mathbf{Mat}_\B$ is equivalent to the category of finite sets and relations.
There is a faithful functor $\mathbf{FinSet} \to \mathbf{Mat}_\B$ which sends finite sets to their cardinality and functions to their graph.
\end{example}

\begin{python}\label{listing:matrix}
{\normalfont Implementation of $\mathbf{Mat}_\S$ with \py{int} as objects and \py{Matrix[dtype]} as arrows.}

\begin{minted}{python}
from typing import Number

class Matrix(Composable):
    dtype = int

    dom: int
    cod: int
    inside: list[list[dtype]]

    def __class_getitem__(cls, dtype: type):
        class C(cls): pass
        C.dtype = dtype
        C.__name__ = C.__qualname__ = "{}[{}]".format(
            cls.__name__, dtype.__name__)
        return C

    def __init__(self, inside: list[list[Number]], dom: int, cod: int):
        assert len(inside) == dom and all(len(row) == cod for row in inside)
        self.inside, self.dom, self.cod =\
            [list(map(self.dtype, row)) for row in inside], dom, cod

    def __eq__(self, other):
        if not isinstance(other, Matrix):
            return self.dom == self.cod == 1 and self.inside[0][0] == other
        return (self.dtype, self.inside, self.dom, self.cod)\
            == (other.dtype, other.inside, other.dom, other.cod)

    @classmethod
    def id(cls, x: int) -> Matrix:
        return cls([[i == j for i in range(x)] for j in range(x)], x, x)

    @inductive
    def then(self, other: Matrix) -> Matrix:
        assert self.dtype == other.dtype and self.cod == other.dom
        inside = [[sum(
            self.inside[i][j] * other.inside[j][k] for j in range(other.dom))
            for k in range(other.cod)] for i in range(self.dom)]
        return type(self)(inside, self.dom, other.cod)

    def __getitem__(self, key: int | tuple[int, ...]) -> Matrix:
        key = key if isinstance(key, tuple) else (key, )
        inside = [[self.inside[i][j] for i in key for j in range(self.cod)]]\
            if len(key) == 1 else [[self.inside[i][j] for i, j in [key]]]
        dom, cod = 1, self.cod if len(key) == 1 else 1
        return type(self)(inside, dom, cod)

for converter in (bool, int, float, complex):
    def method(self):  # Downcasting a 1 by 1 Matrix to a scalar.
        assert self.dom == self.cod == 1
        return converter(self.inside[0][0])
    setattr(Matrix, "__{}__".format(converter.__name__), method)
\end{minted}

Subscriptable types such as \py{list[list[int]]} implemented by the magic method \py{__class_getitem__} are a new feature of Python \py{3.10}~\cite{Levkivskyi17}.
By default, we fix \py{Matrix = Matrix[int]}.
We can get Boolean, real and complex matrices with \py{Matrix[bool]}, \py{Matrix[float]} and \py{Matrix[complex]} respectively.
Note that this implementation is not meant to be efficient, rather it helps in making the thesis self-contained.
As we will mention in section~\ref{subsection:hypergraph-vs-premonoidal}, DisCoPy interfaces with NumPy~\cite{VanDerWaltEtAl11} for efficient matrix multiplication.
\end{python}

\begin{example}
The category $\mathbf{Circ}$ has natural numbers as objects and $n$-qubit quantum circuits as arrows $n \to n$.
There is a functor $\mathtt{eval} : \mathbf{Circ} \to \mathbf{Mat}_\C$ which sends $n$ qubits to $2^n$ dimensions and evaluates each circuit to its unitary matrix.
\end{example}

\subsection{Free categories}\label{subsection:free-categories}

The main principles behind the implementation of DisCoPy follow from the concept of a \emph{free object}.
Let's start from a simple example.
Given a set $X$, we can construct a monoid $X^\star$ with underlying set $\coprod_{n \in \N} X^n$ the set of all finite lists with elements in $X$.
The associative multiplication is given by list concatenation $X^m \times X^n \to X^{m + n}$ and the unit is given by the empty list denoted $1 \in X^0$.
Given a function $f : X \to Y$, we can construct a homomorphism $f^\star : X^\star \to Y^\star$ defined by element-wise application of $f$ (this is what the built-in \py{map} does in Python).
We can easily check that $(f \fcmp g)^\star = f^\star \fcmp g^\star$ and $(\id_X)^\star = \id_{X^\star}$.
Thus, we have defined a functor $F : \mathbf{Set} \to \mathbf{Mon}$.

Why is this functor so special? Because it is the \emph{left adjoint} to the forgetful functor $U : \mathbf{Mon} \to \mathbf{Set}$.
An \emph{adjunction} $F \dashv U$ between two functors $F : C \to D$ and $U : D \to C$ is a pair of natural transformations $\eta : \id_C \to F \fcmp U$ and $\epsilon : U \fcmp F \to \id_D$ called the \emph{unit} and \emph{counit} respectively.
In the case of lists, we already mentioned the unit in example~\ref{ex:python categories}: it is the function that sends every object to a singleton list.
For a monoid $M$, the counit $\epsilon(M) : F(U(M)) \to M$ is the monoid homomorphism that takes lists of elements in $M$ and multiplies them.
We can easily check that these two transformations are indeed natural, thus we get that \emph{lists are free monoids}.
This may be taken as a mathematical explanation for why lists are so ubiquitous in programming.
Another equivalent definition of adjunction is in terms of an isomorphism $C(x, U(y)) \simeq D(F(x), y)$ which is natural\footnote
{The isomorphism $C(x, U(y)) \simeq D(F(x), y)$ is natural in $x$ if it is a natural transformation between the two functors $C(-, U(y)), D(F(-), y) : C \to \mathbf{Set}$.}
in $x \in C_0$ and $y \in D_0$.
In the adjunction for lists, functions $X \to U(M)$ from a set $X$ to the underlying set of a monoid $M$ are in a natural one-to-one correspondence with monoid homomorphisms $X^\star \to M$.
To define a homomorphism from a free monoid, it is sufficient to define the image of each generating element.

Now we want to play the same game with categories instead of monoids.
We can define a forgetful functor $U : \mathbf{Cat} \to \mathbf{Set}$ which sends a small category $C$ to its set of objects $C_0$, and its left adjoint $F : \mathbf{Set} \to \mathbf{Cat}$ which sends a set to the \emph{discrete category} with its elements as objects and only identity arrows.
However, this is a rather boring construction because forgetting the arrows of a categories is too much: the forgetful functor $U$ is not faithful.
Instead, we need to replace the category of sets with the category of \emph{signatures}.
The data for a signature is given by a tuple $\Sigma = (\Sigma_0, \Sigma_1, \dom, \cod)$ where:
\begin{itemize}
    \item $\Sigma_0$ is a set of \emph{generating objects},
    \item $\Sigma_1$ is a set of \emph{generating arrows}, which we will also call \emph{boxes},
    \item $\dom, \cod : \Sigma_1 \to \Sigma_0$ are the domain and codomain.
\end{itemize}
A morphism of signatures $f : \Sigma \to \Sigma'$ is a pair of overloaded functions $f : \Sigma_0 \to \Sigma'_0$ and $f : \Sigma_1 \to \Sigma'_1$ such that $f \fcmp \dom = \dom \fcmp f$ and $f \fcmp \cod = \cod \fcmp f$.
Thus, signatures and their morphisms form a category $\mathbf{Sig}$ and there is a faithful functor $U : \mathbf{Cat} \to \mathbf{Sig}$ which sends a category to its underlying signature: it forgets the identity and composition.
Signatures may be thought of as directed multigraphs \emph{with an attitude}~\cite{NLab}.
Given a signature $\Sigma$, we can define a category $F(\Sigma)$ with nodes as objects and \emph{paths as arrows}.
More precisely, an arrow $f : x \to y$ is given by a length $n \in \N$ and a list $f_1, \dots, f_n \in \Sigma_1$ with $\dom(f_1) = x$, $\cod(f_n) = y$ and $\cod(f_i) = \dom(f_{i + 1})$ for all $i < n$.
Given a morphism of signatures $f : \Sigma \to \Sigma'$, we get a functor $F(f) : F(\Sigma) \to F(\Sigma')$ relabeling boxes in $\Sigma$ by boxes in $\Sigma'$.
Thus, we have defined a functor $F : \mathbf{Sig} \to \mathbf{Cat}$, it remains to show that it indeed forms an adjunction $F \dashv U$.
This is very similar to the monoid case: the unit sends a box in a signature to the path of just itself, the counit sends a path of arrows in a category to their composition.
Equivalently, we have a natural isomorphism $\mathbf{Cat}(F(\Sigma), C) \simeq \mathbf{Sig}(\Sigma, U(C))$: to define a functor $F(\Sigma) \to C$ from a free category is the same as to define a morphism of signatures $\Sigma \to U(C)$.

If lists are such fundamental data structures because they are free monoids, we argue that the arrows of free categories should be just as fundamental: they capture the basic notion of \emph{data pipelines}.
Free categories are implemented in the most basic module of DisCoPy, \py{discopy.cat}, which is sketched in listing~\ref{listing:cat.py}.

\begin{python}~\label{listing:cat.py}
{\normalfont Implementation of the free category $F(\Sigma)$ with $\Sigma_0 = \py{Ob}$ and $\Sigma_1 = \py{Box}$.}

\begin{minted}{python}
@dataclass
class Ob:
    name: str
    __str__ = lambda self: self.name

@dataclass
class Arrow(Composable):
    inside: tuple[Box, ...]
    dom: Ob
    cod: Ob

    @classmethod
    def cast(cls, old: Arrow) -> Arrow:
        return old if isinstance(old, cls) else cls(old.inside, old.dom, old.cod)

    @classmethod
    def id(cls, x: Ob) -> Arrow:
        return cls.cast(Arrow((), x, x))

    def then(self, *others: Arrow) -> Arrow:
        for f, g in zip((self, ) + others, others): assert f.cod == g.dom
        dom, cod = self.dom, others[-1].cod if others else self.cod
        inside = self.inside + sum([other.inside for other in others], ())
        return self.cast(Arrow(inside, dom, cod))

    __len__ = lambda self: len(self.inside)
    __str__ = lambda self: ' >> '.join(map(str, self.inside))\
        if self.inside else '{}.id({})'.format(type(self).__name__, self.dom)

class Box(Arrow):
    def __init__(self, name: str, dom: Ob, cod: Ob):
        self.name = name
        super().__init__((self, ), dom, cod)

    def __eq__(self, other):
        if isinstance(other, Box):
            return (self.name, self.dom, self.cod)\
                == (other.name, other.dom, other.cod)
        return isinstance(other, Arrow) and other.inside == (self, )

    __hash__ = lambda self: hash(repr(self))
    __str__ = lambda self: self.name
    cast = Arrow.cast
\end{minted}
\end{python}

The classes \py{Ob} and \py{Arrow} for objects and arrows are implemented in a straightforward way, using the \py{dataclass} decorator to avoid the bureaucracy of defining initialisation, equality, etc.
We define the method \py{__str__} so that \py{eval(str(f)) == f} for all \py{f: Arrow}, provided that the names of all objects and boxes are in scope.
The attribute \py{inside} holds the list of generating arrows, which we store as an immutable \py{tuple} rather than a mutable \py{list}.
The method \py{Arrow.then} accepts any number of arrows \py{others}, which will prove useful when defining functors.
The \py{Box} class requires more attention: a box \py{f = Box('f', x, y)} is an arrow with the list of just itself as boxes, i.e. \py{f.inside == (f, )}.
For the axiom \py{f >> f.id(y)} \py{== f} \py{== f.id(x) >> f} to hold, we need to make sure that \py{f == Arrow((f, ), x, y)}, i.e. a box is equal to the arrow with just itself as boxes.
The main subtlety in the implementation is the class method \py{cast} which takes an \py{old: Arrow} as input and returns a new member of a given \py{cls}, subclass of \py{Arrow}.
This allows the composition of arrows in a subclass to remain within the subclass, without having to rewrite the method \py{then}.
This means we need to make \py{Arrow.id} a \py{classmethod} as well so that it can call \py{cast} and return an arrow of the appropriate subclass.
We also need to fix \py{Box.cast = Arrow.cast}: when we compose a box then an arrow, we want to return a new arrow object, not a box.

\begin{example}\label{example:Circuit}
We can define \py{Circuit} as a subclass of \py{Arrow} and \py{Gate} as a subclass of \py{Circuit} and \py{Box} defined by a name and a number of qubits.

\begin{minted}{python}
class Circuit(Arrow): pass

class Gate(Box, Circuit):
    cast = Circuit.cast

Id = Circuit.id(Ob('1'))
X, Y, Z, H = [Gate(name, Ob('1'), Ob('1')) for name in "XYZH"]

assert (X >> Y) >> Z == X >> (Y >> Z) and X >> Id == X == Id >> X
assert isinstance(Id, Circuit) and isinstance(X >> Y, Circuit)
\end{minted}
\end{example}

The \py{Functor} class listed in \ref{listing:Functor} has two mappings \py{ob} and \py{ar} as attributes, from objects to objects and from boxes to arrows respectively.
The domain of the functor is implicitly defined as the free category generated by the domain of the \py{ob} and \py{ar} mappings.
The optional arguments \py{dom} and \py{cod} allow to define functors with arbitrary categories as domain and codomain, a \py{Category} is nothing but a pair of types for its objects and arrows.
For now we only use \py{cod} to define the image of identity arrows, otherwise the (co)domain of the functor is defined implicitly as the (co)domain of the \py{ob} and \py{ar} mappings.

We have chosen to implement functors in terms of Python \py{dict} rather than functions mainly because the syntax looked better for small examples.
However, nothing prevents us from making the most of Python's \emph{duck typing}: if it quacks like a \py{dict} and if it has a \py{__getitem__} method, we can use it to define functors like a \py{dict}.
Thus, we can define functors with domains that are not finitely generated, such as the identity functor or more concretely the evaluation functor for quantum gates parameterised by a continuous angle.
An equivalent solution is to subclass \py{Functor} and override its \py{__call__} method directly.
The only downside is that we cannot print, save or check equality for such functors, we can only apply them to objects and arrows.

\begin{python}~\label{listing:Functor}
{\normalfont Implementation of $\mathbf{Cat}$ with \py{Category} as objects and \py{Functor} as arrows.}

\begin{minted}{python}
class DictOrCallable:
    def __class_getitem__(_, source, target):
        return dict[source, target] | Callable[[source], target]

@dataclass
class FakeDict:
    inside: Callable
    __getitem__ = lambda self, key: self.inside(key)

class Functor:
    ob: dictOrCallable[Ob, Ob]
    ar: dictOrCallable[Box, Ar]
    dom: Category = Category(Ob, Arrow)
    cod: Category = Category(Ob, Arrow)

    def __init__(self, ob, ar, dom=None, cod=None):
        dom, cod = dom or type(self).dom, cod or type(self).cod
        ob = ob if hasattr(ob, "__getitem__") else FakeDict(ob)
        ar = ar if hasattr(ar, "__getitem__") else FakeDict(ar)
        self.ob, self.ar, self.dom, self.cod = ob, ar, dom, cod

    def __call__(self, other: Ob | Arrow) -> Ob | Arrow:
        if isinstance(other, Ob):
            return self.ob[other]
        if isinstance(other, Box):
            result = self.ar[other]
            if isinstance(result, self.cod.ar): return result
            # This allows some nice syntactic sugar for the ar mapping.
            return self.cod.ar(result, self(other.dom), self(other.cod))
        if isinstance(other, Arrow):
            base_case = self.cod.ar.id(self(other.dom))
            return base_case.then(*[self(box) for box in other.inside])
        raise TypeError

    @classmethod
    def id(cls, x: Category) -> Functor:
        return cls(lambda obj: obj, lambda box: box, dom=x, cod=x)

    @inductive
    def then(self: Functor, other: Functor) -> Functor:
        assert self.cod == other.dom
        ob, ar = lambda x: other.ob[self.ob[x]], lambda f: other.ar[self.ar[f]]
        return type(self)(ob, ar, self.dom, other.cod)
\end{minted}
\end{python}

\begin{example}
A typical DisCoPy script starts by defining objects and boxes:
\begin{minted}{python}
x, y, z = map(Ob, "xyz")
f, g, h = Box('f', x, y), Box('g', y, z), Box('h', z, x)
\end{minted}
We can define a simple relabeling functor from the free category to itself:
\begin{minted}{python}
F = Functor(
    ob={x: y, y: z, z: x},
    ar={f: g, g: h, h: f})
assert F(f >> g >> h) == F(f) >> F(g) >> F(h) == g >> h >> f
\end{minted}
We can interpret our arrows as Python functions:
\begin{minted}{python}
G = Functor(
    ob={x: int, y: Iterable, z: int},
    ar={f: range, g: sum, h: lambda n: n * (n - 1) // 2},
    cod=Category(type, Function))
assert G(f >> g)(42) == G(h)(42) == 861
\end{minted}
We can interpret our arrows as matrices:
\begin{minted}{python}
H = Functor(
    ob={x: 1, y: 2, z: 2},
    ar={f: [[0, 1]], g: [[0, 1], [1, 0]], h: [[1], [0]]},
    cod=Category(int, Matrix))
assert H(f >> g) == H(h).transpose()
\end{minted}
We can even define functors into $\mathbf{Cat}$, i.e. interpret arrows as functors:
\begin{minted}{python}
I = Functor(
    ob={x: Category(Ob, Arrow), y: Category(Ob, Arrow), z: Category(int, Matrix)},
    ar={f: F, g: H},
    cod=Category(Category, Functor))
assert I(f >> g)(h) == H(F(h)) == H(f)
\end{minted}
\end{example}

\subsection{Quotient categories}\label{subsection:quotient-categories}

After free objects, another concept behind DisCoPy is that of a \emph{quotient object}.
Again, let's start with the example of a monoid $M$.
Suppose we're given a binary relation $R \sub M \times M$, then we can construct a quotient monoid $M / R$ with underlying set the equivalence classes of the smallest congruence generated by $R$.
That is, the smallest relation $(\sim_R) \sub M \times M$ such that:
\begin{itemize}
\item $x \sim_R y$ for all $(x, y) \in R$,
\item $x \sim_R x$ and if $x \sim_R y$ and $y \sim_R z$ then $x \sim_R z$,
\item if $x \sim_R x'$ and $y \sim_R y'$ then $x \times y \sim_R x' \times y'$.
\end{itemize}
The first point says that $R \sub (\sim_R)$.
The second says that $(\sim_R)$ is an equivalence relation.
The third says that $(\sim_R)$ is closed under products, it is equivalent to the substitution axiom: if $x \sim_R y$ then $a x b \sim_R a y b$ for all $a, b \in M$.
Explicitly, the congruence $(\sim_R)$ can be constructed in two steps: first, we define the \emph{rewriting relation} $(\to_R) \sub M \times M$ where $a x b \to_R a y b$ for all $(x, y) \in R$ and $a, b \in M$.
Second, we define $(\sim_R)$ as the \emph{symmetric, reflexive, transitive closure} of the rewriting relation, i.e. two elements $x, y \in M$ are equal in $M / R$ iff they are in the same connected component of the undirected graph induced by $(\to_R) \sub M \times M$.
Now there is a homomorphism $q : M \to M / R$ which sends monoid elements to their equivalence class with the following property: for any homomorphism $f : M \to N$ with $x \sim_R y$ implies $f(x) = f(y)$, there is a unique $f' : M / R \to N$ with $f = q \fcmp f'$.
Intuitively, a homomorphism from a quotient $M / R$ is nothing but a homomorphism from $M$ which respects the axioms $R$.
Up to isomorphism, we can construct any monoid $M$ as the quotient $X^\star / R$ of a free monoid $X^\star$: take $X = U(M)$ and $R = \{ (x y, z) \in X^\star \times X^\star \s \vert \s x \times y = z \in M \}$.

The pair $(X, R \sub X^\star \times X^\star)$ of a set of generating elements $X$ and a binary relation $R$ on its free monoid is called a \emph{presentation} of the monoid $M \simeq X^\star / R$.
Arguably, the most fundamental computational problem is the \emph{word problem for monoids}: given a presentation $(X, R)$ and a pair of lists $x, y \in X^\star$, decide whether $x = y$ in $X^\star / R$.
As mentioned in the introduction, it was shown to be equivalent to Turing's halting problem, and thus undecidable, by Post~\cite{Post47} and Markov~\cite{Markov47}.
The proof is straightforward: we can encode the tape alphabet and the states of a Turing machine in the set $X$ and its transition table into the relation $R$, then whether the machine halts reduces to deciding $x = y$ for $x$ and $y$ the initial and accepting configurations respectively: a proof of equality corresponds precisely to a run of the Turing machine.

The case of quotient categories is similar, only we need to take care of objects now.
Given a category $C$ and a family of binary relations $\{ R_{x,y} \sub C(x, y) \times C(x, y) \}_{x, y \in C_0}$, we can construct a quotient category $C / R$ with equivalence classes as arrows.
There is a functor $Q : C \to C / R$ sending each arrow to its equivalence class, and for any functor $F : C \to D$ with $(f, g) \in R_{x, y}$ implies $F(f) = F(g)$,
there is a unique $F' : C / R \to D$ with $F = Q \fcmp F'$.
Intuitively, a functor from a quotient category $C / R$ is nothing but a functor from $C$ which respects the axioms $R$.
Again, any small category $C$ is isomorphic to the quotient $F(\Sigma) / R$ of a free category $F(\Sigma)$: take $\Sigma = U(C)$ and $R = \{ (f \fcmp g, h) \in F(\Sigma) \times F(\Sigma) \s \vert \s f \fcmp g = h \in C \}$.
The pair $(\Sigma, R \sub \coprod_{x, y \in \Sigma_0} \Sigma(x, y) \times \Sigma(x, y))$ is called a presentation of the category $C \simeq F(\Sigma) / R$.
Since monoids are just categories with one object, the word problem for categories will be just as undecidable as for monoids.

What does it mean to implement a quotient category in Python?
Since presentations of categories are as expressive as Turing machines, we might as well avoid solving the halting problem and just use a Python function to define equality of arrows.
Implementing a quotient category is nothing but implementing a free category and an equality function that respects the axioms of a congruence.
One straightforward way is to define equality of arrows $f, g$ in a free category $F(\Sigma)$ to be the equality of their interpretation $\eval{f} = \eval{g}$ under a functor $\eval{-} : F(\Sigma) \to D$ into a concrete category $D$ where equality is decidable.
Another method is to define a \emph{normal form} method which takes an arrow and returns the representative of its equivalence class, then identity of arrow is identity of their normal forms.

\begin{example} \label{example:1-qubit-presentation}
Take the signature $\Sigma$ with one object $\Sigma_0 = \{ 1 \}$ and four arrows $\Sigma_1  = \{ Z, X, H, -1 \}$ for the Z, X and Hadamard gate and the global $(-1)$ phase.
Let's define the relation $R$ induced by:
\begin{itemize}
    \item $H \fcmp X = Z \fcmp H$ and $Z \fcmp X = (-1) \fcmp X \fcmp Z$,
    \item $f \fcmp f = \id(1)$ and $f \fcmp (-1) = (-1) \fcmp f$ for all $f \in \Sigma_1$.
\end{itemize}
The quotient $F(\Sigma) / R$ is a subcategory of the category $\mathbf{Circ}$ of quantum circuits, it is isomorphic to the quotient induced by the interpretation $\eval{-} : F(\Sigma) \to \mathbf{Mat}_\C$.
Suppose we're given a functor $\mathtt{cost} : F(\Sigma) \to \R^+$, we can define the normal form of a circuit $f$ to be the representative of its equivalence class with the lowest cost.
Thus, deciding equality of circuits reduces to solving circuit optimisation perfectly.
\end{example}

\subsection{Daggers, sums and bubbles}\label{subsection:dagger-sums-bubbles}

We conclude this section by discussing three extra pieces of implementation beyond the basics of category theory: daggers, sums and bubbles.
A \emph{dagger} for a category $C$ can be thought of as a kind of time-reversal for arrows.
More precisely, a dagger is a contravariant endofunctor $\dagger : C \to C^{op}$, i.e. from the category to its opposite with $\dom$ and $\cod$ swapped, which is the identity on objects and an involution, i.e. $(\dagger) \fcmp (\dagger) = \id_\C$.
A $\dagger$-functor is a functor between $\dagger$-categories that commutes with the dagger, thus we get a category $\dagger-\mathbf{Cat}$.
The free $\dagger$-category is constructed as follows.
Define the functor $\dagger : \mathbf{Sig} \to \mathbf{Sig}$ which sends a signature $\Sigma$ to $\dagger(\Sigma)$ with
$\dagger(\Sigma)_0 = \Sigma_0$ and $\dagger(\Sigma)_1 = \{ -1, 1 \} \times \Sigma_1$ with $\dom(b, f) = \cod(f)$ if $b = -1$ else $\dom(f)$ and symmetrically for $\cod$.
Then the free dagger category is the quotient category $F(\dagger(\Sigma)) / R$ for the congruence generated by $(1, f) \fcmp (-1, f) \to_R \id(\dom(f))$ and $(-1, f) \fcmp (1, f) \to_R \id(f.cod)$.

\begin{example}
The conjugate transpose defines a dagger on the category $\mathbf{Mat}_\C$, the adjoint defines a dagger on the category $\mathbf{Circ}$ and the evaluation $\mathbf{Circ} \to \mathbf{Mat}_\C$ is a $\dagger$-functor.
By extension, there is a dagger structure on $\mathbf{Mat}_\S$ for each rig anti-homomorphism $\dagger : \S \to \S$, i.e. a homomorphism for the commutative addition and an anti-homomorphism for the (non-necessarily commutative) product $\dagger(a \times b) = \dagger(b) \times \dagger(a)$.
Thus, when $\S$ is a commutative rig such as the Boolean, $\mathbf{Mat}_\S$ is automatically a $\dagger$-category with the transpose as dagger and the identity as conjugation.
\end{example}

DisCoPy implements free $\dagger$-categories by adding an attribute \py{is_dagger: bool} to boxes and a method \py{Arrow.dagger}, shortened to the postfix operator \py{[::-1]}, which reverses the order of boxes and negates \py{is_dagger} elementwise.
The normal form is computable in linear time but it has not been implemented yet.
In order to implement the syntactic sugar \py{f[::-1] == f.dagger()}, we need to override the \py{__getitem__} method.
In general, DisCoPy defines indexing \py{f[i]} and slicing \py{f[start:stop:step]} so that \py{f[key].inside == f.inside[key]} for any \py{key: int} and any \py{key: slice} with \py{key.step in (-1, 1, None)}.

\begin{python}
{\normalfont Implementation of free $\dagger$-categories and $\dagger$-functors.}
\begin{minted}{python}
class Arrow(cat.Arrow):
    def dagger(self):
        inside = tuple(box.dagger() for box in self.inside[::-1])
        return self.cast(Arrow(inside, self.cod, self.dom))

    def __getitem__(self, key: int | slice) -> Arrow:
        if isinstance(key, slice):
            if key.step == -1:
                inside = tuple(box.dagger() for box in self.inside[key])
                return self.cast(Arrow(inside, self.cod, self.dom))
            if (key.step or 1) != 1:
                raise IndexError
            inside = self.inside[key]
            if not inside:
                if (key.start or 0) >= len(self):
                    return self.id(self.cod)
                if (key.start or 0) <= -len(self):
                    return self.id(self.dom)
                return self.id(self.inside[key.start or 0].dom)
            return self.cast(Arrow(inside, inside[0].dom, inside[-1].cod))
        return self.inside[key]

class Box(cat.Box, Arrow):
    cast = Arrow.cast

    def __init__(self, name: str, dom: Ob, cod: Ob, is_dagger=False):
        self.is_dagger = is_dagger; cat.Box.__init__(self, name, dom, cod)

    def dagger(self):
        return type(self)(self.name, self.cod, self.dom, not self.is_dagger)

class Functor(cat.Functor):
    dom = cod = Category(Ob, Arrow)

    def __call__(self, other):
        if isinstance(other, Box) and other.is_dagger:
            return self(other.dagger()).dagger()
        return super().__call__(other)
\end{minted}
\end{python}

\begin{example}
We can show the dagger is indeed a contravariant endofunctor.

\begin{minted}{python}
x, y, z = map(Ob, "xyz")
f, g = Box('f', x, y), Box('g', y, z)

assert Arrow.id(x)[::-1] == Arrow.id(x)
assert (f >> g)[::-1] == g[::-1] >> f[::-1]
\end{minted}
\end{example}

\begin{python}
{\normalfont Implementation of $\mathbf{Mat}_\S$ as a $\dagger$-category.}

\begin{minted}{python}
def transpose(self: Matrix) -> Matrix:
    inside = [[self[j][i] for j in range(self.dom)] for i in range(self.cod)]
    return type(self)(inside, self.cod, self.dom)

def map(self: Matrix, func: Callable[[Number], Number]) -> Matrix:
    inside = [list(map(func, row)) for row in self.inside]
    return type(self)(inside, self.dom, self.cod)

Matrix.transpose, Matrix.map = transpose, map
Matrix.conjugate = lambda self: self.map(lambda x: x.conjugate())
Matrix.dagger = lambda self: self.conjugate().transpose()
\end{minted}
\end{python}

\begin{example}
We can implement a simulator for $1$-qubit circuits as a $\dagger$-functor.

\begin{minted}{python}
Circuit.eval = lambda self: Functor(
    ob={Ob('1'): 2},
    ar={X: [[0, 1], [1, 0]],
        Y: [[0, -1j], [1j, 0]],
        Z: [[1, 0], [0, -1]],
        H: [[x / sqrt(2) for x in row] for row in [[1, 1], [1, -1]]]},
    cod=Category(int, Matrix[complex]))(self)
\end{minted}

We can check that every circuit is \emph{unitary}, i.e. its dagger is also its inverse.

\begin{minted}{python}
for c in [X, Y, Z, H, X >> Y >> Z >> H]:
    assert (c >> c[::-1]).eval()\
        == Matrix[complex].id(2)\
        == (c[::-1] >> c).eval()
\end{minted}

We can check the equations given in the presentation of example~\ref{example:1-qubit-presentation}.

\begin{minted}{python}
assert (Z >> H).eval() == (H >> X).eval()
assert (Z >> X).eval() == (X >> Z).eval().map(lambda x: -x)
for gate in [H, Z, X]: assert (gate >> gate).eval() == Matrix[complex].id(2)
\end{minted}
\end{example}

A category $C$ is \emph{commutative-monoid-enriched} (CM-enriched) when it comes equipped with a commutative monoid $(+, 0)$ on each homset $C(x, y)$ such that $f \fcmp 0 = 0 = 0 \fcmp f$ and $(f + f') \fcmp (g + g') = f \fcmp g + f \fcmp g' + f' \fcmp g + f' \fcmp g'$ for all arrows $f, g, f', g'$.
A functor $F : C \to D$ between CM-enriched categories is CM-enriched when $F(0) = 0$ and $F(f + g) = F(f) + F(g)$.
For example, the category $\mathbf{Mat}_\S$ is CM-enriched with elementwise addition of matrices.
A commutative-monoid-enriched category with one object is precisely a rig.
Given a signature $\Sigma$, we construct the free CM-enriched category $F^+(\Sigma)$ by taking the free commutative monoid over each homset of $F(\Sigma)$, i.e. arrows $f : x \to y$ in $F^+(\Sigma)$ are \emph{bags} (also called \emph{multisets}) of arrows $f_i : x \to y$ in $F(\Sigma)$.

In DisCoPy, free CM-enriched categories are implemented by \py{Sum}, a subclass of \py{Box} with an attribute \py{terms: list[Arrow]} as well as its own \py{cast} method, which turns an arrow into the sum of just itself.
It is attached to the arrow with \py{Arrow.sum = Sum}, we also override \py{Arrow.then} so that we have \py{f >> (g + h)} \py{== Sum.cast(f) >> (g + h)} for any arrow \py{f}, i.e. .
the composition of an arrow with a sum is the sum of the compositions with its terms.
We define equality so that \py{f == Sum.cast(f)}, equality of bags of terms is implemented as equality of lists sorted by an arbitrary ordering.
DisCoPy functors are commutative-monoid-enriched, i.e. a formal sum of arrows can be interpreted as a concrete sum of matrices.

\begin{python}
{\normalfont Implementation of free sum-enriched categories and functors.}

\begin{minted}{python}
class Arrow(cat.Arrow):
    def __eq__(self, other):
        return other.terms == (self, ) if isinstance(other, Sum)\
            else super().__eq__(other)

    def then(self, other: Arrow) -> Arrow:
        return self.sum.cast(self).then(other) if isinstance(other, Sum)\
            else super().then(other)

    @classmethod
    def zero(cls, dom: Ob, cod: Ob) -> Arrow: return cls.sum((), dom, cod)

    __add__ = lambda self, other: self.sum.cast(self) + other
    __lt__ = lambda self, other: hash(self) < hash(other)  # An arbitrary order.

class Sum(cat.Box, Arrow):
    def __init__(self, terms: tuple[Arrow, ...], dom: Ob, cod: Ob):
        assert all(f.dom == dom and f.cod == cod for f in terms)
        self.terms, name = terms, "Sum({}, {}, [{}])".format(
            dom, cod, ", ".join(map(str, terms)))
        cat.Box.__init__(self, name, dom, cod)

    def __eq__(self, other):
        if isinstance(other, Sum):
            return (self.dom, self.cod, sorted(self.terms))\
                == (other.dom, other.cod, sorted(other.terms))
        return self.terms == (other, )

    def __add__(self, other):
        if not isinstance(other, Sum): return self + self.cast(other)
        return Sum(self.terms + other.terms, self.dom, self.cod)

    @classmethod
    def cast(cls, old: cat.Arrow) -> Sum:
        return old if isinstance(old, cls) else cls((old, ), old.dom, old.cod)

    @inductive
    def then(self, other):
        terms = tuple(f.then(g) for f in self.terms for g in self.cast(other).terms)
        return type(self)(terms, self.dom, other.cod)

    def dagger(self):
        return type(self)(tuple(f.dagger() for f in self.terms), self.cod, self.dom)

    id = lambda x: Sum.cast(Arrow.id(x))

Arrow.sum = Sum

class Functor(cat.Functor):
    dom = cod = Category(Ob, Arrow)

    def __call__(self, other):
        if isinstance(other, Sum):
            unit = self.cod.ar.zero(self(other.dom), self(other.cod))
            return sum([self(f) for f in other.terms], unit)
        return super().__call__(other)
\end{minted}
\end{python}

\begin{python}
{\normalfont Implementation of $\mathbf{Mat}_\S$ as a CM-enriched category.}

\begin{minted}{python}
class Matrix:
    ...
    def __add__(self, other: Matrix) -> Matrix:
        inside = [[x + y for x, y in zip(u, v)]
                  for u, v in zip(self.inside, other.inside)]
        return type(self)(inside, self.dom, self.cod)

    def __radd__(self, other: Number) -> Matrix:
        # We can add a scalar matrix and a number.
        assert self.dom == self.cod == 1
        return self.inside[0][0] + other

    @classmethod
    def zero(cls, dom: int, cod: int) -> Matrix:
        return cls([[0 for _ in range(cod)] for _ in range(dom)], dom, cod)
\end{minted}
\end{python}

We define a \emph{bubble} on a category $C$ as a pair of unary operators $\beta_\dom, \beta_\cod : C_0 \to C_0$ on objects and a unary operator between homsets $\beta : C(x, y) \to C(\beta_\dom(x), \beta_\cod(y))$ for pairs of objects $x, y \in C_0$.
Given a signature $\Sigma$ and a pair $\beta_\dom, \beta_\cod : C_0 \to C_0$, we construct the free category with bubbles $F(\Sigma^\beta)$ by induction on the maximum level $n$ of bubble nesting: take the signature $\Sigma^\beta = \bigcup_{n \in \N} \Sigma^\beta_n$
for $\Sigma^\beta_0 = \Sigma$ and $\Sigma^\beta_{n + 1} = \Sigma + \{ \beta(f) \ \vert \ f \in F(\Sigma^\beta_n) \}$.
That is, box in $\Sigma^\beta$ is a box in $\Sigma^\beta_{n}$ for some $n \in \N$.
A box in $\Sigma^\beta_{n}$ is either a box in $\Sigma$ or an arrow $f : x \to y$ in $F(\Sigma^\beta_{n - 1})$ that we have put inside a bubble $\beta(f) : \beta_\dom(x) \to \beta_\cod(y)$.

\begin{remark}
Bubbles are called \emph{unary operator on homsets} (uooh) in \cite{HaydonSobocinski20} where they are used to encode the sep lines of Peirce's existential graphs, see example~\ref{example:monoidal-formula}.
As we will discuss in section~\ref{section:diag-diff}, they first appeared in Penrose and Rindler \cite{PenroseRindler84} as an informal notation for the covariant derivative.
\end{remark}

\begin{example}
The functorial boxes of Melli\`es \cite{Mellies06} can be thought of as
well-behaved bubbles, i.e. such that the composition of bubbles is the
bubble of the composition. Indeed, a functor $F : \mathbf{C} \to \mathbf{D}$
between two categories $\mathbf{C}$ and $\mathbf{D}$ defines a bubble on the
subcategory of their coproduct $\mathbf{C} \coprod \mathbf{D}$ spanned by $\mathbf{C}$.
\end{example}

\begin{example}\label{example:endofunctor-bubbles}
Any endofunctor $\beta : C \to C$ also defines a bubble, thus we can define a bubble-preserving functor $F(U(C)^\beta) \to C$ which interprets bubbles as functor application.
Any functor between two categories $C$ and $D$ defines a bubble on their disjoint union $C + D$ (i.e. with objects $C_0 + D_0$ and arrows $C_1 + D_1$).
These functor bubbles have also been called \emph{functorial boxes}~\cite{Mellies06}.
\end{example}

\begin{example}
An \emph{exponential rig} is a one-object CM-enriched category $\S$ with a bubble $\exp : \S \to \S$ which is a homomorphism from sum to product, i.e. $\exp(a + b) = \exp(a) \exp(b)$ and $\exp(0) = 1$.
Any rig $\S$ is an exponetial rig by taking $\exp(a) = 1$ for all $a \in \S$.
Non-trivial examples include complex numbers as well as the Boolean rig with negation.
\end{example}

\begin{example}
Matrix exponential is a bubble on the subcategory of square matrices, with the property that $\exp(f + g) = \exp(f) \fcmp \exp(g)$ whenever $f \fcmp g = g \fcmp f$.
Also, any function $\S \to \S$ yields a bubble on $\mathbf{Mat}_\S$ given by element-wise application.
For example, we can define a bubble on the category $\mathbf{Mat}_\B$ of Boolean matrices which sends each matrix $f$ to its entrywise negation $\bar{f}$.
\end{example}

DisCoPy implements free bubbles with \py{Bubble}, a subclass of \py{Box} which we attach to the arrow class with \py{Arrow.bubble = Bubble}.
\py{Bubble} has attributes \py{diagram: Arrow} as well as optional arguments \py{dom: Ob}, \py{cod: Ob} and \py{method: str}.
DisCoPy functors interpret bubbles as the application of \py{method} in the codomain category, by default we send bubbles to bubbles.
The resulting syntax is strictly more expressive than that of free categories alone.
For example, element-wise negation cannot be expressed as a composition: there is no matrix $N : x \to x$ in $\mathbf{Mat}_\B$ such that $N \fcmp f = \bar{f}$ for all $f : x \to y$.
This is also the case for the element-wise application of any non-linear function such as the rectified linear units (ReLU) used in machine learning.
As we will discuss in section~\ref{section:diag-diff}, differentiation of parameterised matrices cannot be expressed as a composition either, but it is a unary operator between homsets, i.e. a bubble.

\begin{python}
{\normalfont Implementation of free categories with bubbles and their functors.}

\begin{minted}{python}
class Bubble(Box):
    method = "bubble"

    def __init__(self, diagram: Arrow, dom=None, cod=None, **params):
        self.diagram = diagram
        self.method = params.pop("method", type(self).method)
        name = "Bubble({}, {}, {})".format(diagram, dom, cod)
        dom, cod = dom or diagram.dom, cod or diagram.cod
        super().__init__(name, dom, cod, **params)

    def dagger(self):
        return type(self)(
            self.diagram, self.dom, self.cod, is_dagger=not self.is_dagger)

    @property
    def is_id_on_objects(self):
        return self.dom == self.diagram.dom and self.cod == self.diagram.cod

Arrow.bubble = lambda self, **kwargs: Bubble(self, **kwargs)

class Functor(cat.Functor):
    def __call__(self, other):
        if isinstance(other, Bubble):
            method = getattr(self.cod.ar, other.method)
            if other.is_id_on_objects:
                return method(self(other.diagram))
            return method(self(other.diagram), self(other.dom), self(other.cod))
        return super().__call__(other)
\end{minted}
\end{python}

\begin{example}\label{example:neural-net}
We can encode the architecture of a neural network as an arrow with sums and bubbles, encoding vector addition and non-linear activation function respectively.
The evaluation of the neural network on some input vector for some parameters is given by the application of a sum-and-bubble-preserving functor into $\mathbf{Mat}_\R$.
The hyper-parameters (i.e. the number of neurons at each layer) are given by the image of the functor on objects.

\begin{minted}{python}
x, y, z = map(Ob, "xyz")

Matrix.ReLU = lambda self: self.map(lambda x: max(x, 0))

class Network(Arrow):
    pass

class ReLU(Bubble, Network):
    method = "ReLU"
    cast = Network.cast

class Box(cat.Box, Network):
    cast = Network.cast

vector, bias = Box('vector', x, y), Box('bias', x, x)
ones, weights = Box('ones', x, y), Box('weights', y, z)
network = ReLU((vector + (bias >> ones)) >> weights)

F = Functor(
    ob={x: 1, y: 4, z: 2},
    ar={vector: [[1.2, -2.3, 3.4, -4.5]],
        bias: [[-3.14]], ones: [[1, 1, 1, 1]],
        weights: [[5.6, -6.7], [7.8, -8.9],
                  [9.0, -0.1], [2.3, -3.4]]},
    dom=Category(Ob, Network),
    cod=Category(int, Matrix[float]))

assert F(network) == F(vector).map(lambda x: x + F(bias))\
                              .then(F(weights)).map(lambda x: max(0, x))
\end{minted}
\end{example}

\begin{example}\label{example:propositional-logic}
We can implement propositional logic with boxes as propositions, composition as conjunction, sum as disjunction and bubble as negation.
The evaluation of a formula in a model corresponds to the application of a sum-and-bubble-preserving functor into $\mathbf{Mat}_\B(1, 1)$.

\begin{minted}{python}
Matrix._not = lambda self: self.map(lambda x: not x)

class Formula(Arrow): pass

class Not(Bubble, Formula):
    method = "_not"

class Proposition(Box, Formula):
    def __init__(self, name):
        Box.__init__(self, name, Ob('x'), Ob('x'))

Not.cast = Proposition.cast = Formula.cast

def model(data: dict[Proposition, bool]):
    return Functor(ob={Ob('x'): 1}, ar={p: [[data[p]]] for p in data},
                   dom=Category(Ob, Formula), cod=Category(int, Matrix[bool]))

p, q = map(Proposition, "pq")
p_implies_q = Not(Not(q) >> p)
not_p_or_q = Not(p) + q

for a, b in itertools.product([0, 1], [0, 1]):
   F = model({p: a, q: b})
   assert F(p_implies_q) == (not (not F(q) and F(p)))\
       == F(not_p_or_q) == (not F(p) or F(q))
\end{minted}
\end{example}



\section{Diagrams in Python} \label{section:monoidal}

In the previous section, we introduced the idea of arrows in free categories as abstract data pipelines and functor application as their evaluation in concrete categories such as $\mathbf{Pyth}$, $\mathbf{Mat}$ or $\mathbf{Circ}$ where the computation happens.
For now, our pipelines are rather basic because they are linear: we cannot express functions of multiple arguments, nor tensors of order higher than 2, nor circuits with multiple qubits in any explicit way.

In this section, we move from the one-dimensional syntax of arrows in free categories to the two-dimensional syntax of \emph{string diagrams}, the arrows of free \emph{monoidal categories}.
The data for a (strict\footnote
{We will assume that our monoidal categories are strict, i.e. the axioms for monoids are equalities rather than natural isomorphisms subject to coherence conditions.}) monoidal category $C$ is that of a category together with:
an object $1 \in C_0$ called the \emph{unit} and a pair of overloaded binary operations called the \emph{tensor} on objects $\otimes : C_0 \times C_0 \to C_0$ and on arrows $\otimes : C_1 \times C_1 \to C_1$, translated to \py{@} in Python.
The axioms for monoidal categories are the following:
\begin{itemize}
\item $(C_0, \otimes, 1)$ and $(C_1, \otimes, \id(1))$ are monoids,
\item the tensor defines a functor $\otimes : C \times C \to C$, i.e. the following \emph{interchange law} $(f \fcmp f') \otimes (g \fcmp g') = (f \otimes g) \fcmp (f' \otimes g')$ holds for all arrows $f, f', g, g' \in C_1$.
\end{itemize}
We will use the following terminology: an object $x$ is called a \emph{system}, an arrow $f : 1 \to x$ from the unit is called a \emph{state} of the system $x$, an arrow $f : x \to 1$ into the unit is called an \emph{effect} of $x$ and an arrow $a : 1 \to 1$ from the unit to itself is called a \emph{scalar}.

A functor $F : C \to D$ between monoidal categories $C$ and $D$ is (strict\footnote
{We will assume that our monoidal functors are strict, i.e. $F(x \otimes y) = F(x) \otimes F(y)$ and $F(1) = 1$ are equalities rather than natural transformations.}) monoidal whenever it is also a monoid homomorphism on objects and arrows.
Thus, monoidal categories themselves form a category $\mathbf{MonCat}$ with monoidal functors as arrows.
A transformation $\alpha : F \to G$ between two monoidal functors $F, G : C \to D$ is monoidal itself when $\alpha(x \otimes y) = \alpha(x) \otimes \alpha(y)$ for all objects $x, y \in C$.

\begin{example}
Every monoid $M$ can also be seen as a discrete monoidal category, i.e. with only identity arrows.
\end{example}

\begin{example}
A monoidal category with one object is a commutative monoid.
Indeed in any monoidal category, the interchange law implies that scalars form a commutative monoid, by the following Eckmann-Hilton argument:
\begin{align*}
a \fcmp b
&\quad = \quad 1 \otimes a \s \fcmp \s b \otimes 1
\quad = \quad (1 \fcmp b) \s \otimes \s (a \fcmp 1)
\quad = \quad b \otimes a
\\
&\quad = \quad (b \fcmp 1) \s \otimes \s (1 \fcmp a)
\quad = \quad b \otimes 1 \s \fcmp \s 1 \otimes a
\quad = \quad b \fcmp a
\end{align*}
\end{example}

\begin{example}
A monoidal category with at most one arrow between any two objects is called a \emph{preordered monoid}.
The functoriality axiom implies that the preorder is in fact a pre-congruence, i.e. $a \leq b$ and $c \leq d$ implies $a \times c \leq b \times d$.
Given the presentation of a monoid $(X, R)$ with $R \sub X^\star \times X^\star$, we can construct a preordered monoid with $(\leq_R) = \bigcup_{n \in \N} R^n \sub X^\star \times X^\star$ the (non-symmetric) reflexive transitive closure of $R$.
Thus, the inequality problem (i.e. given two lists $x, y \in X^\star$ and a presentation $(X, R)$, decide whether $x \leq_R y$) is a generalisation of the word problem for monoids.
\end{example}

\begin{example}
The category $\mathbf{FinSet}$ is monoidal with the singleton $1$ as unit and Cartesian product as tensor.
Again, this is not a strict monoidal category but it is equivalent to one: take the category with natural numbers $m, n \in \N$ as objects and functions $[m] \to [n]$ as arrows for $[n] = \{ 0, 1, \dots, n - 1 \}$.
The states can be identified with elements and the only effect is discarding, i.e. the constant function into the singleton.
$\mathbf{FinSet}$ is also monoidal with the empty set $0$ as unit and disjoint union as tensor.
\end{example}

\begin{example}\label{example:endofunctors are monoidal}
For any category $C$, there is a monoidal category $C^C$ where the objects are enfodunctors with composition as tensor and the arrows are natural transformations $\alpha : F \to F'$, $\beta : G \to G'$ with vertical composition $(\alpha \otimes \beta)(x) : G(F(x)) \to G'(F'(x))$ as tensor.
\end{example}

\begin{example}
The category $\mathbf{Pyth}$ is monoidal with unit \py{()} and \py{tuple[t1, t2]} as the tensor of types \py{t1} and \py{t2}.
Given two functions \py{f} and \py{g}, we can define their tensor \py{f @ g = lambda x, y: f(x), g(y)}.
\end{example}

\begin{python}
{\normalfont Implementation of $\mathbf{Pyth}$ as a (non-strict pre)monoidal category with tuple as tensor.}

\begin{minted}{python}
class Function:
    ...
    def tensor(self, other: Function) -> Function:
        dom, cod = tuple[self.dom, other.dom], tuple[self.cod, other.cod]
        return Function(lambda x, y: (self(x), other(y)), dom, cod)
\end{minted}
\end{python}

\begin{remark}
As discussed in remark~\ref{remark:category-Pyth}, it's not clear whether $\mathbf{Pyth}$ is a category.
It's even less clear whether it can be called a monoidal category, for two reasons.
First, $\mathbf{Pyth}$ is not strict monoidal: \py{(x, (y, z)) != ((x, y), z)} and \py{((), x) != x != (x, ())} are not strictly equal but only naturally isomorphic.
These natural isomorphisms are subject to coherence conditions which make sure that all the ways to rebracket \py{(((x, y), z), w)} into \py{(x, (y, (z, w)))} are the same.
In practice, this bureaucracy of parenthesis does not pose any problem: MacLane's coherence theorem~\cite[VII]{MacLane71} makes sure that every monoidal category is monoidally equivalent\footnote
{An \emph{equivalence} of categories is an adjunction where the unit and counit are in fact natural isomorphisms.
It is a \emph{monoidal equivalence} when they are also monoidal transformations.} to a strict one.

Second, the interchange law only holds for the subcategory of $\mathbf{Pyth}$ with \emph{pure functions} as arrows.
Indeed, if the functions \py{f} and \py{g} are impure (e.g. they call \py{random} or \py{print}) then their tensor \py{f @ g} will depend on the order in which they are evaluated, i.e. \py{f @ id >> id @ g != id @ g >> f @ id}.
As we will discuss in section~\ref{section:premonoidal}, $\mathbf{Pyth}$ is in fact a \emph{premonoidal category}.
The states, i.e. the functions \py{f : () -> t}, can be identified with their value \py{f(): t}.
There is only one pure effect, i.e. a unique pure function \py{f : t -> ()} called \emph{discarding}, and thus a unique pure scalar.
If we take all impure functions into account, the scalars form a non-commutative monoid of side-effects.
\end{remark}

\begin{example}
We can also make $\mathbf{Pyth}$ monoidal with the \emph{tagged union} as tensor on objects and \py{typing.NoReturn} as unit.
Given two types \py{t0, t1}, their tagged union \py{t0 + t1} is the union of the types \py{tuple[0, t0]} and \py{tuple[1, t1]}\footnote
{What we really mean is \py{tuple[Literal[0], t0] | tuple[Literal[1], t1]}.}, i.e. a term \py{(b, x): t0 + t1} is a pair of a Boolean \py{b: bool} and a term \py{x: t0} if \py{b} else a term \py{x: t1}.
Given two functions \py{f, g} we can define their tensor as $\py{lambda b, x:} \py{(b, f(x) if b else g(x))}$.
\end{example}

\begin{example}
The category $\mathbf{Mat}_\S$ is monoidal with addition of natural numbers as tensor on objects and the \emph{direct sum} $f \oplus g = \big(\begin{smallmatrix}f & 0\\ 0 & g\end{smallmatrix}\big)$ as tensor on arrows.
When the rig $\S$ is commutative, $\mathbf{Mat}_\S$ is also monoidal with multiplication of natural numbers as tensor on objects and the \emph{Kronecker product} as tensor on arrows.
The inclusion functor $\mathbf{FinSet} \to \mathbf{Mat}_\B$ is monoidal in two ways: it sends disjoint unions to direct sums and Cartesian products to Kronecker products.
\end{example}

\begin{python}
{\normalfont Implementation of $\mathbf{Mat}_\S$ as a monoidal category with \py{direct_sum} and \py{Kronecker} as tensor.}

\begin{minted}{python}
class Matrix:
    ...
    def direct_sum(self, other: Matrix) -> Matrix:
        dom, cod = self.dom + other.dom, self.cod + other.cod
        left, right = (len(m.inside[0]) if m.inside else 0 for m in (self, other))
        inside = [row + right * [0] if i < len(self.inside) else left * [0] + row
                  for i, row in enumerate(self.inside + other.inside)]
        return type(self)(inside, dom, cod)

    def Kronecker(self, other: Matrix) -> Matrix:
        dom, cod = self.dom * other.dom, self.cod * other.cod
        inside = [[self.inside[i_dom][i_cod] * other.inside[j_dom][j_cod]
            for i_cod in range(self.cod) for j_cod in range(other.cod)]
            for i_dom in range(self.dom) for j_dom in range(other.dom)]
        return type(self)(inside, dom, cod)
\end{minted}
\end{python}

\begin{example}
The category $\mathbf{Circ}$ is monoidal with addition of natural numbers as tensor on objects and \emph{parallel composition} of circuits as tensor on arrows.
The evaluation functor $\mathtt{eval} : \mathbf{Circ} \to \mathbf{Mat}_\C$ is monoidal: it sends the parallel composition of circuits to the Kronecker product of their unitary matrices.
\end{example}

\subsection{Foo monoidal categories}

Again, implementing a monoidal category in Python means nothing but defining a pair of classes for objects and arrows with a \py{tensor} method that satisfies the axioms.
Less trivially, we want to implement the arrows of \emph{free monoidal categories} which can then be interpreted in arbitrary monoidal categories via the application of monoidal functors: this is the content of the \py{discopy.monoidal} module.
As in the case of free categories, free monoidal categories will be the image of a functor $F : \mathbf{MonSig} \to \mathbf{MonCat}$, the left adjoint to the forgetful functor $U : \mathbf{MonCat} \to \mathbf{MonSig}$ from monoidal categories to \emph{monoidal signatures}.
A monoidal signature $\Sigma$ is a monoidal category without identity, composition or tensor: a pair of sets $\Sigma_0, \Sigma_1$ and a pair of functions $\dom, \cod : \Sigma_1 \to \Sigma_0^\star$ from boxes to lists of objects.
A morphism of monoidal signatures $f : \Sigma \to \Sigma'$ is a pair of functions $f : \Sigma_0 \to \Sigma'_0$ and $f : \Sigma_1 \to \Sigma'_1$ with $f \fcmp \dom = \dom \fcmp f^\star$ and $f \fcmp \cod = \cod \fcmp f^\star$.
Thus, we have defined the category $\mathbf{MonSig}$ of monoidal signatures and their morphisms.

In order to define the forgetful functor $U : \mathbf{MonCat} \to \mathbf{MonSig}$, we will make our lives easier and add an extra assumption: that monoidal categories are \emph{free on objects} (foo), i.e. that the monoid of objects $(C_0, \otimes, 1)$ is a free monoid $C_0 = X^\star$ generated by some set of objects $X$.
This means we can take the data for a monoidal category $C$ to be the following:
\begin{itemize}
\item a class $C_0$ of \emph{generating objects} and a class $C_1$ of arrows,
\item domain and codomain functions $\dom, \cod : C_1 \to C_0^\star$,
\item a function $\id : C_0^\star \to C_1$ and a (partial) operation $\then : C_1 \times C_1 \to C_1$,
\item an operation on arrows $\tensor : C_1 \times C_1 \to C_1$ such that $\dom (f \otimes g) = \dom(f) \dom(g)$ and $\cod (f \otimes g) = \cod(f) \cod(g)$.
\end{itemize}
The axioms for the objects to be a monoid now come for free, we only need to require that tensor on arrows is a monoid with the interchange law.
With this definition of (free-on-objects) monoidal category, we can define the forgetful functor $U : \mathbf{MonCat} \to \mathbf{MonSig}$: it forgets the identity, composition and tensor on arrows, but not the tensor on objects which is free.

\begin{remark}\label{remark:coloured-PRO}
In the cases of monoidal categories where the objects are the natural numbers with addition as tensor, such as $\mathbf{FinSet}$ with disjoint union, $\mathbf{Mat}_\S$ with direct sum or $\mathbf{Circ}$, the monoid of objects is already free: $(\N, +, 0)$ is the free monoid generated by the singleton set.
These monoidal categories are also called \emph{PROs} (for PROduct categories).
When the objects are generated by a more-than-one-element set they are also called \emph{coloured PROs}, which is the standard name for foo-monoidal categories.
For example, we can take all the Python types as colours and define $\mathbf{Pyth}$ as a foo-monoidal category with \py{tuple[type, ...]} as objects.
\end{remark}

\begin{remark}
Given any non-foo monoidal category $C$, we can construct an equivalent foo-monoidal category $C'$ with objects $C_0^\star$ the free monoid over the objects of $C$ and $C'(x, y) = C(\epsilon_{C_0^\star}(x), \epsilon_{C_0^\star}(y))$ for $\epsilon_{C_0} : C_0^\star \to C_0$ the counit of the list adjunction.
That is, an arrow $f : x \to y$ between two lists $x, y \in C_0^\star$ in $C'$ is an arrow $f : \epsilon_{C_0}(x) \to \epsilon_{C_0}(y)$ between their multiplication in $C$.
From left to right, the equivalence $C \simeq C'$ sends every object $x \in C_0$ to its singleton list $x \in C_0^\star$ and every arrow to itself, from right to left it sends every list to its multiplication and every arrow to itself.
Note that the functor $F : C \to C'$ witnessing the equivalence is not strict monoidal, indeed $F(x \otimes y)$ is a singleton list whereas the list $F(x) \otimes F(y)$ has two elements.
\end{remark}

\begin{example}
Take a monoid $M$ seen as a discrete monoidal category, we get an equivalent monoidal category $M'$ with objects the free monoid $M^\star$ and an isomorphism $x_1 \dots x_n \to y_1 \dots y_m$ whenever $x_1 \times \dots \times x_n = y_1 \times \dots \times y_m$ in $M$.
\end{example}

\begin{python}
{\normalfont Syntactic sugar for whiskering and tensor.}

\begin{minted}{python}
class Tensorable:
    @classmethod
    def whisker(cls, other):
        return other if isinstance(other, Tensorable) else cls.id(other)

    __matmul__ = lambda self, other: self.tensor(self.whisker(other))
    __rmatmul__ = lambda self, other: self.whisker(other).tensor(self)
\end{minted}
\end{python}

\begin{python}\label{listing:monoidal.Function}

{\normalfont $\mathbf{Pyth}$ as a foo-monoidal category with \py{tuple[type, ...]} as objects, \py{Function} as arrows and tuple as tensor.}

\begin{minted}{python}
tuplify = lambda stuff: stuff if isinstance(stuff, tuple) else (stuff, )
untuplify = lambda stuff: stuff[0] if len(stuff) == 1 else stuff

class Function(cat.Function, Tensorable):
    inside: Callable
    dom: tuple[type, ...]
    cod: tuple[type, ...]

    @inductive
    def tensor(self, other: Function) -> Function:
        def inside(*xs):
            left, right = xs[:len(self.dom)], xs[len(self.dom):]
            return untuplify(tuplify(self(*left)) + tuplify(other(*right)))
        return Function(inside, self.dom + other.dom, self.cod + other.cod)
\end{minted}
\end{python}

In the case of $\mathbf{Mat}_\S$ with Kronecker product as tensor, we can define an equivalent category $\mathbf{Tensor}_\S$ where the objects are lists of natural numbers and the arrows $f : x_1 \dots x_n \to y_1 \dots y_m$ are $(x_1 \times \dots \times x_n) \times (y_1 \times \dots \times y_m)$ matrices, i.e. tensors of order $m + n$.
Note that we could define yet another equivalent category where the objects are lists of prime numbers instead.

\begin{python}\label{listing:tensor}
{\normalfont Implementation of the foo-monoidal category $\mathbf{Tensor}_\S \simeq \mathbf{Mat}_\S$ with \py{tuple[int, ...]} as objects and \py{Tensor[dtype]} as arrows.}

\begin{minted}{python}
def product(x, unit=1): return unit if not x else product(x[1:], x[0] * unit)

class Tensor(Tensorable, Matrix):
    inside: list[list[Number]]
    dom: tuple[int, ...]
    cod: tuple[int, ...]

    def downgrade(self) -> Matrix:
        return Matrix[self.dtype](
            self.inside, product(self.dom), product(self.cod))

    @classmethod
    def id(cls, x: tuple[int, ...]) -> Tensor:
        return cls(Matrix.id(product(x)).inside, x, x)

    @inductive
    def then(self, other: Tensor) -> Tensor:
        inside = Matrix.then(*map(Tensor.downgrade, (self, other))).inside
        return type(self)(inside, self.dom, other.cod)

    @inductive
    def tensor(self, other: Tensor) -> Tensor:
        inside = Matrix.Kronecker(*map(Tensor.downgrade, (self, other))).inside
        return type(self)(inside, self.dom + other.dom, self.cod + other.cod)

    def __getitem__(self, key : int | tuple) -> Tensor:
        if isinstance(key, tuple):
            key = sum(
                key[i] * product(self.dom[i + 1:]) for i in range(len(key)))
        inside = Matrix.__getitem__(self.downgrade(), key).inside
        dom, cod = ((), self.cod) if product(self.dom) == 1 else ((), ())
        return type(self)(inside, dom, cod)

for attr in ("__bool__", "__int__", "__float__", "__complex__"):
    setattr(Tensor, attr, lambda self: getattr(self.downgrade(), attr)())
\end{minted}
\end{python}

\subsection{Free monoidal categories}\label{subsection:free-monoidal}

Now how do we go on constructing the left adjoint $F : \mathbf{MonSig} \to \mathbf{MonCat}$?
In the same way that lists in the free monoid $X^\star$ can be defined as equivalence classes of expressions built from generators in $X$, product and unit, we can construct the arrows of the free monoidal category $F(\Sigma)$ as equivalence classes of expressions built from boxes in $\Sigma_1$, identity, composition and tensor.
In order to find good representatives for these equivalence classes, we will need the following technical lemma.

\begin{definition}
Given a monoidal signature $\Sigma$, we define a signature of \emph{layers} $L(\Sigma)$ with $\Sigma_0^\star$ as objects and triples $(x, f, y) \in \Sigma_0^\star \times \Sigma_1 \times \Sigma_0^\star$ as boxes with $\dom(x, f, y) = x \dom(f) y$ and $\cod(x, f, y) = x \cod(f) y$.
Given a morphism of monoidal signatures $f : \Sigma \to \Sigma'$, we get a morphism between their signatures of layers $L(f) : L(\Sigma) \to L(\Sigma')$.
Thus, we have defined a functor $L : \mathbf{MonSig} \to \mathbf{Sig}$.
\end{definition}

\begin{lemma}
Fix a monoidal signature $\Sigma$.
Every well-typed expression built from boxes in $\Sigma_1$, identity of objects in $\Sigma_0^\star$, composition and tensor is equal to:
$$
\id(x) \s \text{for} \s x \in \Sigma_0^\star \quad \text{or} \quad
\id(x_1) \otimes f_1 \otimes \id(y_1)
\s \fcmp \s \dots \s \fcmp \s
\id(x_n) \otimes f_n \otimes \id(y_n)$$
for some list of layers $(x_1, f_1, y_1), \dots, (x_n, f_n, y_n) \in L(\Sigma)$.
\end{lemma}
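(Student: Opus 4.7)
The plan is to proceed by structural induction on the well-typed expression $e$ built from the grammar of boxes, identities, composition and tensor. I need to show that $e$ reduces either to $\id(x)$ for some $x \in \Sigma_0^\star$, or to a composite of the displayed layer form. Call an expression in one of these two shapes a \emph{layer word}, and note that the identity case can be regarded as the empty layer word with specified type $x$.

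For the base cases, $\id(x)$ is already a layer word by definition, and a generating box $f \in \Sigma_1$ equals $\id(1) \otimes f \otimes \id(1)$, which is a single-layer word with $x_1 = y_1 = 1$. For the inductive step on composition, assume $e = e' \fcmp e''$ where both subexpressions are, by induction, layer words. If either is an identity at the appropriate type, then $e$ is equal to the other by the unitality axiom and we are done; otherwise, concatenating the two lists of layers gives a layer word for $e$, whose typing is well-defined because $\cod(e') = \dom(e'')$.

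The main work is the tensor case $e = e' \otimes e''$. Here I would use the biased decomposition
\[
e' \otimes e'' \;=\; \bigl(e' \otimes \id(\dom(e''))\bigr) \;\fcmp\; \bigl(\id(\cod(e')) \otimes e''\bigr),
\]
which follows from the interchange law together with unitality of composition. It now suffices to show that each of the two factors is a layer word, for then the composition case above finishes the proof. By induction, write $e' = \id(x_1) \otimes f_1 \otimes \id(y_1) \fcmp \cdots \fcmp \id(x_n) \otimes f_n \otimes \id(y_n)$ (treating identities as the $n = 0$ case). Applying $(-) \otimes \id(\dom(e''))$ to each layer and using associativity of the monoid on objects, $e' \otimes \id(\dom(e''))$ becomes the layer word with layers $(x_i, f_i, y_i \dom(e''))$ for $i \leq n$. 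Symmetrically, $\id(\cod(e')) \otimes e''$ becomes the layer word whose layers left-whisker $\cod(e')$ onto each $x'_j$ of a layer decomposition of $e''$.

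The main obstacle is not deep but bureaucratic: one needs to verify that the whiskering operation on a layer word is well-defined, which amounts to checking the bookkeeping of concatenations in $\Sigma_0^\star$ against the $\dom$/$\cod$ definitions of layers, and to confirm that the interchange identity used to split $e' \otimes e''$ is indeed a consequence of the monoidal category axioms as spelled out earlier. The handling of the empty layer word (identities) in both the composition and tensor cases should be treated as a degenerate subcase rather than a separate argument, so that the induction goes through uniformly.
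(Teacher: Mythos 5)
Your proof is correct and follows essentially the same route as the paper: structural induction in which the only non-trivial case is the tensor, handled via the interchange-law decomposition $e' \otimes e'' = \bigl(e' \otimes \id(\dom(e''))\bigr) \fcmp \bigl(\id(\cod(e')) \otimes e''\bigr)$. You simply spell out the whiskering bookkeeping that the paper leaves implicit.
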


\begin{proof}
By induction on the structure of well-typed expressions.
The only non-trivial case is for the tensor $f \otimes g$ of two expressions $f : x \to y$ and $g : z \to w$, where we need to apply the interchange law to push the tensor through the composition $f \otimes g \s = \s (f \fcmp \id(y)) \otimes (\id(z) \fcmp g) \s = \s f \otimes \id(z) \ \fcmp \ \id(y) \otimes g$.
\end{proof}

We have all the ingredients to define the free monoidal category $F(\Sigma)$: it is a quotient $F(L(\Sigma)) / R$ of the free category generated by the signature of layers $L(\Sigma)$.
Its objects, which we call \emph{types}, are lists in the free monoid $\Sigma_0^\star$.
Its arrows, which we call \emph{diagrams}, are paths with lists in $\Sigma_0^\star$ as nodes and layers $(x, f : s \to t, y) \in L(\Sigma)$ as edges $x s y \to x t y$.
The equality of diagrams is the smallest congruence generated by the \emph{right interchanger}:
$$
(a x b, \ g, \ c) \s \fcmp \s (a, \ f, \ b w c)
\quad \to_R \quad
(a, \ f, \ b z c) \s \fcmp \s (a y b, \ g, \ c)
$$
for all types $a, b, c \in \Sigma_0^\star$ and boxes $f : x \to y$ and $g : z \to w$.
That is, we can interchange two consecutive layers whenever the output of the first box is not connected to the input of the second, i.e. there is an identity arrow $\id(b)$ separating them.
Note that for an effect $f : x \to 1$ followed by a state $g : 1 \to y$, we have two options: we can apply the right interchanger $(1, f, 1) \fcmp (1, g, 1) \to_R (1, g, x) \fcmp (y, f, 1)$ or its opposite $(1, f, 1) \fcmp (1, g, 1) \leftarrow_R (x, g, 1) \fcmp (1, f, y)$.
For the composition of two scalars $a : 1 \to 1$ and $b : 1 \to 1$, we can apply interchangers indefinitely $a \fcmp b \to_R b \fcmp a \to_R a \fcmp b$: this is the Eckmann-Hilton argument.
Delpeuch and Vicary~\cite{VicaryDelpeuch22} give a quadratic solution to the word problem for free monoidal categories, i.e. deciding when two diagrams are equal.
It is linear time in the connected case, and quadratic in the general case.
The right interchanger is confluent and for connected diagrams, i.e. when the Eckmann-Hilton argument does not apply.
It reaches a normal form in a cubic number of steps, the worst-case is given in example~\ref{example:spiral}.

We have defined the equality of diagrams, there remains to define the tensor operation.
First, we define the \emph{whiskering} $f \otimes z$ of a diagram $f$ by an object $z \in \Sigma_0^\star$ on the right: we tensor $z$ to the right-hand side of each layer $(x_i, f_i, y_i)$, i.e. $f \otimes z = (x_1, f_1, y_1 z) \fcmp \dots \fcmp (x_n, f_n, y_n z)$ and symmetrically for the whiskering $z \otimes f$ on the left.
Then, we can define the tensor $f \otimes g$ of two diagrams $f : x \to y$ and $g : z \to w$ in terms of whiskering $f \otimes g = f \otimes z \s \fcmp \s y \otimes g$.
Note that we could have chosen to define $f \otimes g = x \otimes g \fcmp f \otimes w$, the two definitions are equated by the interchanger.

Given a morphism of monoidal signatures $f : \Sigma \to \Sigma'$, we get a monoidal functor $F(f) : F(\Sigma) \to F(\Sigma')$ by relabeling: we have defined a functor $F : \mathbf{MonSig} \to \mathbf{MonCat}$.
We now have to show that it is indeed the left adjoint of $U : \mathbf{MonCat} \to \mathbf{MonSig}$.
This is very similar to the monoid case.
The unit $\eta_\Sigma : \Sigma \to U(F(\Sigma))$ sends objects to themselves and boxes $f : x \to y \in \Sigma$ to diagrams $(1, f, 1) \in L(\Sigma)$, i.e. the layer with empty lists on both sides of $f$.
The counit $\epsilon_C : F(U(C)) \to C$ is the functor which sends diagrams with boxes in $C$ to their evaluation, i.e. the formal composition and tensor of diagrams in $F(U(C))$ is sent to the concrete composition and tensor of arrows in $C$.
In the next section, we will show that this construction is in fact equivalent to the topological definition of diagrams as labeled graphs embedded in the plane.

\begin{python}\label{listing:Ty}
{\normalfont Outline of the class \py{monoidal.Ty}.}
\begin{minted}{python}
class Ty(Ob):
    def __init__(self, inside: Optional[tuple[Ob | str, ...]] = ()):
        self.inside = tuple(x if isinstance(x, Ob) else Ob(x) for x in inside)
        name = ' @ '.join(map(str, inside)) if inside\
            else "{}()".format(type(self).__name__)
        super().__init__(name)

    def tensor(self, *others: Ty) -> Ty:
        if all(isinstance(other, Ty) for other in others):
            inside = self.inside + sum([other.inside for other in others], ())
            return self.cast(inside)
        return NotImplemented  # This will allow whiskering on the left.

    def __getitem__(self, key):
        if isinstance(key, slice):
            return self.cast(self.inside[key])
        return self.cast((self.inside[key], ))

    __matmul__ = __add__ = tensor
    __pow__ = lambda self, n: self.cast(n * self.inside)
    __len__ = lambda self: len(self.inside)

    cast = classmethod(lambda cls, inside: cls(inside))
\end{minted}
\end{python}

The implementation of the class \py{Ty} for types (i.e. lists of objects) is straightforward, it is sketched in listing~\ref{listing:Ty}.
The only subtlety is in the use of the class method \py{cast} which allows the tensor of objects in a subclass to stay within the subclass, without having to redefine the \py{tensor} method.
We also use it to define indexing (which returns a type of length one), slicing and exponentiation by a natural number.

\begin{example}
We can define a \py{Qubits} subclass and be sure that the tensor of qubits is still an instance of \py{Qubits}, not merely \py{Ty}.

\begin{minted}[fontsize=\footnotesize]{python}
class Qubits(Ty):
    __str__ = lambda self: "qubit ** {}".format(len(self))

qubit = Qubits('1')

nstance(qubit ** 0, Qubits) and isinstance(qubit ** 42, Qubits)
\end{minted}
\end{example}

The implementation of \py{Layer} as a subclass of \py{cat.Box} is sketched in listing~\ref{listing:Layer}.
It has methods \py{__matmul__} and  \py{__rmatmul__} for whiskering on the right and left respectively, and \py{cast} for turning boxes into layers with units on both sides.
We use empty slices of the box's domain as units, so that \py{Layer} can be used with any subclass of \py{Ty} as attributes.
Instead of getting the units by calling \py{Ty()} directly, we use the domain of the box to slice empty types of the appropriate \py{Ty} subclass.
This will prove useful in sections~\ref{subsection:rigid}, \ref{subsection:closed} and~\ref{subsection:towards-higher} where we will subclass \py{Ty} to define the types for free rigid, closed and 2-categories respectively.

\begin{python}\label{listing:Layer}
{\normalfont Outline of the class \py{monoidal.Layer}.}

\begin{minted}[fontsize=\footnotesize]{python}
class Layer(cat.Box):
    def __init__(self, left: Ty, box: Box, right: Ty):
        self.left, self.box, self.right = left, box, right
        name = ("{} @ ".format(left) if left else "") + box.name\
            + (" @ {}".format(right) if right else "")
        dom, cod = left @ box.dom @ right, left @ box.cod @ right
        super().__init__(name, dom, cod)

    def __matmul__(self, other: Ty) -> Layer:
        return Layer(self.left, self.box, self.right @ other)

    def __rmatmul__(self, other: Ty) -> Layer:
        return Layer(other @ self.left, self.box, self.right)

    def __iter__(self): yield self.left; yield self.box; yield self.right

    @classmethod
    def cast(cls, old: Box) -> Layer:
        return cls(old.dom[:0], old, old.dom[len(old.dom):])
\end{minted}
\end{python}

Now we have all the ingredients to define \py{Diagram} as a subclass of \py{Arrow} with instances of \py{Layer} as boxes.
The \py{tensor} method is defined in terms of left and right whiskering of layers.
The \py{interchange} method takes an integer \py{i < len(self)} and returns the diagram with boxes \py{i} and \py{i + 1} interchanged, or raises an \py{AssertionError} if they are connected.
It also takes an optional argument \py{left: bool} which allows to choose between left and right in case we're interchanging an effect then a state.
The \py{normal_form} method implements applies \py{interchange} until it reaches a normal form, or raises \py{NotImplementedError} if the diagram is disconnected.
The \py{draw} method renders the diagram as an image, it implements the drawing algorithm discussed in the next section.

\begin{python}\label{listing:monoidal.Diagram}
{\normalfont Outline of the class \py{monoidal.Diagram}.}

\begin{minted}{python}
class Diagram(cat.Arrow, Tensorable):
    inside: tuple[Layer, ...]
    dom: Ty
    cod: Ty

    @inductive
    def tensor(self, other: Diagram) -> Diagram:
        layers = tuple(layer @ other.dom for layer in self.inside)\
            + tuple(self.cod @ layer for layer in other.inside)
        dom, cod = self.dom @ other.dom, self.cod @ other.cod
        return self.cast(Diagram(layers, dom, cod))

    def interchange(self, i: int, left=False) -> Diagram: ...
    def normal_form(self, left=False) -> Diagram: ...
    def draw(self, **params): ...
\end{minted}
\end{python}

Again, we have a class method \py{cast} which takes an old \py{cat.Arrow} and turns it into a new object of type \py{cls}, a given subclass of \py{Diagram}.
This means we do not need to repeat the code for identity or composition which is already implemented by \py{cat.Arrow}.
In turn, when the user defines a subclass of \py{Diagram}, they do not need to repeat the code for identity, composition or tensor.
The implementation of \py{monoidal.Box} as a subclass of \py{cat.Box} and \py{Diagram} is relatively straightforward, we only need to make sure that a box is equal to the diagram of just itself.
We also want the \py{cast} method of \py{Box} to be that of \py{Diagram}.

\begin{python}
{\normalfont Outline of the class \py{monoidal.Box}.}
\begin{minted}[fontsize=\footnotesize]{python}
class Box(cat.Box, Diagram):
    def __init__(self, name: str, dom: Ty, cod: Ty, **params):
        cat.Box.__init__(self, name, dom, cod, **params)
        Diagram.__init__(self, (Layer.cast(self), ), dom, cod)

    def __eq__(self, other):
        if isinstance(other, Box):
            return cat.Box.__eq__(self, other)
        if isinstance(other, Diagram):
            return other.inside == (Layer.cast(self), )
        return False

    __hash__ = cat.Box.__hash__
    cast = Diagram.cast
\end{minted}
\end{python}

\begin{example}\label{example:circuit-diagrams}
We can define \py{Circuit} as a subclass of \py{Diagram}. \py{Gate}, \py{Bra} and \py{Ket} are subclasses of \py{Box} and \py{Circuit}.
Now we can compose and tensor gates together and the result will be an instance of \py{Circuit}.

\begin{minted}{python}
class Circuit(Diagram): pass

class Gate(Box, Circuit): pass

class Bra(Box, Circuit):
    def __init__(self, *bits: bool):
        name = "Bra({})".format(', '.join(map(str, bits)))
        self.bits, dom, cod = bits, qubit ** len(bits), qubit ** 0
        Box.__init__(self, name, dom, cod)

    def dagger(self) -> Circuit: return Ket(*self.bits)

class Ket(Box, Circuit):
    def __init__(self, *bits: bool):
        name = "Ket({})".format(', '.join(map(str, bits)))
        self.bits, dom, cod = bits, qubit ** 0, qubit ** len(bits)
        Box.__init__(self, name, dom, cod)

    def dagger(self) -> Circuit: return Bra(*self.bits)

Gate.cast = Ket.cast = Circuit.cast

X, Y, Z, H = [Gate(name, qubit, qubit) for name in "XYZH"]
CX = Gate("CX", qubit ** 2, qubit ** 2)
sqrt2 = Gate("$\\sqrt{2}$", qubit ** 0, qubit ** 0)
assert isinstance(sqrt2 @ Ket(0, 0) >> H @ qubit >> CX, Circuit)
\end{minted}
\end{example}

The \py{monoidal.Functor} class is a subclass of \py{cat.Functor}.
It overrides the \py{__call__} method to define the image of types and layers, and it delegates to its superclass for the image of boxes and composition.
To make the syntax look nicer, we define the domain of the object mapping as types of length one rather than their generating object, e.g. we can define a functor with \py{ob={x: y}} for \py{x = Ty('x')} and \py{y = Ty('y')} rather than \py{ob={x.inside[0]: y}}.
We also implement some syntactic sugar for the codomain so that we can define e.g. a \py{Tensor}-valued functor with \py{ob={x: n}} rather than \py{ob={x: [n]}} for a \py{x = Ty('x')} and \py{n: int}.
We make use of Python's duck typing so that the codomain can be \py{Ty} or \py{tuple} indifferently, in both cases computed using the built-in \py{sum} with \py{Ty()} or \py{()} as unit.

\begin{python}
{\normalfont Implementation of monoidal functors.}
\begin{minted}{python}
class Functor(cat.Functor):
    dom = cod = Category(Ty, Diagram)

    def __call__(self, other : Ty | Diagram) -> Ty | Diagram:
        if isinstance(other, Ty):
            return sum([self(obj) for obj in other.inside], self.cod.ob())
        if isinstance(other, Ob):
            result = self.ob[self.dom.ob((other, ))]
            return result if isinstance(result, self.cod.ob)\
                else self.cod.ob((result, ))
        if isinstance(other, Layer):
            return self(other.left) @ self(other.box) @ self(other.right)
        return super().__call__(other)
\end{minted}

Note that the keys of the dictionary \py{ob} are \py{Ty} of length $1$.
\end{python}

\begin{example}
We can simulate quantum circuits by applying a functor from \py{Circuit} to \py{Tensor}.
We override the \py{__call__} method to define the image of \py{Bra} and \py{Ket} on the fly.

\begin{minted}{python}
class Eval(Functor):
    def __init__(self, ob, ar):
        super().__init__(ob, ar,
                         dom=Category(Qubits, Circuit),
                         cod=Category(tuple[int, ...], Tensor[complex]))

    def __call__(self, other):
        if isinstance(other, Ket):
            if not other.bits: return Tensor.id(())
            head, *tail = other.bits
            return Tensor[complex]([[not head, head]], (), (2, ))\
                @ self(Ket(*tail))
        if isinstance(other, Bra):
            return self(other.dagger()).dagger()
        return super().__call__(other)

Circuit.eval = lambda self: Eval(
    ob={qubit: 2},
    ar={X: [[0, 1], [1, 0]], Y: [[0, -1j], [1j, 0]], Z: [[1, 0], [0, -1]],
        H: [[1 / sqrt(2), 1 / sqrt(2)], [1 / sqrt(2), -1 / sqrt(2)]],
        CX: [[1, 0, 0, 0], [0, 1, 0, 0], [0, 0, 0, 1], [0, 0, 1, 0]],
        sqrt2: [[sqrt(2)]]})(self)

circuit = sqrt2 @ Ket(0, 0) >> H @ qubit >> CX
superposition = Ket(0, 0) + Ket(1, 1)
assert circuit.eval() == Circuit.eval(superposition)
\end{minted}
\end{example}

\begin{remark}
DisCoPy uses a more compact encoding of diagrams than their list of layers.
Indeed, a diagram is uniquely specified by a domain, a list of boxes and a list of \emph{offsets}, i.e. the length of the type to the left of each box.

\begin{minted}{python}
@dataclass
class Encoding:
    dom: Ty
    boxes_and_offsets: tuple[tuple[Box, int], ...]

Diagram.boxes = property(lambda self: tuple(box for _, box, _ in self.inside))
Diagram.offsets = property(
    lambda self: tuple(len(left) for left, _, _ in self.inside))

def encode(diagram: Diagram) -> Encoding:
    return Encoding(diagram.dom, tuple(zip(diagram.boxes, diagram.offsets)))

def decode(encoding: Encoding) -> Diagram:
    diagram = Diagram.id(encoding.dom)
    for box, offset in encoding.boxes_and_offsets:
        left, right = diagram.cod[:offset], diagram.cod[offset + len(box.dom):]
        diagram >>= left @ box @ right
    return diagram

x, y, z = map(Ty, "xyz")
f, g, h = Box('f', x, y), Box('g', y, z), Box('h', y @ z, x)
encoding = Encoding(dom=x @ y, boxes_and_offsets=((f, 0), (g, 1), (h, 0)))
assert decode(encoding) == f @ g >> h and encode(f @ g >> h) == encoding
\end{minted}
\end{remark}

\subsection{Quotient monoidal categories}\label{subsection:quotient-monoidal}

Once we have defined freeness, we need to define quotients.
The quotient $C / R$ of a monoidal category $C$ by a binary relation $R \sub \coprod_{x, y \in C_0^\star} C(x, y) \times C(x, y)$ has the same objects $C_0$ and arrows equivalence classes of arrows in $C_1$ under the smallest \emph{monoidal congruence} containing $R$.
A congruence $(\sim_R)$ is monoidal when $f \sim_R f'$ and $g \sim_R g'$ implies $f \otimes g \sim f' \otimes g'$.
Explicitly, we can construct $C / R$ as the quotient category for the rewriting relation $\to_R$ where:
$$u \s \fcmp \s b \otimes f \otimes c \s \fcmp \s v \quad
\to_R \quad u \s \fcmp \s b \otimes g \otimes c \s \fcmp \s v$$
for all $(f, g) \in R$, $u : a \to b \otimes \dom(f) \otimes c$ and $v : b \otimes \cod(f) \otimes c \to d$.
Intuitively, if we can equate $f$ and $g$ then we can equate them in any context, i.e. with any objects $b$ and $c$ tensored on the left and right and any arrows $u$ and $v$ composed above and below.
A proof that two diagrams are equal in the quotient can itself be thought of as a diagram in three dimensions, i.e. the movie of a diagram being rewritten into another.
These higher-dimensional diagrams will be mentioned in section~\ref{section:summary-and-future}.
Again, every monoidal category $C$ is isomorphic to the quotient of a free monoidal category $C = F(\Sigma) / R$: take $\Sigma = U(C)$ and the relation $R \sub F(U(C)) \times F(U(C))$ given by every binary composition and tensor.

The word problem for categories reduces to that of monoidal categories, indeed the signature of a category can be seen as a monoidal signature where boxes all have domain and codomain of length one.
Thus, deciding equality of diagrams in arbitrary quotient monoidal categories is just as undecidable.
The implementation of a quotient is nothing but a subclass of \py{Diagram} with an equality method that respects the axioms of a monoidal congruence.
The easy way is to define equality of diagrams to be equality of their evaluation by a monoidal functor into a category where equality is decidable.
The hard way is to define a \py{normal_form} method which sends every diagram to a chosen representative of its equivalence class.
DisCoPy provides some basic tools to define such a normal form: \emph{pattern matching} and \emph{substitution}.

\begin{python}
{\normalfont Implementation of diagram pattern matching and substitution.}

\begin{minted}{python}
@dataclass
class Match:
    top: Diagram
    bottom: Diagram
    left: Ty
    right: Ty

    def subs(self, target):
        return self.top >> self.left @ target @ self.right >> self.bottom

def match(self, pattern: Diagram) -> Iterator[Match]:
    for i in range(len(self) - len(pattern) + 1):
        for j in range(len(self[i].dom) - len(pattern.dom) + 1):
            match = Match(
                self[:i], self[i + len(pattern):],
                self[i].dom[:j], self[i].dom[j + len(pattern.dom):])
            well_typed = match.top.cod == match.left @ pattern.dom @ match.right\
                and match.left @ pattern.cod @ match.right == match.bottom.dom
            if well_typed and self == match.subs(pattern): yield match
\end{minted}
\end{python}

Now implementing a quotient reduces to implementing a \emph{rewriting strategy}, i.e. a function which inputs diagrams and returns either a choice of match or \py{StopIteration}, then proving that it is \emph{confluent} (i.e. the order in which we pick matches does not matter) and \emph{terminating} (i.e. there are no infinite sequences of rewrites).
Our simple pattern matching routine can be extended in several ways.
First, it can only find matches on the nose: we could apply interchangers to the diagram until we find a match (with the cubic-time complexity that this implies).
Second, it can only find and substitute one match at a time: we could iterate through lists of compatible matches and implement their simultaneous substitution.
Third, instead of looking for \py{pattern} as a subdiagram of \py{self} directly, we could iterate through the functors \py{F} such that \py{F(pattern)} is a subdiagram.
This would allow to implement infinite families of equations such as those between quantum gates parameterised by continuous phases.

Why should computer scientists care about such diagram rewriting?
One reason is that diagrams are free data structures in the same sense that lists are free: they are a two-dimensional generalisation of lists.
Another reason is that they allow an elegant definition of a Turing-complete problem: given a finite monoidal signature $\Sigma$ and a pair of lists $x, y \in \Sigma_0^\star$, decide whether there is a diagram $f : x \to y$ in $F(\Sigma)$.
Indeed, the word problem for monoids (which is equivalent to the halting problem for Turing machines) reduces to the existence problem for diagrams: given the presentation of a monoid $X^\star / R$,
take objects $\Sigma_0 = X$ and boxes $\Sigma_1 = R$ with $\dom, \cod : \Sigma_1 \injects X^\star \times X^\star \to X^\star$ the left and right hand-side of each related pair.
For any pair $x, y \in X^\star$, we have that $x \leq_R y$ if and only if there is a diagram $f : x \to y$ in $F(\Sigma)$: the preordered monoid generated by the relation $R$ is the preorder collapse of the free monoidal category $F(\Sigma)$.
While monoid presentations define \emph{decision problems} (i.e. with a Boolean output), free monoidal categories naturally define \emph{function problems}: given a pair of types, output a diagram.

If we compose the two reductions together, we get a free monoidal category where diagrams are the possible runs of a given Turing machine.
Moreover, a monoidal functor from the category of one machine to another corresponds to a reduction between the problems they solve, the domain machine being simulated by the codomain.
Thus, we could very well take finite monoidal signatures as our definition of machine and diagrams as our definition of computation: algorithmic complexity is given by the size of signatures, time and space complexity are given by the length and width\footnote
{The width of a diagram is the maximum width of its layers, which is not preserved by interchangers.
In the diagrams generated by Turing machines, we cannot apply interchangers anyway: every box is connected to the next by the head of the machine.}
of diagrams.
Now if two-dimensional diagrams encode computations on one-dimensional lists, we can think of three-dimensional diagrams either as computations on two-dimensional data, or as higher-order computations.
For example, the optimisation steps of a (classical or quantum) compiler can be thought of as a three-dimensional diagram, with (classical or quantum) circuits as domain and codomain.

\begin{example}\label{example:simplify-circuits}
We can simplify quantum circuits using pattern matching.

\begin{minted}{python}
def simplify(circuit, rules):
    for source, target in rules:
        for match in circuit.match(source):
            return simplify(match.subs(target), rules)
    return circuit

rules = [(Ket(b) >> X, Ket(int(not b)))
         for b in [0, 1]] + [
         (Ket(b0) @ Ket(b1) >> CX, Ket(b0) @ Ket(int(not b1 if b0 else b1)))
         for b0 in [0, 1] for b1 in [0, 1]]
circuit = Ket(1) @ Ket(0) >> CX >> qubit @ X

assert simplify(circuit, rules) == Ket(1) >> qubit @ Ket(0)
\end{minted}
\end{example}

\subsection{Daggers, sums and bubbles}\label{subsection:monoidal-daggers-sums-bubbles}

As in the previous section, we introduce three extra pieces of implementation: daggers, sums and bubbles.
A $\dagger$-monoidal category is a monoidal category with a dagger (i.e. an identity-on-objects involutive contravariant endofunctor) that is also a monoidal functor, a $\dagger$-monoidal functor is both a $\dagger$-functor and a monoidal functor.
They are implemented by adding a \py{dagger} method to the \py{Layer} class.
For example, $\mathbf{Tensor}_\S$ is $\dagger$-monoidal with any conjugate transpose as dagger.
The category $\mathbf{Mat}_\S$ with direct sum as tensor is also $\dagger$-monoidal.

\begin{python}
{\normalfont Implementation of free $\dagger$-monoidal categories.}

\begin{minted}{python}
class Layer:
    ...
    def dagger(self) -> Layer:
        return Layer(self.left, self.box.dagger(), self.right)
\end{minted}
\end{python}

A monoidal category is commutative-monoid-enriched when sums distribute over the tensor, i.e.
$$(f + f') \s \otimes \s (g + g')
\quad = \quad
f \otimes g \s + \s f \otimes g' \s + \s f' \otimes g \s + \s f' \otimes g'$$
$$\text{and} \quad f \otimes 0 \s = \s 0 \s = \s 0 \otimes f$$
They are implemented by a adding method a \py{tensor} method to \py{Sum}, as well as overriding \py{Diagram.tensor} so that \py{f @ (g + h) == Sum.cast(f) @ (g + h)} for all diagrams \py{f}.

\begin{python}
{\normalfont Implementation of free CM-enriched monoidal categories.}

\begin{minted}{python}
class Diagram(monoidal.Diagram):
    @inductive
    def tensor(self, other):
        return self.sum.cast(self).tensor(other)\
            if isinstance(other, Sum) else super().tensor(other)

class Sum(cat.Sum, Box):
    @inductive
    def tensor(self, other: Sum) -> Sum:
        terms = tuple(f @ g for f in self.terms for g in self.cast(other).terms)
        return Sum(terms, self.dom @ other.dom, self.cod @ other.cod)

    id = lambda x: Sum.cast(Diagram.id(x))

Diagram.sum = Sum
\end{minted}
\end{python}

Bubbles for monoidal categories are the same as bubbles for categories, their implementation requires no extra work.
As we mentioned in the previous section, bubbles do give us a strictly more expressive syntax however: they can encode operations on arrows that cannot be expressed in terms of composition or tensor.

\begin{python}
{\normalfont Implementation of free monoidal categories with bubbles.}

\begin{minted}{python}
class Bubble(cat.Bubble, Box): pass

Diagram.bubble = lambda self, **kwargs: Bubble(self, **kwargs)
\end{minted}
\end{python}

\begin{example}
As in example~\ref{example:endofunctor-bubbles}, any monoidal endofunctor $\beta : C \to C$ also defines a bubble on the monoidal category $C$, we can define a bubble-preserving functor $F(U(C)^\beta) \to C$ which interprets bubbled diagrams as functor application.
However, the disjoint union $C + D$ of two monoidal categories does not yield a well-defined monoidal category: we cannot tensor arrows of $C$ with those of $D$.
Thus, the case of monoidal functors $C \to D$ requires diagrams with different colours (for the inside and the outside of the bubble) which we will mention in section~\ref{section:extra structure}.
\end{example}

\begin{example}\label{example:postprocessed-circuit}
We can implement the Born rule as a bubble on $\py{Circ}$ interpreted as element-wise squared amplitude.
We can also implement any classical post-processing as a bubble.

\begin{minted}{python}
Born_rule = lambda x: abs(x) ** 2
Circuit.measure = lambda self: self.bubble(method="squared_amplitude")
Tensor.squared_amplitude = lambda self: self.map(Born_rule)

assert Circuit.eval((Ket(0) >> H >> Bra(0)).measure())[0][0] == .5

biased_ReLU = lambda x: max(0, 2 * x.real - 1)
Circuit.post_process = lambda self: self.bubble(method="non_linearity")
Tensor.non_linearity = lambda self: self.map(biased_ReLU)

circuit = Ket(0, 0) >> H @ qubit >> CX >> Bra(0, 0)
post_processed_circuit = circuit.measure().post_process()
assert Circuit.eval(post_processed_circuit).inside\
    == biased_ReLU(Born_rule(complex(circuit.eval())))
\end{minted}
\end{example}

\begin{example}\label{example:monoidal-formula}
We can implement the formulae of first-order logic using Peirce's \emph{existential graphs}.
They are the first historical examples of string diagrams as well as the first definition of first-order logic~\cite{BradyTrimble98,BradyTrimble00,MelliesZeilberger16,HaydonSobocinski20}.
Predicates of arity $n$ are boxes with a codomain of length $n$, if there are more than one generating objects we get a many-sorted logic.
The wires of the diagram correspond to variables, open wires in the domain and codomain are free variables, the others are existentially quantified.
Thus, the composition of the diagrams $f : x \to y$ and $g : y \to z$ encodes the formula $\exists y \ f(x, y) \land g(y, z)$ with two free variables $x, z$ and $y$ bound.
The diagram obtained by composing a predicate $p$ with the dagger of a predicate $q$ encodes the formula $\exists x \ p(x) \land q(x)$.
Bubbles, which Peirce calls \emph{cuts}, encode negation.
The evaluation of a formula in a finite model corresponds to the application of a bubble-preserving functor into $\mathbf{Mat}_\B$.

\begin{minted}{python}
class Formula(Diagram):
    cut = lambda self: Cut(self)

class Cut(Bubble, Formula):
    method = "_not"
    cast = Formula.cast

class Predicate(Box, Formula):
    cast = Formula.cast

def model(size: dict[Ty, int], data: dict[Predicate, list[bool]]):
    return Functor(
        ob=size, ar={p: [data[p]] for p in data},
        dom=Category(Ty, Formula), cod=Category(tuple[int, ...], Tensor[bool]))

x = Ty('x')
dog, god, mortal = [Predicate(name, Ty(), x) for name in ("dog", "god", "mortal")]
all_dogs_are_mortal = (dog.cut() >> mortal.dagger()).cut()
gods_are_not_mortal = (god >> mortal.dagger()).cut()
there_is_no_god_but_god = god >> (Formula.id(x).cut() >> god.dagger()).cut()

size = {x: 2}

for dogs, gods, mortals in itertools.product(*3 * [
        itertools.product(*size[x] * [[0, 1]])]):
    F = model(size, {dog: dogs, god: gods, mortal: mortals})
    assert F(all_dogs_are_mortal) == all(
        not F(dog)[i] or F(mortal)[i] for i in range(size[x]))
    assert F(gods_are_not_mortal) == all(
        not F(god)[i] or not F(mortal)[i] for i in range(size[x]))
    assert F(there_is_no_god_but_god) == any(F(god)[i] and not any(
        F(god)[j] and j != i for j in range(size[x])) for i in range(size[x]))
\end{minted}

Note that for now our syntax is somehow limited: we can only write formulae where each variable appears at most twice, once for the source and target of its wire.
In section~\ref{subsection:hypergraph} we will introduce the diagrammatic syntax for arbitrary formulae, essentially by adding explicit boxes for equality.
\end{example}

\subsection{From tacit to explicit programming}\label{subsection:tacit-to-explicit}

We get to the end of this section and the reader may have noticed that we have not drawn a single diagram yet: drawing will be the topic of the next section.
This absence of drawing intends to demonstrate that diagrams are not only a great tool for visual reasoning, they can also be thought of as a \emph{data structure for abstract pipelines}.
Monoidal functors then allow to evaluate these abstract pipelines in terms of concrete computation, be it Python functions, tensor operations or quantum circuits.
This abstract programming style, defining programs in terms of composition rather than arguments-and-return-value, is called \emph{point-free} or \emph{tacit programming}.
Because of the difficulty of writing any kind of complex program in that way, it has also been called the \emph{pointless style}.
DisCoPy provides a \py{@diagramize} decorator which allows the user to define diagrams using the standard \emph{explicit} syntax for Python functions instead of the point-free syntax.
Given \py{dom: Ty}, \py{cod: Ty} and \py{signature: tuple[Box, ...]} as parameters, it adds to each box a \py{__call__} method which takes the objects of its domain as input and returns the objects of its codomain.

\begin{example}
We can define quantum circuits as Python functions on qubits.

\begin{minted}{python}
kets0 = Ket(0, 0)

@diagramize(dom=Qubits(2), cod=Qubits(2), signature=(sqrt2, kets0, H, CX))
def circuit():
    sqrt2(); qubit0, qubit1 = kets0
    return CX(H(qubit0), qubit1)

assert circuit == sqrt2 @ kets0 >> H @ qubit >> CX
\end{minted}
\end{example}

The underlying algorithm constructs a graph with nodes for each object of the domain and the codomain of each box, as well as of the whole diagram.
There is an edge from a codomain node of a box (or a domain node of the whole diagram) to the domain node of another (or a codomain node of the whole diagram) whenever they are connected.
There is also a node for each box and an edge from that box node to its domain and codomain nodes.
First, we initialise the graph of the identity diagram and feed the objects of its codomain as input to the decorated function.
When a box is applied to a list of nodes, it adds edges going into each object of its domain and returns nodes for each object of its codomain.
Finally, the return value of the decorated function is taken as the codomain of the whole diagram.

The \py{graph2diagram} algorithm which translates the resulting graph into a diagram will be covered in the next section.
It will allow to automatically read \emph{pictures of diagrams} (i.e. matrices of pixels) and translate them into \py{Diagram} objects.
Listing~\ref{listing:diagram2graph} shows the implementation of the inverse translation \py{diagram2graph} which outputs only planar graphs as we will show in the next section by constructing their embedding in the plane, i.e. their drawing.

\begin{python}\label{listing:diagram2graph}
{\normalfont Translation from \py{Diagram} to \py{Graph}.}

We use the graph data structure from NetworkX~\cite{HagbergEtAl08}.

\begin{minted}{python}
from networkx import Graph

@dataclass
class Node:
    kind: str
    label: Ty | Box
    i: int
    j: int

def diagram2graph(diagram: Diagram) -> Graph:
    graph = Graph()
    scan = [Node('dom', x, i, -1) for i, x in enumerate(diagram.dom)]
    graph.add_edges_from(zip(scan, scan))
    for j, (left, box, _) in enumerate(diagram.inside):
        box_node = Node('box', box, -1, j)
        dom_nodes = [Node('dom', x, i, j) for i, x in enumerate(box.dom)]
        cod_nodes = [Node('cod', x, i, j) for i, x in enumerate(box.cod)]
        graph.add_edges_from(zip(scan[len(left): len(left @ box.dom)], dom_nodes))
        graph.add_edges_from(zip(dom_nodes, len(box.dom) * [box_node]))
        graph.add_edges_from(zip(len(box.cod) * [box_node], cod_nodes))
        scan = scan[len(left):] + cod_nodes + scan[len(left @ box.dom):]
    graph.add_edges_from(zip(scan, [
        Node('cod', x, i, len(diagram)) for i, x in enumerate(diagram.cod)]))
    return graph
\end{minted}
\end{python}

Note that in order to construct a \py{monoidal.Diagram} we need to assume \emph{plane graphs} as input, i.e. graphs with an embedding in the plane.
This means the \py{diagramize} method cannot accept functions which swap the order of variables such as \py{lambda x, y: y, x}.
We also need to assume that every codomain node is connected to exactly one domain node.
In terms of Python functions, this means we have to use every variable exactly once.
In section~\ref{section:extra structure} we will discuss the case of diagrams induced by non-planar graphs, with potentially multiple edges between domain and codomain nodes.


\section{Drawing \& reading} \label{section:drawing}

The previous section defined diagrams as a data structure based on lists of layers, in this section we define \emph{pictures of diagrams}.
Concretely, such a picture will be encoded in a computer memory as a bitmap, i.e. a matrix of colour values.
Abstractly, we will define these pictures in terms of topological subsets of the Cartesian plane.
We first recall the topological definition from Joyal's and Street's unpublished manuscript \emph{Planar diagrams and tensor algebra}~\cite{JoyalStreet88} and then discuss the isomorphism between the two definitions.
In one direction, the isomorphism sends a \py{Diagram} object to its drawing.
In the other direction, it reads the picture of a diagram and translates it into a \py{Diagram} object, i.e. its domain, codomain and list of layers.

\subsection{Labeled generic progressive plane graphs}

A \emph{topological graph}, also called 1-dimensional cell complex, is a tuple $(G, G_0, G_1)$ of a Hausdorff space $G$ and a pair of a closed subset $G_0 \sub G$ and a set of open subsets $G_1 \sub P(G)$ called \emph{nodes} and \emph{wires} respectively, such that:
\begin{itemize}
\item $G_0$ is discrete and $G - G_0 = \bigcup G_1$,
\item each wire $e \in G_1$ is homeomorphic to an open interval and its boundary is contained in the nodes $\partial e \sub G_0$.
\end{itemize}
From a topological graph $G$, one can construct an undirected graph in the usual sense by forgetting the space $G$, taking $G_0$ as nodes and edges $(x, y) \in G_0 \times G_0$ for each $e \in G_1$ with $\partial e = \{ x, y \}$.
A topological graph is finite (planar) if its undirected graph is finite (planar, i.e. there is some embedding in the plane).

A \emph{plane graph} between two real numbers $a < b$ is a finite, planar topological graph $G$ with an embedding in $\R \times [a, b]$.
We define the domain $\dom(G) = G_0 \ \cap \ \R \times \{ a \}$, the codomain $\cod(G) = G_0 \ \cap \ \R \times \{ b \}$ as lists of nodes ordered by horizontal coordinates and the set $\boxes(G) = G_0 \ \cap \ \R \times (a, b)$.
We require that:
\begin{itemize}
    \item $G \ \cap \ \R \times \{ a \} = \dom(G)$ and $G \ \cap \ \R \times \{ b \} = \cod(G)$, i.e. the graph touches the horizontal boundaries only at domain and codomain nodes,
    \item every domain and codomain node $x \in G \ \cap \ \R \times \{ a, b \}$ is in the boundary of exactly one wire $e \in G_1$, i.e. wires can only meet at box nodes.
\end{itemize}
A plane graph is \emph{generic} when the projection on the vertical axis $p_1 : \R \times \R \to \R$ is injective on $G_0 \ - \ \R \times \{ a, b \}$, i.e. no two box nodes are at the same height.
From a generic plane graph, we can get a list $\boxes(G) \in G_0^\star$ ordered by height.
A plane graph is \emph{progressive} (also called \emph{recumbent} by Joyal and Street) when $p_1$ is injective on each wire $e \in G_1$, i.e. wires go from top to bottom and do not bend backwards.

From a progressive plane graph $G$, one can construct a directed graph by forgetting the space $G$, taking $G_0$ as nodes and edges $(x, y) \in G_0 \times G_0$ for each $e \in G_1$ with $\partial e = \{ x, y \}$ and $p_1(x) < p_1(y)$.
We can also define the domain and the codomain of each box node $\dom, \cod : \boxes(G) \to G_1^\star$ with
$\dom(x) = \{ e \in G_1 \ \vert \partial e = \{ x, y \}, p_1(x) < p_1(y) \}$ the wires coming in from the top and
$\cod(x) = \{ e \in G_1 \ \vert \partial e = \{ x, y \}, p_1(x) > p_1(y) \}$ the wires going out to the bottom, these sets are linearly ordered as follows.
Take some $\epsilon > 0$ such that the horizontal line at height $p_1(x) - \epsilon$ crosses each of the wires in the domain.
Then list $\dom(x) \in G_1^\star$ in order of horizontal coordinates of their intersection points, i.e. $e < e'$ if $p_0(y) < p_0(y')$ for the projection $p_0 : \R \times \R \to \R$ and $y^{(')} = e^{(')} \cap \{ p_1(x) - \epsilon \} \times \R$. Symmetrically we define the list of codomain nodes $\cod(x) \in G_1^\star$ with a horizontal line at $p_1 + \epsilon$.

A \emph{labeling} of progressive plane graph $G$ by a monoidal signature $\Sigma$ is a pair of functions from wires to objects $\lambda : G_1 \to \Sigma_0$ and from boxes to boxes $\lambda : \boxes(G) \to \Sigma_1$ which commutes with the domain and codomain.
From an lgpp (\emph{labeled generic progressive plane}) graph, one can construct a \py{Diagram}.

\begin{python}\label{listing:lgpp2diagram}
{\normalfont Reading a labeled generic progressive plane graphs as a \py{Diagram}.}
\vspace{5pt}
\hrule
\vspace{-15pt}
\begin{flalign*}
\py{def} \s & \py{read(} \s G, \s \lambda : G_1 \to \py{Ty}, \s \lambda : \boxes(G) \to \py{Box} \s \py{) -> Diagram:}&&\\
& \py{dom = [} \s \lambda(e) \s \py{for} \s x \in \dom(G) \s \py{for} \s e \in G_1 \s \py{if} \s x \in \partial e \s \py{]}&&\\
& \py{boxes = [} \s \lambda(x) \s \py{for} \s x \in \boxes(G) \s \py{]}&&\\
& \py{offsets = [len(} \s G_1 \ \cap \ \{ p_0(x) \} \times \R \s \py{) for} \s x \in \boxes(G) \s \py{]}&&\\
& \py{return decode(dom, zip(boxes, offsets))} &&
\vspace{-10pt}
\end{flalign*}
\hrule
\end{python}

\subsection{From diagrams to graphs and back}

In the other direction, there are many possible ways to draw a given \py{Diagram} as a lgpp graph, i.e. to embed its graph into the plane.
Vicary and Delpeuch \cite{VicaryDelpeuch22} give a linear-time algorithm to compute such an embedding with the following disadvantage: the drawing of a tensor $f \otimes g$ does not necessarily look like the horizontal juxtaposition of the drawings for $f$ and $g$.
For example, if we tensor an identity with a scalar, the wire representing the identity will wiggle around the node representing the scalar.
DisCoPy uses a quadratic-time drawing algorithm with the following design decision: we make every wire a straight line and as vertical as possible.
We first initialise the lgpp graph of the identity with a constant spacing between each wire, then for each layer we update the embedding so that there is enough space for the output wires of the box before we add it to the graph.
The resulting plane graph is then either plotted on the screen using Matplotlib~\cite{Hunter07} or translated to TikZ~\cite{Tantau13} code that can be integrated to a \LaTeX \ document.
All the diagrams in this thesis were drawn using DisCoPy together with TikZiT\footnote{\url{https://tikzit.github.io}} for manual editing.

\begin{python}
{\normalfont Outline of \py{Diagram.draw} from \py{Diagram} to \py{PlaneGraph}.}

\begin{minted}{python}
Embedding = dict[Node, tuple[float, float]]
PlaneGraph = tuple[Graph, Embedding]

def make_space(position: Embedding, scan: list[Node], box: Box, offset: int
        ) -> tuple[Embedding, float]:
    """ Update the graph to make space and return the left of the box. """

def draw(self: Diagram) -> PlaneGraph:
    graph = diagram2graph(self)
    box_nodes = [Node('box', box, -1, j) for j, box in enumerate(self.boxes)]
    dom_nodes = scan = [Node('dom', x, i, -1) for i, x in enumerate(self.dom)]
    position = {node: (i, -1) for i, node in enumerate(dom_nodes)}
    for j, (left, box, _) in enumerate(self.inside):
        box_node = Node('box', box, -1, j)
        position, left_of_box = make_space(position, scan, box, len(left))
        position[box_node] = (
            left_of_box + max(len(box.dom), len(box.cod)) / 2, j)
        for i, x in enumerate(box.dom):
            cod_node, = filter(lambda node: node.kind != "box",
                               graph.neighbors(Node('dom', x, i, j)))
            position[Node('dom', x, i, j)] = (position[cod_node][0], j - .1)
        for i, x in enumerate(box.cod):
            position[Node('cod', x, i, j)] = (left_of_box + i, j + .1)
        box_cod_nodes = [Node('cod', x, i, j) for i, x in enumerate(box.cod)]
        scan = scan[:len(left)] + box_cod_nodes + scan[len(left @ box.dom):]
    for i, x in enumerate(self.cod):
        cod_node = Node('cod', x, i, len(self))
        position[cod_node] = (position[scan[i]][0], len(self))
    return graph, position

Diagram.draw = draw
\end{minted}
\end{python}

Note that when we draw the plane graph for a diagram, we do not usually draw the box nodes as points.
Instead, we draw them as boxes, i.e. a box node $x \in \boxes(G)$ is depicted as the rectangle with corners $(l, p_1(x) \pm \epsilon)$ and $(r, p_1(x) \pm \epsilon)$ for $l, r \in \R$ the left- and right-most coordinate of its domain and codomain nodes.
In this way, we do not need to draw the in- and out-going wires of the box node: they are hidden by the rectangle.
Exceptions include \emph{spider boxes} where we draw the box node (the head) and its outgoing wires (the legs of the spider) as well as \emph{swap, cup and cap boxes} where we do not draw the box node at all, only its outgoing wires which are drawn as Bézier curves to look like swaps, cups and caps respectively.
These special boxes will be discussed, and drawn, in section~\ref{section:extra structure}.

\begin{example}
{\normalfont Drawing of a box, an identity, a layer, a composition and a tensor.}

\begin{minted}{python}
a, b, c, x, y, z, w = map(Ty, "abcxyzw")
Box('box', a @ b, x @ y @ z).draw()
\end{minted}

\ctikzfig{img/basics/box}

\begin{minted}{python}
Diagram.id(x @ y @ z).draw()
\end{minted}

\ctikzfig{img/basics/id}

\begin{minted}{python}
layer = a @ Box('f', x, y) @ b
layer.draw()
\end{minted}

\ctikzfig{img/basics/layer}

\begin{minted}{python}
top, bottom = Box('top', a @ b, x @ y @ z), Box('bottom', x @ y @ z, c)
(top >> bottom).draw()
\end{minted}

\ctikzfig{img/basics/composition}

\begin{minted}{python}
left, right = Box('left', a @ b, x @ y @ z), Box('right', x @ y @ z, c)
(left @ right).draw()
\end{minted}

\ctikzfig{img/basics/tensor}
\end{example}

\begin{example}
{\normalfont Drawing of the interchanger in the general case.}

\begin{minted}{python}
f, g = Box('f', x, y), Box('g', z, w)
(a @ f @ b @ g @ c).interchange(0).draw(); (a @ f @ b @ g @ c).draw()
\end{minted}

\begin{center}
\tikzfig{img/basics/interchanger-left}
$\quad \to_R \quad$ \tikzfig{img/basics/interchanger-right}
\end{center}
\end{example}

\begin{example}
{\normalfont Drawing of the interchangers for an effect then a state.}

\begin{minted}{python}
f, g = Box('f', x, Ty()), Box('g', Ty(), w)
(f >> g).interchange(0).draw()
(f >> g).draw()
(f >> g).interchange(0, left=True).draw()
\end{minted}

\begin{center}
\tikzfig{img/basics/state-effect-left}
$\quad \to_R \quad$ \tikzfig{img/basics/state-effect}
$\quad \to_R \quad$ \tikzfig{img/basics/state-effect-right}
\end{center}
\end{example}

\begin{example}
{\normalfont Drawing of the Eckmann-Hilton argument.}

\begin{minted}{python}
f, g = Box('f', Ty(), Ty()), Box('g', Ty(), Ty())
(f @ g).draw()
(f @ g).interchange(0).draw()
(f @ g).interchange(0).interchange(0).draw()
\end{minted}

\begin{center}
\tikzfig{img/basics/Eckmann-Hilton-left}
$\quad \to_R \quad$ \tikzfig{img/basics/Eckmann-Hilton-right}
$\quad \to_R \quad$ \tikzfig{img/basics/Eckmann-Hilton-left}
$\quad \to_R \quad \dots$
\end{center}
\end{example}

\begin{example}\label{example:spiral}
The following spiral diagram is the cubic worst-case for interchanger normal form.
It is also the quadratic worst-case for drawing, at each layer of the first half we need to update the position of every preceding layer in order to make space for the output wires.

\begin{minted}{python}
x = Ty('x')
f, g = Box('f', Ty(), x @ x), Box('g', x @ x, Ty())
u, v = Box('u', Ty(), x), Box('v', x, Ty())

def spiral(length: int) -> Diagram:
    diagram, n = u, length // 2 - 1
    for i in range(n):
        diagram >>= x ** i @ f @ x ** (i + 1)
    diagram >>= x ** n @ v @ x ** n
    for i in range(n):
        diagram >>= x ** (n - i - 1) @ g @ x ** (n - i - 1)
    return diagram

diagram = spiral(8)
for i in [1, 2, 3]: diagram[:i + 1].draw()
diagram.draw(); diagram.normal_form().draw()
Diagram.to_gif(*diagram.normalize())
\end{minted}
\begin{center}
\tikzfig{img/spiral/1},
\tikzfig{img/spiral/2},
\tikzfig{img/spiral/3}, ... \\
\vspace{5pt}
\tikzfig{img/spiral/8} $\quad \sim \quad$
\tikzfig{img/spiral/nf}
\end{center}
The interchangers between these two diagrams can be downloaded as a \py{.gif}\footnote
{\href{https://github.com/oxford-quantum-group/discopy/blob/f364ce218890d87fda4aa5c1f4f770f07c7b4f25/docs/_static/imgs/spiral.gif}{\nolinkurl{https://github.com/oxford-quantum-group/discopy/.../imgs/spiral.gif}}} video.
\end{example}

Next, we define the inverse translation \py{graph2diagram}.
\pagebreak

\begin{python}\label{listing:graph2diagram}
{\normalfont Translation from \py{PlaneGraph} to \py{Diagram}.}

\begin{minted}{python}
def graph2diagram(graph: Graph, position: Embedding) -> Diagram:
    dom = Ty(*[node.label for node in graph.nodes
               if node.kind == 'dom' and node.j == -1])
    boxes = [node.label for node in graph.nodes if node.kind == 'box']
    scan, offsets = [Node('dom', x, i, -1) for i, x in enumerate(dom)], []
    for j, box in enumerate(boxes):
        left_of_box = position[Node('dom', box.dom[0], 0, j)][0]\
            if box.dom else position[Node('box', box, -1, j)][0]
        offset = len([node for node in scan if position[node][0] < left_of_box])
        box_cod_nodes = [Node('cod', x, i, j) for i, x in enumerate(box.cod)]
        scan = scan[:offset] + box_cod_nodes + scan[offset + len(box.dom):]
        offsets.append(offset)
    return decode(Encoding(dom, list(zip(boxes, offsets))))
\end{minted}
\end{python}

\begin{proposition}\label{proposition:graph2diagram(self.draw())}
The equality \py{graph2diagram(self.draw()) == self} holds for all \py{self: Diagram}.
\end{proposition}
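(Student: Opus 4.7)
The plan is to prove the equality by induction on $n = \py{len(self)}$, the number of layers of \py{self}. The base case $n = 0$ is immediate: on an identity diagram, \py{diagram2graph} (called inside \py{draw}) produces only domain and codomain nodes with no box nodes, and \py{graph2diagram} reads back the correct domain together with an empty list of layers, reconstructing the identity on \py{self.dom}.

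For the inductive step, I would first observe that \py{graph2diagram} reconstructs a diagram from three pieces of data: the domain \py{dom}, the list \py{boxes}, and the list \py{offsets}. The domain is read as the labels of the nodes satisfying \py{node.kind == 'dom' and node.j == -1}; by inspection of \py{diagram2graph} these are exactly the initial \py{dom_nodes} inserted in order of the index $i$, and since NetworkX preserves insertion order on \py{graph.nodes}, this list equals \py{self.dom}. The list \py{boxes} is obtained by filtering on \py{node.kind == 'box'}; since \py{diagram2graph} inserts one box node \py{Node('box', box, -1, j)} per layer in the order of $j$, this list equals \py{self.boxes}. It thus suffices to verify that the offsets computed by \py{graph2diagram} match \py{self.offsets}.

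The heart of the argument is the following loop invariant, which I would establish by induction on the number of layers already processed: after the $j$-th iteration, the \py{scan} list in \py{draw} coincides node-for-node with the \py{scan} that would be constructed by \py{graph2diagram} after reading the first $j$ boxes, and the embedding \py{position} assigns strictly increasing $x$-coordinates to the nodes of \py{scan}. Granting this invariant, for the $j$-th layer \py{(left, box, _)} the routine \py{make_space} is specified precisely to place \py{left_of_box} strictly between the $x$-coordinates of the \py{len(left)}-th and \py{(len(left)+1)}-th scan nodes (with boundary cases at the extremes). Consequently the expression \py{len([node for node in scan if position[node][0] < left_of_box])} evaluates to \py{len(left)}, which is exactly \py{self.offsets[j]}. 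Both algorithms then update \py{scan} by the same slicing \py{scan[:offset] + box_cod_nodes + scan[offset + len(box.dom):]}, so the invariant is preserved and the induction closes.

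The main obstacle is proving the correctness of \py{make_space}: one must show that it both preserves the strict monotonicity of $x$-coordinates along \py{scan} and positions \py{left_of_box} in the correct gap while reserving enough horizontal room for \py{box.cod} not to collide with the wires carried by \py{scan}. Once this lemma is in hand, the result follows, because a \py{Diagram} is uniquely determined by its \py{Encoding} consisting of the domain together with the list of box--offset pairs (as spelled out in the remark preceding this proposition); since both the domain, the list of boxes, and the list of offsets extracted by \py{graph2diagram} from \py{self.draw()} agree with those of \py{self}, we conclude \py{graph2diagram(self.draw()) == self}.
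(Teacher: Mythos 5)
Your proof is correct and follows essentially the same route as the paper's: induction on \py{len(self)}, with the base case immediate and the inductive step reducing to the facts that \py{dom} and \py{boxes} are recovered verbatim and that the strictly increasing horizontal coordinates of the \py{scan} nodes force the recovered offsets to equal \py{self.offsets}. You are somewhat more explicit than the paper in isolating the correctness of \py{make_space} (preservation of strict monotonicity) as the key lemma, which the paper's proof simply asserts, but the argument is the same.
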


\begin{proof}
By induction on \py{n = len(self)}.
If \py{n == 0} we get that \py{dom == self.dom} and \py{boxes == offsets == []}.
If the proposition holds for \py{self}, then it holds for \py{self} \py{>> Layer(left, box, right)}.
Indeed, we have:
\begin{itemize}
\item \py{dom == self.dom and boxes == self.boxes + [box]}
\item \py{(x, Node('cod', self.cod[i], i, n)) in graph}\\
\noindent \py{for i, x in enumerate(scan)}
\end{itemize}
Moreover, the horizontal coordinates of the nodes in \py{scan} are strictly increasing,
thus we get the desired \py{offsets == self.offsets + [len(left)]}.
\end{proof}

From a labeled generic progressive plane graph, we get a unique diagram \emph{up to deformation}.
A deformation $h : G \to G'$ between two labeled plane graphs $G, G'$ is a continuous map $h : G \times [0, 1] \to \R \times \R$ such that:
\begin{itemize}
\item $h(G, t)$ is a plane graph for all $t \in [0, 1]$, $h(G, 0) = G$ and $h(G, 1) = G'$,
\item $x \in \boxes(G)$ implies $h(x, t) \in \boxes(h(G, t))$ for all $t \in [0, 1]$,
\item $h(G, t) \fcmp \lambda = \lambda$ for all $t \in [0, 1]$, i.e. the labels are preserved throughout.
\end{itemize}
A deformation is progressive (generic) when $h(G, t)$ is progressive (generic) for all $t \in [0, 1]$.
We write $G \sim G'$ when there exists some deformation $h : G \to G'$, this defines an equivalence relation.

\begin{proposition}\label{proposition:g2d then d2g}
\py{Diagram.draw(graph2diagram(} $G$ \py{))} $\sim G$ for all lgpp graphs $G$, up to generic progressive deformation.
\end{proposition}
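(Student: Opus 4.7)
The plan is to factor the claim into two steps: first, that \texttt{graph2diagram} and \texttt{Diagram.draw} commute with a purely combinatorial invariant of an lgpp graph; second, that this invariant determines the graph up to generic progressive deformation.

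To the first point, I would define the combinatorial invariant of an lgpp graph $G$ as the tuple $(\lambda(\dom G), \lambda(\boxes(G)), o(G), \lambda(\cod G))$, where boxes are listed in the order induced by height (well-defined by genericity) and $o(x)$ is the offset, i.e.\ the number of wires strictly to the left of $x$ in the horizontal slice at height $p_1(x) - \epsilon$ for small $\epsilon > 0$. By inspection of Listings~\ref{listing:lgpp2diagram} and \ref{listing:graph2diagram}, the diagram $\texttt{graph2diagram}(G)$ reads off precisely this invariant as its \texttt{dom}, \texttt{boxes}, \texttt{offsets} and \texttt{cod}. By inspection of \texttt{Diagram.draw}, placing each box at the height $j$ of its index in the \texttt{boxes} list, and using \texttt{make\_space} so that the scan at each layer respects the stored offset, the output graph $G' \coloneqq \texttt{Diagram.draw}(\texttt{graph2diagram}(G))$ has the same invariant as $G$. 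This is a straightforward induction on the number of layers, parallel to Proposition~\ref{proposition:graph2diagram(self.draw())}.

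To the second point, I would prove a rigidity lemma: any two lgpp graphs $G_0, G_1$ sharing the same combinatorial invariant are related by a generic progressive deformation. The construction has two stages. In the first stage, deform each of $G_0$ and $G_1$ linearly in the vertical coordinate so that the box at index $j$ sits on the horizontal line $y = j+1$; this is manifestly a generic progressive deformation since the relative vertical order of boxes is preserved. In the second stage, on the resulting graphs $\tilde G_0, \tilde G_1$ with boxes at the same heights, interpolate the horizontal coordinates of boxes linearly in $t \in [0,1]$, and redraw each wire as a polyline passing through the points determined by linear interpolation of its intersection with every intermediate line $y = j + \tfrac{1}{2}$.

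The main obstacle is checking that this interpolation remains a valid plane graph at every time $t$, i.e.\ that wires do not cross spuriously. Since $\tilde G_0$ and $\tilde G_1$ share the same invariant, the \emph{left-to-right order} of wires on each line $y = j + \tfrac{1}{2}$ agrees between the two graphs, so linear interpolation of their horizontal coordinates preserves this order for every $t$; hence no wires cross. Progressivity is automatic because each wire is a polyline strictly monotone in $y$, and genericity holds because no two box heights ever collide by construction. Composing the two vertical rescalings with the horizontal interpolation yields a generic progressive deformation $G \sim G'$, as required.
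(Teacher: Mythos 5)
Your proposal is correct, but it takes a genuinely different route from the paper. The paper proves the statement directly by induction on the number of boxes: it cuts the plane graph into horizontal slices between consecutive boxes (using genericity) and vertical slices around each box node, then straightens the wires slice by slice, checking that this never bends a wire backwards or moves one box past another. You instead factor the claim through the combinatorial invariant $(\lambda(\dom G), \lambda(\boxes(G)), o(G), \lambda(\cod G))$ — i.e.\ the \py{Encoding} of the diagram — reducing it to (a) both algorithms preserve this invariant, which is essentially the content of proposition~\ref{proposition:graph2diagram(self.draw())}, and (b) a rigidity lemma stating that the encoding is a complete invariant of lgpp graphs up to generic progressive deformation, proved by a straight-line interpolation homotopy. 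Note that your rigidity lemma is precisely the $n = 0$ (coincidence-free) case of the converse direction of proposition~\ref{proposition:d2g then g2d}, so your decomposition makes the logical relationship between propositions~\ref{proposition:graph2diagram(self.draw())}, \ref{proposition:g2d then d2g} and \ref{proposition:d2g then g2d} more transparent; the paper keeps the three proofs independent. What each buys: the paper's induction is self-contained and mirrors the inductive structure of \py{Diagram.draw} itself; yours isolates the key invariance statement and exhibits an explicit homotopy. The one place your argument is thinner than it looks is the behaviour of the interpolation in the band containing a box node, where several wire segments converge to a single interpolated point and must be checked not to cross the wires passing the box on either side — but this gap is no worse than the paper's own ``we need to make space for the wires going out of the box,'' and is closed by observing that within each band the wires are consistently ordered at every height, so the convex combination of two order-preserving embeddings remains order-preserving.
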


\begin{proof}
By induction on the length of $\boxes(G)$.
If there are no boxes, $G$ is the graph of the identity and we can deform it so that each wire is vertical with constant spacing.
If there is one box, $G$ is the graph of a layer and we can cut it in three vertical slices with the box node and its outgoing wires in the middle.
We can apply the case of the identity to the left and right slices, for the middle slice we make the wires straight with a constant spacing between the domain and codomain.
Because $G$ is generic, we can cut a graph with $n > 2$ boxes in two horizontal slices between the last and the one-before-last box, then apply the case for layers and the induction hypothesis.
To glue the two slices back together while keeping the wires straight, we need to make space for the wires going out of the box.

This deformation is indeed progressive, i.e. we never bend wires, we only make them straight.
It is also generic, i.e. we never move a box node past another.
\end{proof}

\begin{proposition}\label{proposition:d2g then g2d}
There is a progressive deformation $h : G \to G'$ between two lgpp graphs iff \py{graph2diagram(} $G$ \py{) == graph2diagram(} $G'$ \py{)} up to interchanger.
\end{proposition}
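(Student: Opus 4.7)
The plan is to prove both directions by reducing arbitrary situations to \emph{elementary moves} that correspond exactly to interchangers.

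For the \textbf{if} direction, I would proceed inductively on the number of interchangers needed to relate $\py{graph2diagram}(G)$ and $\py{graph2diagram}(G')$. The base case, where the diagrams are equal on the nose, is handled by proposition~\ref{proposition:g2d then d2g}: both $G$ and $G'$ are deformation-equivalent to $\py{Diagram.draw}(\py{graph2diagram}(G))$, and gluing these two deformations gives a progressive one from $G$ to $G'$. For the inductive step it is enough to construct, for any single right interchanger $d \s \to_R \s d'$, a progressive deformation between the canonical drawings of $d$ and $d'$. Such an interchanger is triggered by two layers whose boxes have disjoint vertical projections on the scan, so we can slide one box continuously past the other through the unoccupied vertical strip between them. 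During this slide the boxes temporarily share a height (the graph becomes non-generic for an instant) but wires never have to bend backwards, so the deformation is progressive throughout.

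For the \textbf{only if} direction, suppose $h : G \to G'$ is a progressive deformation. I would first argue that, up to a further generic progressive deformation on either side, we may assume $h$ factors through only finitely many non-generic intermediate graphs $G = G_0, G_1, \dots, G_n = G'$, where each $G_i \to G_{i+1}$ either stays generic (so $\py{graph2diagram}(G_i) = \py{graph2diagram}(G_{i+1})$ by proposition~\ref{proposition:g2d then d2g}) or crosses a single codimension-one degeneracy in which exactly two box nodes swap their vertical order. In the second case, progressivity forces the two boxes to be horizontally disjoint at the moment of the swap: otherwise some wire leaving the upper box would have to enter the lower one, and sliding past would bend that wire backwards. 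Horizontal disjointness is precisely the hypothesis that licenses a right interchanger, so each elementary swap translates into one application of $\to_R$ on the associated diagram.

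The main obstacle is justifying the finite factorisation used above: a general continuous deformation might have pathological behaviour, with box heights coinciding on a complicated subset of $[0,1]$. I would overcome this by perturbing $h$ slightly using a standard transversality argument (analogous to the one underlying Joyal–Street's original proof~\cite{JoyalStreet91}), so that the function $t \mapsto (p_1(h(x,t)))_{x \in \boxes(G)}$ becomes piecewise-monotone with only finitely many simple crossings; such a perturbation is itself progressive once $\epsilon$ is small enough, because progressivity is an open condition on the space of deformations. A secondary subtlety is the treatment of scalars and of state–effect pairs, where two interchangers are available; this is handled by noting that the \py{left} flag in the implementation corresponds to which side a zero-width box is slid around, and either choice yields an admissible progressive deformation.
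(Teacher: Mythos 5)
Your proposal is correct and follows essentially the same route as the paper: the paper proves the forward direction by induction on the number of \emph{coincidences} (moments where the deformation fails to be generic), reduced WLOG to finitely many simple two-box crossings each of which realises one interchanger, and the converse by translating each interchanger into a deformation with one coincidence. Your transversality perturbation makes explicit what the paper compresses into ``WLOG (up to continuous deformation of deformations)'', and your observation that progressivity forces horizontal disjointness at the crossing is the same implicit justification the paper relies on when matching a coincidence to a (left or right) interchanger.
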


\begin{proof}
By induction on the number $n$ of \emph{coincidences}, the times at which the deformation $h$ fails to be generic, i.e. two or more boxes are at the same height.
WLOG (i.e. up to continuous deformation of deformations) this happens at a discrete number of time steps $t_1, \dots, t_n \in [0, 1]$.
Again WLOG at each time step there is at most two boxes at the same height, e.g. if there are two boxes moving below a third at the same time, we deform the deformation so that they move one after the other.
The list of boxes and offsets is preserved under generic deformation, thus if $n = 0$ then \py{graph2diagram(} $G$ \py{) == graph2diagram(} $G'$ \py{)} on the nose.
If $n = 1$, take \py{i: int} the index of the box for which the coincidence happens and \py{left: bool} whether it is a left or right interchanger, then \py{graph2diagram(} $G$ \py{).interchange(i, left) == graph2diagram(} $G'$ \py{)}.
Given a deformation with $n + 1$ coincidences, we can cut it in two time slices with $1$ and $n$ coincidences respectively then apply the cases for $n = 1$ and the induction hypothesis.

For the converse, a proof of \py{graph2diagram(} $G$ \py{) == graph2diagram(} $G'$ \py{)}, i.e. a sequence of $n$ interchangers, translates into a deformation with $n$ coincidences.
DisCoPy can output these proofs as videos using \py{Diagram.normalize} to iterate through the rewriting steps and \py{Diagram.to_gif} to produce a \py{.gif} file.
\end{proof}

\subsection{A natural isomorphism}

We have established an isomorphism between the class of lgpp graphs (up to progressive deformation) and the class of \py{Diagram} objects (up to interchanger).
It remains to define lgpp graphs as the arrows of a monoidal category, i.e. to define identity, composition and tensor.
For every monoidal signature $\Sigma$, there is a monoidal category $G(\Sigma)$ with objects $\Sigma_0^\star$ and arrows the equivalence classes of lgpp graphs with labels in $\Sigma$.
The domain and codomain of an arrow is given by the labels of the domain and codomain of the graph.
The identity $\id(x_1 \dots x_n)$ is the graph with wires $(i, a) \to (i, b)$ for $i \leq n$ and $a, b \in \R$ the horizontal boundaries.
The tensor of two graphs $G$ and $G'$ is given by horizontal juxtaposition, i.e. take $w = \max(p_0(G)) + 1$ the right-most point of $G$ plus a margin and set $G \otimes G' = G \cup \{ (p_0(x) + w, p_1(x)) \ \vert \ x \in G' \}$.
The composition $G \fcmp G'$ is given by vertical juxtaposition and connecting the codomain nodes of $G$ to the domain nodes of $G'$.
That is, $G \fcmp G' = s^+(G) \cup s^-(G') \cup E$ for $s^\pm(x) = \big( p_0(x), \frac{p_1(x) \pm (b - a)}{2} \big)$ and wires $s^+(\cod(G)_i) \to s^-(\dom(G')_i) \in E$ for each $i \leq \len(\cod(G)) = \len(\dom(G'))$.

The deformations for the unitality axioms are straightforward: there is a deformation $G \fcmp \id(\cod(G)) \sim G \sim \id(\dom(G)) \fcmp G$ which contracts the wires of the identity graph, the unit of the tensor is the empty diagram so we have an equality $G \otimes \id(1) = G = \id(1) \otimes G$.
The deformations for the associativity axioms are better described by the hand-drawn diagrams of Joyal and Street in figure~\ref{fig:assoc}.

\begin{figure}[H]
\centering
\includegraphics[scale=0.2]{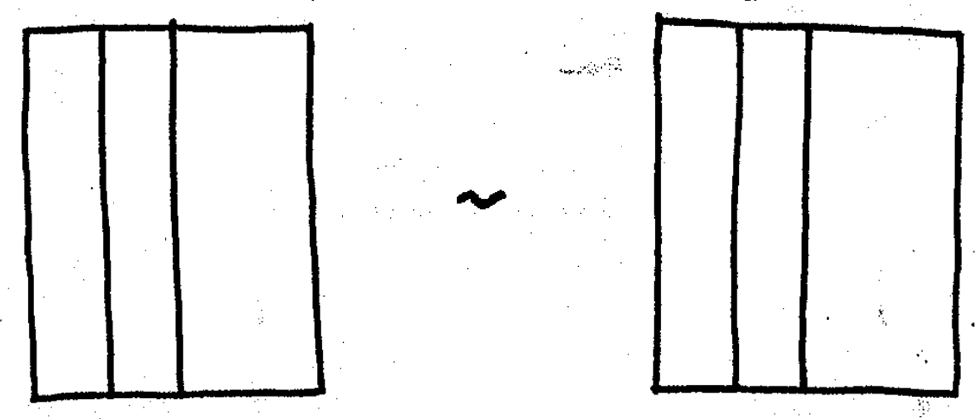}
\qquad \quad \includegraphics[scale=0.2]{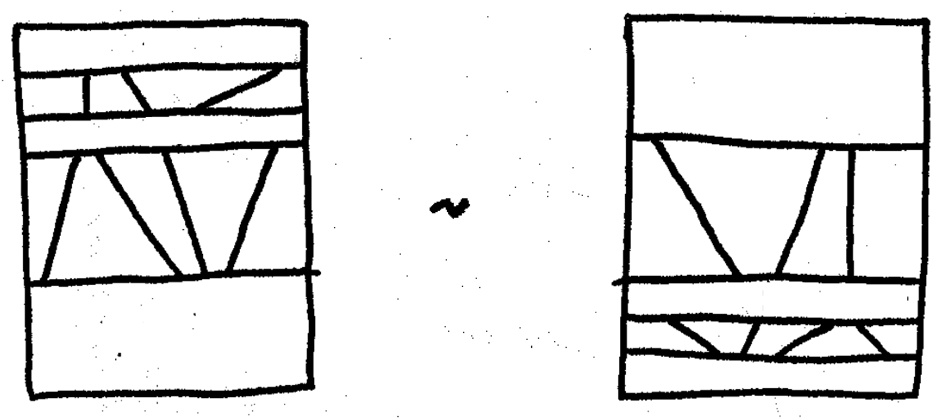}
\caption{Deformations for the associativity of tensor and composition.}
\label{fig:assoc}
\end{figure}

The interchange law holds on the nose, i.e. $(G \otimes G') \fcmp (H \otimes H') = (G \fcmp H) \otimes (G' \fcmp H')$, as witnessed by figure~\ref{fig:interchange}, the hand-drawn diagram which is the result of both sides.

\begin{figure}[H]
\centering
\includegraphics[scale=0.1]{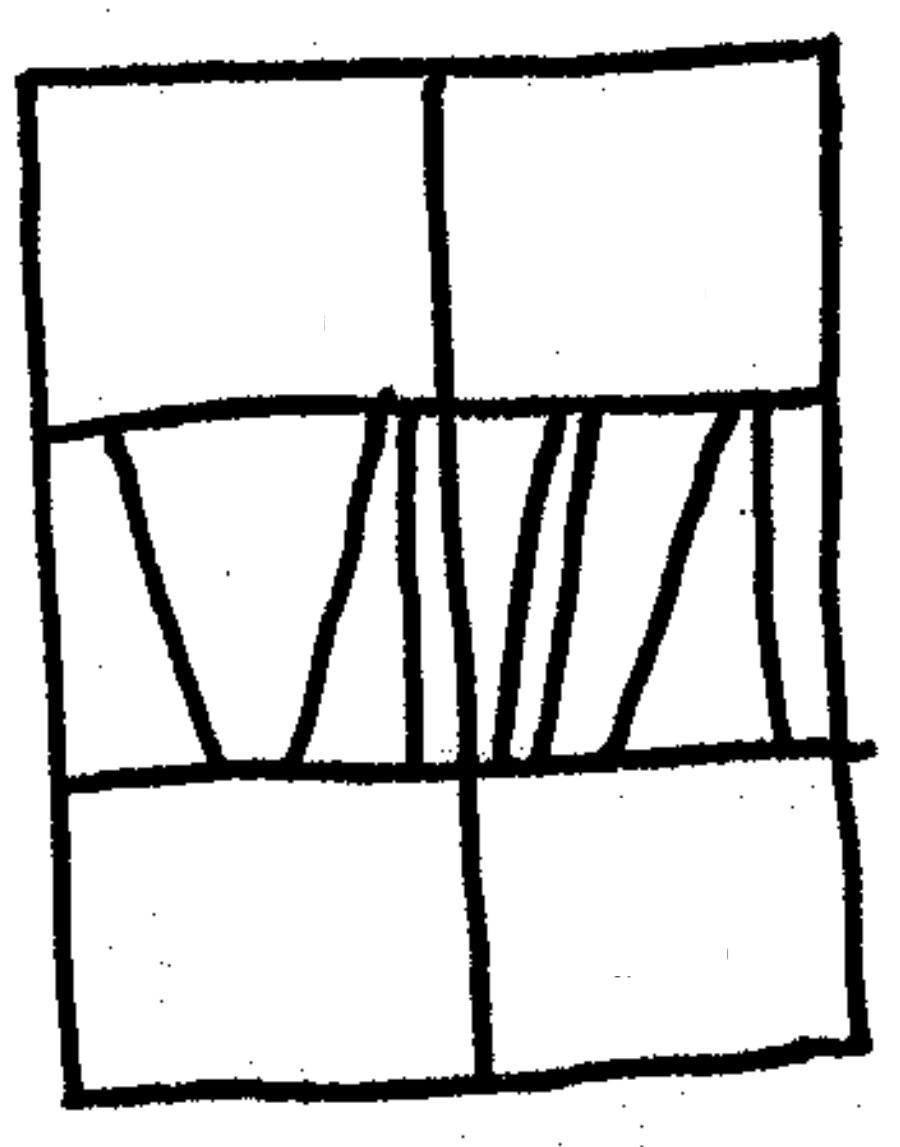}
\caption{The graph of the interchange law.}
\label{fig:interchange}
\end{figure}

Thus, we have defined a monoidal category $G(\Sigma)$.
Now given a morphism of monoidal signatures $f : \Sigma \to \Sigma'$, we can define a functor $G(f) : G(\Sigma) \to G(\Sigma')$ which sends a graph to itself relabeled with $f \fcmp \lambda$, its image on arrows is given in listing~\ref{listing:G_of_f}.
Hence, we have defined a functor\footnote
{For lack of a better notation, we use the same letter $G$ to refer to an arbitrary graph as well as for the functor $G  : \mathbf{MonSig} \to \mathbf{MonCat}$.} $G : \mathbf{MonSig} \to \mathbf{MonCat}$ which we claim is naturally isomorphic to the free functor $F : \mathbf{MonSig} \to \mathbf{MonCat}$ defined in the previous section.
An abstract way to prove this is to appeal to the universal property of free monoidal categories: if the topological and the combinatorial definitions are both free monoidal, they are necessarily isomorphic.
More concretely, we can implement this natural isomorphism as a commutative diagram in $\mathbf{Pyth}$ and think of it as a software test for our drawing and reading algorithms.

\begin{python}\label{listing:G_of_f}
{\normalfont Implementation of the functor $G : \mathbf{MonSig} \to \mathbf{MonCat}$.}

\begin{minted}{python}
SigMorph = tuple[dict[Ob, Ob], dict[Box, Box]]

def G(f: SigMorph) -> Callable[[Graph], Graph]:
    def G_of_f(graph: Graph) -> Graph:
        relabel = lambda node: Node('box', f[1][node.label], node.i, node.j)\
            if node.kind == 'box'\
            else Node(node.kind, f[0][node.label], node.i, node.j)
        return Graph(map(relabel, graph.edges))
    return G_of_f
\end{minted}
\end{python}

\begin{proposition}\label{proposition:nat-iso}
There is a natural isomorphism $F \simeq G : \mathbf{MonSig} \to \mathbf{MonCat}$ for $F$ the combinatorial definition of diagrams in section~\ref{subsection:free-monoidal} and $G$ the topological definition in terms of labeled generic progressive plane graphs.
\end{proposition}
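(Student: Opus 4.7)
The plan is to take the \py{draw} and \py{graph2diagram} translations of the previous subsection as the two directions of the natural isomorphism, and to bundle the three preceding propositions into the claim. Concretely, I would define for each signature $\Sigma$ a pair of maps
$\alpha_\Sigma : F(\Sigma) \to G(\Sigma)$ and $\beta_\Sigma : G(\Sigma) \to F(\Sigma)$
induced by \py{Diagram.draw} and \py{graph2diagram} respectively, identity on objects (both send a type $x \in \Sigma_0^\star$ to itself viewed as an ordered list of boundary nodes). Proposition~\ref{proposition:graph2diagram(self.draw())} gives $\beta_\Sigma \fcmp \alpha_\Sigma = \id$ on the nose, and proposition~\ref{proposition:g2d then d2g} gives $\alpha_\Sigma \fcmp \beta_\Sigma \sim \id$ up to generic progressive deformation, so the components are set-theoretic bijections on homsets once we pass to the equivalence classes on each side.

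Next I would check that $\alpha_\Sigma$ (or equivalently $\beta_\Sigma$) is a monoidal functor. Functoriality amounts to three clauses. For identity: \py{draw} applied to \py{Diagram.id(x)} produces the graph consisting of $|x|$ straight vertical wires with the prescribed spacing, which by construction is $\id_{G(\Sigma)}(x)$ up to deformation. For composition: the drawing algorithm initialises on $\dom$ and then processes layers in order, so stacking the drawing of $d'$ on top of the drawing of $d$ and joining codomain nodes of $d$ to domain nodes of $d'$ is exactly the definition of $\fcmp$ in $G(\Sigma)$; the only wiggle is the constant used to re-space the wires, which is absorbed into the deformation. For tensor: since every diagram can be written as a composition of whiskered boxes and \py{draw} processes each such layer independently while the tensor in $G(\Sigma)$ is horizontal juxtaposition, the same horizontal-then-vertical decomposition makes both sides agree up to a horizontal rescaling, again a deformation. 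Proposition~\ref{proposition:d2g then g2d} is then exactly the statement that this map is well-defined on interchanger-classes, i.e. descends to the quotient defining $F(\Sigma)$.

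For naturality, given a morphism of signatures $f : \Sigma \to \Sigma'$ I need $G(f) \fcmp \alpha_{\Sigma'} = \alpha_\Sigma \fcmp F(f)$. Both $F(f)$ and $G(f)$ act by pure relabeling and leave the combinatorial skeleton (list of layers on one side, underlying plane graph with its embedding on the other) untouched; since \py{draw} depends only on this skeleton, naturality reduces to a direct unfolding of definitions on boxes, from which the general case follows by the monoidality established in the previous paragraph.

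The main obstacle is the tensor clause. The combinatorial tensor $d \otimes d' = d \otimes \dom(d') \fcmp \cod(d) \otimes d'$ is biased, whereas the topological tensor is symmetric horizontal juxtaposition; proving they agree up to progressive deformation requires checking that the biased choice is interchanger-equivalent to the symmetric one and that \py{draw} does not introduce any spurious crossings when making space for the right-hand summand. This is really just another invocation of proposition~\ref{proposition:d2g then g2d} applied to the single interchanger relating the two biased tensors, but it is the one place where the quadratic-time \py{make_space} subroutine has to be argued to produce a deformation of the naive side-by-side placement rather than a genuinely different graph.
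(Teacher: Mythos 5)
Your proposal is correct and follows essentially the same route as the paper: both directions of the isomorphism are given by \py{Diagram.draw} and \py{graph2diagram}, the bijection on homsets and its well-definedness on equivalence classes come from propositions~\ref{proposition:graph2diagram(self.draw())}, \ref{proposition:g2d then d2g} and \ref{proposition:d2g then g2d}, and naturality reduces to the observation that signature morphisms act by pure relabeling. The only difference is one of bookkeeping: the paper verifies monoidal functoriality (identity, composition, tensor as juxtaposition, and the biased-versus-symmetric tensor issue you flag) in the prose defining $G(\Sigma)$ just before the proposition, so its proof can be reduced to citing the two propositions and displaying the naturality squares.
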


\begin{proof}
From propositions~\ref{proposition:g2d then d2g} and \ref{proposition:d2g then g2d}, we have an isomorphism between \py{Diagram} and \py{PlaneGraph} (up to deformation and interchanger respectively) given by \py{d2g = Diagram.draw} and \py{g2d = graph2diagram}.
Now define the image of $F$ on arrows \py{F = lambda f: Functor(ob=f[0], ar=f[1])}.
Given a morphism of monoidal signatures \py{f: SigMorph} we have the following two naturality squares.
\begin{center}
    \begin{tikzcd}
    \py{Diagram} \ar{r}{\py{d2g}} \ar{d}[']{\py{F(f)}} &
    \py{PlaneGraph} \ar{d}{\py{G(f)}}\\
    \py{Diagram} \ar{r}{\py{d2g}} &
    \py{PlaneGraph}
    \end{tikzcd}
    \qquad \text{and} \qquad
    \begin{tikzcd}
    \py{PlaneGraph} \ar{r}{\py{g2d}} \ar{d}[']{\py{G(f)}} &
    \py{Diagram} \ar{d}{\py{F(f)}}\\
    \py{PlaneGraph} \ar{r}{\py{g2d}} &
    \py{Diagram}
    \end{tikzcd}
\end{center}
\end{proof}

\subsection{Daggers, sums and bubbles}

As in the previous sections, we now discuss the drawing of daggers, sums and bubbles.
When we draw a diagram in the free $\dagger$-monoidal category, we add some asymmetry to the drawing of each box so that it looks like the vertical reflection of its dagger.

\begin{example}
{\normalfont Drawing of the axiom for unitaries.}

\begin{minted}{python}
f, g = Box('f', x, y), Box('g', z, w)
(f >> f[::-1]).draw(); Diagram.id(x).draw()
(f[::-1] >> f).draw(); Diagram.id(y).draw()
\end{minted}

\begin{center}
\tikzfig{img/basics/unitaries-left} \hspace{100pt}
\tikzfig{img/basics/unitaries-right}
\end{center}
\end{example}

\begin{example}
{\normalfont Drawing of the axiom for $\dagger$-monoidal categories.}

\begin{minted}{python}
f, g = Box('f', x, y), Box('g', z, w)
(f @ g)[::-1].draw(); (f[::-1] @ g[::-1]).draw()
assert (f @ g)[::-1].normal_form() == f[::-1] @ g[::-1]
\end{minted}

\begin{center}
\tikzfig{img/basics/dagger-monoidal-left}
$\quad \to_R \quad$ \tikzfig{img/basics/dagger-monoidal-right}
\end{center}
\end{example}

When we draw a sum, we just draw each term with an addition symbol in between.
More generally, \py{drawing.equation} allows to draw any list of diagrams and \py{drawing.Equation} allows to draw equations within equations.

\begin{example}
{\normalfont Drawing of a commutativity equation.}

\begin{minted}{python}
from discopy import drawing

f, g, h = Box('f', x, y), Box('g', y, z), Box('h', x, z)
drawing.equation(drawing.Equation(f >> g, h, symbol='$+$'),
                 drawing.Equation(h, f >> g, symbol='$+$'))
\end{minted}
\ctikzfig{img/basics/equation}
\end{example}

\begin{example}
{\normalfont Drawing a composition and tensor of sums.}

\begin{minted}{python}
f0, g0 = Box("f0", x, y), Box("g0", y, z)
f1, g1 = Box("f1", x, y), Box("g1", y, z)
((f0 + f1) >> (g0 + g1)).draw()
\end{minted}
\ctikzfig{img/basics/sums-composition}

\begin{minted}{python}
((f0 + f1) @ (g0 + g1)).draw()
\end{minted}
\ctikzfig{img/basics/sums-tensor}
\end{example}

The case of drawing bubbles is more interesting.
One solution would be to draw the bubble as a rectangle like any other box, then draw the content of the bubble inside the rectangle.
However, this would require some clever scaling so that the boxes of the diagram inside the bubble have the same size as the boxes outside, i.e. we would need to add more complexity to our drawing algorithm.
The solution implemented in DisCoPy is to apply a faithful functor $\py{downgrade} : F(\Sigma^\beta) \to F(\Sigma \cup \mathtt{open}^\beta \cup \mathtt{close}^\beta)$ from the free monoidal category with bubbles $F(\Sigma^\beta)$ to the free monoidal category generated by the following signature.
Take the objects $\mathtt{open}^\beta_0 = \mathtt{close}^\beta_0 = \Sigma_0 + \{ \bullet \}$ and boxes
for opening $\mathtt{open}^\beta(x) : \beta_\dom(x) \to \bullet \otimes x \otimes \bullet$ and closing $\mathtt{close}^\beta(x) : \bullet \otimes x \otimes \bullet \to \beta_\cod(x)$ bubbles for each type $x \in \Sigma_0^\star$.
Now define $\py{downgrade(f.bubble())} = \mathtt{open}^\beta(\py{f.dom}) \fcmp (\bullet \otimes \py{f} \otimes \bullet) \fcmp \mathtt{close}^\beta(\py{f.cod})$ for any diagram $\py{f}$ inside a bubble.
That is, we draw a bubble as its opening, its inside with identity wires on both sides then its closing.
The $\bullet$-labeled wires are drawn with Bézier curves so that the bubble looks a bit closer to a circle than a rectangle.
In the case of bubbles that are length-preserving on objects, we also want to override the drawing of its opening and closing boxes so that the wires go straight through the bubble rather than meeting at the box node.

\begin{example}
{\normalfont Drawing of a bubbled diagram and a first-order logic formula.}

\begin{minted}{python}
f, g, h = Box('f', x, y), Box('g', y, z), Box('h', y @ z, x)
(f @ g >> h).bubble(dom=a @ b, cod=c, name="$\\beta$").draw()
\end{minted}
\ctikzfig{img/basics/bubble}
\begin{minted}{python}
god = Predicate("G", x)
formula = god >> (Formula.id(x).cut() >> god.dagger()).cut()
formula.draw()
\end{minted}
\ctikzfig{img/basics/there-is-but-one-god}
\end{example}

\subsection{Automatic diagram recognition}

We conclude this section with an application of proposition~\ref{proposition:nat-iso} to \emph{automatic diagram recognition}: turning pictures of diagrams into diagrams.
In listing~\ref{listing:lgpp2diagram}, we described an abstract reading algorithm which took lgpp graphs as input and returned diagrams.
We make it a concrete algorithm by taking \emph{bitmaps} as input: grids of Boolean pixels describing a black-and-white picture.
The algorithm \py{read} listed below takes as input a pair of bitmaps for the box nodes and the wires of the plane graph, it returns a \py{Diagram}.
It is more general than the \py{graph2diagram} algorithm of listing~\ref{listing:lgpp2diagram} where we assumed that the embedding of the graph looked like the output of \py{Diagram.draw}. i.e. that edges are straight vertical lines.
Indeed, our reading algorithm will accept \emph{any bitmaps} as input and always return a valid diagram, however bended the edges are.
If the bitmaps indeed represent a progressive generic plane graph $G$, then we get \py{read(} $G$ \py{).draw()} $\sim G$ up to progressive generic deformation.
If not, the output will still be a diagram but its drawing may not look anything like the input.

\begin{python}
{\normalfont Implementation of the abstract reading algorithm of listing~\ref{listing:lgpp2diagram}.}

\begin{minted}{python}
from numpy import array, argmin
from skimage.measure import regionprops, label

def read(box_pixels: array, wire_pixels: array) -> Diagram:
    connected_components = lambda img: regionprops(label(img))
    box_nodes, wires = map(connected_components, (box_pixels, wire_pixels))
    source, target, length, width = [], [], len(box_pixels), len(box_pixels[0])
    critical_heights = [0] + [
        int(node.centroid[0]) for node in box_nodes] + [length]
    for wire, region in enumerate(wires):
        top, bottom = (
            minmax(i for i, _ in region.coords) for minmax in (min, max))
        source.append(argmin(abs(array(critical_heights) - top)))
        target.append(argmin(abs(array(critical_heights) - bottom)))
    scan = [wire for wire, node in enumerate(source) if node == 0]
    dom, boxes_and_offsets = Ty('x') ** len(scan), []
    for depth, box_node in enumerate(box_nodes):
        input_wires = [wire for wire in scan if target[wire] == depth + 1]
        output_wires = [
            wire for wire, node in enumerate(source) if node == depth + 1]
        dom, cod = Ty('x') ** len(input_wires), Ty('x') ** len(output_wires)
        box = Box('box_{}_{}'.format(len(dom), len(cod)), dom, cod)
        height, left = map(int, box_node.centroid)
        left_of_box = [wire for wire in scan if wire not in input_wires
                       and dict(wires[wire].coords).get(height, width) < left]
        offset = max(len(left_of_box), 0)
        boxes_and_offsets.append((box, offset))
        scan = scan[:offset] + output_wires + scan[offset + len(input_wires):]
    return decode(dom, tuple(boxes_and_offsets))
\end{minted}
\end{python}

We use the \py{array} data structure of NumPy~\cite{VanDerWaltEtAl11} for bitmaps.
We compute the connected components of box and wire pixels with Scikit-Image~\cite{WaltEtAl14}, using their default ordering by lexicographic order of top-left pixel.
We then define a list of critical heights: the top of the picture, the height of the centroid of each box component, then the bottom of the picture.
For each wire component, we define its source and target as the closest critical height to its top-most and bottom-most pixel.
We define the domain of the diagram as the list of wires with the domain as source.
We then scan through the picture top to bottom, keeping a list \py{scan} of the open wires at each height.
For each box, we find its input wires in this list and define the offset as the number of wires left of the box node that are not inputs, then we update \py{scan} with the output wires.
We get an encoding \py{dom, boxes_and_offsets} which yields a valid diagram by construction.

\begin{example}
Suppose we take the following picture of a diagram as input, where the red pixels are boxes and the black pixels are wires:
\begin{center}
\includegraphics[scale=.2]{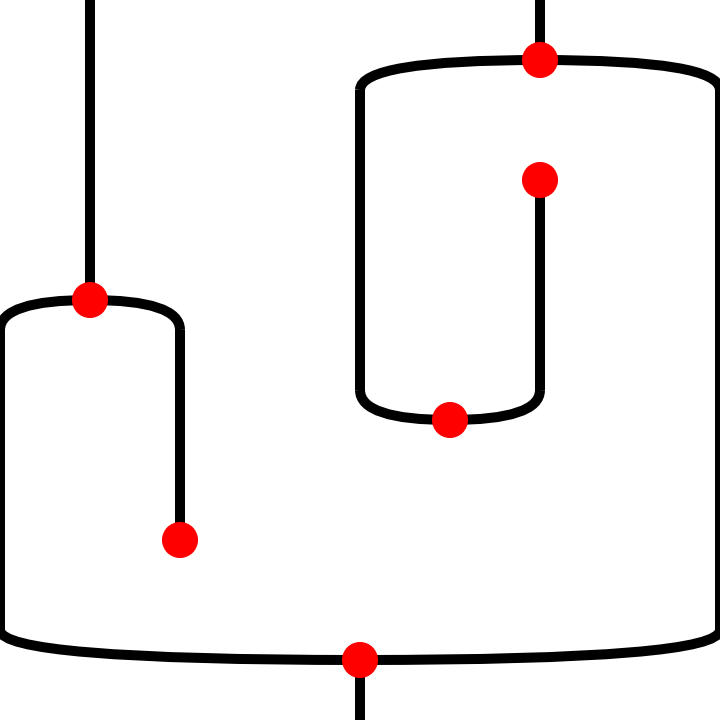}
\end{center}
We count $\{ 1, \dots, 6 \}$ boxes and 8 wires
$$
\{ 0 \to 3, \s
0 \to 1, \s
1 \to 4, \s
1 \to 6, \s
2 \to 4, \s
3 \to 6, \s
3 \to 5, \s
6 \to 7 \}
$$
for $0$ and $7$ the domain and codomain of the whole diagram respectively.
\begin{center}
\includegraphics[scale=.2]{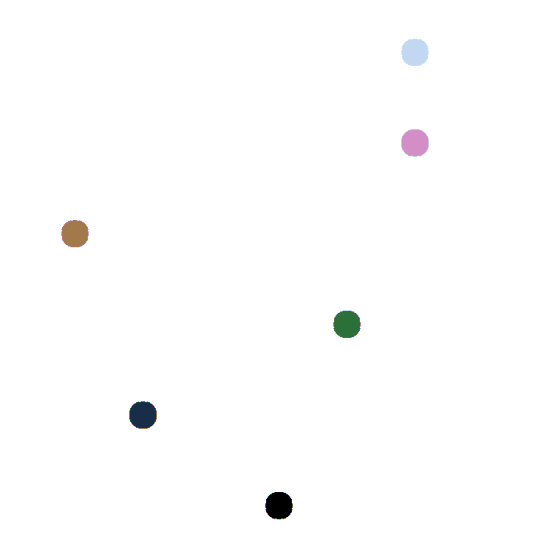}
\hspace{10pt}
\includegraphics[scale=.2]{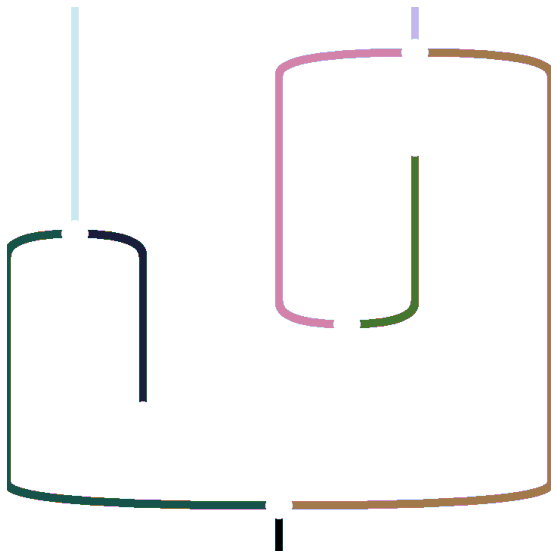}
\end{center}

From this, we reconstruct the following diagram by scanning top to bottom:

\ctikzfig{img/bitmap2diagram/finish}

which is indeed equal to the input picture, up to generic progressive deformation.
\end{example}

\begin{example}
Suppose we start from the following pastiche of Kandinsky's \emph{Punkt und linie zu fläche} (point and line on the plane):

\begin{center}
\includegraphics[scale=.4]{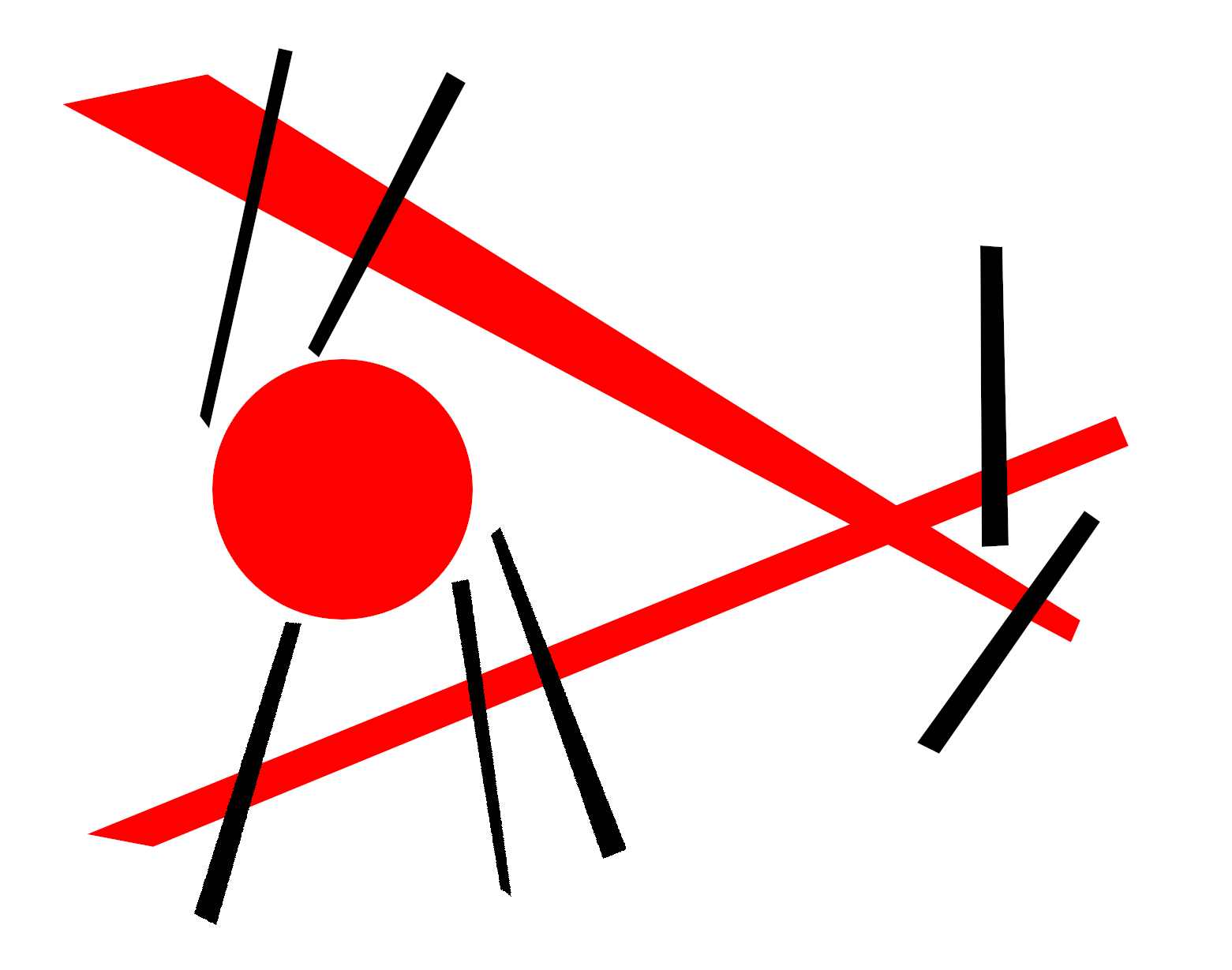}
\end{center}
We count $\{ 1, \dots, 9 \}$ boxes and 7 wires
$$\{
0 \to 3, \s
1 \to 3, \s
2 \to 4, \s
4 \to 8, \s
4 \to 9, \s
6 \to 10, \s
6 \to 10 \}
$$
for $0$ and $10$ the domain and codomain of the whole diagram respectively.
\begin{center}
\includegraphics[scale=.2]{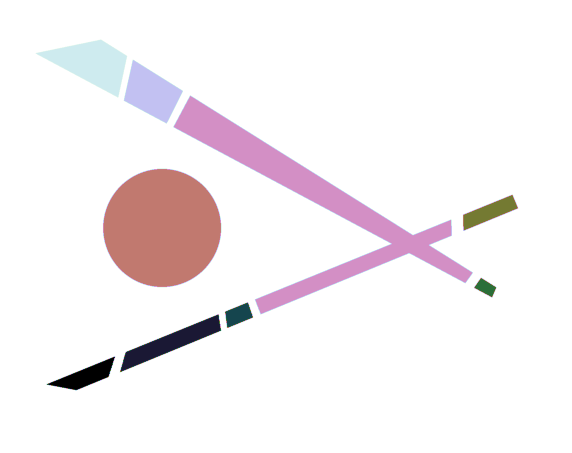}
\hspace{10pt}
\includegraphics[scale=.2]{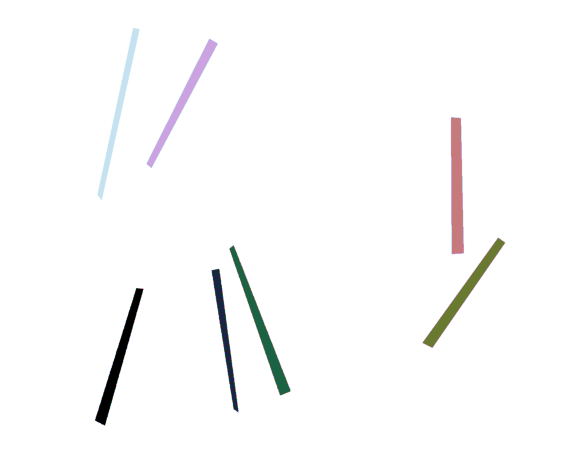}
\end{center}

From this, we reconstruct the following diagram by scanning top to bottom:

\ctikzfig{img/punkt-und-linie/finish}

which looks nothing like the input because Kandinsky's abstract paintings are not generic progressive plane graphs.
\end{example}

What could be the applications of such a reading algorithm?
Of course, real-world pictures do not come in clean red and black bitmaps.
We need some more computer vision to label each pixel as belonging either to a box, a wire or the background.
This can be achieved by training a convolutional neural network on a large number of example pictures, each annotated with their red and black bitmaps.
Once we have trained the machine to classify whether pixels belong to a box or wire, we can also train it to output the label of each pixel, i.e. the label of the box or wire it belongs to.
With enough training data, we could automatically turn pictures of Bob's blackboard into circuits that we can execute on a quantum computer.
Less trivially, this could be applied to \emph{document layout analysis}:
from the picture of say, a hand-drawn calendar from the middle ages, we could train our recognition algorithm to output a diagram that encodes the structure of the calendar.
In previous work~\cite{BorosEtAl19}, we trained convolutional neural networks to extract lines of text in medieval manuscripts, but we had to exclude calendars from our analysis due to the complexity of their layouts.
Our diagram recognition machines would enhance the automated analysis of such hand-drawn documents with structured layouts.

Our simple algorithm is robust to any deformation of lgpp graph, but there is much room for improvement.
The easiest assumption to remove is genericity, i.e. boxes need not be at distinct heights.
Non-generic progressive plane graphs, i.e. with potentially horizontal wires between boxes at the same height, have been characterised as the arrows of free \emph{double categories}.
Delpeuch~\cite{Delpeuch20} shows how they can be encoded as lgpp graphs with an extra label on each wire for whether it is horizontal or vertical.
We can also remove the progressivity assumption, i.e. wires can bend backwards.
We have two options: either a) we write a geometric algorithm than can find the endpoints of any bended wire, or b) we train another neural network to detect each point of non-progressivity, i.e. the cups and caps where the vertical derivative of a wire changes sign.
Such non-progressive plane graphs can be encoded as lgpp graphs with \emph{cups and caps boxes}, they are the arrows of free \emph{pivotal categories} which we discuss in section~\ref{subsection:rigid}.

Next, we can get rid of the planarity assumption: the projection of the topological graph onto the plane need not be an embedding, i.e. wires can cross.
We can improve our reading algorithm in a similar way: either a) some geometric algorithm or b) some black-box neural network that can detect the points of non-planarity, i.e. the intersection of wires.
Such non-planar progressive graphs can be encoded as lgpp graphs with \emph{swap boxes}, they are the arrows of free \emph{symmetric monoidal categories} which we discuss in section~\ref{subsection:symmetric}.
Non-planar non-progressive graphs, i.e. where wires can bend and swap, are the arrows of free \emph{compact closed categories}, which play the starring role in categorical quantum mechanics.

Finally, we can even remove the graph assumption: wires need not be homeomorphic to an open interval.
We merely require that they are one-dimensional open subsets of the plane, i.e. wires can split and merge.
We do not even need to assume that their boundary is in the nodes of the graph, i.e. wires can start or end anywhere in the plane.
Again, we can improve our reading algorithm by encoding such hypergraphs (i.e. where wires can have any number of sources and targets) as lgpp graphs with \emph{spider boxes} for the splits, merges, starts and ends of each wire.
Labeled hypergraphs are the arrows of free \emph{hypergraph categories}, which we discuss in section~\ref{subsection:hypergraph}.
In section~\ref{section:premonoidal}, we discuss the relationship between this definition of hypergraph diagrams as (equivalence classes of) planar diagrams with swap and spider boxes and the more traditional graph-based definition.


\section{Adding extra structure} \label{section:extra structure}


\subsection{Rigid categories \& wire bending} \label{subsection:rigid}

In sections~\ref{section:cat} and \ref{section:monoidal} we discussed the fundamental notion of \emph{adjunction} with the example of free-forgetful functors.
The definition of left and right adjoints in terms of unit and counit natural transformations makes sense in $\mathbf{Cat}$, but it can be translated in the context of any monoidal category $C$.
An object $x^l \in C_0$ is the left adjoint of $x \in C_0$ whenever there are two arrows $\ttcup(x) : x^l \otimes x \to 1$ and $\ttcap(x) : 1 \to x \otimes x^l$ (also called counit and unit) such that:
\begin{itemize}
\item $\ttcap(x) \otimes x \ \fcmp \ x \otimes \ttcup(x) \s = \s \id(x)$,
\ctikzfig{img/rigid/snake-left}
\item $x^l \otimes \ttcap(x) \ \fcmp \ \ttcup(x) \otimes x^l \s = \s \id(x^l)$.
\ctikzfig{img/rigid/snake-right}
\end{itemize}
This is equivalent to the condition that the functor $x^l \otimes - : C \to C$ is the left adjoint of $x \otimes - : C \to C$.
Symmetrically, $x^r \in C_0$ is the right adjoint of $x \in C_0$ if $x$ is its left adjoint.
We say that $C$ is \emph{rigid} (also called \emph{autonomous}) if every object has a left and right adjoint.
From this definition we can deduce a number of properties:
\begin{itemize}
    \item adjoints are unique up to isomorphism,
    \item adjoints are monoid anti-homomorphisms, i.e. $(x \otimes y)^l \simeq y^l \otimes x^l$ and $1^l \simeq 1$,
    \item left and right adjoints cancel, i.e. $(x^l)^r \simeq x \simeq (x^r)^l$,
\end{itemize}
We say that $C$ is strictly rigid whenever these isomorphisms are in fact equalities, again one can show that any rigid category is monoidally equivalent to a strict one.
One can also show that cups and caps compose by nesting:
\begin{itemize}
\item $\ttcup(x \otimes y) \s = \s y^l \otimes \ttcup(x) \otimes y \ \fcmp \ \ttcup(y)$,
\ctikzfig{img/rigid/nesting-cups}
\item $\ttcap(x \otimes y) \s = \s \ttcap(x) \ \fcmp \ x \otimes \ttcap(y) \otimes x^l$,
\ctikzfig{img/rigid/nesting-caps}
\item $\ttcup(1) \s = \s \ttcap(1) \s = \s \id(1)$, drawn as the equality of three empty diagrams.
\end{itemize}
The first two equations are drawn as diagrams in a non-foo monoidal category, i.e. with wires for composite types and explicit boxes for tensor.
This can be taken as an inductive definition, once we have defined the cups and caps for generating objects, we have defined them for all types.
Thus, we can take the data for a (strictly) rigid category $C$ to be that of a free-on-objects monoidal category together with:
\begin{itemize}
    \item a pair of unary operators $(-)^l, (-)^r : C_0 \to C_0$ on generating objects,
    \item and a pair of functions $\ttcup, \ttcap : C_0 \to C_1$ witnessing that $x^l$ and $x^r$ are the left and right adjoints of each generating object $x \in C_0$.
\end{itemize}
Diagrams in rigid categories are more flexible than monoidal categories: we can bend wires.
They owe their name to the fact that they are less flexible than \emph{pivotal categories}.
For any rigid category $C$, there are two contravariant endofunctors, called the left and right \emph{transpose} respectively.
They send objects to their left and right adjoints, and each arrow $f : x \to y$ to
\begin{center}
\tikzfig{img/rigid/transpose-left}
\hspace{50pt} and \hspace{50pt}
\tikzfig{img/rigid/transpose-right}
\end{center}
A rigid category $C$ is called \emph{pivotal} when it has a monoidal natural isomorphism $x^l \sim x^r$ for each object $x$, which implies that the left and right transpose coincide: we can rotate diagrams by 360 degrees~\cite[§4.2]{Selinger10}.
We say $C$ is strictly piotal when this isomorphism is an equality.
This is the case for any rigid category $C$ with a dagger structure: the dagger of the cup (cap) for an object $x$ is the cap (cup) of its left adjoint $x^l$.
When this is the case, $C$ is called $\dagger$-pivotal.
We say $C$ is strictly piotal when left and right transpose are equal.

\begin{example}
Recall from example~\ref{example:endofunctors are monoidal} that for any category $C$, the category $C^C$ of endofunctors and natural transformations is monoidal.
Its subcategory with endofunctors that have both left and right adjoints is rigid.
Its subcategory with endofunctors that have equal left and right adjoints is pivotal.
\end{example}

\begin{example}
$\mathbf{Tensor}_\S$ is $\dagger$-pivotal with left and right adjoints given by list reversal, cups and caps by the Kronecker delta $\ttcup(n)(i, j) = \ttcap(n)(i, j) = 1$ if $i = j$ else $0$.
Note that for tensors of order greater than 2, the \emph{diagrammatic transpose} defined in this way differs from the usual \emph{algebraic transpose}: the former reverses list order while the latter is the identity on objects.
\end{example}

\begin{example}
$\mathbf{Circ}$ is $\dagger$-pivotal with the preparation of the Bell state as cap and the post-selected Bell measurement as cup (both are scaled by $\sqrt{2}$).
The snake equations yield a proof of correctness for the (post-selected) quantum teleportation protocol.
\end{example}

\begin{example}\label{example:pregroups}
A discrete rigid category is a group: if the cups and caps are identities then they define an inverse for the tensor.
A rigid preordered monoid (i.e. a rigid category with at most one arrow between any two objects) is called a (quasi\footnote
{In his original definition~\cite{Lambek99}, Lambek also requires that pregroups are \emph{partial orders}, i.e. preorders with antisymmetry $x \leq y$ and $y \leq x$ implies $x = y$.
This implies that pregroups are strictly rigid, but also that they cannot be free on objects: $\ttcup(x) \otimes \id(x) : x \otimes x^l \otimes x \to x$ and $\id(x) \otimes \ttcap(x) : x \to x \otimes x^l \otimes x$ together would imply $x = x \otimes x^l \otimes x$.
}) \emph{pregroup}, their application to NLP will be discussed in section~\ref{section:NLP}.
A commutative pregroup is a (preordered) abelian group: left and right adjoints coincide with the multiplicative inverse.

Natural examples of non-free non-commutative pregroups are hard to come by.
One exception is the monoid of \emph{monotone unbounded functions} $\Z \to \Z$ with composition as multiplication and pointwise order.
The left adjoint of $f : \Z \to \Z$ is defined such that $f^l(m)$ is the minimum $n \in \Z$ with $m \leq f(n)$ and symmetrically $f^r(m)$ is the maximum $n \in \N$ with $f(n) \leq m$.
Extending Cayley's theorem from groups to pregroups, Buszkowski~\cite[Proposition~2]{Buszkowski01} proved that every pregroup $G$ is in fact isomorphic to a subpregroup (i.e. a monoidal subcategory) of monotone functions $G \to G$.
\end{example}

Any monoidal functor $F : C \to D$ between two rigid categories $C$ and $D$ preserves left and right adjoints up to isomorphism, we say it is strict when it preserves them up to equality.
Thus, we have defined a subcategory $\mathbf{RigidCat} \injects \mathbf{MonCat}$.
We define a \emph{rigid signature} $\Sigma$ as a monoidal signature where the generating objects have the form $\Sigma_0 \times \Z$.
We identify $x \in \Sigma_0$ with $(x, 0) \in \Sigma_0 \times \Z$ and define the left and right adjoints $(x, z)^l = (x, z - 1)$ and $(x, z)^r = (x, z + 1)$.
The objects $\Sigma_0$ are called \emph{basic types}, their iterated adjoints $\Sigma_0 \times \Z$ are called \emph{simple types}.
The integer $z \in \Z$ is called the \emph{adjunction number} of the simple type $(x, z) \in \Sigma_0 \times \Z$ by Lambek and Preller~\cite{PrellerLambek07} and its \emph{winding number} by Joyal and Street~\cite{JoyalStreet88}.
Again, a morphism of rigid signatures $f : \Sigma \to \Sigma'$ is a pair of functions $f : \Sigma_0 \to \Sigma'_0$ and $f : \Sigma_1 \to \Sigma'_1$ which commute with domain and codomain.

There is a forgetful functor $U : \mathbf{RigidCat} \to \mathbf{RigidSig}$ which sends any strictly-rigid foo-monoidal category to its underlying rigid signature.
We now describe its left adjoint $F^r : \mathbf{RigidSig} \to \mathbf{RigidCat}$.
Given a rigid signature $\Sigma$, we define a monoidal signature $\Sigma^r = \Sigma \cup \{ \ttcup (x) \}_{x \in \Sigma_0} \cup \{ \ttcap (x) \}_{x \in \Sigma_0}$.
The free rigid category is the quotient $F^r(\Sigma) = F(\Sigma^r) / R$ of the free monoidal category by the snake equations $R$.
That is, the objects are lists of simple types $(\Sigma_0 \times \Z)^\star$, the arrows are equivalence classes of diagrams with cup and cap boxes.
This is implemented in the \py{rigid} module of DisCoPy as outlined below.

\begin{python}
{\normalfont Implementation of objects and types of free rigid categories.}

\begin{minted}{python}
@dataclass
class Ob(cat.Ob):
    z: int = 0

    l = property(lambda self: Ob(self.name, self.z - 1))
    r = property(lambda self: Ob(self.name, self.z + 1))

    @classmethod
    def cast(cls, old: cat.Ob) -> Ob:
        return old if isinstance(old, cls) else cls(str(old), z=0)

class Ty(monoidal.Ty, Ob):
    def __init__(self, inside=()):
        monoidal.Ty.__init__(self, inside=tuple(map(Ob.cast, inside)))

    l = property(lambda self: type(self)([x.l for x in self.inside[::-1]]))
    r = property(lambda self: type(self)([x.r for x in self.inside[::-1]]))
\end{minted}
\end{python}

\begin{example}
We can check the axioms for objects in rigid categories hold on the nose.

\begin{minted}{python}
x, y = Ty('x'), Ty('y')
assert Ty().l == Ty() == Ty().r
assert (x @ y).l == y.l @ x.l and (x @ y).r == y.r @ x.r
assert x.r.l == x == x.l.r
\end{minted}
\end{example}

\py{rigid.Ob} and \py{rigid.Ty} are subclasses of \py{cat.Ob} and \py{monoidal.Ty} respectively, with \py{property} methods (i.e. attributes that are computed on the fly) \py{l} and \py{r} for the left and right adjoints.
Thanks to the \py{cast} method, we do not need to override the \py{tensor} method inherited from \py{monoidal.Ty}.
In turn, subclasses of \py{rigid.Ty} will not need to override \py{l} and \py{r}.
Similarly, the \py{rigid.Diagram} class is a subclass of \py{monoidal.Diagram}, thanks to the \py{cast} we do not need to reimplement the identity, composition or tensor.
\py{rigid.Box} is a subclass of \py{monoidal.Box} and \py{rigid.Diagram}, with \py{Box.cast = Diagram.cast}.
We need to be careful with the order of inheritance however: diagram equality is defined in terms of box equality, so if we had \py{Box.__eq__ = Diagram.__eq__} then checking equality would enter an infinite loop.
\py{Cup} (\py{Cap}) is a subclass of \py{Box} initialised by a pair of types \py{x, y} such that \py{len(x) == len(y) == 1} \py{x == y.l} (\py{x.l == y}, respectively).
The class methods \py{cups} and \py{caps} construct diagrams of nested cups and caps by induction, with \py{Cup} and \py{Cap} as a base case.

\begin{python}
{\normalfont Implementation of the arrows of free rigid categories.}

\begin{minted}{python}
class Diagram(monoidal.Diagram):
    def transpose(self, left=True) -> Diagram:
        if left: ... # Symmetric to the right case.
        return self.caps(self.dom.r, self.dom) @ self.id(self.cod.r)\
            >> self.id(self.dom.r) @ self @ self.id(self.cod.r)\
            >> self.id(self.dom.r) @ self.cups(self.cod, self.cod.r)

class Box(monoidal.Box, Diagram):
    cast = Diagram.cast

class Cup(Box):
    def __init__(self, x: Ty, y: Ty):
        assert len(x) == 1 and x == y.l
        super().__init__("Cup({}, {})".format(repr(x), repr(y)), x @ y, x[:0])

class Cap(Box):
    def __init__(self, x: Ty, y: Ty):
        assert len(x) == 1 and x.l == y
        super().__init__("Cap({}, {})".format(repr(x), repr(y)), x[:0], x @ y)

def nesting(factory):
    def method(cls, x: Ty, y: Ty) -> Diagram:
        if len(x) == 0: return cls.id(x[:0])
        if len(x) == 1: return factory(x, y)
        head = factory(x[0], y[-1])
        if head.dom:  # We are nesting cups.
            return x[0] @ method(cls, x[1:], y[:-1]) @ y[-1] >> head
        return head >> x[0] @ method(cls, x[1:], y[:-1]) @ y[-1]
    return classmethod(method)

Diagram.cups, Diagram.caps = nesting(Cup), nesting(Cap)
\end{minted}
\end{python}

The \emph{snake removal} algorithm listed below computes the normal form of diagrams in rigid categories.
It is a concrete implementation of the abstract algorithm described in pictures by Dunn and Vicary~\cite[2.12]{DunnVicary19}.
First, we implement a subroutine \py{follow_wire}.
It takes a codomain node (given by the index \py{i} of its box and the index \py{j} of itself in the box's codomain)
and follows the wire till it finds either the domain of another box or the codomain of the diagram.
When we follow a wire, we compute two lists of \emph{obstructions}, the index of each box on its left and right.
The \py{find_snake} function calls \py{follow_wire} for each \py{Cap} in the diagram until it finds one that is connected to a \py{Cup}, or returns \py{None} otherwise.
A \py{Yankable} snake is given by the index of its cup and cap, the two lists of obstructions on each side and whether it is a left or right snake.
\py{unsnake} applies \py{interchange} repeatedly to remove the obstructions, i.e. to make the cup and cap consecutive boxes in the diagram, then returns the diagram with the snake removed.
Each snake removed reduces the length $n$ of the diagram by 2, hence the \py{snake_removal} algorithm makes at most $n / 2$ calls to \py{find_snake}.
Finally, we call \py{monoidal.Diagram.normal_form} which takes at most cubic time.
Finding a snake takes quadratic time (for each cap we need to follow the wire at each layer) as well as removing it (for each obstruction we make a linear number of calls to \py{interchange}).
Thus, we can compute normal forms for diagrams in free rigid categories in cubic time.

\begin{python}\label{listing:snake-removal}
{\normalfont Outline of the snake removal algorithm.}

\begin{minted}{python}
Obstruction = tuple[tuple[int, ...], tuple[int, ...]]
Yankable = tuple[int, int, Obstruction, bool]

def follow_wire(
    self: Diagram, i: int, j: int) -> tuple[int, int, Obstruction]: ...
def find_snake(self: Diagram) -> Optional[Yankable]: ...
def unsnake(self: Diagram, yankable: Yankable) -> Diagram: ...

def snake_removal(self: Diagram) -> Diagram:
    yankable = find_snake(diagram)
    return snake_removal(unsnake(diagram, yankable)) if yankable else diagram

Diagram.normal_form = lambda self:\
    monoidal.Diagram.normal_form(snake_removal(self))
\end{minted}
\end{python}

\begin{example}
We can check that the snake equations hold up to normal form.

\begin{minted}{python}
t = x @ y

left_snake = Diagram.id(t.l).transpose(left=False)
right_snake = Diagram.id(t).transpose(left=True)

assert left_snake.normal_form() == Diagram.id(t)\
    and right_snake.normal_form() == Diagram.id(t.l)

drawing.equation(
    drawing.Equation(left_snake, Diagram.id(t)),
    drawing.Equation(right_snake, Diagram.id(t.l)),
    symbol='and', space=2, draw_type_labels=False)
\end{minted}

\ctikzfig{img/rigid/double-snake-equation}
\end{example}

\begin{example}
We can check that left and right transpose cancel up to normal form.

\begin{minted}{python}
f = Box('f', x, y)

lr_transpose = f.transpose(left=True).transpose(left=False)
rl_transpose = f.transpose(left=False).transpose(left=True)

assert lr_transpose.normal_form() == f == rl_transpose.normal_form()
drawing.equation(lr_transpose, f, rl_transpose)
\end{minted}

\ctikzfig{img/rigid/transpose-inverse}
\end{example}

\begin{python}\label{example:pivotal-circuit}
{\normalfont Implementation of $\mathbf{Circ}$ as a pivotal category.}

\begin{minted}{python}
class Qubits(monoidal.Qubits, Ty):
    l = r = property(lambda self: self)

class Circuit(monoidal.Circuit, Diagram):
    cups = nesting(lambda *_: sqrt2 @ Ket(0, 0) >> H @ qubit)
    caps = lambda x, y: Circuit.cups(x, y).dagger()
\end{minted}
\end{python}

\begin{example}
We can verify the teleportation protocol for two qubits.

\begin{minted}{python}
Bell_state = Circuit.caps(qubit ** 2, qubit ** 2)
Bell_effect = Circuit.cups(qubit ** 2, qubit ** 2)

assert (Bell_state @ qubit ** 2 >> qubit ** 2 @ Bell_effect).eval()\
    == (qubit ** 2).eval()\
    == (qubit ** 2 @ Bell_state >> Bell_effect @ qubit ** 2).eval()
\end{minted}
\end{example}

\py{rigid.Functor} is implemented as a subclass of \py{monoidal.Functor} with the magic method \py{__call__} overriden.
The image on types and on objects \py{x} with \py{x.z == 0} remains unchanged.
The image on objects \py{x} with \py{x.z < 0} is defined by \py{F(x) = F(x.r).l} and symmetrically for \py{x.z > 0}.
Indeed, when defining a strict rigid functor we only need to define the image of basic types, the image of their iterated adjoints is completely determined.
The only problem arises when the objects in the codomain do not have \py{l} and \py{r} attributes, such as the implementation of $\mathbf{Tensor}_\S$ with \py{list[int]} as objects.
In this case, we assume that the left and right adjoints are given by list reversal.

\begin{python}
{\normalfont Implementation of strict rigid functors.}
\begin{minted}{python}
class Functor(monoidal.Functor):
    dom = cod = Category(Ty, Diagram)

    def __call__(self, other):
        if isinstance(other, Ty) or isinstance(other, Ob) and other.z == 0:
            return super().__call__(other)
        if isinstance(other, Ob):
            if not hasattr(self.cod.ob, 'l' if other.z < 0 else 'r'):
                return self(Ob(other.name, z=0))[::-1]
            return self(other.r).l if other.z < 0 else self(other.l).r
        if isinstance(other, Cup):
            return self.cod.ar.cups(self(other.dom[:1]), self(other.dom[1:]))
        if isinstance(other, Cap):
            return self.cod.ar.caps(self(other.cod[:1]), self(other.cod[1:]))
        return super().__call__(other)
\end{minted}
\end{python}

\begin{python}
{\normalfont Implementation of $\mathbf{Tensor}_\S$ as a pivotal category.}
\begin{minted}{python}
Tensor.cups = classmethod(lambda cls, x, y: cls(cls.id(x).inside, x + y, ()))
Tensor.caps = classmethod(lambda cls, x, y: cls(cls.id(x).inside, (), x + y))
\end{minted}
\end{python}

\begin{example}
We can check that $\mathbf{Tensor}_\S$ is indeed pivotal.

\begin{minted}{python}
F = Functor(
    ob={x: 2, y: 3}, ar={f: [[1, 2], [3, 4], [5, 6]]},
    cod=Category(tuple[int, ...], Tensor[int]))

assert F(left_snake) == F(Diagram.id(t.l)) == F(right_snake)
assert F(f.transpose()) == F(f).transpose() == F(f.transpose(left=False))

# Diagrammatic and algebraic transpose differ for tensors of order >= 2.
assert F(f @ x).transpose() != F((f @ x).transpose())
\end{minted}
\end{example}

Free pivotal categories are defined in a similar way to free rigid categories, with the two-element field $\Z / 2 \Z$ instead of the integers $\Z$, i.e. simple types with adjunction numbers of the same parity are equal.
In this case, we usually write $x^l = x^r = x^\star$ with $(x^\star)^\star = x$.
Given a pivotal signature $\Sigma$ with objects of the form $\Sigma_0 \times (\Z / 2 \Z)$, the free pivotal category is the quotient $F^p(\Sigma) = F^r(\Sigma) / R$ of the free rigid category by the relation $R$ equating the left and right transpose of the identity for each generating object.
While the diagrams of free rigid categories can have snakes, those of free pivotal categories can have circles: we can compose $\ttcap(x) : 1 \to x^l \otimes x$ then $\ttcup(x^\star) : x^l \otimes x \to 1$ to form a scalar diagram called the \emph{dimension} of the system $x$.
We also draw the wires with an orientation: the wire for $x$ is labeled with an arrow going down, the one for $x^\star$ with an arrow going up.

To the best of our knowledge, the word problem for pivotal categories is still open.
When defining the normal form of pivotal diagrams, we would need to make a choice between the diagrams for left or right transpose of a box.
Another solution is to add a new box $f^T : y^\star \to x^\star$ for the transpose of every box $f : x \to y$ in the signature, and set it as the normal form of both diagrams.
We can add some asymmetry to the drawing of the box $f$, and draw $f^T$ as its 180° degree rotation.
If the category is also $\dagger$-pivotal, we get a four-fold symmetry: the box, its dagger, its transpose and its dagger-transpose (also called its conjugate).
This is still being developed by the DisCoPy community.

\begin{python}
{\normalfont Implementation of free $\dagger$-pivotal categories.}

\begin{minted}{python}
class Ob(rigid.Ob):
    l = r = property(lambda self: self.cast(Ob(self.name, (self.z + 1) % 2)))

class Ty(rigid.Ty, Ob):
    def __init__(self, inside=()):
        rigid.Ty.__init__(self, inside=tuple(map(Ob.cast, inside)))

class Diagram(rigid.Diagram): pass

class Box(rigid.Box, Diagram):
    cast = Diagram.cast

class Cup(rigid.Cup, Box):
    def dagger(self):
        return Cap(self.dom[0], self.dom[1])

class Cap(rigid.Cap, Box):
    def dagger(self):
        return Cup(self.cod[0], self.cod[1])

Diagram.cups, Diagram.caps = nesting(Cup), nesting(Cap)

class Functor(rigid.Functor):
    dom = cod = Category(Ty, Diagram)
\end{minted}
\end{python}


\subsection{Braided categories \& wire crossing} \label{subsection:symmetric}

With rigid and pivotal categories, we have removed the assumption that diagrams are progressive: we can bend wires.
With braided and symmetric monoidal categories, we now remove the planarity assumption: wires can cross.

The data for a \emph{braided category} is that of a monoidal category $C$ together with a \emph{brading} natural isomorphism $B(x, y) : x \otimes y \to y \otimes x$ (and its inverse $B^{-1}$) drawn as a wire for $x$ crossing under (over) a wire $y$.
Braidings are subject to the following \emph{hexagon equations}:
\begin{itemize}
\item $B(x, y \otimes z) \s = \s B(x,y) \otimes z \ \fcmp \ y \otimes B(x,z)$,
\ctikzfig{img/symmetric/hexagon-left}
\item $B(x \otimes y, z) \s = \s x \otimes B(y,z) \ \fcmp \ B(x,z) \otimes y$
\ctikzfig{img/symmetric/hexagon-right}
\end{itemize}
which owe their name to the shape of the corresponding commutative diagrams when $C$ is non-strict monoidal.
We also require that $B(x, 1) = \id(x) = B(1, x)$\footnote
{Note that in a non-strict monoidal category this axiom is unnecessary, it follows from the coherence conditions.},
i.e. braiding a wire $x$ with the unit $1$ does nothing, we do not need to draw it.
The hexagon equations may be taken as an inductive definition: we can decompose the braiding $B(x, y \otimes z)$ of an object with a tensor in terms of two simpler braids $B(x,y)$ and $B(x,z)$.
Thus, we can take the data for a braided category to be that of a foo-monoidal category together with a pair of functions $B, B^{-1} : C_0 \times C_0 \to C_1$ which send a pair of generating objects to their braiding and its inverse.
Once we have specified the braids of generating objects, the braids of any type (i.e. list of objects) is uniquely determined.
A monoidal functor $F : C \to D$ between two braided categories $C$ and $D$ is braided when $F(B(x, y)) = B(F(x), F(y))$.
Thus, we get a category $\mathbf{BraidCat}$ with a forgetful functor $U : \mathbf{BraidCat} \to \mathbf{MonSig}$, we now describe its left adjoint.

Given a monoidal signature $\Sigma$, the free braided category is a quotient $F^B(\Sigma) = F(\Sigma^B) / R$ of the free monoidal category generated by $\Sigma^B =  \Sigma \cup B \cup B^{-1}$ for the braiding $B(x, y) : x \otimes y \to y \otimes x$ and its inverse $B^{-1}(x, y) : y \otimes x \to x \otimes y$ for each pair of generating objects $x, y \in \Sigma_0$.
The relation $R$ is given by the following axioms for a natural isomorphism:
\begin{itemize}
\item $B(x, y) \fcmp B^{-1}(y, x)
\s = \s \id(x \otimes y) \s = \s B^{-1}(y, x) \fcmp B(x, y)$,
\ctikzfig{img/symmetric/isomorphism}
\item $f \otimes x \ \fcmp \ B(b, x) \s = \s B(a, x) \ \fcmp \ x \otimes f \s$ and $\s x \otimes f \ \fcmp \ B(x, b) \s = \s B(x, a) \ \fcmp \ f \otimes x$.
\begin{center}
\tikzfig{img/symmetric/naturality-left}
\hfill
\tikzfig{img/symmetric/naturality-right}
\end{center}
\end{itemize}
for all generating objects $x, y \in \Sigma_0$ and boxes (including braidings) $f : a \to b$ in $\Sigma^B$.
From $B$ being an isomorphism on generating objects, we can prove it is self-inverse on any type by induction.
Similarly, from $B$ being natural on the left and right for each box, we can prove by induction that it is in fact natural for any diagram.
Note that the naturality axiom holds for boxes with domains and codomains of arbitrary length.
In particular, it holds for $f = B(y, z)$ in which case we get the following Yang-Baxter equation:
\ctikzfig{img/symmetric/yang-baxter}
It also holds for any scalar $f : 1 \to 1$, which allows to pass them through a wire:
\ctikzfig{img/symmetric/naturality-scalars}

A braided category $C$ is \emph{symmetric} if the braiding $B$ is its own inverse $B = B^{-1} = S$, in this case it is called a \emph{swap} and drawn as the intersection of two wires.
A symmetric functor is a braided functor between symmetric categories.
A $\dagger$-braided category is a braided category with a dagger structure, such that the braidings are unitaries, i.e. their inverse is also their dagger.
A $\dagger$-symmetric category is a $\dagger$-braided category that is also symmetric.

\begin{remark}
A symmetric (braided) category with one generating object is called a PROP (PROB) for PROduct and Permutation (Braid).
Indeed, the arrows of the free PROP with no generating boxes (i.e. only swaps) are \emph{permutations}, the arrows of the free braided PRO with no boxes are called \emph{braids}.

Both are \emph{groupoids}, i.e. all their arrows are isomorphisms, which also implies that they are $\dagger$-braided with the dagger given by the inverse.
For every $n \in \N$, the arrows $f : x^n \to x^n$ in the free PROP (PROB) are the elements of the $n$-th symmetric group $S_n$ (braid group $B_n$).
\end{remark}

DisCoPy implements free $\dagger$-symmetric ($\dagger$-braided) categories with a class \py{Swap} (\py{Braid}) initialised by types of length one and a class method \py{swap} (\py{braid}) for types of arbitrary length.
The method \py{simplify} cancels every braid followed by its inverse.
The \py{naturality} method applies the naturality axiom to the box at a given index \py{i: int}.
The optional argument \py{left: bool} allows to choose between left and right naturality axioms, \py{down: bool} allows to move the box either up or down the braid and \py{braid: Callable} allows to apply naturality to any subclass of \py{Braid}.

\begin{python}
{\normalfont Implementation of free $\dagger$-braided categories.}

\begin{minted}{python}
class Diagram(monoidal.Diagram):
    def simplify(self):
        for i, ((x, f, _), (y, g, _)) in enumerate(
                zip(self.inside, self.inside[1:])):
            if x == y and isinstance(f, Braid) and f == g[::-1]:
                inside = self.inside[:i] + self.inside[i + 2:]
                return self.cast(Diagram(inside, self.dom, self.cod)).simplify()
        return self

class Box(monoidal.Box, Diagram):
    cast = Diagram.cast

class Braid(Box):
    def __init__(self, x: Ty, y: Ty, is_dagger=False):
        assert len(x) == len(y) == 1
        name = "{}({}, {})[::-1]".format(type(self), y, x) if is_dagger\
            else "{}({}, {})".format(type(self), x, y)
        super().__init__(name, x @ y, y @ x, is_dagger)

    def dagger(self): return Braid(*self.cod, is_dagger=not self.is_dagger)

def hexagon(factory) -> Callable:
    def method(cls, x: Ty, y: Ty) -> Diagram:
        if len(x) == 0: return cls.id(y)
        if len(x) == 1:
            if len(y) == 1: return factory(x[0], y[0])
            return method(cls, x, y[:1]) @ cls.id(y[1:])\
                >> cls.id(y[:1]) @ method(cls, x, y[1:])  # left hexagon equation.
        return cls.id(x[:1]) @ method(cls, x[1:], y)\
            >> method(cls, x[:1], y) @ cls.id(x[1:])  # right hexagon equation.
    return classmethod(method)

Diagram.braid, Diagram.swap = hexagon(Braid), hexagon(Swap)

def naturality(self: Diagram, i: int, left=True, down=True, braid=None):
    braid = braid or self.braid
    layer, box = self.inside[i], self.inside[i].box
    if left and down:
        source = layer.left[-1] @ box >> braid(layer.left[-1], box.cod)
        target = braid(layer.left[-1], box.dom) >> box @ layer.left[-1]
    elif left: ...
    elif down: ...
    else:
        source = braid(layer.right[0], box.dom) >> box @ layer.right[0]
        target = layer.right[0] @ box >> braid(layer.right[0], box.cod)
    match = Match(top=self[:i] if down else self[:i - len(source) + 1],
                  bottom=self[i + len(source):] if down else self[i + 1:],
                  left=layer.left[:-1] if left else layer.left,
                  right=layer.right if left else layer.right[1:])
    assert self == match.subs(source)
    return match.subs(target)

Diagram.naturality = naturality

class Functor(monoidal.Functor):
    dom = cod = Category(Ty, Diagram)

    def __call__(self, other):
        if isinstance(other, Braid) and not other.is_dagger:
            return self.cod.ar.braid(self(other.dom[0]), self(other.dom[1]))
        return super().__call__(other)
\end{minted}
\end{python}

\begin{example}
We can check the hexagon equations hold on the nose.

\begin{minted}{python}
x, y, z = map(Ty, "xyz")

assert Diagram.braid(x, y @ z) == Braid(x, y) @ z >> y @ Braid(x, z)
assert Diagram.braid(x @ y, z) == x @ Braid(y, z) >> Braid(x, z) @ y
\end{minted}

We can check that \py{Braid} is an isomorphism up to a \py{simplify} call.

\begin{minted}{python}
assert (Diagram.braid(x, y @ z) >> Diagram.braid(x, y @ z)[::-1]).simplify()\
    == Diagram.id(x @ y @ z)\
    == (Diagram.braid(y @ z, x)[::-1] >> Diagram.braid(y @ z, x)).simplify()
\end{minted}

We can check that \py{Braid} and its dagger are natural.

\begin{minted}{python}
a, b = Ty('a'), Ty('b')
f = Box('f', a, b)

for braid in [Diagram.braid, (lambda x, y: Diagram.braid(y, x)[::-1])]:
    source, target = x @ f >> braid(x, b), braid(x, a) >> f @ x
    assert source.naturality(0, braid=braid) == target
    assert target.naturality(1, left=False, down=False, braid=braid) == source
\end{minted}
\end{example}

\begin{python}
{\normalfont Implementation of free $\dagger$-symmetric categories.}

\begin{minted}{python}
class Diagram(braided.Diagram): pass

class Box(braided.Box, Diagram):
    cast = Diagram.cast

class Swap(Braid, Box):
    def dagger(self): return Swap(*self.cod)

Diagram.braid = Diagram.swap = hexagon(Swap)

class Functor(braided.Functor):
    dom = cod = Category(Ty, Diagram)

    def __call__(self, other):
        if isinstance(other, Swap):
            return self.cod.ar.swap(self(other.dom[0]), self(other.dom[1]))
        return super().__call__(other)
\end{minted}
\end{python}

\begin{python}
{\normalfont Implementation of $\mathbf{Pyth}$ and $\mathbf{Tensor}_\S$ as symmetric categories.}

\begin{minted}{python}
@staticmethod
def function_swap(x: tuple[type, ...], y: tuple[type, ...]) -> Function:
    def inside(*xs):
        return untuplify(tuplify(xs)[len(x):] + tuplify(xs)[:len(x)])
    return Function(inside, dom=x + y, cod=y + x)

Function.swap = Function.braid = function_swap

@classmethod
def tensor_swap(cls, x: tuple[int, ...], y: tuple[int, ...]) -> Tensor:
    inside = [[(i0, j0) == (i1, j1)
        for j0 in range(product(y)) for i0 in range(product(x))]
        for i1 in range(product(x)) for j1 in range(product(y))]
    return cls(inside, dom=x + y, cod=y + x)

Tensor.swap = Tensor.braid = tensor_swap
\end{minted}
\end{python}

\begin{example}
We can check the axioms for symmetric categories hold in $\mathbf{Tensor}_\S$ and $\mathbf{Pyth}$.

\begin{minted}{python}
swap_twice = Diagram.swap(x, y @ z) >> Diagram.swap(y @ z, x)

F = Functor(
    ob={a: 1, b: 2, x: 3, y: 4, z: 5},
    ar={f: [[1-2j, 3+4j]]},
    cod=Category(tuple[int, ...], Tensor[complex]))

assert F(f @ x >> Swap(b, x)) == F(Swap(a, x) >> x @ f)
assert F(x @ f >> Swap(x, b)) == F(Swap(x, a) >> f @ x)
assert F(swap_twice) == Tensor.id(F(x @ y @ z))

G = Functor(
    ob={a: complex, b: float, x: int, y: bool, z: str},
    ar={f: lambda z: abs(z) ** 2},
    cod=Category(tuple[type, ...], Function))

assert G(f @ x >> Swap(b, x))(1j, 2) == G(Swap(a, x) >> x @ f)(1j, 2)
assert G(x @ f >> Swap(x, b))(2, 1j) == G(Swap(x, a) >> f @ x)(2, 1j)
assert G(swap_twice)(42, True, "foo") == (42, True, "foo")
\end{minted}
\end{example}

\begin{remark}
Note that the naturality axioms in $\mathbf{Pyth}$ hold only for its subcategory of pure functions, as we will see in section~\ref{section:premonoidal} $\mathbf{Pyth}$ is in fact a symmetric \emph{premonoidal} category.
This is also the case for $\mathbf{Tensor}_\S$ when the rig $\S$ is non-commutative.
\end{remark}

A \emph{compact closed category} is one that is both rigid and symmetric, which implies that it is also pivotal; a $\dagger$-compact closed category is both $\dagger$-pivotal and $\dagger$-symmetric.
The arrows of free $\dagger$-compact closed categories (i.e. equivalence classes of diagrams with cups, caps and swaps) are also called \emph{tensor networks}, a graphical equivalent to \emph{Einstein notation} and \emph{abstract index notation}, first introduced by Penrose~\cite{Penrose71}.
Unlike the computer scientists however, physicists tend to identify the diagram (syntax) with its image under some interpretation functor to the category of tensors (semantics).

A \emph{tortile category}, also called a \emph{ribbon category}\footnote
{Here again we take a \emph{strict} definition, where the twist is an identity rather than an isomorphism.
In a non-strict tortile category, the wires would be drawn as ribbons, i.e. two wires side by side.
The twist isomorphism would be drawn as the two wires being braided twice.
In a strict tortile category, the ribbon has no width thus the twist is invisible.},
is a braided, pivotal category which furthermore satisfies the following \emph{untwisting} equation:
\ctikzfig{img/symmetric/untwisting}
The scalars of the free tortile category with no boxes (i.e. equivalence classes of diagrams with only cups, caps and braids) are called \emph{links} in general and \emph{knots} when they are connected.
Untwisting, the self-inverse equation and the Yang-Baxter equation (i.e. naturality with respect to braids) are called the three \emph{Reidemeister moves}, they completely characterise the continuous deformations of circles embedded in three-dimensional space~\cite{Reidemeister13}.

The \emph{unknotting problem} (given a knot, can it be untied, i.e. continuously deformed to a circle?) is a candidate NP-intermediate problem: it is decidable~\cite{Haken61} and in NP~\cite{Lackenby15}, but there is neither a proof of it being NP-complete nor a polynomial-time algorithm.
Delpeuch and Vicary~\cite{DelpeuchVicary21} proved that the word problem for free braided categories is unknotting-hard.
Hence, there is little hope of finding a simple polynomial-time algorithm for computing normal forms of braided diagrams.
It is not known whether it is even decidable.

The word problem for free symmetric categories reduces to the \emph{graph isomorphism problem}~\cite{PattersonEtAl21}, another potential NP-intermediate problem.
The word problem for free compact closed categories also reduces to graph isomorphism~\cite{Selinger07}.
To the best of our knowledge, it is not known whether they are graph-isomorphism-hard, i.e. whether there is a reduction the other way around that sends any graph to a diagram with swaps (and cups and caps) so that graphs are isomorphic if and only their diagrams are equal.
Thus, there could be a simple polynomial-time algorithm for computing normal forms of diagrams in symmetric and compact closed categories.
In any case, DisCoPy does not implement any normal forms for diagrams with braids yet.

Of course, we can also enrich rigid and braided categories in commutative monoids, i.e. we can take formal sums of diagrams with cups, caps and braids in the same way as any other box.
We can also define bubbles and draw them in the same way as for monoidal diagrams.

\begin{python}
{\normalfont Implementation of free tortile categories and functors.}

\begin{minted}{python}
class Ty(pivotal.Ty, braided.Ty): pass
class Diagram(pivotal.Diagram, braided.Diagram): pass
class Box(pivotal.Box, braided.Box, Diagram):
    cast = Diagram.cast

class Cup(pivotal.Cup, Box): pass
class Cap(pivotal.Cap, Box): pass
class Braid(braided.Braid, Box): pass

Diagram.braid = hexagon(Braid)
Diagram.cups, Diagram.caps = nesting(Cup), nesting(Cap)

class Functor(pivotal.Functor, braided.Functor):
    dom = cod = Category(Ty, Diagram)

    def __call__(self, other):
        if isinstance(other, Braid):
            return braided.Functor.__call__(self, other)
        return pivotal.Functor.__call__(self, other)
\end{minted}
\end{python}

\begin{example}
We can define knot polynomials such as the Kauffman bracket using tortile functors into a self-dual category where the braiding is defined as a weighted sum of diagrams.

\begin{minted}{python}
Ty.l = Ty.r = property(lambda self: self)
x, A = Ty('x'), Box('A', Ty(), Ty())

class Polynomial(Diagram):
    def braid(x, y):
        return (A @ x @ y) + (Cup(x, y) >> A.dagger() >> Cap(x, y))

Kauffman = Functor(
    ob={x: x}, ar={}, cod=Category(Ty, Polynomial))

drawing.equation(Braid(x, x).bubble(), Kauffman(Braid(x, x)))
\end{minted}

\ctikzfig{img/symmetric/kauffman-bracket}
\end{example}


\subsection{Hypergraph categories \& wire splitting} \label{subsection:hypergraph}

With compact closed and tortile categories, we have removed both the progressivity and the planarity assumptions: wires can bend and cross.
With \emph{hypergraph categories} we remove the assumption that diagrams are graphs: wires can split and merge, they need not be homeomorphic to an open interval.
A hypergraph category is a symmetric category with \emph{coherent special commutative spiders}, let's spell out what this means.

An object $x$ in a monoidal category $C$ has \emph{spiders} with \emph{phases} in a monoid $(\Phi, +, 0)$ if it comes equipped with a family of arrows $\spider_{\phi, a, b}(x) : x^a \to x^b$ for every phase $\phi \in \Phi$ and pair of natural numbers $a, b \in \N$, such that the following \emph{spider fusion} equation holds for all $a, b, c, d, n \in \N$.
$$\spider_{\phi, a, c + n + 1}(x) \otimes x^b
\ \fcmp \ x^c \otimes \spider_{\phi', c + n + 1, d}(x)
\s = \s \spider_{\phi + \phi', a + b, c + d}(x)$$
We also require that our spiders satisfy the \emph{special} condition $\spider_{0, 1, 1}(x) = \id(x)$.
Spiders owe their name to their arachnomorphic drawing, for example $\spider_{\phi, 2, 6}$ is drawn as a node (the head, labeled by its phase when it's non-zero) and its wires (the eight legs of the spider, two of them menacing us):
\ctikzfig{img/hypergraph/spider}
Once drawn, the spider fusion equation has the intuitive graphical meaning that if one or more legs of two spiders touch, they fuse and add up their phase.
\ctikzfig{img/hypergraph/spider-fusion}
From spider fusion, we can deduce the following properties:
\begin{itemize}
\item $\merge(x) = \spider_{0, 2, 1}(x)$ and $\unit(x) = \spider_{0, 0, 1}(x)$ form a monoid,
\begin{center}
\tikzfig{img/hypergraph/assoc}
\hfill
\tikzfig{img/hypergraph/unit}
\end{center}
\item $\ttsplit(x) = \spider_{0, 1, 2}(x)$ and $\counit(x) = \spider_{0, 1, 0}(x)$ form a comonoid,
\begin{center}
\tikzfig{img/hypergraph/coassoc}
\hfill
\tikzfig{img/hypergraph/counit}
\end{center}
\item $\ttsplit(x) \fcmp \merge(x) = \id(x)$, called the \emph{special} condition,
\ctikzfig{img/hypergraph/special}
\item $\merge(x) \otimes x \fcmp x \otimes \ttsplit(x) \s = \s x \otimes \merge(x) \fcmp \ttsplit(x) \otimes x$, called the \emph{Frobenius law}.
\ctikzfig{img/hypergraph/frobenius}
\end{itemize}
In fact, when the phases are trivial $\Phi = \{ 0 \}$ these four axioms are sufficient to deduce spider fusion, spiders are also called \emph{special Frobenius algebras}.
Indeed, given a monoid $\merge(x) : x \otimes x \to x, \s \unit(x) : 1 \to x$ and a comonoid $\ttsplit(x) : x \to x \otimes x, \s \counit(x) : x \to 1$ subject to the Frobenius law, we can construct $\spider_{a, b}(x) : x^a \to x^b$ by induction on the number of legs.
The base case is given by the special condition $\spider_{1, 1}(x) = \id(x)$.
Then we define spiders with $a \in \N$ input legs for $a \neq 1$:
\begin{itemize}
\item $\spider_{0, b}(x) \s = \s \unit(x) \fcmp \spider_{1, b}(x)$,
\item $\spider_{a + 2, b}(x) \s = \s \merge(x) \otimes x^a \ \fcmp \ \spider_{a + 1, b}(x)$,
\end{itemize}
\begin{center}
\tikzfig{img/hypergraph/induction-base}
\hfill
\tikzfig{img/hypergraph/induction-step}
\end{center}
Finally we define spiders with one input leg by induction on the output legs $b \in \N$:
\begin{itemize}
\item $\spider_{1, 0}(x) \s = \s \counit(x)$,
\item $\spider_{1, b + 2}(x) \s = \s \spider_{1, b + 1}(x) \ \fcmp \ \ttsplit(x) \otimes x^b$.
\ctikzfig{img/hypergraph/induction-step-one-legged}
\end{itemize}
One can show that this satisfies the spider fusion law, again by induction on the legs~\cite[Lemma 5.20]{HeunenVicary19a}.
In this way, we can construct an infinite family of spiders from just the four boxes $\merge(x), \unit(x), \ttsplit(x), \counit(x)$ and a finite set of equations: a spider is nothing but a big multiplication followed by a big co-multiplication.
As for the phases, we can recover them from a family of \emph{phase shifts} $\{ \shift_\phi(x) : x \to x \}_{\phi \in \Phi}$ such that:
\begin{itemize}
\item $\shift_-(x)$ is a monoid homomorphism $\Phi \to C(x, x)$, i.e. $\shift_0(x) = \id(x)$ and $\shift_{\phi}(x) \fcmp \shift_{\phi'} = \shift_{\phi + \phi'}(x)$,
\ctikzfig{img/hypergraph/phase-hom}
\item phase shifts commute with the product, $\shift_\phi(x) \otimes x \ \fcmp \ \merge(x)
\ = \ \merge(x) \fcmp \shift_\phi(x)
\ = \ x \otimes \shift_\phi(x) \ \fcmp \ \merge(x)$,
\ctikzfig{img/hypergraph/phase-commute-product}
\item phase shifts commute with the coproduct, $\ttsplit(x) \ \fcmp \ \shift_\phi(x) \otimes x
\ = \ \shift_\phi(x) \fcmp \ttsplit(x)
\ = \ x \otimes \ttsplit(x) \ \fcmp \ \shift_\phi(x)$.
\ctikzfig{img/hypergraph/phase-commute-coproduct}
\end{itemize}
We can then define $\spider_{\phi, a, b}(x) = \spider_{a, 1}(x) \fcmp \shift_\phi(x) \fcmp \spider_{1, b}(x)$ and check that indeed, spiders fuse up to addition of their phase.
Thus when the monoid is finite, we get a finite number of boxes and equations, i.e. a finite presentation of the spiders.
In fact instead of taking it as data, we could have equivalently defined the monoid of phases $\Phi$ as the set of endomorphisms $x \to x$ that satisfy the last two conditions.

\begin{remark}
Given any Frobenius algebra on an object $x$, we can show that $x$ is its own left and right adjoint.
Indeed, take $\ttcup(x) = \unit(x) \fcmp \ttsplit(x)$ and $\ttcap(x) = \merge(x) \fcmp \counit(x)$, then the Frobenius law and the (co)unit law of the (co)monoid implies the snake equations.
Thus, a category with (not-necessarily special) spiders on every object is automatically a pivotal category.

\ctikzfig{img/hypergraph/spider-implies-snake}
\end{remark}

\begin{example}
In any pivotal category, there is a Frobenius algebra for every object of the form $x^\star \otimes x$ given by:
\begin{itemize}
\item $\merge(x^\star \otimes x) = x^\star \otimes \ttcup(x^\star) \otimes x$ and $\unit(x) = \ttcap(x)$,
\item $\ttsplit(x^\star \otimes x) = x^\star \otimes \ttcap(x) \otimes x$ and $\counit(x) = \ttcup(x^\star)$.
\end{itemize}
\ctikzfig{img/hypergraph/pair-of-pants}
Due to the drawing of its comonoid, this is called the \emph{pair of pants} algebra.
The special condition requires the dimension of the system $x$ to be the unit, i.e. the circle is equal to the empty diagram.
Non-special Frobenius algebras can still be drawn as spiders, they satisfy a modifed version of spider fusion where we keep track of the number of circles, i.e. the number of splits followed by a merge.
We can extend our inductive definition so that all the circles are in between the product and coproduct, see~\cite[Theorem 5.21]{HeunenVicary19a}.
\end{example}

\begin{example}\label{example:tensor-spider}
The category $\mathbf{Tensor}_\S$ has spiders for every dimension $n \in \N$ with phases in any submonoid of $\phi \in (\S, \times, 1)^n$.
They are given by $\spider_{\phi, a, b}(n) = \sum_{i \leq n} \phi_i \ket{i}^{\otimes a} \bra{i}^{\otimes b}$ where $\ket{i}$ ($\bra{i}$) is the $i$-th basis row (column) vector.

\begin{minted}{python}
class Tensor:
    ...
    @classmethod
    def spider(cls, a: int, b: int, n: int, phase=None) -> Tensor:
        phase = phase or n * [1]
        inside = [[sum(phase)]] if not a and not b\
            else [[phase[xs[0]] for xs in itertools.product(*b * [range(n)])
                   if all(x == xs[0] for x in xs)]]\
            if not a else cls.spider([], a + b, n).inside
        return cls(inside, dom=a * [n], cod=b * [n])
\end{minted}

When $\S$ is a field, we can divide every $\phi_i$ by $\phi_0$, or equivalently require that $\phi_0 = 1$.
Indeed, we can represent any spider with $\phi_0 \neq 1$ as a spider with $\phi_0 = 1$ multiplied by the scalar $\phi_0$, which is called a \emph{global phase}.
When $\S = \C$ and $n = 2$, we usually take the monoid of phases to be the unit circle and write it in terms of addition of angles.
\end{example}

\begin{example}\label{example:circuit-spider}
In the category $\mathbf{Circ}$ of quantum circuits, if we allow post-selected measurements then we can construct spiders with the unit circle as phases.
The spiders with no inputs legs are called the (generalised) GHZ states:
$$
\spider_{\alpha, 0, b} = \ket{0}^{\otimes b} + e^{i \alpha} \ket{1}^{\otimes b}
$$
Note that we need to scale by $\frac{1}{\sqrt{2}}$ to make this a normalised quantum state.
The spiders with $a > 0$ input legs can be thought of as measuring $a$ qubits, post-selecting on all of them giving the same result and then preparing $b$ copies of this result.
The evaluation functor $\mathbf{Circ} \to \mathbf{Tensor}_\C$ sends spiders to spiders.
\end{example}

Spiders allow us to draw diagrams where wires can split and merge, connecting an arbitrary number of boxes.
The PRO of Frobenius algebras (without the special condition), i.e. diagrams with only spider boxes, defines a notion of ``well-behaved'' 1d subspaces of the plane, up to continuous deformation.
Indeed, it is equivalent to the category of \emph{planar thick tangles}~\cite{Lauda05}.
Intuitively, planar thick tangles can be thought of as planar wires with a width, i.e. that we can draw with pens or pixels.
The inductive definition of spiders in terms of monoids and comonoids has the topological interpretation that any wire can be deformed so that all its singular points (i.e. where the wire crosses itself) are binary splits and merges.
The special condition has the non-topological consequence that we can contract the holes in the wires, splitting a wire then merging it back does nothing.

If the monoidal category $C$ is braided, we can remove the planarity assumption and define \emph{commutative spiders} as those where the monoid and comonoid are commutative, i.e.
\begin{align*}
\spider_{\phi, a + b, c + d}(x) \ \fcmp \ B(x^c, x^d)
\s &= \s \spider_{\phi, a + b, c + d}(x) \\
\s &= \s
B(x^a, x^b) \ \fcmp \ \spider_{\phi, a + b, c + d}(x)
\end{align*}
\ctikzfig{img/hypergraph/co-commutativity}
Together with spider fusion, this implies that the monoid of phases is also commutative.
The PROB of commutative Frobenius algebras (without the special condition), i.e. diagrams with only spiders and braids, defines a notion of ``well-behaved'' 1d subspaces of 3d space, up to continuous deformation.
When the category is furthermore symmetric, the PROP of commutative spiders defines a notion of ``well-behaved'' 1d spaces up to diffeomorphism, or equivalently 1d subspaces of 4d space, i.e. one where wires can pass through each other and all knots untie.
It is equivalent to the category of two-dimensional \emph{cobordisms}~\cite{Abrams96}, i.e. oriented 2d manifolds with a disjoint union of circles as boundary.
Intuitively, a 2d cobordism can be thought of as a (non-planar) wire with a width, i.e. one that we can draw.

When $C$ is braided, we can also give an inductive definition of spiders for tensors.
Indeed, given the spiders for $x$ and $y$ we can construct the following comonoid:
\begin{itemize}
\item $\spider_{1, 0}(x \otimes y) \s = \s \spider_{1, 0}(x) \otimes \spider_{1, 0}(y)$,
\ctikzfig{img/hypergraph/coherence-unit}
\item $\spider_{1, 2}(x \otimes y) \s = \s \spider_{1, 2}(x) \otimes \spider_{1, 2}(y) \ \fcmp \ x \otimes S(x, y) \otimes y$
\ctikzfig{img/hypergraph/coherence-product}
\end{itemize}
and construct a monoid in a symmetric way, then show that they satisfy the spider fusion equations for $x \otimes y$.
We can also show that the identity of the unit defines a family of spiders, i.e. $\spider_{a, b}(1) = \id(1)$.
If we take them as axioms rather than definitions, these are called the \emph{coherence conditions} for spiders.

Thus we get to our definition: a \emph{hypergraph category} is a symmetric category with coherent special commutative spiders on each object.
We can take the data to be that of a foo-monoidal category $C$ together with a function $\spider : \N \times \N \times C_0 \to C_1$ or equivalenty, with four functions $\merge, \unit, \ttsplit, \counit : C_0 \to C_1$.
Once we fix the spiders for generating objects, we get spiders for any type (i.e. list of objects).
A hypergraph functor is a symmetric functor $F : C \to D$ between hypergraph categories such that $F \fcmp \spider_{a, b} = \spider_{a, b} \fcmp F$.
Thus we get a category $\mathbf{HypCat}$ with a forgetful functor $U : \mathbf{HypCat} \to \mathbf{MonSig}$.
Its left adjoint $F^H : \mathbf{MonSig} \to \mathbf{HypCat}$ is defined as a quotient $F^S(\Sigma^H) / R$ of the free symmetric category generated by $\Sigma^H = \Sigma \spider$ and the relation $R$ given by the equations for commutative spiders.
Equivalently, we can take $\Sigma^H = \bigcup \{ \Sigma, \merge, \unit, \ttsplit, \counit \}$ and $R$ given by the equations for special commutative Frobenius algebras.
A \emph{$\dagger$-hypergraph category} is a $\dagger$-symmetric category (i.e. the swaps are unitaries) where the dagger is a hypergraph functor.
We also require that the monoid of phases is in fact a group with the dagger as inverse or equivalently, that phase shifts are unitaries.

\begin{example}
For every commutative rig $\S$, $\mathbf{Tensor}_\S$ is a $\dagger$-hypergraph category with the transpose as dagger.
Arguably, special commutative Frobenius algebras were first defined by Peirce~\cite{Peirce06} with their interpretation in the category of relations, or equivalently $\mathbf{Tensor}_\B$.
Indeed, they correspond to what Peirce calls \emph{lines of identity}: they express in two dimensions what one-dimensional first-order logic would express with equality symbols.
For example, take a binary predicate encoded as a box $p : 1 \to x^2$ (interpreted as the formula $\exists \ a \cdot \exists \ b \cdot p(a, b)$) then the diagram $p \fcmp \merge(x)$ is interpreted as the formula $\exists \ a \cdot \exists \ b \cdot p(a, b) \land a = b$ or equivalently $\exists \ a \cdot p(a, a)$.
Thus, every first-order logic formula can be written as a diagram with boxes for predicates, spiders for identity and bubbles for negation.
The equivalence of formulae can be defined as a quotient of a free hypergraph category with bubbles, i.e. all the rules of first-order logic can be given in terms of diagrams.
\end{example}

\begin{example}
The category of complex tensors $\mathbf{Tensor}_\C$ is $\dagger$-hypergraph with the spiders given in example~\ref{example:tensor-spider}.
Any unitary matrix $U : n \to n$ defines another family of spiders $U^{\otimes a} \fcmp \spider_{\phi, a, b}(n) \fcmp (U^\dagger)^{\otimes b}$.
In fact, every unitary arises in this way, see Heunen and Vicary~\cite[Corollary 5.32]{HeunenVicary19a}.
Thus, the axioms for spiders allow us to define any orthonormal basis without ever mentioning basis vectors: they are merely the states $v : 1 \to n$ for which the comonoid is natural, i.e. $v \fcmp \ttsplit(x) = v \otimes v$ and $v \fcmp \counit(x) = \id(1)$.
\end{example}

\begin{example}
The category $\mathbf{Circ}$ is $\dagger$-hypergraph with the spiders defined in example~\ref{example:circuit-spider}, the evaluation functor $\mathbf{Circ} \to \mathbf{Tensor}_\C$ is a $\dagger$-hypergraph functor.
\end{example}

DisCoPy implements spiders for types of length one (i.e. generating objects) as a subclass of \py{Box} and spiders for arbitrary types as a method \py{Diagram.spiders}.

\begin{python}
{\normalfont Implementation of $\dagger$-hypergraph categories and functors.}

\begin{minted}{python}
class Spider(Box):
    def __init__(self, a: int, b: int, x: Ty, phase=None):
        assert len(x) == 1
        self.object, self.phase = x, phase or 0
        name = "Spider({})".format(', '.join(map(str, (a, b, x, phase))))
        super().__init__(name, dom=x ** a, cod=x ** b)

    def dagger(self):
        a, b, x = len(self.cod), len(self.dom), self.object
        phase = None if self.phase is None else -self.phase
        return Spider(a, b, x, phase)

def coherence(factory):
    def method(cls, a: int, b: int, x: Ty, phase=None) -> Diagram:
        if len(x) == 0 and phase is None: return cls.id(x)
        if len(x) == 1: return factory(a, b, x, phase)
        if phase is not None:  # Coherence for phase shifters.
            shift = cls.tensor(*[factory(1, 1, obj, phase) for obj in x])
            return method(cls, a, 1, x) >> shift >> method(cls, 1, b, x)
        if (a, b) in [(1, 0), (0, 1)]: # Coherence for (co)units.
            return cls.tensor(*[factory(a, b, obj) for obj in x])
        # Coherence for binary (co)products.
        if (a, b) in [(1, 2), (2, 1)]:
            spiders, braids = (
                factory(a, b, x[0], phase) @ method(cls, a, b, x[1:], phase),
                x[0] @ cls.braid(x[0], x[1:]) @ x[1:])
            return spiders >> braids if (a, b) == (1, 2) else braids >> spiders
        if a == 1:  # We can now assume b > 2.
            return method(cls, 1, b - 1, x)\
                >> method(cls, 1, 2, x) @ (x ** (b - 2))
        if b == 1:  # We can now assume a > 2.
            return method(cls, 2, 1, x) @ (x ** (a - 2))\
                >> method(cls, a - 1, 1, x)
        return method(cls, a, 1, x) >> method(cls, 1, b, x)
    return classmethod(method)

Diagram.spiders = coherence(Spider)
Diagram.cups = nesting(lambda x, _: Spider(0, 2, x))
Diagram.caps = nesting(lambda x, _: Spider(2, 0, x))

class Functor(symmetric.Functor):
    def __call__(self, other):
        if isinstance(other, Spider):
            a, b = len(other.dom), len(other.cod)
            x, phase = other.object, other.phase
            return self.cod.ar.spiders(a, b, self(x), phase)
        return super().__call__(other)
\end{minted}
\end{python}

\begin{example}
We can now extend example~\ref{example:monoidal-formula} to arbitrary formulae of first-order logic.
Every variable that appears exactly twice is encoded as a wire (possibly with cups and caps), every variable that appears $n \neq 2$ is encoded as an $n$-legged spider.
For example, the formula $\forall c \ \forall o \ O(c, o) \land R(c) \land C(c) \implies U(c, o)$ (interpreted as ``every object of a rigid cartesian category is also its unit'') can be encoded as a diagram with a wire for $o$ and a four-legged spider for $c$.

\begin{minted}{python}
class Formula(Diagram):
    cut = lambda self: Cut(self)

class Cut(Bubble, Formula):
    method = "_not"
    cast = Formula.cast

class Predicate(Box, Formula):
    cast = Formula.cast

def model(size: dict[Ty, int], data: dict[Predicate, list[bool]]):
    return Functor(ob=size, ar={p: [data[p]] for p in data},
                   dom=Category(Ty, Formula),
                   cod=Category(list[int], Tensor[bool]))

objects, categories = Ty('o'), Ty('c')
has_object, has_unit = [Predicate(p, Ty(), categories @ objects) for p in "OU"]
is_rigid, is_cartesian = [Predicate(p, Ty(), categories) for p in "RC"]

rigid_cartesian_implies_trivial = (
    has_object >> Formula.spiders(1, 3, categories) @ objects
    >> (is_rigid @ is_cartesian @ has_unit.cut()).dagger()).cut()

size = {objects: 2, categories: 2}
predicate_values = itertools.product(*size[categories] * [[0, 1]])
relation_values = itertools.product(*size[categories] * size[objects] * [[0, 1]])

for O, U, R, C in itertools.product(
        *(2 * [predicate_values] + 2 * [relation_values])):
    F = model(size, {has_object: O, has_unit: U, is_rigid: R, is_cartesian: C})
    is_rigid_cartesian_and_has_object = lambda i, j:\
        F(has_object)[i, j] and F(is_rigid)[i] and F(is_cartesian)[i]
    assert F(rigid_cartesian_implies_trivial) == all(
        not is_rigid_cartesian_and_has_object(i, j) or F(has_unit)[i, j]
        for i in range(size[categories]) for j in range(size[objects]))

rigid_cartesian_implies_trivial.draw()
\end{minted}

\ctikzfig{img/hypergraph/rigid-cartesian-implies-trivial}
\end{example}

The equality of hypergraph diagrams reduces to hypergraph isomorphism, it will be discussed in section~\ref{subsection:hypergraph-vs-premonoidal}.
The equality of non-commutative spiders is not implemented yet, spider fusion would be a natural extension of the snake removal algorithm for rigid diagrams: we find pairs fusable spiders then apply interchangers to make them adjacent.
The possible obstructions are more serious for spiders than for cups and caps however, for example consider the diagram $\spider_{0, 3}(x) \ \fcmp \ x \otimes f \otimes g \ \fcmp \ \spider_{3, 0}(x)$.
The two three-legged spiders want to fuse but the boxes $f$ and $g$ stand on the way, the best we can do is to bend their output wires with two cups and get a four-legged spider $\spider_{0, 4}(x) \ \fcmp \ x \otimes f \otimes g \otimes x \ \fcmp \ \ttcup(x) \otimes \ttcup(x)$.

\ctikzfig{img/hypergraph/obstruction}


\subsection{Products \& coproducts} \label{subsection:cartesian}

With hypergraph diagrams, we have enough syntax to discuss quantum protocols and first-order logic.
However, the spiders of hypergraph categories are of no use if we want to interpret our diagrams as (pure) Python functions with \py{tuple} as tensor.
Indeed, $\mathbf{Pyth}$ has the property that every function $f : x \to y \otimes z$ into a composite system $y \otimes z$ is in fact a tensor product $f = f_0 \otimes f_1$ of two separate functions $f_0 : x \to y$ and $f_1 : x \to z$.
If a Python type $x$ had caps (let alone spiders) then we could break them in two with the consequence that the identity function on $x$ is constant, i.e. $x$ is trivial~\cite[Proposition~4.76]{CoeckeKissinger17}.
Moreover, there is only one (pure) effect of every type, discarding it.
Thus if a Python type $x$ had cups then we could break them apart as well with the same consequence: only the trivial Python type can have spiders.
A similar argument destroys our hopes for time reversal in Python: if we had a monoidal dagger on $\mathbf{Pyth}$, every state would be equal to every other.

Now if we go back to the intuition of diagrams as pipelines and their wires as carrying data, not all might be lost about spiders.
Indeed, it makes sense to split a data-carrying wire: it means we are copying information.
Closing a data-carrying wire is the counit of the copying comonoid, it means we are deleting information.
In this context, the special condition would translate as follows: if we copy some data then merge the two copies back together, then we haven't done anything.
In order for the spider fusion equations to hold, we would need the monoid to take any two inputs and assert that they are equal or abort the computation otherwise, i.e. we would need side effects.
Even more weirdly, we would need the unit of the monoid to be equal to anything else.

Rather than complaining that classical computing is weird because we cannot coherently merge data back together, we should embrace this as a feature, not a bug: in Python we can copy and discard data (at least assuming that we have enough RAM and that the garbage collector is doing its job).
This means we can still keep the comonoid half of our spiders, forget that they are spiders and come to realise that they are in fact \emph{natural comonoids}, i.e. every function is a comonoid homomorphism.
Indeed, the functions \py{copy = lambda *xs: xs + xs} and \py{delete = lambda *xs: ()} define a pair of natural transformations $x \to x \otimes x$ and $x \to 1$ in $\mathbf{Pyth}$:
\begin{itemize}
    \item \py{copy(f(xs)) == f(copy(xs)[:n]), f(copy(xs)[n:])}
    \item \py{delete(f(xs)) == delete(xs)}
\end{itemize}
for all pure functions \py{f} and inputs \py{xs} with \py{n = len(xs)}.
Once drawn as a diagram, the naturality equations for comonoids allows us to either copy or delete boxes by passing them through either the coproduct or the counit.

\ctikzfig{img/cartesian/naturality}

A \emph{cartesian category} is a symmetric category with coherent, natural commutative comonoids.
The category $\mathbf{Pyth}$ is an example of cartesian category, as well as the categories $\mathbf{Set}$, $\mathbf{Mon}$, $\mathbf{Cat}$, $\mathbf{MonCat}$, etc.
The category $\mathbf{Mat}_\S$ is also a cartesian category with the direct sum as tensor.
Our definition of cartesian is convenient if we want to draw string diagrams and interpret them as functions but it is rather cumbersome: checking that a given category fits the definition involves a lot of structure (tensor, swaps and comonoids) and many axioms relating them.
In practice, we usually take an equivalent definition: a category $C$ is cartesian if it has \emph{categorical products} and a \emph{terminal object}.
An object $1 \in C_0$ is terminal if there is a unique arrow $\counit(x) : x \to 1$ from each object $C_0$.
An object $x_0 \times x_1 \in C_0$ is the product of two objects $x_0, x_1 \in C_0$ if it comes equipped with a pair of arrows $\pi_0 : x_0 \times x_1 \to x_0$ and $\pi_1 : x_0 \times x_1 \to x_1$
such that for all pairs of arrows $f_0 : y \to x_0$ and $f_1 : y \to x_1$ there is a unique $f = \langle f_0, f_1 \rangle : y \to x_0 \times x_1$ such that $f \fcmp \pi_0 = f_0$ and $f \fcmp \pi_1 = f_1$.
These definitions are usually drawn as commutative diagrams where the full lines are universally quantified and the dotted line is uniquely existentially quantified.

From these two \emph{universal properties} we can deduce that terminal objects and categorical products are unique up to a unique isomorphism.
Given two arrows $f : a \to b$ and $g : c \to d$ we have two arrows $\pi_0 \fcmp f : a \times c \to b$ and $\pi_1 \fcmp g : a \times c \to d$, thus there is a unique $f \times  g = \langle \pi_0 \fcmp f, \pi_1 \fcmp g \rangle : a \times b \to c \times d$.
One can show that this makes the category $C$ a (non-strict) monoidal category.
Furthermore, we can show $C$ is symmetric with the swaps given by $S(x, y) = \langle \pi_1, \pi_0 \rangle : x \times y \to y \times x$.
Finally, we can show $C$ has coherent natural commutative comonoids given by $\ttsplit(x) = \langle \id(x), \id(x) \rangle : x \to x \times x$ and $\counit(x) : x \to 1$.

In the other direction, if $C$ has coherent natural commutative comonoids we can deduce that $1$ is a terminal object from the naturality of the counit.
For any arrows $f_0 : y \to x_0$ and $f_1 : y \to x_1$ we can define
\begin{itemize}
\item $\langle f_0, f_1 \rangle = \ttsplit(y) \fcmp f_0 \otimes f_1$,
\item $\pi_0 = \id(x_0) \otimes \counit(x_1)$ and $\pi_1 = \counit(x_0) \otimes \id(x_1)$,
\end{itemize}
and show that $\otimes = \times$ is in fact a categorical product, see Selinger's survey~\cite[Section 6.1]{Selinger10}.
A functor is cartesian when it preserves the categorical product, or equivalently if it is a symmetric functor that preserves the comonoid.
This defines a category $\mathbf{CCat}$ of cartesian categories and functors.
We can assume that cartesian categories are free-on-objects, i.e. the monoid axioms for objects are equalities rather than natural transformations.
Thus, we get a forgetful functor $U : \mathbf{CCat} \to \mathbf{MonSig}$ with its left adjoint given by a quotient of the free symmetric category $F^C(\Sigma) = F^S(\Sigma \cup \ttsplit \cup \counit) / R$ with the relations $R$ given by the naturality equations for each box.

Taking the opposite definition, a \emph{cocartesian category} is one with a categorical coproduct, or equivalently with a coherent natural commutative monoid.
For example, the category $\mathbf{Set}$ is cocartesian with the disjoint union as tensor.
The category $\mathbf{Pyth}$ is cocartesian with tagged union: the merging function takes a tagged element of an n-fold union and forgets the tag.
While cartesian structures can be thought of in terms of data copying, cocartesian structures formalise conditional branching.
Indeed, when we interpret cocartesian diagrams in $\mathbf{Pyth}$ parallel wires encode the different branches of a program, merging two wires of the same type means forgetting the difference between two branches.

DisCoPy implements free (co)cartesian categories with subclasses of \py{Box} for making and merging \py{n} copies of a type \py{x} of length one.
The class methods \py{copy} and \py{merge} extend this to types of arbitrary length by calling the \py{coherence} subroutine of the previous section.
Cartesian functors take \py{Copy} (\py{Merge}) boxes of its domain to the \py{copy} (\py{merge}) method of its codomain.

\begin{python}
{\normalfont Implementation of free (co)cartesian categories and functors.}

\begin{minted}{python}
class Diagram(symmetric.Diagram):
    @classmethod
    def copy(cls, x: Ty, n=2) -> Diagram:
        def factory(a, b, x, _):
            assert a == 1
            return Copy(x, b)
        return coherence(factory).__func__(cls, 1, n, x)

    @classmethod
    def merge(cls, x: Ty, n=2) -> Diagram:
        return cls.copy(x, n).dagger()

class Box(symmetric.Box, Diagram):
    cast = Diagram.cast

class Swap(symmetric.Swap, Box): pass

Diagram.swap = Diagram.braid = hexagon(Swap)

class Copy(Box):
    def __init__(self, x: Ty, n: int = 2):
        super().__init__(name="Copy({}, {})".format(x, n), dom=x, cod=x ** n)

    dagger = lambda self: Merge(self.dom, len(self.cod))

class Merge(Box):
    def __init__(self, x: Ty, n: int = 2):
        super().__init__(name="Merge({}, {})".format(x, n), dom=x ** n, cod=x)

    dagger = lambda self: Copy(self.cod, len(self.dom))

class Functor(symmetric.Functor):
    dom = cod = Category(Ty, Diagram)

    def __call__(self, other):
        if isinstance(other, Copy):
            return self.cod.ar.copy(self(other.dom), len(other.cod))
        if isinstance(other, Merge):
            return self.cod.ar.merge(self(other.cod), len(other.dom))
        return super().__call__(other)
\end{minted}
\end{python}

\begin{python}\label{listing:python-co-cartesian}
{\normalfont Implementation of $\mathbf{Pyth}$ as a cartesian category.}

\begin{minted}{python}
class Function:
    ...
    @staticmethod
    def copy(x: tuple[type, ...], n: int):
        return Function(lambda *xs: n * xs, dom=x, cod=n * x)
\end{minted}
\end{python}

\begin{example}
We can implement the architecture of a neural network as a cartesian diagram and its evaluation as a functor to \py{Function}.

\begin{minted}{python}
x = Ty('x')
add = lambda n: Box('$+$', x ** n, x)
ReLU = Box('$\\sigma$', x, x)
weights = [Box('w{}'.format(i), x, x) for i in range(4)]
bias = Box('b', Ty(), x)

network = Diagram.copy(x @ x, 2)\
>> Diagram.tensor(*weights) @ bias >> add(5) >> ReLU

F = Functor(ob={x: int}, ar={
        add(5): lambda *xs: sum(xs),
        ReLU: lambda x: max(0, x),
        bias: lambda: -1, **{
            weight: lambda x, w=w: x * w
            for weight, w in zip(weights, range(4))}},
    cod=Category(tuple[type, ...], Function))

assert F(network)(42, 43) == max(0, sum([42 * 0, 43 * 1, 42 * 2, 43 * 3, -1]))
\end{minted}
\end{example}

\begin{example}
In a cartesian category, every monoid is automatically a \emph{bialgebra}, i.e. the monoid is a comonoid homomorphism or equivalently, the comonoid is a homomorphism for the monoid.
When furthermore the monoid has an inverse, then it is automatically a \emph{Hopf algebra}, the generalisation of groups to arbitrary monoidal categories.

\begin{minted}{python}
x = Ty('x')
copy, discard = Copy(x), Copy(x, n=0)
add, minus, zero = Box('+', x @ x, x), Box('-', x, x), Box('0', Ty(), x)

drawing.equation(add >> copy, copy @ copy >> x @ Swap(x, x) @ x >> add @ add)
drawing.equation(zero >> copy, zero @ zero)
\end{minted}
\begin{center}
\tikzfig{img/cartesian/add-hom-coproduct}
\hfill
\tikzfig{img/cartesian/zero-hom-coproduct}
\end{center}
\begin{minted}{python}
drawing.equation(add >> discard, discard @ discard)
drawing.equation(zero >> discard, Diagram.id(Ty()))
\end{minted}
\begin{center}
\tikzfig{img/cartesian/add-hom-counit}
\hfill
\tikzfig{img/cartesian/zero-hom-counit}
\end{center}
\begin{minted}{python}
drawing.equation(copy >> minus @ x >> add,
                 discard >> zero,
                 copy >> x @ minus >> add)
\end{minted}
\ctikzfig{img/cartesian/hopf}

A bialgebra which is also a Frobenius algebra is necessarily trivial, i.e. isomorphic to the unit.

\begin{minted}{python}
drawing.equation(
    Diagram.id(x),
    x @ zero >> x @ copy >> add @ x >> discard @ x,
    x @ zero @ zero >> discard @ discard @ x,
    discard >> zero)
\end{minted}

\ctikzfig{img/cartesian/bialgebra-frobenius-implies-trivial}
\end{example}

A cartesian category that is also a PROP is called a \emph{Lawvere theory}~\cite{Lawvere63}, they were first introduced as a high-level language for \emph{universal algebra}. take the boxes $x^n \to x$ to be the primitive $n$-ary operations of your language (e.g. for rigs we have unary boxes for $0$ and $1$, binary boxes for $+$ and $\times$) then the diagrams in the free Lawvere theory are all the terms of your language.
We can write down any universally quantified axiom as a relation between diagrams and the cartesian functors from the resulting quotient to $\mathbf{Set}$ are \emph{algebras}, i.e. sets equipped with operations that satisfy the axioms.
The natural transformations between models are precisely the homomorphisms between the algebras, i.e. the functions that commute with the operations.
If we add colours back in and allow many generating objects, we can define many-sorted theories such as that of modules, with a ring acting on a group.
If we take every pair of objects $(x, y) \in C_0 \times C_0$ as colour and a box $(x, y) \otimes (y, z) \to (x, z)$ for every possible composition, then we can even define the Lawvere theory of categories with $C_0$ as objects.
Thus, categories (with some fixed objects) can also be seen as the functors from this Lawvere theory to $\mathbf{Set}$, functors can also be seen as the natural transformations between such functors.

The free Lawvere theory with no boxes (only swaps, coproducts and counits) is equivalent to $\mathbf{FinSet}^{op}$, the opposite category to finite sets and functions, with the disjoint union as tensor.
Indeed, a cartesian diagram $f : x^n \to x^m$ can be seen as the graph of a function from the $m$ to $n$ elements.
This free Lawvere theory is also called the \emph{theory of equality}, a functor from it to $\mathbf{Set}$ (i.e. an algebra for the theory) is just a set with its equality relation, a natural transformation between those functors is just a function.
Thus, equality of cartesian diagrams with no boxes reduces to equality of functions between finite sets, which can be implemented as equality of finite dictionaries in Python.
It is easy to show that the rules for naturality are confluent and terminating when applied from left to right, i.e. we copy (delete) every box by passing it down through all possible coproducts (counits).
At each rewrite step there is one fewer box node above a comonoid node, thus we can reduce the word problem for free cartesian categories to that of free symmetric categories and then to graph isomorphism.

\begin{python}
Implementation of the free cartesian category $\mathbf{FinSet}^{op}$ with \py{int} as objects and \py{Dict} as arrows.

\begin{minted}{python}
@dataclass
class Dict(Composable, Tensorable):
    inside: dict[int, int]
    dom: int
    cod: int

    __getitem__ = lambda self, key: self.inside[key]

    @staticmethod
    def id(x: int = 0): return Dict({i: i for i in range(x)}, x, x)

    @inductive
    def then(self, other: Dict) -> Dict:
        inside = {i: self[other[i]] for i in range(other.cod)}
        return Dict(inside, self.dom, other.cod)

    @inductive
    def tensor(self, other: Dict) -> Dict:
        inside = {i: self[i] for i in range(self.cod)}
        inside.update({
            self.cod + i: self.dom + other[i] for i in range(other.cod)})
        return Dict(inside, self.dom + other.dom, self.cod + other.cod)

    @staticmethod
    def swap(x: int, y: int) -> Dict:
        inside = {i: i + x if i < x else i - x for i in range(x + y)}
        return Dict(inside, x + y, x + y)

    @staticmethod
    def copy(x: int, n: int) -> Dict:
        return Dict({i: i % x for i in range(n * x)}, x, n * x)
\end{minted}
\end{python}

\begin{example}
We can check equality of cartesian diagrams with one generating object and no boxes.

\begin{minted}{python}
x = Ty('x')
copy, discard, swap = Copy(x, 2), Copy(x, 0), Swap(x, x)
F = Functor({x: 1}, {}, cod=Category(int, Dict))

assert F(copy >> discard @ x) == F(Diagram.id(x)) == F(copy >> x @ discard)
assert F(copy >> copy @ x) == F(Copy(x, 3)) == F(copy >> x @ copy)
assert F(copy >> swap) == F(copy)
\end{minted}
\end{example}

A \emph{rig category} has two monoidal structures $\oplus$ and $\otimes$ that satisfy the equations of a rig up to natural isomorphism.
This is the case for the category $\mathbf{Set}$ as well as for $\mathbf{Pyth}$.
The Kronecker product is not a cartesian product for $\mathbf{Mat}_\S$ (since this role is taken by direct sums) but it does form a rig category with the direct sum.
The arrows of free rig categories can be described as (equivalence classes of) three-dimensional \emph{sheet diagrams} where composition, additive and multiplicative tensor are encoded in three orthogonal axes~\cite{ComfortEtAl20}.
These 3d diagrams are a complete language for \emph{dataflow programming}~\cite{Delpeuch20a}, they are not implemented in DisCoPy yet.

\subsection{Biproducts}\label{subsection:biproducts}

A category has \emph{biproducts} if the cartesian and cocartesian structures coincide,
a $\dagger$-category has \emph{$\dagger$-biproducts} when furthermore the monoid is the dagger of the comonoid.
This is the case in the $\dagger$-category $\mathbf{Mat}_\S$ with direct sum $\oplus$ as tensor.

\begin{python}
{\normalfont Implementation of $\dagger$-biproducts for $\mathbf{Mat}_\S$.}

\begin{minted}{python}
class Matrix:
    ...
    @classmethod
    def copy(cls, x: int, n: int) -> Matrix:
        inside = [[
            i + int(j % n * x) == j for j in range(n * x)] for i in range(x)]
        return cls(inside, x, n * x)

    @classmethod
    def merge(cls, x: int, n: int) -> Matrix:
        return cls.copy(x, n).dagger()

    @classmethod
    def basis(cls, x: int, i: int) -> Matrix:
        return cls([[i == j for j in range(x)]], x ** 0, x)
\end{minted}
\end{python}

Biproducts and matrices happen to be intimately related.
Indeed, given any commutative-monoid-enriched category $C$ we can construct its \emph{free biproduct completion} $\mathbf{Mat}_C$ as the monoidal category with objects given by $C_0^\star$ and arrows $f : x \to y$ given by matrices $f_{ij} : x_i \to y_j$ of arrows in $C_1$.
Composition in $\mathbf{Mat}_C$ is an extension of the usual matrix multiplication with composition as product.
In particular, if $C = \S$ is a rig, i.e. a one-object CM-enriched category, then this definition coincides with the usual one.
Any category with biproducts is automatically enriched in commutative monoids with $f + g : x \to y$ given by $\ttsplit(x) \fcmp f \oplus g \fcmp \merge(x)$ and the zero morphism $0 = \counit(x) \fcmp \unit(y)$.
One can verify that the free completion is indeed the left adjoint to the forgetful functor from biproducts to CM-enrichment, see~\cite[Exercise VIII.2.6]{MacLane71}.
Similarly, if $C$ is a $\dagger$-category then $\mathbf{Mat}_C$ is its free $\dagger$-biproduct completion with the element-wise dagger of the transpose.
If $C$ is also a monoidal category, then $\mathbf{Mat}_C$ is a rig category with the tensor given by an extension of the usual Kronecker product with tensor as product.

We implement free $\dagger$-biproduct completion as a subclass of \py{Matrix[Sum]} with \py{tuple[Ty, ...]} as objects and addition given by the formal sum of diagrams.
We use Python's duck typing to lift the code for composition, tensor and direct sum from \py{int} to \py{tuple[Ty, ...]}.
We also use a \py{contextmanager} to temporarily replace the multiplication of two \py{Sum} entries by composition or tensor.
Again, we override equality so that diagrams are equal to the matrix of just themselves.

\begin{python}
{\normalfont Implementation of free $\dagger$-biproduct completion.}

\begin{minted}{python}
@dataclass
class FakeInt:
    inside: tuple[Ty, ...] = (Ty(), )

    __index__ = lambda self: len(self.inside)
    __iter__ = property(lambda self: self.inside.__iter__)
    __add__ = lambda self, other: FakeInt(self.inside + other.inside)
    __mul__ = lambda self, other: FakeInt(
        tuple(x0 @ x1 for x0 in self.inside for x1 in other))
    __rmul__ = lambda self, n: FakeInt(n * self.inside)
    __pow__ = lambda self, n: product(n * (self, ), unit=FakeInt())

class Diagram(monoidal.Diagram):
    def __eq__(self, other):
        if isinstance(other, Biproduct):
            return other.inside == [[self]]
        return monoidal.Diagram.__eq__(self, other)

    def direct_sum(self, *others):
        return Biproduct.cast(self).direct_sum(*others)

    __or__ = direct_sum

class Box(monoidal.Box, Diagram):
    cast = Diagram.cast

class Sum(monoidal.Sum, Box):
    id = lambda x: Sum.cast(Diagram.id(x))

Diagram.sum = Sum

class Biproduct(Matrix):
    dtype = Sum

    def __init__(self, inside: list[list[Sum]], dom: FakeInt, cod: FakeInt):
        self.dom, self.cod, self.inside = dom, cod, [[
            self.dtype.id(x) if val == 1
            else self.dtype.zero(x, y) if val == 0
            else self.dtype.cast(val)
            for y, val in zip(cod, row)] for x, row in zip(dom, inside)]

    @contextmanager
    def fake_multiplication(self, method):
        self.dtype.__mul__ = getattr(self.dtype, method)
        yield
        delattr(self.dtype, "__mul__")

    @classmethod
    def cast(cls, old: Diagram):
        if isinstance(old, cls): return old
        return cls([[old]], FakeInt((old.dom, )), FakeInt((old.cod, )))

    @inductive
    def then(self, other: Biproduct | Diagram) -> Biproduct:
        with self.fake_multiplication("then"):
            return Matrix.then(self, self.cast(other))

    @inductive
    def tensor(self, other: Biproduct | Diagram) -> Biproduct:
        with self.fake_multiplication("tensor"):
            return Matrix.Kronecker(self, self.cast(other))

    @inductive
    def direct_sum(self, other: Biproduct | Diagram) -> Biproduct:
        with self.fake_multiplication("then"):
            return Matrix.direct_sum(self, self.cast(other))

    dagger = lambda self: self.transpose().map(lambda f: f.dagger())
    __eq__ = lambda self, other: Matrix.__eq__(self, self.cast(other))
\end{minted}
\end{python}

\begin{example}
We can define the object \py{bit} as the list of two empty types, with \py{true} and \py{false} the two basis states
then we can implement conditional expressions as biproducts.

\begin{minted}{python}
unit = FakeInt()
true = Biproduct.copy(unit, 2)\
    >> (Biproduct.id(unit) | Biproduct.zero(unit, unit))
false = Biproduct.copy(unit, 2)\
    >> (Biproduct.zero(unit, unit) | Biproduct.id(unit))

x, y = Ty('x'), Ty('y')
f, g = Box('f', x, y), Box('g', x, y)
conditional = (f | g) >> Biproduct.merge(FakeInt((y, )), 2)

assert true @ FakeInt((x, )) >> conditional == f\
    and false @ FakeInt((x, )) >> conditional == g
\end{minted}
\end{example}

\begin{example}\label{example:biproduct-measurement}
We can implement quantum measurements as biproducts of two quantum effects and classical control as a biproduct of two quantum states.
When we compose classical control with measurement, we get a matrix where the entries are scalar diagrams.
The squared amplitude of the evaluation of these scalars give us the measurement probabilities for each classical choice of state.
We leave the implementation of such biproduct-valued functors to future work: it would require to augment the syntax of types from monoids to semirings.
\end{example}

As we mentioned at the end of section~\ref{section:monoidal}, DisCoPy uses a \emph{point-free} syntax and it can be rather tedious to define any complex diagram in this way.
It is straightforward to extend the \py{diagramize} method to cartesian diagrams, so that they can be defined using the standard syntax for Python functions, where we can use arguments any number of times in any order.
Extending it to cocartesian diagrams so that they can be defined using the standard Python syntax for conditionals will likely be more challenging.
Given enough engineering, it would be possible to turn any pure Python function into a diagram, however this will require more structure than just (co)cartesian categories.
Functions with side effects can be seen as arrows in \emph{premonoidal categories} which are the topic of section~\ref{section:premonoidal}, while recursive functions are arrows in \emph{traced categories}, both will be discussed in section~\ref{section:premonoidal}.
Higher-order functions are modeled as arrows in \emph{closed categories}, the topic of the next section.


\subsection{Closed categories} \label{subsection:closed}

As we have seen in sections~\ref{subsection:hypergraph} and \ref{subsection:cartesian}, cartesian categories like $\mathbf{Pyth}$ and hypergraph categories like $\mathbf{Tensor}$ are two orthogonal extensions of monoidal categories.
The former have natural comonoids on each object, the latter have spiders on each object, an object that has both is necessarily trivial.
Nevertheless, the category $\mathbf{Pyth}$ does share a common structure with rigid categories beyond being monoidal: both are \emph{closed} monoidal categories.
A monoidal category $C$ is left-closed if for every object $x \in C_0$, the functor $x \otimes - : C \to C$ has a right adjoint $x \backslash - : C \to C$ called \emph{$x$ under $-$}.
Symmetrically, $C$ is right-closed if the functor $- \otimes x : C \to C$ has a right adjoint $- / x : C \to C$ called \emph{$-$ over $x$}.\footnote
{There is a simple mnemonic to remember what comes over or under: the input is under the slash in the same way that the denominator is under the fraction bar, it gets canceled when multiplied on the appropriate side $x / y \otimes y \to x$ and $y \otimes y \backslash x \to x$.}
A \emph{closed (monoidal) category} is one that is closed on the left and the right.
For example, a rigid category is a closed category where the over and under types have the form $x \backslash y = x^r \otimes y$ and $y / x = y \otimes x^l$.
When the category is symmetric, over and under types coincide, they are called \emph{exponentials} $x \backslash y = y / x = y^x$.

\begin{example}\label{example:residuated-monoids}
A discrete monoidal category (i.e. a monoid) is closed if and only if it is a group.
A closed preordered monoid (i.e. a closed category with at most one arrow between any two objects) is also called a \emph{residuated monoid}~\cite{Coecke13}, their application to NLP will be discussed in section~\ref{section:NLP}.
The powerset of any monoid $M$ can be given the structure of a residuated monoid where:
\begin{itemize}
    \item $X \otimes Y = \{ x y \in M \ \vert \ x \in X \land y \in Y \}$,
    \item $(X / Y) = \{ z \in M \ \vert \ \forall y \in Y \cdot z y \in X \}$,
    \item $(X \backslash Y) = \{ z \in M \ \vert \ \forall x \in X \cdot x z \in Y \}$.
\end{itemize}
for all subsets $X, Y \sub M$.
\end{example}

As the name suggests, a \emph{cartesian closed category} is a cartesian category that is also closed.
Examples of cartesian closed categories include $\mathbf{Set}$ with the exponential $Y^X$ given by the set of functions from $X$ to $Y$ and $\mathbf{Cat}$ with $D^C$ the category of functors from $C$ to $D$ with natural transformations as arrows.
The category $\mathbf{Pyth}$ with \py{tuple[type, ...]} as objects and pure functions between tuples as arrows is also cartesian closed, the exponential of two lists of types \py{x, y} is given by \py{Callable[x, tuple[y]]}.
The natural isomorphism $\Lambda : \mathbf{Pyth}(x \times y, z) \to \mathbf{Pyth}(y, z^x)$ is called \emph{currying}, after the founding father of functional programming Haskell Curry.
A function of two arguments $x \times y \to z$ is the same as a one-argument higher-order function $y \to z^x$.
Taking the equivalent definition of adjunctions, the unit $\eta_y : y \to (y \times x)^x$ is given by concatenation, i.e. \py{lambda *ys: lambda *xs: ys + xs}, while the counit $\epsilon_y : y^x \times x \to x$ is given by evaluation, i.e. \py{lambda f, *xs: f(*xs)}.
In fact, we can take the data for a cartesian closed category to be that of a cartesian category $C$ together with:
\begin{itemize}
\item an operation $\exp : C_0^\star \times C_0^\star \to C_0$ sending every pair of types to a generating object,
\item an operation $\mathtt{ev} : C_0^\star \times C_0^\star \to C_1$ sending every pair of types $x, y$ to an arrow $\mathtt{ev}(x, y) : \exp(y, x) \times x \to y$,
\item an operation $\Lambda_n : C_1 \to C_1$ for each $n \in \N$, sending every arrow $f : x \times y \to z$ with $x$ of length $n$ to an arrow $\Lambda_n(f) : y \to \exp(z, x)$.
\end{itemize}
We can take the axioms to be those of cartesian categories together with $\Lambda_n(f \times x \ \fcmp \ \mathtt{ev}(z, x)) = f$ for all $f : y \to \exp(z, x)$.
Intuitively, if we take a higher-order function, evaluate it then abstract away the result, we get back to where you started, i.e. $\Lambda_n^{-1}(f) = f \times x \ \fcmp \ \mathtt{ev}(z, x)$.

\begin{python}\label{example:closed-function}
{\normalfont Implementation of $\mathbf{Pyth}$ as a cartesian closed category.}

\begin{minted}{python}
Ty = tuple[type, ...]

def exp(base: Ty, exponent: Ty) -> Ty:
    return (Callable[exponent, tuple[base]], )

class Function:
    ...
    def curry(self, n=1, left=True) -> Function:
        inside = lambda *xs: lambda *ys: self(*(xs + ys) if left else (ys + xs))
        if left:
            dom = self.dom[:len(self.dom) - n]
            cod = exp(self.cod, self.dom[len(self.dom) - n:])
        else: dom, cod = self.dom[n:], exp(self.cod, self.dom[:n])
        return Function(inside, dom, cod)

    @staticmethod
    def ev(base: Ty, exponent: Ty, left=True) -> Function:
        if left:
            inside = lambda f, *xs: f(*xs)
            return Function(inside, exp(base, exponent) + exponent, base)
        inside = lambda *xs: xs[-1](*xs[:-1])
        return Function(inside, exponent + exp(base, exponent), base)

    def uncurry(self, left=True) -> Function:
        base, exponent = self.cod[0].__args__[-1], self.cod[0].__args__[:-1]
        base = tuple(base.__args__) if is_tuple(base) else (base, )
        return self @ exponent >> Function.ev(base, exponent) if left\
            else exponent @ self >> Function.ev(base, exponent, left=False)

    exp = under = over = staticmethod(exp)
\end{minted}
\end{python}

\begin{example}
We can check the axioms for cartesian closed categories hold in $\mathbf{Pyth}$.

\begin{minted}{python}
x, y, z = (complex, ), (bool, ), (float, )
f = Function(dom=y, cod=exp(z, x),
             inside=lambda y: lambda x: abs(x) ** 2 if y else 0)
g = Function(dom=x + y, cod=z, inside=lambda x, y: f(y)(x))

assert f.uncurry().curry()(True)(1j) == f(True)(1j)
assert g.curry().uncurry()(True, 1j) == g(True, 1j)
\end{minted}
\end{example}

A (strict) cartesian closed functor is a cartesian functor $F$ which respects the exponential, i.e. $F(y^x) = F(y)^{F(x)}$.
Thus, we get a category $\mathbf{CCCat}$ of cartesian closed categories and functors, with a forgetful functor $U : \mathbf{CCCat} \to \mathbf{MonSig}$.
Its left adjoint $F^{CC} : \mathbf{MonSig} \to \mathbf{CCCat}$ can be constructed in two steps.
First, we define a \emph{closed signature} as a signature $\Sigma$ with a pair of binary operators $(- / -), \ (- \backslash -) : \Sigma_0 \times \Sigma_0 \to \Sigma_0$, i.e. for every pair $x, y \in \Sigma_0$ there are two generating objects $x / y, \ y \backslash x \in \Sigma_0$.
A closed monoidal signature is both a monoidal signature (i.e. its generating objects are a free monoid) and a closed signature.
Morphisms of closed monoidal signatures are defined in the obvious way, thus we get a category $\mathbf{CMonSig}$ with two forgetful functors $U : \mathbf{CCCat} \to \mathbf{CMonSig}$ and $U : \mathbf{CMonSig} \to \mathbf{MonSig}$.
The left adjoint $F^C : \mathbf{MonSig} \to \mathbf{CMonSig}$ takes a monoidal signature and freely adds extra objects for the over and under slashes, it can be defined by induction on the number of nested slashes.
Now we can define the left adjoint $F : \mathbf{CMonSig} \to \mathbf{CCCat}$ as a quotient of the free cartesian category with boxes for evaluations, bubbles for currying (i.e. by induction on the number of nested curryings) and relations given by the axioms for natural isomorphisms.
Composing the two adjunctions we get $F^{CC} : \mathbf{MonSig} \to \mathbf{CCCat}$.

\begin{remark}
Diagrams in free cartesian closed categories can also be seen as terms of the \emph{simply typed lambda calculus} (up to $\beta\eta$-equivalence) or as proofs in \emph{minimal logic} (the fragment of propositional logic with only conjunction and implication).
See Abramsky and Tzevelekos~\cite{AbramskyTzevelekos11} for an introduction to this \emph{Curry-Howard-Lambek correspondence}.
The problem of deciding the ($\beta\eta$) equivalence of a given pair of simply typed lambda terms (or equivalently the equivalence of two minimal logic formulae) is decidable~\cite{Tait67} but not elementary recursive~\cite{Statman79}, i.e. its time complexity is not bounded by any tower of exponentials.
As such, the word problem for free cartesian closed categories is as intractable as it gets.
If we remove the copying and discarding, the word problem for free symmetric closed categories is decidable in linear time~\cite{Voreadou77}.
The algorithm is based on a variant of Gentzen's cut-elimination theorem~\cite{Gentzen35} for \emph{multiplicative intuitionistic linear logic} (MILL), a \emph{substructural logic} where the \emph{weakening} and \emph{contraction} rules are omitted, i.e. we cannot discard or copy assumptions.
To the best of our knowledge, the case of (non-symmetric) closed monoidal categories is still open.
We conjecture it can be solved with a variant of cut-elimination for a \emph{non-commutative logic} omitting the \emph{exchange} rule, i.e. we cannot swap assumptions.
\end{remark}

DisCoPy implements the types of the closed diagrams with a subclass \py{Exp} of \py{Ty} for exponentials.
\py{Over} and \py{Under} are two subclasses of \py{Exp}, shortened to \py{x >> y} and \py{y << x}, and attached to the \py{Diagram} class as two static methods \py{over} and \py{under}.
We need to initalise the type \py{self: Exp} with some list of objects, our only choice is \py{self.inside=[self]}.
Thus, we need to override the equality and printing methods so that we don't fall into infinite recursion.
We also need to override the \py{cast} method so that the unit law is satisfied, i.e. \py{self @ Ty() == self == Ty() @ self}.

\begin{python}
{\normalfont Implementation of the types in free closed categories.}

\begin{minted}{python}
class Ty(monoidal.Ty):
    @classmethod
    def cast(cls, old: monoidal.Ty) -> Ty:
        return old[0] if len(old) == 1 and isinstance(old[0], Exp) else cls(old)

    def __pow__(self, other):
        return Exp(self, other) if isinstance(other, Ty)\
            else super().__pow__(other)

class Exp(Ty):
    cast = Ty.cast

    def __init__(self, base, exponent):
        self.base, self.exponent = base, exponent
        super().__init__(inside=(self, ))

    def __eq__(self, other):
        return isinstance(other, type(self))\
            and (self.base, self.exponent) == (other.base, other.exponent)

    __str__ = lambda self: "({} ** {})".format(self.base, self.exponent)

class Over(Exp):
    __str__ = lambda self: "({} << {})".format(self.base, self.exponent)

class Under(Exp):
    __str__ = lambda self: "({} >> {})".format(self.exponent, self.base)

Ty.__lshift__ = lambda self, other: Over(self, other)
Ty.__rshift__ = lambda self, other: Under(other, self)
\end{minted}
\end{python}

Closed diagrams are implemented with two subclasses of \py{Box} for currying and evaluation, which are attached to the \py{Diagram} class as two static methods \py{curry} and \py{ev}.
We shorten \py{Diagram.ev(base, exponent, left)} to \py{Ev(exponent >> base)} if \py{left} else \py{Ev(base << exponent)}.
Closed functors map \py{Exp} types to the \py{over} and \py{under} methods of their codomain, similarly for \py{Curry} and \py{Uncurry} boxes.

\begin{python}
{\normalfont Implementation of free closed categories and functors.}

\begin{minted}{python}
class Diagram(monoidal.Diagram):
    curry = lambda self, n=1, left=True: Curry(self, n, left)

    @staticmethod
    def ev(base: Ty, exponent: Ty, left=True) -> Ev:
        return Ev(base << exponent if left else exponent >> base)

    def uncurry(self: Diagram, left=True) -> Diagram:
        base, exponent = self.cod.base, self.cod.exponent
        return self @ exponent >> Ev(base << exponent) if left\
            else exponent @ self >> Ev(exponent >> base)

class Box(monoidal.Box, Diagram):
    cast = Diagram.cast

class Ev(Box):
    def __init__(self, x: Exp):
        self.base, self.exponent = x.base, x.exponent
        self.left = isinstance(x, Over)
        dom, cod = (x @ self.exponent, self.base) if self.left\
            else (self.exponent @ x, self.base)
        super().__init__("Ev" + str(x), dom, cod)

class Curry(Box):
    def __init__(self, diagram: Diagram, n=1, left=True):
        self.diagram, self.n, self.left = diagram, n, left
        name = "Curry({}, {}, {})".format(diagram, n, left)
        if left:
            dom = diagram.dom[:len(diagram.dom) - n]
            cod = diagram.cod << diagram.dom[len(diagram.dom) - n:]
        else: dom, cod = diagram.dom[n:], diagram.dom[:n] >> diagram.cod
        super().__init__(name, dom, cod)

Diagram.over, Diagram.under, Diagram.exp = map(staticmethod, (Over, Under, Exp))

class Functor(monoidal.Functor):
    dom = cod = Category(Ty, Diagram)

    def __call__(self, other):
        for cls, attr in [(Over, "over"), (Under, "under"), (Exp, "exp")]:
            if isinstance(other, cls):
                method = getattr(self.cod.ar, attr)
                return method(self(other.base), self(other.exponent))
        if isinstance(other, Curry):
            return self.cod.ar.curry(
                self(other.diagram), len(self(other.cod.exponent)), other.left)
        if isinstance(other, Ev):
            return self.cod.ar.ev(
                self(other.base), self(other.exponent), other.left)
        return super().__call__(other)
\end{minted}
\end{python}

\begin{example}
We can check the axioms by applying functors into $\mathbf{Pyth}$ and evaluating the result on some test input.

\begin{minted}{python}
x, y, z = map(Ty, "xyz")
f, g = Box('f', y, z << x), Box('g', y, z >> x)

F = Functor(
    ob={x: complex, y: bool, z: float},
    ar={f: lambda y: lambda x: abs(x) ** 2 if y else 0,
        g: lambda y: lambda z: z + 1j if y else -1j}
    cod=Category(Ty, Function))

assert F(f.uncurry().curry())(True)(1j) == F(f)(True)(1j)
assert F(g.uncurry(left=False).curry(left=False))(True)(1.2) == F(g)(True)(1.2)
\end{minted}
\end{example}

Understanding the relationship between closed and rigid categories will also explain how we draw closed diagrams, using the bubble notation introduced by Baez and Stay~\cite[Section~2.6]{BaezStay10}.
Indeed, in a free rigid category the exponentials are given as a tensor of a type and an adjoint, thus we can draw them as two wires side by side.
On the other hand, the exponentials of a free closed category are defined as generating objects, they ought to be drawn as one wire but we can decide to draw them as two, inseparable wires.
This constraint can be materialised by a \emph{clasp} that binds the two wires together.
Similarly, in free rigid categories we can draw evaluation and currying as diagrams with cups and caps while in a free closed category they are defined as generating boxes, which ought to be drawn as black boxes.
We can decide to draw them the same way as in a rigid category, with a bubble surrounding them to prohibit illicit rewrites.
Once drawn in this way, the equations for currying become a special case of the snake equations, although in general closed categories do not have boxes for cups and caps.

\begin{python}
{\normalfont Implementation of free rigid categories as closed categories.}

\begin{minted}{python}
rigid.Ty.__lshift__ = lambda self, other: self @ other.l
rigid.Ty.__rshift__ = lambda self, other: self.r @ other
rigid.Diagram.over = staticmethod(lambda base, exponent: base << exponent)
rigid.Diagram.under = staticmethod(lambda base, exponent: exponent >> base)

@classmethod
def ev(cls, base: rigid.Ty, exponent: rigid.Ty, left=True) -> rigid.Diagram:
    return base @ cls.cups(exponent.l, exponent) if left\
        else cls.cups(exponent, exponent.r) @ base

def curry(self: rigid.Diagram, n=1, left=True) -> rigid.Diagram:
    if left:
        base, exponent = self.dom[:n], self.dom[n:]
        return base @ self.caps(exponent, exponent.l) >> self @ exponent.l
    offset = len(self.dom) - n
    base, exponent = self.dom[offset:], self.dom[:offset]
    return self.caps(exponent.r, exponent) @ base >> exponent.r @ self

Diagram.ev, Diagram.curry = ev, curry
\end{minted}
\end{python}

\begin{example}
We can draw closed diagrams by applying a functor to a rigid category with bubbled evaluation and currying.

\begin{minted}{python}
class ClosedDrawing(rigid.Diagram):
    ev = staticmethod(lambda base, exponent, left=True:
        rigid.Diagram.ev(base, exponent, left).bubble())
    curry = lambda self, n=1, left=True:\
        rigid.Diagram.curry(self, n, left).bubble()

Draw = Functor(lambda x: x, lambda f: f, cod=Category(rigid.Ty, ClosedDrawing))
Diagram.draw = lambda self, **params: Draw(self).draw(**params)

f, g, h = Box('f', x, z << y), Box('g', x @ y, z), Box('h', y, x >> z)

drawing.equation(f.uncurry().curry(), f)
drawing.equation(h.uncurry(left=False).curry(left=False), h)
\end{minted}

\begin{center}
\tikzfig{img/closed/right-curry}
\hfill
\tikzfig{img/closed/left-curry}
\end{center}

\begin{minted}{python}
drawing.equation(g.curry().uncurry(), g, g.curry(left=False).uncurry(left=False))
\end{minted}

\ctikzfig{img/closed/uncurry}
\end{example}


\subsection{Traced categories} \label{subsection:traced}

The category $\mathbf{Pyth}$ has another categorical structure in common with hypergraph categories like $\mathbf{Tensor}$: they are both \emph{traced symmetric categories}.
A symmetric category is traced when it comes equipped with a family of functions $\mathtt{trace}_x(a, b) : C(a \otimes x, b \otimes x) \to C(a, b)$ subject to axioms that formalise the intuition that we can trace a morphism $f : a \otimes x \to b \otimes x$ by connecting its input and output $x$-wires in a loop, see Joyal et al.~\cite{JoyalEtAl96}.

Every compact closed category has a trace given by cups and caps, traced symmetric categories allow to express recursion and fixed points more generally in non-rigid categories.
Indeed in the case of a cartesian category such as $\mathbf{Pyth}$, the trace can equivalently be given in terms of \emph{fixed point operators} $\mathtt{fix}_x : C(a \times x, x) \to C(a, x)$~\cite[Proposition~6.8]{Selinger10}.
Dually, in a cocartesian category the trace can be defined in terms \emph{iteration operators} $\mathtt{iter}_x : C(x, a + x) \to C(x, a)$.
When the category has biproducts, it is sufficient to define a \emph{repetition operator} $\mathtt{repeat}_x : C(x, x) \to C(x, x)$~\cite[Proposition~6.11]{Selinger10}.
In the category of finite sets and relations $\mathbf{Mat}_\B$ with the direct sum as tensor, this coincides with the usual notion of reflexive transitive closure~\cite[Proposition~6.3]{JoyalEtAl96}.

\begin{python}
{\normalfont Implementation of the syntax for free traced categories.}

\begin{minted}{python}
class Diagram(symmetric.Diagram):
    def trace(self, n=1):
        return Trace(self, n)

class Box(symmetric.Box, Diagram):
    cast = Diagram.cast

class Trace(Box):
    def __init__(self, diagram: Diagram, n=1):
        assert diagram.dom[-n:] == diagram.cod[-n:]
        self.diagram, name = diagram, "Trace({}, {})".format(diagram, n)
        super().__init__(name, diagram.dom[:-n], diagram.cod[:-n])

class Functor(symmetric.Functor):
    dom = cod = Category(Ty, Diagram)

    def __call__(self, other):
        if isinstance(other, Trace):
            n = len(self(other.diagram.dom)) - len(self(other.dom))
            return self.cod.ar.trace(self(other.diagram), n)
        return super().__call__(other)
\end{minted}
\end{python}

\begin{example}
We can draw traced diagrams by applying a traced functor into compact-closed categories with bubbles.

\begin{minted}{python}
def compact_trace(self, n=1):
    return self.dom[:-n] @ self.caps(self.dom[-n:], self.dom[-n:].r)\
        >> self @ self.dom[-n:].r\
        >> self.cod[:-n] @ self.cups(self.cod[-n:], self.cod[-n:].r)

compact.Diagram.trace = compact_trace

class TracedDrawing(compact.Diagram):
    trace = lambda self, n: compact_trace(self, n).bubble()

Draw = Functor(lambda x: x, lambda f: f, cod=Category(Ty, TracedDrawing))
Diagram.draw = lambda self, **params: Draw(self).draw(**params)

a, b, x = map(Ty, "abx")
Box('f', a @ x, b @ x).trace().draw()
\end{minted}
\ctikzfig{img/closed/trace}
\end{example}

\begin{python}\label{listing:traced-python}
{\normalfont Implementation of $\mathbf{Pyth}$ as a traced cartesian category.}

\begin{minted}{python}
class Function:
    ...
    def fix(self, n=1):
        if n > 1: return self.fix().fix(n - 1)
        dom, cod = self.dom[:-1], self.cod
        def inside(*xs, y=None):
            result = self.inside(*xs + (() if y is None else (y, )))
            return y if result == y else inside(*xs, y=result)
        return Function(inside, dom, cod)

    def trace(self, n=1):
        dom, cod, traced = self.dom[:-n], self.cod[:-n], self.dom[-n:]
        fixed = (self >> self.discard(cod) @ traced).fix()
        return self.copy(dom) >> dom @ fixed\
            >> self >> cod @ self.discard(traced)
\end{minted}
\end{python}

\begin{example}
We can compute the golden ratio as a fixed point.
Note that in order to find a fixed point we need a default value to start from.

\begin{minted}{python}
phi = Function(lambda x=1: 1 + 1 / x, [int], [int]).fix()
assert phi() == (1 + sqrt(5)) / 2
\end{minted}
\end{example}

\begin{python}\label{listing:traced-matrix}
{\normalfont Implementation of $\mathbf{Mat}_\S$ as a traced biproduct category.}

\begin{minted}{python}
class Matrix:
    ...
    def repeat(self):
        assert self.dtype is bool and self.dom == self.cod
        return sum(
            Matrix.id(self.dom).then(*n * [self]) for n in range(self.dom + 1))

    def trace(self, n=1):
        assert self.dtype is bool
        A, B, C, D = (row >> self >> column
                      for row in [self.id(self.dom - n) @ self.unit(n),
                                  self.unit(self.dom - n) @ self.id(n)]
                      for column in [self.id(self.cod - n) @ self.discard(n),
                                     self.discard(self.cod - n) @ self.id(n)])
        return A + (B >> D.repeat() >> C)
\end{minted}
\end{python}


\section{A premonoidal approach} \label{section:premonoidal}

In the previous section, we have seen that cartesian closed categories give us enough syntax to interpret (simply typed) lambda terms.
Thus, we can execute the diagrams in a free cartesian closed category as functions by applying a functor into $\mathbf{Set}$ or $\mathbf{Pyth}$, we can also interpret them as functors by applying a functor into $\mathbf{Cat}$ with the cartesian product as tensor.
Now if we remove the cartesian assumption, the diagrams of free closed categories give us a programming language with higher-order functions where we cannot copy, discard or even swap data: the (non-commutative) \emph{linear lambda calculus}.
With a more restricted language, we get a broader range of possible interpretations.
For example, there can be only one cartesian closed structure on $\mathbf{Cat}$ (because any other would be naturally isomorphic) but are there any other monoidal closed structures?
Foltz, Lair and Kelly~\cite{FoltzEtAl80} answer the question with the positive: $\mathbf{Cat}$ has exactly two closed structures: the usual cartesian closed structure with the exponential $D^C$ given by the category of functors $C \to D$ and natural transformations, and a second one where the exponential $C \Rightarrow D$ is given by the category of functors $C \to D$ and \emph{transformations}, with no naturality requirement.

The corresponding tensor product on $\mathbf{Cat}$, i.e. the left adjoint $C \Box - \dashv C \Rightarrow -$, is called the \emph{funny tensor product}, maybe because mathematicians thought it was funny not to require naturality.
More explicitly, the funny tensor $C \Box D$ can be described as the push-out of $C \times D_0 \leftarrow C_0 \times D_0 \to C_0 \times D$ where $C_0, D_0$ are the discrete categories of objects, or equivalently as a quotient of the coproduct $(C \times D_0 + C_0 \times D) / R$ where the relations are given by $(0, \id(x), y) = (1, x, \id(y))$.
Even more explicitly, the objects of $C \Box D$ are given by the cartesian product $C_0 \times D_0$, the arrows are alternating compositions $(0, f_1, y_1) \fcmp (1, x_2, g_2) \fcmp \dots \fcmp (0, f_{n - 1}, y_{n -1}) \fcmp (1, x_n, g_n)$ of arrows in one category paired with an object of the other.
When the two categories are in fact monoids, the funny tensor is called the \emph{free product} because it sends free monoids to free monoids, i.e. $X^\star \Box Y^\star = (X + Y)^\star$.
This is also true for free categories, i.e. $F(\Sigma) \Box F(\Sigma') = F(\Sigma \times \Sigma'_0 \cup \Sigma_0 \times \Sigma)$, so maybe $\Box$ should be called free rather than funny.
While a functor on a cartesian product $F : C \times D \to E$ can be seen as a functor of two arguments that is functorial in both simultaneously, i.e. $F(f \fcmp f', g \fcmp f') = F(f, g) \fcmp F(f', g')$, a functor on a funny product $F : C \Box D \to E$ is functorial separately in its $C$ and $D$ arguments, i.e. $F(0, f \fcmp f', y) = F(0, f, y) \fcmp F(0, f', y)$ and $F(1, x, g \fcmp g') = F(1, x, g) \fcmp F(1, x, g')$.

\subsection{Premonoidal categories \& state constructions}\label{subsection:state-construction}

Now recall that a (strict) monoidal category $C$ is a monoid in $(\mathbf{Cat}, \times)$, i.e. the tensor is a functor on a cartesian product $\otimes : C \times C \to C$.
In a similar way, we define a (strict) \emph{premonoidal category} as a monoid in $(\mathbf{Cat}, \Box)$, i.e. a category $C$ with an associative, unital functor on the funny product $\boxtimes : C \Box C \to C$.
A (strict) \emph{premonoidal functor}\footnote
{As we mention in remark~\ref{remark:non-strict-premonoidal}, the original definition from Power and Robinson~\cite{PowerRobinson97} requires premonoidal functors to be center-preserving.
This is not necessary for strict premonoidal functors.}
is a functor that commutes with $\boxtimes$, i.e. $F(f \boxtimes g) = F(f) \boxtimes F(g)$, thus we get a category $\mathbf{PreMonCat}$.
As for monoidal categories, we can show that every premonoidal category is equivalent to a foo one, i.e. where the monoid of objects is free, thus we get a forgetful functor $U : \mathbf{PreMonCat} \to \mathbf{MonSig}$.
The image of $\boxtimes$ on objects may be given by concatenation, its image on arrows is called \emph{whiskering}, it is denoted by $\boxtimes(0, f, x) = f \boxtimes x$ and $\boxtimes(1, x, f) = x \boxtimes f$.
As we have seen in section~\ref{section:monoidal}, from whiskering we can define a (biased) tensor product on arrows $f \boxtimes g = f \boxtimes \dom(g) \fcmp \cod(f) \boxtimes g$ and conversely, we can define whiskering as tensoring with identity arrows.

Thus, we can take the data for a premonoidal category $C$ to be the same as that of a foo-monoidal category and the only axioms to be those for $(C_1, \boxtimes, \id(1))$ being a monoid.
That is, a premonoidal category is almost a monoidal category, only the interchange law does not necessarily hold.
Every monoidal category (functor) is also a premonoidal category (functor), hence we have an inclusion functor $\mathbf{MonCat} \injects \mathbf{PreMonCat}$.
An arrow of a premonoidal category $C$ is called \emph{central} if it interchanges with every other arrow, a transformation is called central if every component is central.
Every identity is central and composition preserves centrality, thus we can define the \emph{center} $Z(C)$ as the subcategory of central arrows and show that $Z : \mathbf{PreMonCat} \to \mathbf{MonCat}$.
A \emph{symmetric premonoidal category} is a premonoidal category with a central natural isomorphism $S : x \boxtimes y \to y \boxtimes x$ such that the hexagon equations hold and $S(x, 1) = \id(x) = S(1, x)$.

\begin{remark}\label{remark:non-strict-premonoidal}
The definition of non-strict premonoidal category and functor requires some caution.
Indeed, in order to get a coherence theorem (i.e. in order to prove that every premonoidal category is equivalent to a strict one) we need to assume that the associator and unitor morphisms are central.
Then for the composition of premonoidal functors to be well-defined, we need them to be center-preserving which rules out important examples~\cite{StatonLevy13}.
This motivates the definition of (non-cartesian) \emph{Freyd categories}, also called \emph{effectful categories}~\cite{Roman22}: a triple $(C, V, J)$ of a premonoidal category $C$, a monoidal category $V$ of \emph{values} and an identity-on-objects strict premonoidal functor $J : V \to C$ whose image is central.
An effectful functor is a functor between effectful categories which restricts to a monoidal functor on their values.
Every premonoidal category can be taken as an effectful category with its center as values (in which case effectful functors are center-preserving) or with its discrete category of objects as values (in which case effectful functors don't necessarily preserve centers).
\end{remark}

\begin{example}
A premonoidal category with one object is just a set with two monoid structures.
They do not satisfy the interchange law so the Eckmann-Hilton argument does not apply, the two monoids need not coincide nor be commutative.
In this case, the notion of center coincides with the usual notion of center of a monoid, i.e. the submonoid of elements that commute with everything else.
Indeed, the monoidal center of a one-object premonoidal is the intersection of the centers of its two monoid structures.
\end{example}

\begin{example}
For any small category $C$, the category $C \Rightarrow C$ of endofunctors $C \to C$ with (not-necessarily-natural) transformations as arrows is premonoidal.
\end{example}

\begin{example}
The category of matrices $\mathbf{Mat}_\S$ with entries in a rig $\S$ with the Kronecker product as tensor is a premonoidal category, it is monoidal precisely when $\S$ is commutative.
\end{example}

\begin{example}
The category $\mathbf{Pyth}$ with \py{tuple} as tensor is premonoidal.
Every pure function is in the center $Z(\mathbf{Pyth})$, but the converse is not necessarily true: take the side effect $f : x \to 1$ which increments a private, internal counter every time it is called.
It is impure, but not enough that we can observe it by parallel composition, i.e. although it does not commute with copy and discard, it can still be interchanged with any other function.
In other words, it is in the monoidal center, but not the cartesian center (i.e. the subcategory of comonoid homomorphisms).
\end{example}

Premonoidal categories were introduced by Power and Robinson~\cite{PowerRobinson97} as a way to model programming languages with side effects, reformulating an earlier framework of Moggi~\cite{Moggi91} which captured notions of computation as \emph{monads}.
Our last two examples can be seen as special cases of a more general pattern: they are \emph{Kleisli categories} for a \emph{strong monad}.
Infamously, a monad is just a monoid $T : C \to C, \mu : T \fcmp T \to T$, $\eta : 1 \to T$ in the category $C^C$ of endofunctors with natural transformations as arrows.
Its Kleisli category $K(T)$ has the same objects as $C$ and arrows given by $K(T)(x, y) = C(x, T(y))$, with the identity given by the unit $\id_K(x) = \eta(x)$ and composition given by post-composition with the multiplication, i.e. $f \fcmp_K g = f \fcmp T(g) \fcmp \mu(z)$ for $f : x \to T(y)$ and $g : y \to T(z)$.
Now if $C$ happens to be a monoidal category, we can ask for $T$ to be a monoidal functor, but we also want the multiplication and unit of the monad to play well with the monoidal structure.
We could ask for a \emph{monoidal monad} where $\mu$ and $\eta$ are monoidal transformations, i.e. for the monad to be a monoid in the category of monoidal endofunctors and monoidal natural transformations and show that the Kleisli category $K(T)$ inherits a monoidal structure.
More generally, we can ask only for a (bi)\emph{strong monad}, equipped with two natural transformations $\sigma(a, b) : a \otimes T(b) \to T(a \otimes b)$ and $\tau(a, b) : T(a) \otimes b \to T(a \otimes b)$ subject to sufficient conditions for the Kleisli category $K(T)$ to inherit a premonoidal structure.
It is monoidal precisely when the monad is commutative, i.e. the two arrows from $T(x) \otimes T(y)$ to $T(x \otimes y)$ are equal.

\begin{example}
Take the category $C = \mathbf{Set}$ and the \emph{distribution monad} $T(X) = \{ p : X \to \S \ \vert \ p \text{has finite support} \}$ for a rig $\S$ with the image on arrows given by \emph{pushforward} $T(f : X \to Y)(p : X \to \S) : y \mapsto \sum_{x \in f^{-1}(y)} p(x)$, the multiplication and unit induced by the rig multiplication and unit.
If we construct its Kleisli category and take the subcategory spanned by finite sets, we get the category $\mathbf{Mat}_S$ of matrices seen as functions $m : Y \to \S^X \simeq X \times Y \to \S$.
One can show this is a strong monad, and it is commutative precisely when the rig is commutative.
\end{example}

\begin{example}
Take any closed symmetric category $C$ and the \emph{state monad} $T(x) = s^{s \otimes x}$ for some object $s$, an arrow $f : x \to y$ in the Kleisli category $K(T)$ is given by an arrow $f : s \otimes x \to s \otimes y$ in $C$ (up to uncurrying).
One can show that $T$ is strong and thus $K(T)$ is premonoidal.
When $C = \mathbf{Set}$ the state monad is a non-commutative as it gets: $T$ is commutative if and only if $s$ is trivial.
Whiskering an arrow $f : s \otimes x \to s \otimes y$ by an object $z$ on the left is given by pre- and post-composition with swaps $z \boxtimes f = S(s, z) \otimes x \ \fcmp \ z \otimes f \ \fcmp \ S(z, s) \otimes y$, whiskering on the right is easier $f \boxtimes z = f \otimes z$.
\end{example}

Jeffrey~\cite{Jeffrey97} then gave the first definition of free premonoidal categories, his construction formalises the intuition that non-central arrows are to be thought as arrows with side effects.
The \emph{state construction} takes as input a symmetric monoidal category $C$ and an object $s$, and builds a symmetric premonoidal category $\mathbf{St}(C, s)$ with the same objects as $C$, arrows given by $\mathbf{St}(C, s)(x, y) = C(s \otimes x, s \otimes y)$ and whiskering defined as in the state monad.
Intuitively, an arrow in $\mathbf{St}(C, s)$ is an arrow in $C$ which also updates a global state encoded in the object $s$, which we can draw as an extra wire passing through every box of the diagram, preventing them from being interchanged.
More formally, given a monoidal signature $\Sigma$ we can construct the free symmetric premonoidal category by taking the state construction $\mathbf{St}(F^S(\Sigma + \{ s \}), s)$ over the free symmetric category with an extra object $s$, then taking the subcategory spanned by objects of the form $s \otimes t$ for $t \in \Sigma_0^\star$.
We can generalise this to (non-symmetric) free premonoidal categories but we still need symmetry at least for the extra object, i.e. natural isomorphisms $S(s, x) : s \otimes x \to x \otimes s$ for each object $x$, subject to hexagon and unit equations.
We refer the reader to Roman~\cite{Roman22} for a formalisation of this result in the framework of effectful categories.

We call this definition of the free premonoidal category as a state construction over a free monoidal category with an extra swappable object the \emph{monoidal approach} to premonoidal categories.
In what we call the \emph{premonoidal approach} to monoidal categories, definitions go the other way around with free premonoidal categories as the fundamental notion and free monoidal categories as an interesting quotient.
Indeed, we have been using the arrows of free premonoidal categories all along: they are string diagrams, defined as lists of layers without quotienting by interchanger.
Equivalently, they are labeled generic progressive plane graphs up to generic deformation, i.e. with at most one box node at each height.
While in the monoidal approach, string diagrams are defined as non-planar graphs and the ordering of boxes is materalised by extra wires connecting the boxes in sequence, in the premonoidal approach we take this ordering as data: boxes are in a list.
This comes with an immediate advantage: equality of premonoidal diagrams can be defined in terms of equality of lists, hence it is decidable in linear time whereas equality of monoidal diagrams has quadratic complexity and equality of symmetric diagrams could be as hard as graph isomorphism.
Another advantage of representing string diagrams with lists rather than graphs is that the code for functor application, i.e. the interpretation of diagrams, is a simple \py{for} loop rather than an elaborate graph algorithm.
Similarly, the algorithm for drawing premonoidal diagrams requires almost no choices, the order of wires and boxes is fixed, we can only choose their shape and the spacing between them.
On the other hand, drawing graphs requires complex heuristics and graphical interfaces in order to get satisfying results.

\subsection{Hypergraph versus premonoidal diagrams}\label{subsection:hypergraph-vs-premonoidal}

In order to compare the graph-based and list-based approaches, we need to say a few words about how string diagrams for symmetric categories are implemented.
Recall from sections~\ref{subsection:symmetric} and \ref{subsection:hypergraph} that equality of diagrams in symmetric and hypergraph categories reduce to graph and hypergraph isomorphisms respectively.
This can be made explicit by implementing these diagrams as graphs and hypergraphs rather than lists of layers with explicit boxes for swaps and spiders.
Given a monoidal signature $\Sigma$, a \emph{hypergraph diagram} (also called hypergraphs with \emph{ports}) $f$ is given by:
\begin{itemize}
\item its domain and codomain $\dom(f), \cod(f) \in \Sigma_0^\star$,
\item a list of boxes $\boxes(f) \in \Sigma_1^\star$ from which we define:
\begin{itemize}
    \item $\mathtt{input\_ports}(f) = \cod(f) + \coprod_i \dom(f_i)$,
    \item $\mathtt{output\_ports}(f) = \dom(f) + \coprod_i \cod(f_i)$,
    \item and $\mathtt{ports}(f) = \mathtt{input\_ports}(f) + \mathtt{output\_ports}(f)$
\end{itemize}
\item a number of \emph{spiders} $\mathtt{spiders}(f) = n \in \N$ together with their list of types $\mathtt{spider\_types}(f) \in \Sigma_0^{n}$,
\item a set of \emph{wires} $\mathtt{wires}(f) : \mathtt{ports}(f) \to \mathtt{spiders}(f)$.
\end{itemize}
The tensor of two hypergraph diagrams is given by concatenating their domain, codomain, boxes and spiders.
The composition is defined in terms of \emph{pushouts}.
Given $f : x \to y$ and $g : y \to x$ we have a \emph{span} of functions $\mathtt{spiders}(f) \leftarrow y \rightarrow \mathtt{spiders}(g)$ induced by the wires from the codomain of $f$ and the domain of $g$, we define $\mathtt{spiders}(f \fcmp g)$ as the size of the quotient set $(\mathtt{spiders}(f) + \mathtt{spiders}(g)) / R$ under the relation given by the codomain wires of $f$ and the domain wires of $g$.
Concretely, this is computed as the reflexive transitive closure of the binary relation on $\mathtt{spiders}(f) + \mathtt{spiders}(g)$.
The identity diagram $\id(x)$ has $\mathtt{spiders}(f) = \vert x \vert$ and wires given by the two injections $\vert x \vert + \vert x \vert \to \mathtt{spiders}(f)$.
Now we can define a notion of interchanger which takes a hypergraph diagram $f$ and some index $i < \vert \boxes(f) \vert$ and returns the diagram with boxes $i$ and $i + 1$ interchanged, i.e. with the wires relabeled appropriately.
While the interchanger of monoidal diagrams is ill-defined when the boxes are connected, that of hypergraph diagrams is always defined.
The category $\mathbf{Hyp}(\Sigma)$ with equivalence classes of hypergraph diagrams is in fact isomorphic to the free hypergraph category $F^H(\Sigma)$ which we defined in section~\ref{subsection:hypergraph} in terms of special commutative Frobenius algebras~\cite[Theorem 3.3]{BonchiEtAl16}.
The data structure for hypergraph diagrams has swaps and spiders built-in: they are hypergraph diagrams with no boxes.

We say a hypergraph diagram is \emph{bijective} when each spider to be connected to either zero or two ports, so that they define a bijection $\mathtt{ports}(f) \to \mathtt{ports}(f)$.
We conjecture the subcategory of bijective hypergraph diagrams is isomorphic to the free compact-closed category defined in section~\ref{subsection:rigid} in terms of cups and caps.
Spiders connected to zero ports correspond to dimension scalars, i.e. circles composed of a cap then a cup.
A hypergraph diagram is \emph{monogamous} when each spider is connected to exactly one input port and one output port, so that they define a bijection $\mathtt{output\_ports}(f) \to \mathtt{input\_ports}(f)$.
We conjecture the subcategory of monogamous hypergraph diagrams is the free traced symmetric category as defined in section~\ref{subsection:traced}.
We have not been able to find a proof of this statement nor of the compact-closed case in the literature, although they are a straightforward generalisation of \cite[Theorem 3.3]{BonchiEtAl16}.

A hypergraph diagram is \emph{progressive} when it is monogamous and furthermore when the output port of box $i$ is connected to the input port of box $j$ we have $i < j$.
An equivalent condition is that the underlying hypergraph obtained by forgetting the ports is \emph{acyclic} and the order of boxes witnesses that acyclicity.
An interchanger between boxes $i$ and $i + 1$ is progressive when the two boxes are not connected, i.e. progressive interchangers preserve progressivity.
The resulting category of progressive hypergraph diagrams up to progressive interchangers is isomorphic to the free symmetric category defined in section~\ref{subsection:symmetric} in terms of braidings~\cite[Theorem~3.12]{BonchiEtAl16}.
If we remove the interchanger quotient, we get premonoidal versions of free hypergraph, compact closed, traced and symmetric categories.
Traced preonoidal categories were introduced by Benton and Hyland~\cite{BentonHyland03} in order to model recursion in the presence of side-effects.
To the best of our knowledge, no one has ever considered premonoidal compact closed categories: the snake equations still hold but we cannot yank the snakes away if there are obstructions, i.e. the snake removal algorithm of section~\ref{section:monoidal} does not apply.

At every level of this symmetric-traced-compact-hypergraph hierarchy, premonoidal diagrams have the same linear-time algorithm for deciding equality, the data structure for hypergraph diagrams faithfully encode the premonoidal axioms without needing to compute any quotient: normal form becomes identity.
Now suppose we want to compute the interpretation of such a hypergraph diagram in a category given only access to its methods for identity, composition, tensor, swaps and spiders.
This requires to compute the isomorphism $\mathbf{Hyp}(\Sigma) \to F^H(\Sigma)$, i.e. we want to describe the given hypergraph diagram as a chosen representative in its equivalence class of premonoidal diagrams with explicit boxes for swaps and spiders.
The inverse isomorphism $F^H(\Sigma) \to \mathbf{Hyp}(\Sigma)$ is computed by applying a premonoidal functor (i.e. a \py{for} loop on a list of layers) sending swap and spider boxes to hypergraph diagrams with no boxes.

This isomorphism is implemented in the \py{hypergraph} module of DisCoPy which is outlined below.
The composition method calls a \py{pushout} subroutine which takes as input the numbers of spiders on the left and right, the wires from some common boundary ports to the left and right spiders, and returns the two injections into their pushout.
The three properties for bijective, monogamous and progressive diagrams implement the subcategory of compact closed, traced and symmetric diagrams respectively.
The three corresponding methods take a diagram and add explicit boxes for spiders, cups and caps so that \py{f.make_bijective().is_bijective} for all \py{f: Diagram} and similarly for monogamous and progressive.
The \py{downgrade} method calls \py{make_progressive} to construct a \py{compact.Diagram} with explicit boxes for swaps and spiders.

The method \py{cast} applies a \py{compact.Functor} from premonoidal to hypergraph diagrams so that we have \py{cast(f.downgrade())} \py{== f} on the nose for any \py{f: Diagram} and \py{cast(f).downgrade()} is equal to any \py{f:} \py{compact.Diagram} up to the special commutative Frobenius axioms.
The \py{draw} method uses a randomised force-based layout algorithm for graphs to compute an embedding from the hypergraph diagram to the plane where wires do not cross too much.
This is still an experimental feature and the results of \py{f.draw()} usually look much worse than the drawing of \py{f.downgrade().draw()} using the deterministic algorithm of section~\ref{section:drawing}.

\begin{python}\label{listing:discopy-hypergraph}
{\normalfont Outline of the \py{discopy.hypergraph} module.}

\begin{minted}{python}
def pushout(left: int, right: int,
            left_wires: tuple[int, ...], right_wires: tuple[int, ...]
            ) -> tuple[dict[int, int], dict[int, int]]: ...

@dataclass
class Diagram(Composable, Tensorable):
    dom: Ty
    cod: Ty
    boxes: tuple[Diagram, ...]
    wires: tuple[int, ...]
    spider_types: tuple[Ty, ...]

    @staticmethod
    def id(x: Ty) -> Diagram: ...
    def then(self, *others: Diagram) -> Diagram: ...
    def tensor(self, *others: Diagram) -> Diagram: ...
    def interchange(self, i: int) -> Diagram: ...

    swap: Callable[[Ty, Ty], Diagram] = staticmethod(...)
    spiders: Callable[[int, int, Ty], Diagram] = staticmethod(...)

    is_bijective: bool = property(...)
    is_monogamous: bool = property(...)
    is_progressive: bool = property(...)

    def make_bijective(self) -> Diagram: ...
    def make_monogamous(self) -> Diagram: ...
    def make_progressive(self) -> Diagram: ...

    def downgrade(self) -> compact.Diagram: ...

    cast = staticmethod(compact.Functor(
        ob=lambda x: Ty(x.inside[0]),
        ar=lambda box: Box(box.name, box.dom, box.cod),
        cod=Category(Ty, Diagram)))

    def draw(self, **params): ...

class Box(Diagram):
    def __init__(self, name: str, dom: Ty, cod: Ty):
        boxes, spider_types, wires = (self, ), tuple(map(Ty, dom @ cod)), ...
        self.name = name; super().__init__(dom, cod, boxes, wires, spider_types)

    __eq__ = lambda self, other: cat.Box.__eq__(self, other)\
            if isinstance(other, Box) else super().__eq__(other)
\end{minted}
\end{python}

In some cases however, we can compute the interpretation of hypergraph diagrams without having to downgrade them back to premonoidal diagrams.
This is the case for \emph{tensor networks}, i.e. hypergraph diagrams interpreted in the category $\mathbf{Tensor}_\S$.
Indeed, rather than applying premonoidal functors into our naive \py{Matrix} class, DisCoPy can translate hypergraph diagrams as input to \emph{tensor contraction} algorithms such as the Einstein summation of NumPy~\cite{VanDerWaltEtAl11} combined with the just-in-time compilation of JAX~\cite{BradburyEtAl20}, or the specialised TensorNetwork library~\cite{RobertsEtAl19}.
Another example is that of quantum circuits: they are inherently symmetric diagrams.
Indeed, the data structure for circuits in a quantum compiler such as t$\vert$ket$\rangle$~\cite{SivarajahEtAl20} is secretly some premonoidal symmetric category: objects are lists of qubit (and bit) identifiers, arrows (i.e. circuits) are lists of operations.
When circuits are encoded as premonoidal diagrams as we have done in example~\ref{example:circuit-diagrams}, qubits are forced into a line because their wires are ordered from left to right, thus applying gates to non-adjacent qubits is encoded in terms of swap boxes.
When we apply a premonoidal functor to the category of t$\vert$ket$\rangle$ circuits, those swap boxes are not interpreted as the physical operation of applying three CNOT gates, but as the logical operation of relabeling our qubit identifiers.
It is then the job of the compiler to map these symmetric diagrams (where every qubit can talk to every other) onto the architecture of the machine and potentially introduce physical swaps when a logical gate applies to physical qubits that are not adjacent.

The main advantage of representing diagrams as hypergraphs rather than lists is that we can use graph rewriting algorithms to implement quotient categories.
Indeed, the \emph{double push-out} (DPO) rewriting of Ehrig et al.~\cite{EhrigEtAl73} can be extended from graphs to hypergraph diagrams so that we can match the left-hand side of an axiom in a hypergraph diagram and then compute the substitution with the right-hand side~\cite{BonchiEtAl20}.
Abstractly, DPO rewriting takes two hypegraph diagrams \py{self} and \py{pattern} and iterates through all possible \py{match} (i.e. pairs of diagrams for top and bottom and pairs of types for left and right as defined in section~\ref{subsection:quotient-monoidal}) such that \py{match.subs(pattern)} is equal to \py{self} up to interchanger.
This can be extended to the case of symmetric diagrams by implementing a Boolean property \py{match.is_convex} that makes sure that pattern matching does not introduce spiders~\cite{BonchiEtAl16}.
DPO rewriting has been the basis of tools such as Quantomatic and its successor PyZX~\cite{KissingerVanDeWetering19}~\cite{KissingerZamdzhiev15} for automated diagrammatic reasoning, it is also at the core of circuit optimisation in the t$\vert$ket$\rangle$ compiler.
DisCoPy implements back and forth translations from diagrams to both PyZX and t$\vert$ket$\rangle$, thus we can use their rewriting engines to implement quotient categories, i.e. to define normal forms.

However, the hypergraph approach breaks down in the case of non-symmetric monoidal categories.
Indeed, the monoidal functor from the free monoidal category to the free symmetric category is not faithful, for example it sends two nested circles and two circles side by side to the same hypergraph diagram.
We conjecture that the category of planar\footnote
{A hypergraph diagram is planar when we can embed it in the plane, or equivalently it is the image of a swap-free premonoidal diagram.} progressive hypergraph diagrams is the free \emph{spacial category}, i.e. one with $s \otimes x = x \otimes s$ for all scalars $s : 1 \to 1$ and objects $x$.
Translated in terms of the topological definition of string diagrams, this would correspond to taking labeled progressive plane graphs up to deformation of three-dimensional space rather than up to deformation of the plane~\cite[Conjecture~3.4]{Selinger10}.
When presented as quotients of free monoidal categories, spacial categories require an infinite family of axioms indexed by all possible scalar diagrams: in the absence of symmetry we cannot decompose the equation $(f \fcmp g) \otimes x = x \otimes (f \fcmp g)$ for a state $f : 1 \to y$ followed by an effect $g : y \to 1$ in terms of two smaller equations about $f$ and $g$ passing through the wire $x$.
That every braided monoidal category is spacial follows from the naturality of the braiding, we do not know of any natural example of non-free non-braided spacial category.
Moreover, it is an open question whether we can extend DPO rewriting to the case of spacial monoidal categories, i.e. whether there is an efficiently checkable condition that ensures that pattern matching does not introduce swaps.

Thus, DisCoPy's planar premonoidal approach to string diagrams allows to define diagrams in non-symmetric categories that cannot be defined as hypergraphs.
Although the concrete examples of categories we have discussed so far (functions, matrices, circuits) are all symmetric, planarity is essential if we are to model grammatical structure in terms of string diagrams as we will in section~\ref{section:NLP}.
Indeed, the left to right order of wires in a planar diagram encode the chronological order of words in a sentence, allowing arbitrary swaps would make grammaticality permutation-invariant: if a sentence is grammatical, then so would be any random shuffling of it.
More impotantly, planarity in grammar has been given a cognitive explanation.
In order to minimise the computational resources needed by the brain, human languages tend to minimise the distance between words that are syntactically connected~\cite{FutrellEtAl15} and the minimisation of swaps comes as a side-effect~\cite{Cancho06}.
It can also be given a complexity-theoretic explanation: planar grammatical structures such as Chomsky's syntax trees, Lambek's pregroup diagrams or Gaifman's dependency trees (which we introduce in section~\ref{section:NLP}) are all \emph{context-free}, they have the same expressive power as \emph{push-down automaton}.
As we will mention in section~\ref{subsection:chomsky}, \emph{cross-serial dependencies} are counter examples where grammatical wires are allowed to cross, albeit in a restricted way that makes them \emph{mildly context-sensitive}~\cite{Stabler04}.
Yeung and Kartsaklis~\cite{YeungKartsaklis21} showed that up to word reordering, the diagrams for these cross-serial dependencies can always be rewritten in a planar way using naturality.
They then used DisCoPy to encode every sentence of Alice in Wonderland as a diagram, ready to be translated into a circuit and sent to a quantum computer.

\subsection{Towards higher-dimensional diagrams}\label{subsection:towards-higher}

The premonoidal approach is also well-suited to be generalised from two- to arbitrary-dimensional diagrams.
The first step would be to add some colours to our diagrams, generalising them from monoidal categories to (strict) \emph{2-categories}, or equivalently from premonoidal categories to \emph{sesquicategories}.
The data for a sesquicategory $C$ is given by:
\begin{itemize}
    \item the data for a category $(C_0, C_1, \dom_0, \cod_0, \fcmp_0, \id_0)$ where the objects $C_0$ and arrows $C_1$ are called the class of $\emph{0- and 1-cells}$,
    \item a class $C_2$ of \emph{2-cells},
    \item domain and codomain $\dom_1, \cod_1 : C_2 \to C_1$,
    \item an identity $\id_1 : C_1 \to C_2$ and a partial composition $(\fcmp_1) : C_2 \times C_2 \to C_1$.
\end{itemize}
such that the following holds
\begin{itemize}
    \item $C(x, y) = \{ f \in C_2 \ \vert f : x \to y \}$ is a category for every pair of 1-cells $x, y \in C_1$,
    \item composition of 1-cells is a functor $(\fcmp)_0 : C(x, y) \Box C(y, z) \to C(x, z)$ for $\Box$ the funny tensor product on $\mathbf{Cat}$.
\end{itemize}
The axioms for 2-categories are the same but now composition is a bifunctor $\fcmp_1 : C(x, y) \times C(y, z) \to C(x, z)$ on a cartesian product.
Every bifunctor is also functorial in its two arguments separately thus every 2-category is also a sesquicategory.
The canonical example of a (2-category) sesquicategory is $\mathbf{Cat}$ with categories as 0-cells, functors as 1-cells and (natural) transformations as 2-cells.
Every monoidal (premonoidal) category is a 2-category (sesquicategory) with one 0-cell.
A 2-functor $F : C \to D$ between two 2-categories is given by three functions $\{ F_i : C_i \to D_i \}_{0 \leq i \leq 2}$ such that $(F_0, F_1) : (C_0, C_1) \to (D_0, D_1)$ and $(F_1, F_2) : C(x, y) \to D(F_1(x), F_1(y))$ are functors for all $x, y \in C_1$.

Free sesquicategories are defined in the same way as free premonoidal categories (i.e. as lists of layers) except that now every type comes itself with a domain and codomain, represented as the background colours on the left and right of the wire.
Thus we need a \emph{2-signature} $\Sigma = (\Sigma_0, \Sigma_1, \Sigma_2, \dom, \cod)$ where
\begin{itemize}
    \item $\Sigma_0$ is a set of colours,
    \item $\Sigma_1$ is a set of objects with colours as domain and codomain,
    \item $\Sigma_2$ is a set of boxes with domain and codomain in the free category $F(\Sigma_0, \Sigma_1)$, i.e. lists of generating objects with composable colours.
\end{itemize}
We also need to require the \emph{globular conditions} $\dom(\dom(f)) = \dom(\cod(f))$ and $\cod(\dom(f)) = \cod(\cod(f))$ that ensure that the top-left (top-right) colour is the same as the bottom-left (bottom-right, respectively).
Intuitively, the only changes in background colour happen at the wires, labeled by a generating object with the appropriate domain and codomain.
The free 2-category $F^{2C}(\Sigma)$ can then be described by its set of \emph{0-cells} $\Sigma_0$ (the colours), its category of \emph{1-cells} $F(\Sigma_0, \Sigma_1)$ (the types) and a category for every pair of 1-cells: the category of coloured diagrams up to interchanger.
The implementation is straightforward: we just need to make \py{Ty} a subclass of both \py{monoidal.Ty} (so that it can be used as domain and codomain for diagrams) and \py{Arrow} (so that it can have a domain and codomain itself).

\begin{python}\label{listing:free-sesquicategory}
{\normalfont Implementation of the free sesquicategory with \py{Colour} as 0-cells, \py{Ty} as 1-cells and \py{Diagram} as 2-cells.}

\begin{minted}{python}
class Colour(cat.Ob):
    pass

class TyArrow(cat.Arrow, monoidal.Ty):
    @inductive
    def tensor(self, other):
        if isinstance(other, TyArrow):
            return cat.Arrow.then(self, other)
        return NotImplemented  # Allows whiskering on the left.

    __matmul__ = tensor

class Ty(cat.Box, TyArrow):
    cast = TyArrow.cast

class Layer(monoidal.Layer):
    def __init__(self, left: Ty, box: monoidal.Box, right: Ty):
        assert left.cod == box.dom.dom and box.dom.cod == right.dom
        super().__init__(left, box, right)

class Diagram(monoidal.Diagram):
    pass

class Box(monoidal.Box, Diagram):
    def __init__(self, name: str, dom: Ty, cod: Ty):
        assert (dom.dom, dom.cod) == (cod.dom, cod.cod)
        monoidal.Box.__init__(self, name, dom, cod)
        Diagram.__init__(self, (Layer.cast(self), ), dom, cod)

    cast = Diagram.cast

@dataclass
class TwoCategory:
    colours: type = Colour
    ob: type = Ty
    ar: type = Diagram

@dataclass
class TwoFunctor(monoidal.Functor):
    colours: DictOrCallable[Colour, Colour]
    ob: DictOrCallable[Ty, Ty]
    ar: DictOrCallable[Box, Diagram]

    dom: TwoCategory = TwoCategory()
    cod: TwoCategory = TwoCategory()

    def __call__(self, other):
        if isinstance(other, Colour):
            return self.colours[other]
        if isinstance(other, Ty):
            return self.ob[other]
        if isinstance(other, TyArrow):
            base_case = self.cod.ob.id(self(other.dom))
            return base_case.then(*[self(box) for box in other.inside])
        return super().__call__(other)
\end{minted}
\end{python}

\begin{python}\label{listing:Transformation}
{\normalfont Implementation of $\mathbf{Cat}$ as a sesquicategory with transformations as 2-cells.}

\begin{minted}{python}
class Transformation(Composable, Tensorable):
    def __init__(self, inside: Callable, dom: Functor, cod: Functor):
        assert (dom.dom, dom.cod) == (cod.dom, cod.cod)
        self.inside, self.dom, self.cod = inside, dom, cod

    @staticmethod
    def id(F: Functor):
        return Transformation(F.cod.ar.id, dom=F, cod=F)

    @inductive
    def then(self, other: Transformation) -> Transformation:
        return Transformation(lambda x: self(x) >> other(x), self.dom, other.cod)

    @inductive
    def tensor(self, other: Transformation) -> Transformation:
        return self @ other.dom >> self.cod @ other

    def __matmul__(self, other: Transformation | Functor) -> Transformation:
        if isinstance(other, Functor):
            return Transformation(
                lambda x: other(self(x)), self.dom >> other, self.cod >> other)
        return self.tensor(other)

    def __rmatmul__(self, other: Transformation | Functor) -> Transformation:
        if isinstance(other, Functor):
            return Transformation(
                lambda x: self(other(x)), other >> self.dom, other >> self.cod)
        raise TypeError

    def __call__(self, other: Ob) -> Arrow:
        inside, dom, cod = self.inside(other), self.dom(other), self.cod(other)
        return self.cod.cod.ar(inside, dom, cod)

Cat = TwoCategory(Category, Functor, Transformation)
\end{minted}
\end{python}

\begin{example}
We can interpret colours as categories, types as functors and diagrams as transformations.

\begin{minted}{python}
a = Colour('a')
x = Ty('x', dom=a, cod=a)
f, g = Box('f', Ty.id(a), x), Box('g', x @ x, x)

Pyth = Category(tuple[type, ...], Function)
List = Functor(
    ob=lambda xs: list[xs],
    ar=lambda f: lambda xs: list(map(f, xs)),
    dom=Pyth, cod=Pyth)
Unit = Transformation(
    lambda _: lambda x: [x], dom=Functor.id(Pyth), cod=List)
Mult = Transformation(
    lambda _: lambda xs: sum(xs, []), dom=List >> List, cod=List)

F = TwoFunctor(
    colours={a: Pyth}, ob={x: List}, ar={f: Unit, g: Mult}, cod=Cat)

assert F(f @ x >> g)(int)([1, 2, 3])\
    == F(x @ f >> g)(int)([1, 2, 3])\
    == F(Diagram.id(x))(int)([1, 2, 3]) == [1, 2, 3]

assert F(g @ x >> g)(int)([[[42]]]) == [42] == F(x @ g >> g)(int)([[[42]]])
\end{minted}
\end{example}

We have already discussed another way to construct a 2-categories: taking types as 0-cells, diagrams as 1-cells and rewrites as 2-cells, in fact this gives a \emph{premonoidal sesquicategory}.
A monoidal 2-signature $\Sigma$ is a 2-signature where the objects in $\Sigma_1$ have lists of colours $\Sigma_0^\star$ as domain and codomain and the boxes in $\Sigma_2$ have domain and codomain in the free premonoidal category $F^P(\Sigma_0, \Sigma_1)$.
Thus, a box $r : f \to g$ in a monoidal 2-signature may be seen as a rewrite rule with parallel diagrams $f : x \to y$ and $g : x \to y$ as domain and codomain.
It generates a free premonoidal sesquicategory with types $\Sigma_0^\star$ as 0-cells, diagrams $F^P(\Sigma_0, \Sigma_1)$ as 1-cells and rewrites as 2-cells.
We can construct it explicitly by generalising layers to \emph{slices} with not only types on the left and right but also diagrams on the top and bottom, i.e. a rewrite rule together with a match.
\py{Rule} is a subclass of \py{Box} with diagrams as domain and codomain, \py{Slice} is a box made of a rule inside a match with methods for left and right whiskering as well as pre- and post-composition.
\py{Rewrite} is a subclass of \py{Diagram} with \py{Slice} as layers, it inherits its vertical composition (i.e. two rewrites applied in sequence) from the diagram class as well as its tensor product (i.e. two rewrites applied in parallel on the tensor of two diagrams).
The horizontal composition (i.e. two rewrites applied in parallel on the composition of two diagrams) can be implemented by temporarily replacing left and right whiskering by pre- and post-composition before calling \py{Diagram.tensor}.

\begin{python}\label{listing:free-monoidal-2-category}
{\normalfont Outline of the implementation of free premonoidal sesquicategories.}

\begin{minted}{python}
class Slice(monoidal.Box):
    def __init__(self, rule: Rule, match: Match):
        dom, cod = match.subs(rule.dom), match.subs(rule.cod)
        super().__init__("Slice({}, {})".format(rule, match), dom, cod)

    @classmethod
    def cast(cls, old: Rule) -> Slice:
        x, y = old.dom.dom, old.cod.cod
        top, bottom, left, right = old.id(x), old.id(y), x[:0], y[len(y):]
        return cls(old, Match(top, bottom, left, right))

class Rewrite(Diagram):
    inside: tuple[Slice, ...]
    dom: Diagram
    cod: Diagram

class Rule(monoidal.Box, Rewrite):
    def __init__(self, name: str, dom: Diagram, cod: Diagram):
        monoidal.Box.__init__(self, name, dom, cod)
        Rewrite.__init__(self, (Slice.cast(self), ), dom, cod)
\end{minted}
\end{python}

Note that when the monoidal 2-signature $\Sigma$ is in fact a simple 2-signature, i.e. every box $f \in \Sigma_1$ has domain and codomain of length one, the definition of a rewrite coincides with the definition of coloured diagram.
Indeed we can relabel everything one level down: the types are colours, the boxes are types and the rules are boxes: a coloured diagram can be seen as a rewrite of one dimensional diagrams, i.e. lists of types with composable colours.
Symmetrically, rewriting an 2-dimensional diagram can itself be seen as a 3-dimensional diagram.
If we compose the two constructions (colours and rewrites) together, we get the free 3-sesquicategory with rewrites of coloured diagrams as 3-cells.
We can keep on going with \emph{modifications} of rewrites, i.e. 4-dimensional diagrams, by generalising layers one step further with not only a pair of types (left and right) and a pair of diagrams (top and bottom) but also a pair of rewrites (before and after).
What could be the use of a such four-dimensional diagram?
For example, a free 4-category with a single 0-, 1- and 2-cell is the same as a free symmetric category (once we relabel everything three levels down).
Indeed, the swaps are given by the interchange law and the 4d space in which the diagrams live allows wires to cross and every knot to be untied: every diagram interpreted in $\mathbf{Pyth}$ or $\mathbf{Mat}_\S$ is secretly four-dimensional.
One dimension lower, a free 3-category with a single 0- and 1-cell is the same as a free braided category, this is only the tip of the \emph{periodic table} of k-tuply monoidal n-categories~\cite[Section~2.5]{BaezStay10}.

The proof assistant Globular~\cite{BarEtAl18} allows to construct 4-dimensional diagrams using a graphical interface for drawing slices and projections in two dimensions.
In fact, the drawing algorithm presented in section~\ref{section:drawing} was reverse engineered from that of Globular.
Its successor homotopy.io~\cite{ReutterVicary19} went from four to arbitrary dimensions based on a data structure for diagrams in free n-sesquicategories~\cite{BarVicary17}.
Interfacing DisCoPy with homotopy.io is in the backlog of features yet to be implemented, so that the user can define diagrams by drag-and-dropping boxes then interpret them in arbitrary n-categories.
One of the new feature of homotopy.io compared to its predecessor is the possibility of drawing non-generic diagrams, i.e. with more than one box on the same layer.
This amounts to taking the free category over $L^+(\Sigma) = (\Sigma_0 + \Sigma_1)^\star \simeq \Sigma_0^\star \Box \Sigma_1^\star$ rather than $L(\Sigma) = \Sigma_0^\star \times \Sigma_1 \times \Sigma_0^\star$.
This is also in DisCoPy's backlog, implementing the syntax is straightforward but then it requires to extend the algorithms for functors, drawing, normal forms, etc.

Layers with arbitrarily many boxes also allow to define the \emph{depth} of an arrow in any quotient of a free premonoidal category as the minimum number of layers in its equivalence class of diagrams.
As we mentioned in section~\ref{subsection:quotient-monoidal}, premonoidal diagrams also have a well-defined notion of \emph{width} (the maximum number of parallel wires) which we can extend in the same way to define the width of any quotient.
This makes diagrams a foundational data structure for computational complexity theory: a signature can be seen as both a machine and a language, a diagram as both code and data.
In the other direction, this also allows to borrow results from complexity theory to characterise the computational resources required in solving problems about diagrams.
This will be needed in the next chapter when we will look at NLP problems through the lens of diagrams.


\section{Summary \& future work} \label{section:summary-and-future}

This chapter gave a comprehensive overview of DisCoPy and the mathematics behind its design principles: we take the definitions of category theory (as strictly and freely as possible) and translate them into a Pythonic syntax.
Figure~\ref{fig:summary} summarises the different modules and their inheritance hierarchy, implementing a subset of the hierarchy of graphical languages surveyed by Selinger~\cite{Selinger10}.
We hope it may be useful both as an introduction to monoidal categories for the Python programmer, and an introduction to Python programming for the applied category theorist.

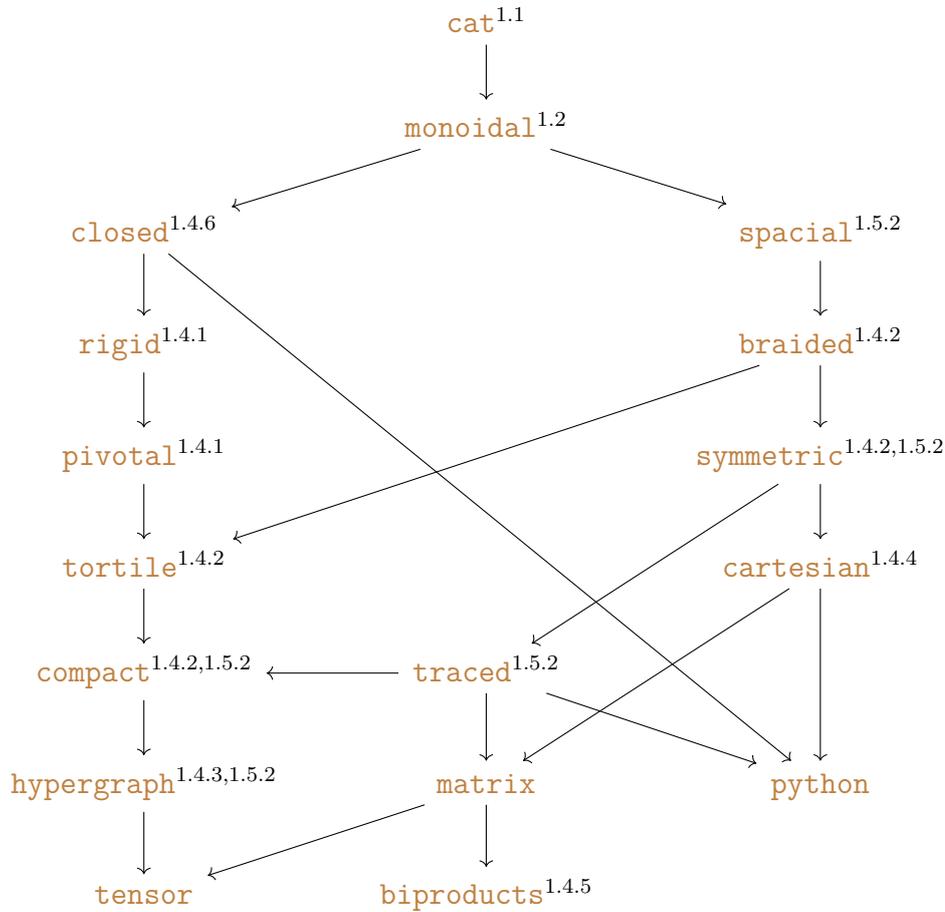
\begin{figure}
\centering
\begin{tikzcd}
	& {\py{cat}^{\ref{section:cat}}} \\
	& {\py{monoidal}^{\ref{section:monoidal}}} \\
	{\py{closed}^{\ref{subsection:closed}}} && {\py{spacial}^{\ref{subsection:hypergraph-vs-premonoidal}}} \\
	{\py{rigid}^{\ref{subsection:rigid}}} && {\py{braided}^{\ref{subsection:symmetric}}} \\
	{\py{pivotal}^{\ref{subsection:rigid}}} && {\py{symmetric}^{\ref{subsection:symmetric}, \ref{subsection:hypergraph-vs-premonoidal}}} \\
	{\py{tortile}^{\ref{subsection:symmetric}}} && {\py{cartesian}^{\ref{subsection:cartesian}}} \\
	{\py{compact}^{\ref{subsection:symmetric}, \ref{subsection:hypergraph-vs-premonoidal}}} & {\py{traced}^{\ref{subsection:hypergraph-vs-premonoidal}}} \\
	{\py{hypergraph}^{\ref{subsection:hypergraph}, \ref{subsection:hypergraph-vs-premonoidal}}} & {\py{matrix}} & {\py{python}} \\
	{\py{tensor}} & {\py{biproducts}^{\ref{subsection:biproducts}}}
	\arrow[from=1-2, to=2-2]
	\arrow[from=2-2, to=3-1]
	\arrow[from=2-2, to=3-3]
	\arrow[from=3-1, to=4-1]
	\arrow[from=3-3, to=4-3]
	\arrow[from=4-1, to=5-1]
	\arrow[from=8-2, to=9-1]
	\arrow[from=8-2, to=9-2]
	\arrow[from=5-1, to=6-1]
	\arrow[from=6-1, to=7-1]
	\arrow[from=7-1, to=8-1]
	\arrow[from=8-1, to=9-1]
	\arrow[from=4-3, to=5-3]
	\arrow[from=5-3, to=6-3]
	\arrow[from=5-3, to=7-2]
	\arrow[from=7-2, to=7-1]
	\arrow[from=6-3, to=8-3]
	\arrow[from=6-3, to=8-2]
	\arrow[from=7-2, to=8-2]
    \arrow[from=7-2, to=8-3]
	\arrow[from=3-1, to=8-3]
    \arrow[from=4-3, to=6-1]
\end{tikzcd}
\caption{DisCoPy's modules and the sections where they are discussed, arrows indicate software dependency.}
\label{fig:summary}
\end{figure}

Note that the code presented in this thesis represents a significant refactoring of the original implementation of DisCoPy \py{v0.4.2} as available online at the time this thesis is submitted\footnote
{\url{https://github.com/oxford-quantum-group/discopy/releases/tag/0.4.2}}.
It is available as a standalone version\footnote
{\url{https://github.com/toumix/thesis}} which will later be merged with the original repository and released as DisCoPy \py{v1.0}.
We list some of the significant changes between the two versions.
\begin{itemize}
\item We add type annotations throughout the codebase, using the postponed evaluation of annotations introduced in Python \py{3.7}~\cite{Langa17}.
\item We simplify the inheritance mechanism using Python's \py{classmethod} decorator.
This improves the code reuse for composition of diagrams, application of functors, etc.
\item We reorganise the codebase so that it follows more closely the hierarchy of categorical structures.
For example, we move the code for \py{Braid} outside of the \py{monoidal} module into its own \py{braided} module, we also introduce e.g. the \py{tortile} module which imports from both \py{rigid} and \py{braided}.
\item We make the syntax more uniform for arrows in different categories, which are all initialised with the same attributes \py{inside}, \py{dom} and \py{cod}.
\item We implement whiskering, i.e. tensoring with the identity of a given type on the left or right.
This avoids to clutter diagram definitions with \py{Id}.
\item Arrows in concrete categories like \py{Matrix}, \py{Tensor} and \py{Function} are no longer subclasses of \py{Box}. Instead, we implement the syntactic sugar for composition, whiskering, etc. with abstract classes \py{Composable} and \py{Tensorable}.
\item We make the \py{Matrix} and \py{Tensor} classes parameterised by the datatype of their entries.
This makes use of the magic method \py{__class_getitem__} which appeared in Python \py{3.10}~\cite{Levkivskyi17}.
\end{itemize}

We list but a few of the many potential directions for further developments.

\begin{itemize}
\item DisCoPy was implemented mainly with correctness in mind, thus there is much room for improving performance.
For now, this has not been quite necessary since the diagrams we manipulate are exponentially smaller than the computation they represent.
However if we want to implement any serious rewriting efficiently, we will need to port the core algorithms to a lower-level language such as Rust~\cite{KlabnikNichols19} and wrap them with Python bindings.
This strategy has improved the time performance of PyZX by over 4000 on a small benchmark consisting of the fusion of 1 million spiders\footnote{\url{https://github.com/quantomatic/quizx}}.

\item As we mentioned in section~\ref{subsection:tacit-to-explicit}, DisCoPy uses a \emph{point-free}, \emph{tacit programming} style which can get very verbose as soon as diagrams have more than a few boxes.
One of the features in our backlog is implementing an \emph{explicit} syntax where diagrams are defined as decorated Python functions taking the wires in their domain as argument, applying boxes to them and returning their codomain.
We already have a working version of this for planar diagrams, it would be straightforward to extend it to any cartesian diagram where we can swap, copy and discard arguments.
What would be less straightforward is to extend it to the syntax of structures beyond cartesian: cocartesian (control flow), closed (higher-order functions) and traced (iteration and recursion).
One starting point for this, rather than reinventing the wheel, would be to use JAX~\cite{BradburyEtAl20} expressions as an intermediate language between pure Python and diagrams.

\item There are many more ways we can interpret diagrams as code, i.e. many more functors into concrete categories we can implement.
One example is probabilistic functions which can be modeled as arrows of \emph{Markov categories}~\cite{FritzEtAl20a} where the objects have comonoids but only the counit is natural.
DisCoPy has already been interfaced with the probabilistic programming language Pyro~\cite{BinghamEtAl19} in order to learn both the structure and the parameters of a machine learning model end-to-end~\cite{Sennesh20}.

\item Some of these concrete categories will not be strictly associative: $(x \otimes y) \otimes z$ and $x \otimes (y \otimes z)$ can represent two different ways of storing the same data, and using one versus the other may have an impact on performance.
Diagrams for non-strict monoidal categories have been used to give an elementary proof of MacLane's coherence theorem for monoidal categories~\cite{WilsonEtAl22}.
We have also drawn them throughout this thesis when discussing coherence for rigid, braided and hypergraph categories.
For now we had to cheat and manually define a new type \py{xy} with boxes from \py{x @ y} to \py{xy} an back, better support for such monoidal coherence is also in the backlog.

\item Categories with a tensor product that is not necessarily associative or unital, sometimes called \emph{magmoidal categories}, also play a role in linguistics.
Indeed, the Lambek calculus in its 1961 version~\cite{Lambek61} is non-associative and non-unital, which gives a finer control over the grammaticality of trees rather than lists.
With \emph{skew monoidal} categories~\cite{UustaluEtAl18}, one re-introduces the natural transformation for associativity but in only one direction.
In another generalisation, Grishin~\cite{Grishin83} introduced a coproduct and its left and right adjoints as dual to the tensor product.
This new binary operation comes with interaction rules for distributing over the tensor, see Moortgat~\cite{Moortgat09} for a modern presentation.
Wijnholds~\cite{Wijnholds15,Wijnholds17} gave a distributional compositional semantics to this Lambek-Grishin calculus in terms of \emph{weakly distributive categories}~\cite{CockettSeely97}.
We leave the implementation of categories with multiple non-associative monoidal structures and their potential application to QNLP as a direction for future work.

\item There are many more constructions from category theory that could be implemented in DisCoPy.
One example is the \emph{$\mathbf{Int}$ construction} which defines the free compact-closed category generated by a traced symmetric category $C$~\cite[Section~4]{JoyalEtAl96}.
Generalising the way the integers $\Z$ are constructed as a quotient of pairs of natural numbers, the objects of $\mathbf{Int}(C)$ are given by pairs of objects in $C$, the arrows by pairs of arrows going in opposite direction and their composition by the trace.
The $\mathbf{Int}$ construction allows to reason about \emph{bidirectional processes} such as \emph{optics} in functional programming~\cite{LavoreRoman19}.
It is also related to the notion of \emph{combs} or \emph{open diagrams}~\cite{Roman20a} which have been used to reason about processes with feedback~\cite{Roman20} as well as causal quantum processes~\cite{KissingerUijlen19}.
Other examples include \emph{open learners}~\cite{FongJohnson19} and \emph{open games}~\cite{Hedges17,Hedges19a} which formalise machine learning and game theory in terms of monoidal categories with some notion of bidirectionality.
\end{itemize}


\chapter{Quantum natural language processing} \label{chapter-2:qnlp}

This chapter introduces quantum natural language processing (QNLP) models as monoidal functors from grammar to quantum circuits.
Building on the previous chapter, we show how to implement QNLP models in DisCoPy and how to train them to solve NLP tasks such as classification and question answering.

\section{Formal grammars and quantum complexity}\label{section:NLP}

The previous chapter has put much emphasis on string diagrams and its role at the intersection of mathematics and computer science.
From the programming perspective, diagrams are a two-dimensional generalisation of lists which may describe the run of a Turing machine, the syntax of a first-order logic formula or the architecture of a neural network.
In fact, we will see that string diagrams also play a key role in linguistics, where they allow to encode the grammatical structure of sentences.
First, section~\ref{subsection:chomsky} reviews formal grammars, the notion of ambiguity and the computational complexity of parsing.
Then we discuss categorial grammars, from the Lambek calculus and Montague semantics to pregroup grammars and DisCoCat models.
Finally, we summarise previous work on the Frobenius anatomy of anaphora and investigate the quantum complexity of DisCoCat models.


\subsection{Formal grammars, parsing and ambiguity}\label{subsection:chomsky}

The word ``grammar'' comes from the ancient Greek ``$\rm{\gamma \rho\acute{\alpha}\mu\mu\alpha}$'' (line of writing), it is cognate to the words ``glamour'' and ``grimoire'' \cite{RuppliThorel05, Davies10, Lambek14}.
The practice of grammar itself goes back to India somewhere between the 6th and 4th century BCE~\cite{BhateKak93}, where the Sanskrit philologist P\={a}\d{n}ini introduced what was later recognised as \emph{context-sensitive grammars}.
More than two thousand years later, Chomsky~\cite{Chomsky56,Chomsky57} gave grammars their modern definition.
A \emph{formal grammar}, also called \emph{unrestricted} or \emph{type-0} grammar, is a tuple $G = (V, X, R, s)$ where:
\begin{itemize}
    \item $V$ and $X$ are finite sets called \emph{terminal} and \emph{non-terminal} symbols respectively, we will also call them the \emph{vocabulary} and the \emph{basic types},
    \item $R$ is a finite set of \emph{production rules} $x \to y$ where $x, y \in (V + X)^\star$,
    \item $s \in X$ is called the \emph{start symbol} or the \emph{sentence type}.
\end{itemize}
A string of words $w = w_1 \dots w_n \in V^\star$ is a \emph{grammatical sentence} whenever\footnote
{Formal grammars are usually defined in the other direction, i.e. $w \in L(G)$ iff $s \leq_R w$.
We choose the opposite convention so that we won't have to switch in the next section.} $w \leq_R s$
for $(\leq_R)$ the reflexive transitive closure of the rewriting relation as defined in section~\ref{subsection:quotient-categories}.
Thus, the grammar $G$ generates a \emph{language} $L(G) \sub V^\star$, the set of all grammatical sentences.
Although formal grammars are called unrestricted, the right-hand side of the rules in $R$ is usually restricted to be non-empty.
This makes no difference as to the classes of languages that can be generated, i.e. for every grammar with empty right-hand sides there is a grammar without that generates the same language.

Equivalently, a formal grammar is a finite monoidal signature $G$ with an injection from the words in the vocabulary and the sentence type into the generating objects $V + \{ s \} \injects G_0$.
Indeed, we can define the non-terminal symbols as $X = G_0 - V$ then a rewrite rule is nothing but a box with lists of symbols as domain and codomain.
The language of $G$ may then be defined as $L(G) = \{ w \in V^\star \ \vert \ \exists f : w \to s \ \in \G \}$ for $\G$ the free monoidal category generated by $G$, a diagram $f : w \to s$ to the sentence type $s$ is proof that the string of words $w$ is grammatical.
We call the diagram $f : w \to s$ a \emph{grammatical structure} for the sentence $w$, we say a sentence is \emph{ambiguous} whenever it has more than one grammatical structure.
The \emph{parsing problem} is to decide, given a grammar $G$ and a string $w \in V^\star$, whether $w \in L(G)$.
It is easily shown to be equivalent to the word problem for monoids and the halting problem for Turing machines, thus it is undecidable.
Moreover, there exists a \emph{universal grammar} $G$ such that the parsing problem with $G$ fixed and only the string $w \in V^\star$ as input is undecidable.
That is, for any other grammar $G'$ and string $w \in V(G')^\star$ we can compute some other string $w' \in V(G)^\star$ such that $w \in L(G')$ if and only if $w' \in L(G)$.

If we are to build a \emph{parser}, i.e. a machine that computes the grammatical structure of a given string, type-0 grammars are too general: their parsing problem is undecidable.
Going one level up in Chomsky's hierarchy, a \emph{context-sensitive grammar} (CSG, also called a \emph{type-1} grammar) is a formal grammar $G$ where the rules have the form $a b c \to a x c$ for a non-terminal symbol $x \in X$ and lists of symbols $a, b, c \in G_0^\star$ where $\len(b) \geq 1$\footnote
{If we care about whether a language contains the empty string $1$ or not, we also have to allow for the rule $s \to 1$.}.
The parsing problem for CSG was the first to be shown complete for the class $\mathtt{NPSPACE}$ of problems solvable in non-deterministic polynomial space~\cite{Kuroda64}.
Savitch~\cite{Savitch70} then proved $\mathtt{NPSPACE} = \mathtt{PSPACE}$, hence that parsing CSG is in fact complete for deterministic polynomial space.
Another $\mathtt{PSPACE}$-complete problem is the parsing problem for \emph{non-contracting grammars}, where we have that $\len(y) \leq \len(x)$ for every rule $x \to y$.
Indeed, every CSG is also non-contracting and for every non-contracting grammar $G$, there is a CSG $G'$ with $L(G) = L(G')$~\cite[Theorem~11]{Chomsky63}.

Two grammars $G$ and $G'$ over the same vocabulary $V$ are \emph{weakly equivalent} whenever they generate the same language, i.e. $L(G) = L(G') \sub V^\star$.
For example, every non-contracting grammar is weakly-equivalent to a CSG.
A \emph{strong equivalence} preserves not only the generated languages but also the grammatical structure, i.e. it defines a bijection\footnote
{Chomsky~\cite{Chomsky63} defines two grammars to be strongly equivalent when they generate ``the same set of structural descriptions'' but he doesn't define sameness of structural descriptions.
Our definition only asserts that the two grammars assign the same number of grammatical structures to any string, not that these structures are isomorphic themselves.
Asking for an equivalence of monoidal categories $\G \simeq \G'$ would be too strong: when two free categories are equivalent, they are automatically isomorphic.} $\G(w, s) \simeq \G'(f(w), s)$ for all strings $w \in V^\star$.
For example, every CSG is strongly equivalent to a non-contracting grammar (itself) in a trivial way.

For a less trivial example, every formal grammar $G$ is strongly equivalent to a \emph{lexicalised} one, where the rules are a union $R = D \cup R'$ of \emph{dictionary entries} $D \sub V \times X$ assigning possible types to each word and production rules $R' \sub X^\star \times X^\star$ not involving the vocabulary.
Indeed, given a grammar $G$ we can add a new basic type $w'$ and a dictionary entry $w \to w'$ for each word $w \in V$ to get a lexicalised grammar $G'$.
Every grammatical structure $f : w_1 \dots w_n \to s$ in $\G'$ factorises as $f = d \fcmp f'$ for a tensor of dictionary entries $d : w_1 \dots w_n \to w_1' \dots w_n'$ and a diagram $f' : w_1' \dots w_n' \to s$ with no dictionary entries, which is isomorphic to a grammatical structure in $\G$.
Once the grammar is lexicalised, we usually draw dictionary entries as boxes labeled by the corresponding word and we omit the wires for terminal symbols.
We also assume that any \emph{semantic functor} $F : \G \to C$ from a lexicalised grammar $\G$ to some concrete category $C$ maps dictionary entries to states, i.e. $F(w) = 1$ for all words $w \in V$, which implies that the interpretation of any grammatical structure is a also state $F(f) : 1 \to F(s)$.

Unless $\mathtt{P} = \mathtt{NP} = \mathtt{PSPACE}$, there can be no efficient parser for context-sensitive grammars in general.
This motivates the introduction of \emph{context-free grammars} (CFGs, also called \emph{type-2} grammars) where the right-hand side of each rule has length one.
We can assume that the grammar is lexicalised, so that the rules have the form either $w \to x$ or $y_1 \dots y_n \to x$ for a basic type $x \in X$, a word $w \in V$ and a list of basic types $y_1 \dots y_n \in X^\star$.
In this case, grammatical structures $f : w_1 \dots w_n \to s$ have the shape of a \emph{syntax tree} with the words $w_1 \dots w_n$ as leaves and the sentence type $s$ as root.
The interchanger normal form of a syntax tree is called its \emph{left-most derivation}, when two rules apply in parallel the left-most is always applied first.
A CFG is in \emph{Chomsky normal form} (CNF\footnote
{CNF is \emph{not} a normal form in the sense that it computes representatives of equivalence classes, a given grammar may have many non-isomorphic CNFs.
In fact, deciding whether two CFGs are weakly equivalent is undecidable~\cite[Theorem~26]{Chomsky63}.}) when it is lexicalised and the rules are of the form either $s \to 1$ or $x y \to z$ for $x, y \in X - \{ s \}$ qnd $z \in X$, i.e. where all the syntax trees are binary and the sentence type appears only at the root.
Every context-free grammar $\G$ can be converted to some weakly equivalent $\G'$ in CNF, with at most a quadratic blow-up in size.
There is a monoidal functor $\G \to \G'$ mapping every $n$-ary rule to a tree of $n - 1$ binary rules when $n \geq 2$ and to the identity when $n < 2$.
This means that \emph{nullable} types, i.e. from which we can derive the empty string, are all sent to the monoidal unit.
Thus in the presence of unary and nullary rules, the functor cannot be faithful and the equivalence cannot be strong.

The CYK (Cocke–Younger–Kasami) algorithm solves the parsing problem for CNF in cubic time using dynamic programming.
Valiant~\cite{Valiant75} then reduced the problem to Boolean matrix multiplication, yielding a solution in time $O(n^{\log_2 7})$ via Strassen's algorithm.
Today, the fastest algorithm known for matrix multiplication, hence for parsing context-free grammars, is the galactic algorithm by Alman and Williams~\cite{AlmanWilliams21}.
Parsing context-free grammars is in fact complete for $\mathtt{P}$, the class of problems solvable in deterministic polynomial time~\cite{JonesLaaser74}.
Hence, whatever grammatical framework we may come up with, if its parsing problem is solvable in polynomial time then there exists a logarithmic-space reduction to CFG parsing: it takes a grammar and a string, returns a CFG and a new string such that the input is grammatical if and only if the output is.
Crucially, the output CFG depends not only on the input grammar but also on the input string: $\mathtt{P}$-completeness does not imply that there exists one fixed CFG that generates the same language.
This opens the door to grammars that are more expressive than context-free but still efficiently-parsable.

Indeed, there is evidence for some degree context-sensitivity in natural language~\cite{Huybregts84,Shieber85}.
The most studied examples are the \emph{cross-serial dependencies} of Dutch and Swiss German, which have been abstracted as the formal language $\{ w^k \ \vert \ w \in V^\star, k \leq n \}$ for some (low) constant threshold $k \leq n$.
Thus, several \emph{mildly context-sensitive grammar} (MCSG) formalisms have been introduced, which generate all of the context-free languages as well as cross-serial dependencies, yet are still parsable in polynomial time.
All MCSGs proposed so far have fallen into one of three classes of weak equivalence~\cite{Weir88}.
Thus, there is reasonable consensus over the kind of computational power required to parse human language, at least up to weak equivalence, see Kallmeyer~\cite{Kallmeyer10} for a standard survey.
However, there is no consensus yet on the syntactic way this computational power should be expressed: apart from some isolated results~\cite{SchifferMaletti21}, there is no classification of MCSGs up to strong equivalence.

Whether two grammars are strongly equivalent matters when we want to define their semantics, i.e. we want to compute the interpretation of a sentence given its grammatical structure.
Indeed, weakly equivalent grammars may assign different sets of possible parsing to the same ambiguous sentence, which will correspond to different interpretations.
For example, we can apply a monoidal functor from a CFG to a category of neural networks, which yields a \emph{recursive neural network} that computes the meaning of a sentence given its parse tree~\cite{SocherEtAl11,SocherEtAl13}.
Different trees will result in different network architectures, so how do we know we have picked the right one?
We can use a \emph{probabilistic grammar}~\cite{Salomaa69} to compute the most likely grammatical structure given some training data, in some cases with theoretical guarantees that this is indeed learnable efficiently~\cite{ClarkEtAl06,ShibataYoshinaka16}.

DisCoPy implements formal grammars with \py{Parsing}, a subclass of \py{Diagram} with \py{Word} and \py{Production} as boxes.
It does not implement any parsing algorithm, however it is straightforward to encode the output of an existing parser e.g. that of NLTK~\cite{LoperBird02} into a \py{Parsing} diagram so that we can compute the semantics of sentences by applying a \py{Functor}.

\begin{python}
{\normalfont Implementation of the \py{grammar} module and its interface with NLTK.}

\begin{minted}{python}
class Parsing(monoidal.Diagram):
    @staticmethod
    def fromtree(tree: nltk.Tree) -> Parsing:
        if len(tree) == 1 and isinstance(tree[0], str):
            return Word(tree[0], Ty(tree.label()))
        subtrees = Parsing.tensor(*[Parsing.fromtree(t) for t in tree])
        return subtrees >> Production(dom=subtrees.cod, cod=Ty(tree.label()))

class Word(monoidal.Box, Parsing):
    def __init__(self, name: str, cod: Ty, dom=Ty()):
        monoidal.Box.__init__(self, name, dom, cod)

class Production(monoidal.Box, Parsing):
    def __init__(self, dom: Ty, cod: Ty):
        name = "Production({}, {})".format(dom, cod)
        monoidal.Box.__init__(self, name, dom, cod)

Word.cast = Production.cast = Parsing.cast
\end{minted}
\end{python}

\begin{example}
We use the recursive descent parser from NLTK to parse an ambiguous expression and draw its possible parsings.

\begin{minted}{python}
from nltk import CFG, BottomUpChartParser as Parser

grammar = """
n -> a n
n -> n n
a -> 'black'
a -> 'metal'
n -> 'metal'
n -> 'fan'
"""
parser = Parser(CFG.fromstring(grammar)).parse

for tree in parser("black metal fan".split()): Parsing.fromtree(tree).draw()
\end{minted}
\begin{center}
\tikzfig{img/nlp/black-metal-fan/0}
\hfill
\tikzfig{img/nlp/black-metal-fan/1}
\hfill
\tikzfig{img/nlp/black-metal-fan/2}
\end{center}
If we fed these syntax trees as input to the recursive neural network of Socher et al.~\cite{SocherEtAl11} (which was trained to generate images from text descriptions) we would expect to get the following images as output:
\begin{center}
\includegraphics[width=0.25\textwidth]{img/nlp/black-metal-fan/0.png}
\hfill
\includegraphics[width=0.25\textwidth]{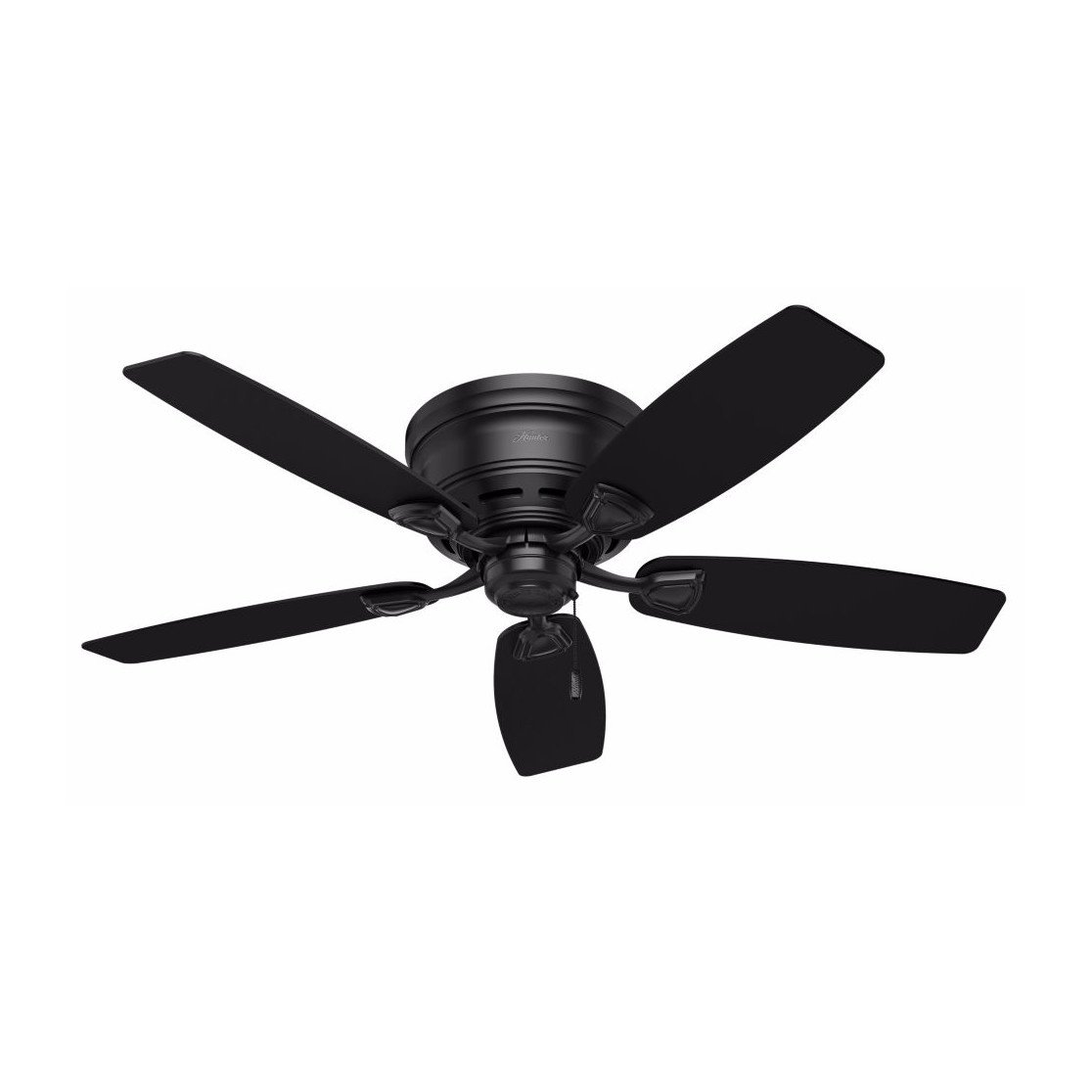}
\hfill
\includegraphics[width=0.25\textwidth]{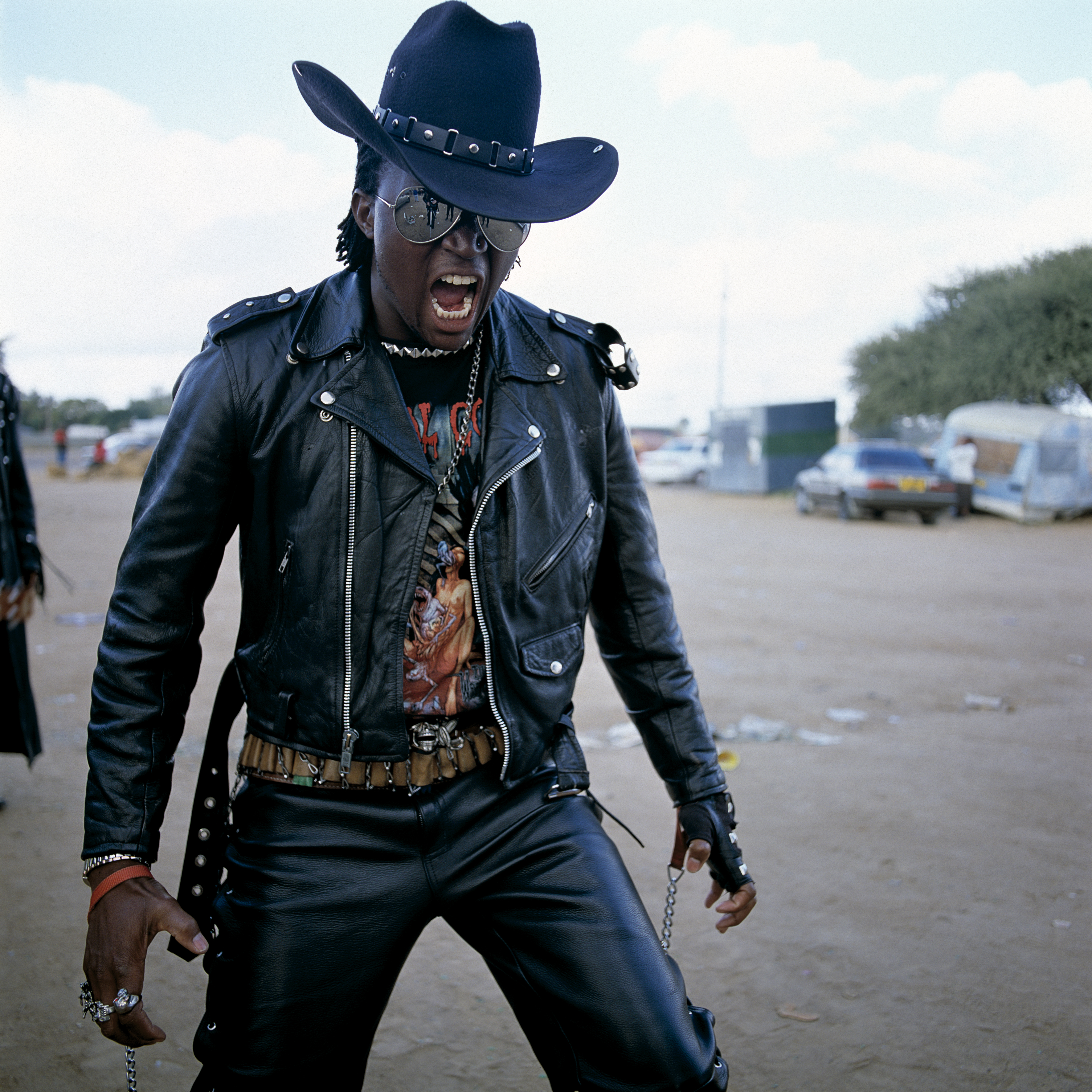}
\end{center}
\end{example}

In the framework of context-free grammars, ambiguity arises in at least two ways: 1) we have to choose from the many weakly equivalent grammars that generate the same language, 2) once the grammar is fixed we have to choose from the many syntax trees that generate the same sentence.
The second type of ambiguity cannot be alleviated: Parikh~\cite{Parikh61,Parikh66} defined a context-free language that is \emph{inherently ambiguous} in the sense that no unambiguous grammar can generate it~\cite[Theorem~29]{Chomsky63}.
In the same paper, Parikh unravels a deep connection between the theory of context-free grammars and that of free rigs: CFGs $G$ with symbols $G_0$ are in one to one corresondance with endomorphisms of the free rig $f_G : \N[G_0] \to \N[G_0]$ which fix the vocabulary, i.e. $f_G(w) = w$ for all words $w \in V$~\cite[Section~3]{Parikh66}.
Indeed, the free rig $\N[X]$ generated by a set $X$ has underlying set $\N^{X^\star}$, it can be thought of as the set of \emph{languages with multiplicities}.
Iterating the endomorphism $n$ times then projecting on the vocabulary with $\pi_V : \N[G_0] \to \N[V]$, the formal sum $\pi_V(f_G^n(s)) \in \N[G_0]$ has a term for each grammatical sentence that can be generated by a syntax tree of depth $n$ and the coefficients given by the ambiguity of the sentence, i.e. the number of different syntax trees~\cite{MichaelisKracht97}.
Thus we can define the language of a CFG as $L(G) = \cup_{n \in \N} \mathtt{sign}(\pi_V(f_G^n(s)))$ for $\mathtt{sign} : \N[V] \to \B[V] \simeq \B^{V^\star}$ the quotient map induced by $1 + 1 = 1$, i.e. forgetting multiplicities.

Parikh~\cite[Theorem~2]{Parikh66} states that if we define the map $p : V^\star \to \N^V$ which sends lists to bags by forgetting word order, then the direct image $p(L(G)) \sub \N^V$ is indistinguishable from that of a \emph{regular grammar}.
Regular grammars (also called \emph{type-3}) have rules of the form either $x \to 1$ or $x \to w y$ for non-terminals $x, y \in X$ and word $w \in V$, they are the least expressive level in Chomsky's hierarchy.
The subsets $p(L(G)) \sub \N^V$ generated by CFGs (or equivalently by regular grammars) are called \emph{semilinear}, they are finite unions of affine subspaces\footnote
{A subspace of $\N^V$ is affine if it has the form $\{ u_0 + t_1 u_1 + \dots + t_n u_n \ \vert \ t_1, \dots, t_n \in \N \}$ for some $u_0, \dots u_n \in \N^V$.
Confusingly, affine subspaces are called ``linear'' in the literature, hence ``semilinear''.
}.
Semilinearity is sometimes required as an extra condition for a grammar to be considered mildly context-sensitive, although there is evidence that some natural languages like Old Georgian are not semilinear~\cite{MichaelisKracht97}.
Regular grammars can be be equivalently defined as \emph{regular expressions}: elements of the free \emph{Kleene algebra} $K(V)$, the free idempotent rig with a closure\footnote
{A closure is an idempotent monad on a preorder, here given by $a \leq b$ iff $\exists c \cdot a + c = b$.} $(-)^\star : K(V) \to K(V)$.
Equality of regular expressions is decidable, hence so is the weak equivalence of regular grammars.
Moreover, every regular language can be generated unambiguously.
Thus, Parikh's theorem tells us intuitively that the hardness of natural language comes from its non-commutativity: if we forget about word order then everything is decidable and ambiguity disappears.
From our applied category theory perspective, this also means that language cannot be fully\footnote
{That is, faithful functors from non-regular CFGs to symmetric categories cannot be full.} investigated in symmetric categories, we need a planar monoidal data structure such as DisCoPy's \py{Diagram}.


\subsection{From the Lambek calculus to DisCoCat models}\label{subsection:lambek-discocat}

Even if we cannot get rid of the inherent ambiguity of natural language, we can still try to reduce the artificial ambiguity of our grammar formalism, i.e. the number of weakly equivalent grammars that generate the same language.

The \emph{categorial grammar} tradition may be summed up in a slogan: \emph{all the grammar is in the dictionary}~\cite{Preller07}.
Indeed, there is no need for language-specific production rules if the types of our grammar have enough structure, if we go from monoidal to closed categories.
In the \emph{Lambek calculus}~\cite{Lambek58}~\footnote
{The original calculus did not include a unit for the tensor product, here we follow the presentation given by Lambek~\cite{Lambek88} thirty years later.
We only consider \emph{string languages}, as opposed to the \emph{tree languages} generated by the non-associative Lambek calculus of 1961~\cite{Lambek61}.}, a categorial grammar is defined as a tuple $G = (V, X, D, s)$ where:
\begin{itemize}
\item $V$ and $X$ are finite sets called the \emph{vocabulary} and the \emph{basic types} with $s \in X$ the \emph{sentence type},
\item $D \sub V \times T(X)$ is a finite set of \emph{dictionary entries} with $T(X) \supseteq X$ the set of formal expressions with $1, (x \otimes y), (x / y), (x \backslash y) \in T(X)$ for all $x, y \in T(X)$.
\end{itemize}
Equivalently, the Lambek grammar $G$ may be seen as a closed monoidal signature (as defined in section~\ref{subsection:closed}) with dictionary entries as boxes where the domain is a single word.
In a \emph{basic categorial grammar}, also called an AB grammar after Ajdukiewicz~\cite{Ajdukiewicz35} and Bar-Hillel~\cite{Bar-Hillel54}, the dictionary is restricted to a closed signature, i.e. types are generated without the tensor product and unit.
The language of a categorial grammar $G$ is given by $L(G) = \{ w \in V^\star \ \vert \ \exists \ f : w \to s \in \G \}$ for $\G$ the free closed category generated by the dictionary.

More explicitly, a grammatical structure $f : w_1 \dots w_n \to s$ is given by a tensor of dictionary entries $(w_i, t_i) \in D$ followed by a closed diagram $t_1 \dots t_n \to s$ composed only of evaluation and currying.
Traditionally, these closed diagrams have been defined in terms of a \emph{sequent calculus} à la Gentzen, see Lambek~\cite{Lambek88} for a translation between the two definitions.
If we uncurry the identity on exponential types $x / y$ and $y \backslash x$ then curry them back the other way, we get the \emph{type raising} rules $x \to y / (x \backslash y)$ and $x \to (y / x) \backslash y$ which are analogous to the \emph{continuation-passing style} in functional programming~\cite{DeGroote01}.
Although it does not affect the expressive power of the Lambek calculus, type raising allows \emph{incremental parsing} where sequences of words are processed strictly from left to right~\cite{Dowty88,Steedman91}, a feature which is well-motivated from a cognitive perspective.
In previous work, Shiebler, Sadrzadeh and the present author~\cite{ShieblerEtAl20} investigate incrementality in terms of a functor from grammars to automata.

\begin{python}
{\normalfont Implementation of categorial grammars as closed categories.}

\begin{minted}{python}
class Parsing(closed.Diagram, grammar.Parsing):
    def type_raise(x: closed.Ty, y: closed.Ty, left=True) -> Parsing:
        return Parsing.id(x >> y).uncurry().curry(left=False) if left\
            else Parsing.id(y << x).uncurry(left=False).curry()

class Ev(closed.Ev, Parsing): pass
class Word(grammar.Word, Parsing): pass
Ev.cast = Word.cast = Parsing.cast
\end{minted}
\end{python}

\begin{example}
We can take $X = \{ s, n, np \}$ and assign common noun the type $n$, determiners $(np / n)$ and transitive verbs $((np \backslash s) / np)$.

\begin{minted}{python}
n, np, s = map(closed.Ty, ('n', 'np', 's'))
man, island = (Word(noun, n) for noun in ("man", "island"))
no, an = (Word(determinant, np << n) for determinant in ("no", "an"))
_is = Word("is", (np >> s) << np)

no_man_is_an_island = no @ man @ _is @ an @ island\
    >> Ev(np << n) @ ((np >> s) << np) @ Ev(np << n)\
    >> Parsing.type_raise(np, s) @ Ev((np >> s) << np)\
    >> Ev(s << (np >> s))

no_man_is_an_island.draw()
\end{minted}
\ctikzfig{img/nlp/no-man-is-an-island}
\end{example}

Bar-Hillel et al.~\cite{Bar-HillelEtAl60} showed that basic categorial grammars are strongly equivalent to context-free grammars in \emph{Greibach normal form}~\cite{Greibach65}, where every production has the form $x \to w y$ for a non-terminal $x \in X$, a word $w \in V$ and a string of non-terminals $y \in X^\star$.
Thus, their parsing problem can be solved in polynomial time.
Pentus~\cite{Pentus93} then showed that Lambek grammars are weakly equivalent to CFGs as well, although their parsing problem is $\mathtt{NP}$-complete~\cite{Pentus06}.
This means that unless $\mathtt{P} = \mathtt{NP}$ there are Lambek grammars for which the smallest weakly equivalent CFG will have exponential size.
Many extensions of the Lambek calculus have been introduced to go beyond its context-free limitation and give a more fine-grained description of syntactic phenomena, see Moortgat~\cite{Moortgat14} for a survey.
Additional unary operators called \emph{modalities} allow to break away from the planarity and linearity of closed diagrams, introducing rules for swaps and comonoids in a controlled way to model phenomena such as \emph{parasitic gaps}, \emph{ellipsis} and \emph{anaphora}.
See Moortgat~\cite[4.2]{Moortgat97} for a survey of modalities in linguistics and McPheat et al.~\cite{McPheatEtAl21} where the author and collaborators introduce a diagrammatic syntax and functorial semantics for such modalities.
The \emph{combinatory categorial grammars} (CCGs) of Steedman~\cite{Steedman87,Steedman00} take a different approach inspired by the combinatory logic of Schönfinkel~\cite{Schonfinkel24} and Curry~\cite{Curry30}, a variable-free predecessor to the lambda-calculus.
In particular, CCGs include \emph{crossed composition} rules which make them mildly context-sensitive, see Kartsaklis and Yeung~\cite{YeungKartsaklis21} for their implementation in DisCoPy.
Another extension is the \emph{abstract categorial grammar} (ACG) framework of de Groote~\cite{Groote01} defined in terms of a homomorphism from abstract to concrete syntax.
This allows to obtain a refinement of the Chomsky hierarchy which characterises mild context-sensitivity in terms of two parameters: the order of the abstract syntax and the complexity of the homomorphism~\cite{DeGrootePogodalla04}.

A key feature of categorial grammars as free closed categories, is that we can define their semantics as functors into any closed category: once the image of dictionary entries is defined, the image of any grammatical structure is fixed.
Montague~\cite{Montague70a,Montague70,Montague73} introduced the idea of semantics as a homomorphism from syntax to logic, i.e. as a closed functor $F : \G \to F^{CC}(\Sigma)$ from a categorial grammar into a free cartesian closed category with logical connectives and predicates as boxes.
Although Montague himself did not care much about syntax~\footnote
{``I fail to see any great interest in syntax except as a preliminary to semantics.''~\cite{Montague70a}},
his method provides a general recipe to interpret any categorial grammar in terms of lambda expressions.
From a computational perspective however, Montague semantics is too expressive: as we mentioned in section~\ref{subsection:closed}, the word problem for free cartesian closed categories, or equivalently the normalisation of simply-typed lambda terms, is not elementary recursive.
The \emph{Entscheidungsproblem} of Hilbert and Ackermann~\cite{HilbertAckerman28} is to decide, given a first-order logic formula, whether it is valid (i.e. true in every interpretation).
Church~\cite{Church36} proved this is undecidable, thus even if we manage to translate sentences as first-order logic formulae (which could take non-elementary time) checking if a given sentence is valid (or if two sentences are equivalent) is also undecidable.
Intuitively, we can reduce Turing's halting problem to the validity of the sentence ``the machine halts''.
The \emph{model checking problem} is to decide whether a formula is valid in a given finite model, it is $\mathtt{PSPACE}$-complete in the size of the formula~\cite[Theorem~4.3]{Gradel02} and in $\mathtt{L}$ (logarithmic space) if the formula is fixed~\cite[Corollary~4.5]{Gradel02}.

\begin{example}
We can interpret natural language as arbitrary Python code by applying a closed functor $\G \to \mathbf{Pyth}$.

\begin{minted}{python}
x, y = map(Ty, "xy")
program, runs = Word("program", x), Word("runs", x >> y)
program_runs = program @ runs >> Ev(x >> y)

F = closed.Functor(
    dom=Category(Ty, Parsing), cod=Category(tuple[type, ...], Function),
    ob={x: int, y: int},
    ar={program: lambda: 42, runs: lambda: lambda n: n * 10})
assert F(program_runs)() == 420
\end{minted}
\end{example}

\begin{example}
We can implement Montague semantics as a closed functor $\G \to \mathbf{Pyth}$ which sends the sentence type to \py{Formula}, the implementation of diagrammatic first-order logic à la Peirce given in example~\ref{example:monoidal-formula}.
Montague~\cite{Montague73} defines common nouns and intransitive verb phrases as functions from terms to formulae, in diagrammatic logic the same role is played by states and effects, i.e. open formulae with one open wire \py{x} in the codomain and domain respectively.
Montague's noun phrases are functions from intransitive verb phrase to sentence, in our setting they are given by functions from open to closed formulae.
The transitive verb ``is'' must take two such functions \py{P} and \py{Q} as input and return a closed formula, the only thing we can do is apply \py{P} to the identity diagram on \py{x} to get a state, before feeding the dagger of the result to \py{Q}.

\begin{minted}{python}
x = Ty('x')

Montague = closed.Functor(
    dom=Category(Ty, Parsing), cod=Category(tuple[type, ...], Function),
    ob={s: Formula, n: Formula, np: exp(Formula, Formula)},
    ar={no: lambda: lambda state: lambda effect: (state >> effect).bubble(),
        man: lambda: Predicate("man", x),
        _is: lambda: lambda P: lambda Q: Q(P(Formula.id(x)).dagger()),
        an: lambda: lambda state: lambda effect: state >> effect,
        island: lambda: Predicate("island", x)})
Montague(no_man_is_an_island)().draw()
\end{minted}

\ctikzfig{img/nlp/peirce-montague}

\begin{minted}{python}
size = {x: 2}

for mans, islands in itertools.product(*2 * [
        itertools.product(*size[x] * [[0, 1]])]):
    F = model(size, {Predicate("man", x): mans, Predicate("island", x): islands})
    assert F(Montague(no_man_is_an_island)()) == not any(
        F(Predicate("man", x))[i] and F(Predicate("island", x))[i]
        for i in range(size[x]))
\end{minted}
\end{example}

Returning to linguistics half a century after his seminal \emph{Mathematics of sentence structure}, Lambek~\cite{Lambek99,Lambek01,Lambek08} introduced \emph{pregroup grammars} as a simplification of his original calculus replacing closed categories by rigid categories.
That is, the dictionary of a pregroup grammar $G = (V, X, D, s)$ has the shape $D \sub V \times (X \times \Z)^\star$, it assigns words to lists of iterated adjoints of basic types.
Again, the language of $G$ is defined as $L(G) = \{ w \in V^\star \ \vert \ \exists \ f : w \to s \in \G \}$ where now $\G$ is the free rigid category generated by the dictionary.
Equivalently, a sentence $w = w_1 \dots w_n \in V^\star$ is grammatical if there are dictionary entries $(w_i, t_i) \in D$ such that $t_1 \dots t_n \leq s$ holds in the free pregroup, i.e. the preorder collapse of $\G$.
In fact, Lambek first defined his pregroup grammars in terms of partial orders (i.e. preorders with antisymmetry) then Preller and he~\cite{PrellerLambek07} reformulated them in terms of free compact 2-categories (i.e. rigid categories with colours) so that they could account for ambiguity.
Moving from preorders to free categories also allows to define pregroup semantics as functors, indeed a functor with a preorder as domain is required to map all the parsings of an ambiguous sentence to the same meaning.
Even worse, a monoidal functor with a pregroup as domain has to obey the equation $F(x) = F(x \otimes x^l \otimes x)$ for all types $x \in X$, which makes the functor trivial in categories of interest such as $\mathbf{Set}$ or $\mathbf{Mat}_\S$.

Every rigid category is also closed, thus for any Lambek grammar $G$ we can construct a pregroup grammar $G'$ and a closed functor $\G \to \G'$ which sends categorial types $x \backslash y$ and $x / y$ to pregroup types $x^r y$ and $x y^l$.
In general this functor need not be faithful: it maps both $(x \otimes y) / z$ and $x \otimes (y / z)$ to the same pregroup type $x y z^l$.
Although they cannot be strongly equivalent to Lambek grammars, Buszkowski~\cite{Buszkowski01} proved that pregroup grammars are context-free, hence they are still weakly-equivalent.
As for the complexity of their parsing problem, Lambek~\cite{Lambek99} first showed it was decidable with the following \emph{switching lemma}: any pregroup inequality $x \leq z$ can be factored into $x \leq y \leq z$ where $x \leq y$ using only cups then $y \leq z$ using only caps.
The proof is essentially given by the snake removal algorithm of listing~\ref{listing:snake-removal} applied to rigid diagrams with only cups and caps: the resulting normal form can be shown to have all cups preceding the caps.
As a corollary, if there is a grammatical structure $f : w_1 \dots w_n \to s$ then there is one using only dictionary entries and cups which we can find by brute force search, Oehrle~\cite{Oehrle04} showed that pregroup parsing can in fact be solved in cubic time.
Preller~\cite{Preller07a} gave sufficient conditions on the dictionary for unambiguous pregroup grammars to be parsed in linear time, the algorithm was later improved and implemented in DisCoPy by Rizzo~\cite{Rizzo21}.
Multiple extensions of pregroup grammars have been proposed to go beyond context-free languages including taking products of free pregroups~\cite[Section~28]{Lambek08}, grammars with infinite dictionaries~\cite{Preller10} or a notion of buffer~\cite{GenkinEtAl10}.

One can define the semantics of a pregroup grammar $G$ as a monoidal functor $F : \G \to C$ into any rigid category $C$, in the \emph{categorical compositional distributional} (DisCoCat) models of Clark, Coecke and Sadrzadeh~\cite{ClarkEtAl08,ClarkEtAl10} one takes $C = \mathbf{Mat}_\S$ the category of matrices.
More explicitly, a DisCoCat model $F : \G \to \mathbf{Mat}_\S$ is defined by a dimension $F(x) \in \N$ for every basic type $x \in X$ and a vector $F(w, t) : 1 \to F(t)$ for every dictionary entry $(w, t) \in D$.
It then defines the semantics of a grammatical sentence $f : w_1 \dots w_n \to s$ as the contraction of a tensor network where the nodes are dictionary entries and the edges are given by the cups.
Note that the two seminal articles~\cite{ClarkEtAl08,ClarkEtAl10} do not mention rigid categories and stick with the definition of pregroup grammars in terms of partial orders.
Unable to define non-trivial functors $P \to \mathbf{Mat}_\S$ for $P$ the free pregroup generated by $G$, i.e. the preorder collapse of $\G$, they resort to defining DisCoCat in terms of a product category $\mathbf{Mat}_\S \times P$.
Although they do not say it explicitly, they in fact worked with a subcategory of $\G \times \mathbf{Mat}_\S$ called the \emph{Grothendieck construction} on a monoidal functor $F : \G \to \mathbf{Mat}_\S$, see Bradley et al.~\cite{BradleyEtAl18} for an application of this observation to model translation and language evolution in DisCoCat.

DisCoCat models came out of a quest to accommodate the \emph{compositional} approach to NLP which focused on grammatical structure and the \emph{distributional} approach that represented words as vectors extracted from text data.
A key feature of this approach compared to its predecessor is that DisCoCat models define a similarity measure between any two expressions of the same type, even though they do not have the same grammatical structure.
Indeed, any two pregroup diagrams $f, g : w_1 \dots w_n \to x$ will be mapped to vectors of dimension $n = F(x)$ such that the inner product $\langle F(f) \vert F(g) \rangle$ yields a measure of their similarity.
We can then use standard machine learning techniques to solve problems such as classification, an approach that has received some empirical support on small-scale datasets~\cite{GrefenstetteSadrzadeh11}.
The name of DisCoPy stands for \emph{Distributional Compositional Python}, indeed it was first meant as an implementation of DisCoCat models before it turned into an implementation of monoidal functors in general.

\begin{python}
{\normalfont Implementation of pregroup grammars as rigid categories.}

\begin{minted}{python}
class Parsing(rigid.Diagram, categorial.Parsing): pass
class Cup(rigid.Cup, Parsing): pass
class Word(categorial.Word, Parsing): pass
Cup.cast = Word.cast = Parsing.cast
\end{minted}
\end{python}

\begin{example}\label{example:alice-loves-bob}
Computing the meaning of ``Alice loves Bob'' was the first example in DisCoPy's documentation.

\begin{minted}{python}
s, n = rigid.Ty('s'), rigid.Ty('n')
Alice, loves, Bob =\
    Word('Alice', n), Word('loves', n.r @ s @ n.l), Word('Bob', n)
sentence = Alice @ loves @ Bob >> Cup(n, n.r) @ s @ Cup(n.l, n)

sentence.draw()
\end{minted}

\ctikzfig{img/nlp/alice-loves-bob}

\begin{minted}{python}
F = rigid.Functor(
    dom=Category(rigid.Ty, Parsing), cod=Category(tuple[int, ...], Tensor),
    ob={s: 1, n: 2},
    ar={Alice: [[1, 0]], loves: [[0, 1], [1, 0]], Bob: [[0, 1]]})

assert F(sentence)
\end{minted}
\end{example}


\subsection{Anaphora and the quantum complexity of language}\label{section:anaphora}

While the meaning of \emph{lexical words} (also called \emph{content} words) such as nouns and verbs are extracted from text data, DisCoCat models allow to encode \emph{functional words} such as pronouns and conjunctions in terms of the Frobenius algebras, a.k.a. spiders, we discussed in section~\ref{subsection:hypergraph}.
This \emph{Frobenius anatomy of word meanings} was first applied to relative pronouns~\cite{SadrzadehEtAl13,SadrzadehEtAl14}, then to coordination~\cite{Kartsaklis16} as well as intonation~\cite{KartsaklisSadrzadeh15}.
In previous work with Coecke, de Felice and Marsden~\cite{CoeckeEtAl18a} as well as in a subsequent dissertation~\cite{Toumi18a}, we proposed the use of spiders to model \emph{anaphora}, expressions such as personal pronouns whose meaning depends on another expression in context, connecting the diagrams for sentences together into a diagram for \emph{discourse}.
This proposal came with an algorithm for constructing a \emph{relational database} from any such discourse diagram and for translating the diagrams of questions into database queries.

This discourse-to-database translation was refined in later work with de Felice and Meichanetzidis~\cite{FeliceEtAl19} where we defined the \emph{functorial question answering} problem as the application of a given DisCoCat model to a question diagram.
In the case of Boolean-valued models $F : \G \to \mathbf{Mat}_\S$, we proved that this question-answering problem is in fact equivalent to \emph{conjunctive query evaluation}, which is $\mathtt{NP}$-complete by a celebrated theorem of Chandra and Merlin~\cite{ChandraMerlin77}.
Conjunctive queries can be defined as Peircean diagrams with no bubbles, i.e. only spiders and predicate boxes, or equivalently as the first-order logic formulae with existentials and conjunction but no negation, see Bonchi et. al for a diagrammatic treatment~\cite{BonchiEtAl18}.
The Chandra-Merlin theorem is based on the construction of a \emph{canonical model} for the given query, i.e. a canonical functor given a diagram, then reducing evaluation to the problem of graph homomorphism between models.
Once translated in terms of Boolean DisCoCat models, the same result implies that question-answering (i.e. the application of a functor to a diagram) is equivalent to the \emph{entailment problem}: given two sentences, does the truth of one imply that of the other?

DisCoCat models with anaphoric spiders are unsatisfying in two opposite ways: they are not expressive enough to encode negation\footnote
{Note that the results of \cite{FeliceEtAl19} assume that the sentence type is mapped to the unit, i.e. the meaning of a sentence is a scalar in $\B$.
Preller~\cite{Preller14a,Preller14} takes an alternative, four-valued approach to first-order logic with pregroups where the sentence type is given dimension $2$: a sentence is either true, false, neither or both.}
but if we allow arbitrary anaphora, they are already too expressive to be computed efficiently.
The first point cannot be avoided if we want our models to be tractable: adding negation to conjunctive queries will generate all of first-order logic, for which question-answering (i.e. model checking) is $\mathtt{PSPACE}$-complete and entailment (i.e. the Entscheidungsproblem) undecidable.
We may get around the second dissatisfaction by adding restrictions on anaphoric expressions.
For example, requiring that the corresponding query has \emph{bounded tree-width}~\cite{ChekuriRajaraman00} ensures that question answering is solvable in polynomial time.
This restriction may be motivated in terms of \emph{bounded short-term memory}: a query with tree-width $k$ corresponds to a first-order logic with $k$ variables, each of which may be bound and reused multiple times.
We refer to Abramsky and Shah~\cite{AbramskyShah21} for a comonadic approach to such resource bounds.

If we go from Booleans to natural numbers, we get a \emph{counting problem}: we want to know not only whether but \emph{how many} answers a question has in a given model.
This answer-counting problem is complete for $\mathtt{\#P}$, the generalisation of $\mathtt{NP}$ from decision to counting problems.
By extension, evaluating DisCoCat models over the real or complex numbers (with finite precision) is also $\mathtt{\#P}$-complete.
The closest decision complexity class is $\mathtt{PP}$, also called \emph{Majority-P}, the class of problems solvable in probabilistic polynomial time with no error bounds, which amounts to computing the most significant digit of a $\mathtt{\#P}$ problem.
If we write $\mathtt{P}^\mathtt{X}$ for the class of problems solvable in polynomial time with access to an oracle solving any problem of $\mathtt{X}$ in one step, we can use a binary search to prove $\mathtt{P}^\mathtt{PP} = \mathtt{P}^\mathtt{\#P}$.
One indication for how much harder counting is compared to decision problems is \emph{Toda's theorem}~\cite{Toda91}.
It states that $\mathtt{PH} \sub \mathtt{P}^\mathtt{\#P}$ where $\mathtt{PH}$ is the \emph{polynomial hierarchy}, the union of all towers of $\mathtt{NP}$-oracles, i.e. $\mathtt{PH} = \cup_{n \in \N} \Sigma_n$ where $\Sigma_0 = \mathtt{P}$ and $\Sigma_{n + 1} = \mathtt{NP}^{\Sigma_n}$.
Intuitively, a $\mathtt{PP}$-oracle for counting problems is at least as powerful as any tower of $\mathtt{NP}$-oracles for decision problems.

In a beautifully simple theorem, Aaronson~\cite{Aaronson05} shows that $\mathtt{PP} = \mathtt{PostBQP}$, the class of problems solvable in polynomial time by a quantum computer given \emph{post-selection}, the ability to make the possible necessary and to choose what outcome we get from a quantum measurement\footnote
{Assuming the many-world hypothesis, Aaronson~\cite{Aaronson05} gives a simple method to achieve post-selection: committing suicide if we do not get the desired outcome.
A less brutal method is to keep on trying until we do, in exponential time on average.}.
In one direction, this equality says that the evaluation of a post-selected quantum circuit can be reduced to tensor contraction: quantum gates, bras and kets are nodes, the qubits connecting them are edges.
In the other, it means that we can use post-selected quantum computation to contract any tensor network, or equivalently to evaluate any monoidal functor from a free compact closed category into $\mathbf{Mat}_\C$.
The related counting class $\mathtt{\#P}$ was originally introduced by Valiant~\cite{Valiant79} to show the completeness the \emph{matrix permanent}.
Aaronson and Arkhipov~\cite{AaronsonArkhipov11} then related it to the complexity of \emph{boson sampling}, a restricted model of quantum computation which they prove cannot be simulated classically unless the \emph{polynomial hierarchy collapses to the third level}.
Although this is less unlikely than $\mathtt{P} = \mathtt{NP}$, i.e. a collapse at level zero, this is still believed to be a strong indication that quantum computers cannot be efficiently classically simulated.

Removing post-selection from $\mathtt{PostBQP}$ we get $\mathtt{BQP}$, bounded-error quantum polynomial time, arguably the largest class of decision problems that a physical machine\footnote
{By a physical machine we mean a machine that obeys the laws of quantum mechanics.
This excludes machines exploiting features of general relativity such as closed time-like curves (CTCs).
Using the CTCs of Deutsch~\cite{Deutsch91} a quantum computer can solve all of $\mathtt{PSPACE}$ in polynomial time, while the more restricted CTCs of Lloyd et al.~\cite{LloydEtAl11,LloydEtAl11a} solve all of $\mathtt{PostBQP}$ in polynomial time.
See Pinzani, Gogioso and Coecke~\cite{PinzaniEtAl19} for a diagrammatic treatment of time-travel in terms of traced categories.}
can solve efficiently.
Its classical analog $\mathtt{BPP}$ (bounded-error probabilistic polynomial time) is contained in $\mathtt{BQP}$ because quantum computers can simulate classical ones efficiently, but whether the containment is strict is an open question.
The best we can do is define $\mathtt{BQP}$-complete problems with \emph{circuit approximation} as the canonical example: given the description of a quantum circuit, decide whether measuring the first qubit yields a one, with the promise that the probability for this is bounded away from a half.
Arad and Landau~\cite{AradLandau10} reformulate this in terms of the additive approximation of tensor networks, i.e. the additive approximation of a monoidal functor into $\mathbf{Mat}_\C$.
In one direction, their reduction tells us that we can approximate tensor networks efficiently with a quantum computer.
In the other, it means that if we could approximate such functors with a classical computer then we could also simulate any quantum circuit efficiently.

What do these complexity results imply for evaluating DisCoCat models on quantum computers?
First, that we cannot hope to evaluate them exactly unless we discover (safe and efficient) time travel and prove $\mathtt{PostBQP} = \mathtt{BQP}$.
Second, that we can evaluate them efficiently with a quantum computer, up to additive approximation.
Third, that if we could do the same with classical computers then they would turn out to be as powerful as quantum computers after all.
We would automatically get a classical Shor algorithm that can outperform the best number sieves mathematicans have come up with, and a classical Grover algorithm that can find a needle in a haystack.
Thus, by interpreting pregroup grammars in terms of tensor networks, DisCoCat models provide a way to reformulate quantum computing in terms of natural language processing.
In short, if $\mathtt{BPP} \neq \mathtt{BQP}$ then quantum computers would allow us to (approximately) answer exponentially bigger natural language questions than classical computers can.
The idea of using quantum circuits to evaluate DisCoCat models was first introduced by Zeng and Coecke~\cite{ZengCoecke16}, where they show a quadratic advantage on a more restricted classification task using Grover's algorithm as subroutine.
Given the size of classical NLP models today, empirical evidence of quantum advantage for natural language processing will most probably require fault-tolerant quantum computers with millions of qubits, if not billions.
In the meantime, we are left to explore the possibilities offered by the small noisy quantum computers of today.


\section{DisCoCat models on quantum hardware}\label{section:discocat-qnlp}

We get to the main definition of this thesis: by a QNLP model we mean a monoidal functor $F : \G \to \mathbf{Circ}$ from the category generated by a grammar $\G$ to a category $\mathbf{Circ}$ of quantum circuits.
Chapter~\ref{chapter:discopy} has already given several definitions of $\mathbf{Circ}$:
\begin{itemize}
    \item as a category (example~\ref{example:Circuit}),
    \item as a monoidal category (example~\ref{example:circuit-diagrams}),
    \item as a pivotal category (example~\ref{example:pivotal-circuit}),
    \item as a category with biproducts (example~\ref{example:biproduct-measurement}).
\end{itemize}
So far, these definitions have only covered \emph{pure quantum circuits} with post-selected measurements. Pure quantum gates are interpreted as unitary matrices, preparation (kets) and measurement (bras) as basis vectors.
The evaluation of a closed pure circuit is a complex scalar and the Born rule says its squared amplitude is the probability of a measurement outcome for a given preparation.
We can use a quantum computer to approximate this probability by executing our circuit many times or in quantum computing parlance, taking many \emph{shots}.
At each execution, we measure all the qubits then we divide the number of times the post-selected outcome occurs by the number of shots.
With the interface from the \py{Circuit} class of DisCoPy to that of the t$\vert$ket$\rangle$ compiler~\cite{SivarajahEtAl20}, going from numerical simulation to quantum hardware is as easy as providing an extra argument \py{backend: pytket.Backend} to the method \py{Circuit.eval}.

\begin{example}\label{example:circuit-alive-loves-bob}
Executing the circuit for ``Alice loves Bob'' was the first NLP experiment on a quantum computer, as documented in Coecke et al.~\cite{CoeckeEtAl20b}.

\begin{minted}{python}
F = rigid.Functor(
    dom=Category(Ty, Parsing), cod=Category(Qubits, Circuit),
    ob={s: qubit ** 0, n: qubit ** 1},
    ar={Alice: Ket(0), loves: Ket(0, 0) >> H @ sqrt2 @ X >> CX, Bob: Ket(1)})

drawing.equation(sentence, F(sentence), symbol="$\\mapsto$")
\end{minted}
\ctikzfig{img/qnlp/alice-loves-bob}
\begin{minted}{python}
from pytket.backends.ibm import IBMQBackend
assert F(sentence).eval(
    backend=IBMQBackend('ibmq_singapore', hub='ibmq'), n_shots=2 ** 10) > .5
\end{minted}
\end{example}

Let's unpack what happened when we executed the last line of the example above.
We apply a monoidal functor \py{F} to the diagram for a \py{sentence} to get a circuit diagram.
DisCoPy then translates it into a \py{pytket.Circuit} which gets executed $2^{10}$ times on the \py{'ibmq_singapore'} backend (a 20-qubit quantum device).
We then assert that the probability of measuring all zeros is bigger than a $\frac{1}{2}$ threshold, once multiplied by $(\vert \sqrt{2} \vert^2)^3 = 8$ to account for the three scalars.
What did we achieve?
We have evaluated (the squared amplitude of) the DisCoCat model of example~\ref{example:alice-loves-bob} on a quantum computer!
Indeed, we have chosen our QNLP model so that the quantum states for words encode their interpretation, e.g. we have sent \py{loves} to an (anti-correlated) Bell state scaled by $\sqrt{2}$ so that \py{F(loves).eval().inside} \py{= [[0, 1], [1, 0]]}.
Assuming a universal gate set, every complex vector $u \in \C^{2^n}$ can be encoded as a pure quantum circuit $c : \mathtt{qubit}^0 \to \mathtt{qubit}^n$ scaled by a positive real scalar to account for the normalisation, which we can represent as a quantum gate on zero qubits.

In fact, every choice of encoding will define a faithful monoidal functor $\mathtt{load} : \mathbf{Tensor}_\C \to \mathbf{Circ}$ from complex tensors to pure quantum circuits with real scalars (up to equality of their interpretation) which is an inverse to evaluation, i.e. $\mathtt{load} \fcmp \mathtt{eval} = \id(\mathbf{Tensor}_\C)$.
On objects, it sends a dimension $d \in \N$ to a qudit, i.e. a $d$-dimensional quantum system.
From the rigid structure of $\mathbf{Tensor}_\C$, every arrow $f : x \to y$ can be written as $f = x \otimes u \fcmp \ttcup(x) \otimes y$ for the state $u : 1 \to x y$ given by $u = \ttcap(x) \fcmp x \otimes f$.
Thus, every tensor can be encoded as a scaled circuit followed by a post-selected Bell measurement.
From this isomorphism we can extract an abstract proof of $\mathtt{PostBQP} = \mathtt{PP}$: simulating a post-selected quantum circuit is equivalent to evaluating a monoidal functor into tensors, i.e. contracting a tensor network.

In particular, for any DisCocat model $F : \mathbf{G} \to \mathbf{Tensor}_\C$ we get a QNLP model $F_Q = F \fcmp \mathtt{load} : \mathbf{G} \to \mathbf{Circ}$ such that $F = F_Q \fcmp \mathtt{eval}$.
Evaluating the circuit $F_Q(f) = c$ for a grammatical sentence $f : w_1 \dots w_n \to s$ yields a quantum state that encodes its interpretation $\mathtt{eval}(c) = F(f)$.
When the interpretation is a scalar, i.e. $F(s) = 1$, we can directly evaluate the circuit to compute the squared amplitude $\vert F(f) \vert^2$.
If furthermore we know that the evaluation must be a positive real scalar, e.g. because the vectors for all dictionary entries are positive reals as in our example, we can simply take the square root of this probability to get a truth value for the sentence.
However, we are measuring the probability of an exponentially unlikely event (measuring all qubits to zero) thus in general we will need an exponential number of shots to approximate the result.
If the truth value is an arbitrary complex scalar, we can use a \emph{Hadamard test} to compute the real and imaginary part of the complex scalar $\mathtt{eval}(c) \in \C$.
When the interpretation is an $F(s) = n$-dimensional state for $n \geq 2$, we can perform a \emph{swap test} with the quantum state for another grammatical expression of the same type, which will compute their inner product, i.e. measure their similarity.
From the characterisation of $\mathtt{BQP}$ by Arad and Landau~\cite{AradLandau10}, we know we can get an additive approximation using a polynomial number of shots.
However, in general we have no guarantee that this additive error will be no larger than the value we want to approximate.

Thus, this naive definition of QNLP models is unsatisfactory from a computational point of view: if we post-select on all the qubits we'll have to wait an exponentially long time even to approximate an answer.
It is also unsatisfactory from a category theoretic point of view.
Indeed, so far we have defined the evaluation of pure quantum circuits as a functor into tensor, but what we can actually execute on a quantum device is the Born rule of the result, which is not a functor: the squared amplitude of a composition is not necessarily the composition of the squared amplitudes.
Killing two birds with one stone, we can overcome both limitations (computational and theoretical) if we extend our definition of $\mathbf{Circ}$ from \emph{pure} to \emph{mixed} quantum circuits.
In practice, what our QNLP algorithm is missing is the ability to do nothing, i.e. not to measure a qubit.
In theory, our category $\mathbf{Circ}$ is missing a box for \emph{discard}.
As soon as we leave the realm of pure quantum circuits, quantum states cannot be represented as complex vectors anymore, we need \emph{density matrices}.
Similarly, we cannot interpret quantum circuits as unitary matrices anymore, we need \emph{completely-positive trace-preserving} (CPTP) maps, also called \emph{quantum channels}.
As a bonus, going from pure to mixed and from unitaries to channels gives us enough room to talk about both classical and quantum processes in the same concrete category $\mathbf{Channel}$.


\subsection{Quantum channels and mixed quantum circuits}\label{section:mixed-circuits}

DisCoPy implements a variant of the definition of \emph{classical-quantum maps} (cq-maps) from Coecke and Kissinger~\cite[Chapter~8]{CoeckeKissinger17}.
Abstractly, it is a simplification of the $\mathbf{CP^\star}$ construction from Coecke, Heunen and Kissinger~\cite{CoeckeEtAl12}, which itself generalises the notion of finite-dimensional C$^\star$-algebra to arbitrary dagger compact closed categories.
Concretly, we first define the category $\mathbf{CQMap}$ with objects given by pairs of natural numbers\footnote
{We actually implement an equivalent category where objects are pairs of lists of natural numbers and the arrows are tensors rather than matrices.} $(a, b) \in \N \times \N$ for the classical and quantum dimensions of the system.
The arrows $f : (a, b) \to (c, d)$ are given by $(a b^2) \times (c d^2)$ complex matrices, with composition given by matrix multiplication and tensor given by the following diagram:
\ctikzfig{img/qnlp/channel-tensor}
A \emph{quantum channel} is a cq-map subject to the following two conditions:
\begin{itemize}
\item \emph{complete positivity} (CP), there is a dimension $n \in \N$ and an $(a b) \times (c d n)$ matrix $g$ with element-wise conjugate $g^\star$ such that:
\ctikzfig{img/qnlp/complete-positivity}
\item \emph{trace preservation} (TP) also called \emph{causality}:
\ctikzfig{img/qnlp/trace-preservation}
\end{itemize}
Note that we take the convention to use the \emph{algebraic conjugate} which is the identity on objects, rather than the \emph{diagrammatic conjugate} which reverses the order of wires.
This makes the implementation easier at the cost of breaking the symmetry of the diagram for complete positivity.
We also note that \emph{Picturing quantum processes}~\cite{CoeckeKissinger17} does not distinguish between diagrams and their evaluation as matrices.
Moreover, their definition of cq-map includes the complete positivity condition, thus they do not give a name to the matrices that we call cq-maps.

There is a functor $\mathtt{double} : \mathbf{Mat}_\C \to \mathbf{CQMap}$ which sends a dimension $n$ to the pair $(1, n)$ and a complex matrix $f$ to its \emph{double} $\widehat{f}$, the cq-map given by tensoring with its conjugate $\widehat{f} = f \otimes f^\star$.
The double of a scalar $s : 1 \to 1$ gives the same result as the Born rule $\hat{s} = s \bar{s} = \vert s \vert^2$.
The double of any matrix $f : m \to n$ automatically satisfies the complete-positivity condition; it satisfies causality iff $f$ is an isometry, i.e. $f^\dagger \fcmp f = \id(n)$.
Furthermore the functor $\mathtt{double}$ is faithful up to a global phase~\cite[Proposition~6.6]{CoeckeKissinger17}.
A cq-map is called \emph{pure} if it is in the image of $\mathtt{double}$ and \emph{mixed} otherwise.

There is also a functor $\mathtt{single} : \mathbf{Mat}_\R \to \mathbf{CQMap}$ which sends a dimension $n$ to the pair $(1, n)$ and a real matrix $f : m \to n$ to itself.
Again, this is always completely positive and it satisfies causality iff $f$ is a stochastic matrix, i.e. all its columns sum to one.
For every dimension $x \in \N$, there are channels $\mathtt{measure}(x) : (1, x) \to (x, 1)$ and $\mathtt{encode}(x) : (x, 1) \to (1, x)$ with underlying matrix given by $\mathtt{spider}_{2, 1}(x)$ and $\mathtt{spider}_{2, 1}(x)$.
For every pair of dimensions $a, b \in \N$, we can also define $\mathtt{discard}(a, b) : (a, b) \to (1, 1)$ and $\mathtt{mixed\_state}(a, b) : (1, 1) \to (a, b)$ with underlying matrix given by $\mathtt{spider}_{1, 0}(a) \otimes \mathtt{cup}(b)$ and its dagger.
Thus, the intuition for the causality condition is that the future cannot signal to the past: if we apply a quantum process then discard the output, we might as well have discarded the input.
Abstractly, causality makes the category $\mathbf{Channel}$ \emph{semicartesian}, i.e. the monoidal unit is a terminal object, discarding is the unique arrow from any object into it.

\begin{python}
{\normalfont Implementation of the category $\mathbf{CQMap}$ with \py{CQ} as objects and \py{Channel} as arrows.}

\begin{minted}{python}
@dataclass
class CQ:
    classical: tuple[int, ...] = ()
    quantum: tuple[int, ...] = ()

    def tensor(self, other: CQ) -> CQ:
        return CQ(self.classical + other.classical, self.quantum + other.quantum)

    def downgrade(self) -> tuple[int]:
        return self.classical + 2 * self.quantum

    __matmul__ = tensor

C, Q = lambda x: CQ(classical=x), lambda x: CQ(quantum=x)

@dataclass
class Channel(Composable, Tensorable):
    inside: Tensor[complex]
    dom: CQ
    cod: CQ

    @staticmethod
    def id(x: CQ) -> Channel:
        return Channel(x, x, Tensor[complex].id(x.downgrade()))

    def dagger(self) -> Channel:
        return Channel(self.inside.dagger(), self.cod, self.dom)

    @inductive
    def then(self, other: Channel) -> Channel:
        assert self.cod == other.dom
        return Channel(self.inside >> other.inside, self.dom, other.cod)

    @inductive
    def tensor(self, other: Channel) -> Channel:
        inside = ...  # Given by the diagram above.
        return Channel(inside, self.dom @ other.dom, self.cod @ other.cod)

    @staticmethod
    def double(f: Tensor[complex]) -> Channel:
        return Channel(f @ f.map(lambda x: x.conjugate()), Q(f.dom), Q(f.cod))

    @staticmethod
    def single(f: Tensor[float]) -> Channel:
        inside = Tensor[complex](f.inside, f.dom, f.cod)
        return Channel(inside, C(f.dom), C(f.cod))

    @staticmethod
    def measure(x: tuple[int, ...]) -> Channel:
        return Channel(Tensor[complex].spider(2, 1, x), Q(x), C(x))

    @staticmethod
    def encode(x: tuple[int, ...]) -> Channel:
        return Channel(Tensor[complex].spider(1, 2, x), C(x), Q(x))

    @staticmethod
    def discard(x: CQ) -> Channel:
        inside = Tensor[complex].spider(1, 0, x.classical)\
            @ Tensor[complex].cups(x.quantum, x.quantum[::-1])
        return Channel(inside, x, CQ())
\end{minted}
\end{python}

The category $\mathbf{CQMap}$ inherits a dagger compact closed structure from that of $\mathbf{Mat}_\C$.
The swaps (cups, caps) for classical-quantum systems $(a, b) \in \N \times \N$ are given by tensoring a single swap (cup, cap) for $a$ and a double swap (cup, cap) for $b$.
It is also commutative-monoid-enriched with element-wise addition, note however that the functor $\mathtt{double}$ is not CM-enriched, i.e. $\widehat{f + f'} \neq \widehat{f} + \widehat{f'}$ in general.
In quantum mechanical terms, this corresponds to the distinction between
quantum superposition and probabilistic mixing.
Crucially, the subcategory $\mathbf{Channel}$ of CPTP maps is \emph{not} compact-closed because caps are not causal.
It is not commutative-monoid-enriched: the sum of two channels is not causal (although any convex combination is).

\begin{python}
{\normalfont Implementation of $\mathbf{CQMap}$ as a CM-enriched compact closed category.}

\begin{minted}{python}
CQ.l = CQ.r = property(lambda self: self)

for attr in ("swap", "cups", "caps"):
    def channel_method(left: CQ, right: CQ) -> Channel:
        tensor_method = getattr(Tensor, attr)
        return Channel.single(tensor_method(left.classical, right.classical))\
            @ Channel.double(tensor_method(left.quantum, right.quantum))
    setattr(Channel, attr, channel_method)

def __add__(self, other: Channel) -> Channel:
    assert self.dom == other.dom and self.cod == other.cod
    return Channel(self.inside + other.inside, self.dom, self.cod)

@staticmethod
def zero(dom: CQ, cod: CQ) -> Channel:
    return Channel(Tensor.zero(dom.dowgrade(), cod.downgrade()), dom, cod)

Channel.__add__, Channel.zero = __add__, zero
\end{minted}
\end{python}

We're now ready to define $\mathbf{Circ}$ as a free symmetric monoidal category with a monoidal functor $\mathbf{Circ} \to \mathbf{CQMap}$.
The definition of circuits depends on a choice of \emph{gateset}, i.e. a monoidal signature $\Sigma$ together with a functor $[\![-]\!] : F^S(\Sigma) \to \mathbf{Mat}_\C$ from the free symmetric category of pure circuits as complex matrices.
We assume there is a generating object for each finite-dimensional quantum system $\Sigma_0 = \{ \mathtt{qudit}(n) \}_{n > 1}$ and a box $g \in \Sigma_1$ for each pure quantum process, e.g. unitary gates, preparations (kets) and post-selected measurements (bras).
We say the gateset is universal when the interpretation $[\![-]\!] : F^S(\Sigma) \to \mathbf{Mat}_\C$ is full, i.e. every complex matrix is the interpretation of some pure circuit.

We then define an extended signature $cq(\Sigma) \supset \Sigma$ with objects:
$$cq(\Sigma)_0 = \{ \mathtt{digit}(n) \}_{n > 1} + \{ \mathtt{qudit}(n) \}_{n > 1}$$
for classical and quantum systems of each dimension, and boxes given by:
\begin{align*}
cq(\Sigma)_1 = \{ \hat{g} \}_{g \in \Sigma_1}
&+ \{ \mathtt{measure}(n) : \mathtt{qudit}(n) \to \mathtt{digit}(n) \}_{n > 1}\\
&+ \{ \mathtt{encode}(n) : \mathtt{digit}(n) \to \mathtt{qudit}(n) \}_{n > 1}
\end{align*}
Let $\mathbf{Circ} = F^S(cq(\Sigma))$ be the free symmetric category it generates, where the diagrams are called mixed quantum circuits.
The evaluation $[\![-]\!] : \mathbf{Circ} \to \mathbf{CQMap}$ is given by
$[\![\mathtt{digit}(n)]\!] = (n, 1)$, $[\![\mathtt{qudit}(n)]\!] = (1, n)$ and
$[\![\hat{g}]\!] = \mathtt{double}([\![g]\!])$.
The evaluation of any mixed circuit is always completely-positive~\cite[Corollary~8.6]{CoeckeKissinger17}.
Let $\mathbf{CausalCirc} \injects \mathbf{Circ}$ be the subcategory of causal processes, i.e. $\mathbf{CausalCirc} = F^S(cq(\Sigma'))$ for $\Sigma' \sub \Sigma$ the set of boxes that are interpreted as isometries.
If the gateset is universal, then the interpretation $\mathbf{CausalCirc} \to \mathbf{Channel}$ is full~\cite[Theorem~8.96]{CoeckeKissinger17}.
More explicitly, every quantum channel $f : (a, b) \to (c, d)$ can be written as:
$$f = \mathtt{encode}(a) \otimes b \otimes \mathtt{double}(\vert 0 \rangle_n)
\s \fcmp \s \mathtt{double}(g) \s \fcmp \s
\mathtt{measure}(c) \otimes d \otimes \mathtt{discard}(n)$$
for some $n$-dimensional ancilla $\vert 0 \rangle_n$ and an $(n a b) \times (n c d)$ unitary matrix $g$.
This gives us a general intuition for what it means to take a shot at a quantum circuit: 1) we prepare some qubits in the zero state, 2) we perform classically-controlled unitary gates, 3) we measure some of the qubits and discard the others.
Drawing digits and qudits as thin and thick wires, encode and measure as spiders, discard as three horizontal lines, we get the following diagram for a generic circuit:
\ctikzfig{img/qnlp/stinespring}

DisCoPy implements \py{Circuit} as a subclass of \py{Diagram} with objects generated by two families of objects \py{Digit(n)} and \py{Qudit(n)} indexed by natural numbers \py{n > 1}, where \py{bit = Ty(Digit(2))} and \py{qubit = Ty(Qudit(2))}.
The class \py{Gate} is a subclass of \py{Box} and \py{Circuit} with an attribute \py{array} which will define its interpretation as a pure circuit, i.e. a unitary matrix.
We also have boxes for \py{Ket} and its dagger \py{Bra}, \py{Measure} and its dagger \py{Encode}, \py{Discard} and its dagger \py{MixedState}.

\begin{python}
{\normalfont Implementation of the category $\mathbf{Circ}$ with \py{Digit} and \py{Qudit} as generating objects and \py{Circuit} as arrows.}

\begin{minted}{python}
class Digit(Ob):
    def __init__(self, n: int):
        self.n = n
        super().__init__(name="bit" if n == 2 else "Digit({})".format(n))

class Qudit(Ob):
    def __init__(self, n: int):
        self.n = n
        super().__init__(name="qubit" if n == 2 else "Qudit({})".format(n))

bit, qubit = Ty(Digit(2)), Ty(Qudit(2))

class Circuit(Diagram): pass

class Gate(Box, Circuit):
    def __init__(self, name: str, dom: Ty, cod: Ty,
                 array: list[list[complex]], is_dagger=False):
        self.array = array
        Box.__init__(self, name, dom, cod, is_dagger=is_dagger)

    def dagger(self) -> Gate: return Gate(
        self.name, self.cod, self.dom, self.array, is_dagger=not self.is_dagger)

class Bra(Box, Circuit):
    def __init__(self, *digits: int, base=2):
        self.digits, self.base = digits, base
        name = "Bra({}, base={})".format(', '.join(map(str, digits)), base)
        Box.__init__(self, name, qubit ** len(digits), qubit ** 0)

    def dagger(self) -> Ket: return Ket(*self.digits, base=self.base)

class Ket(Box, Circuit):
    def __init__(self, *digits: int, base=2):
        self.digits, self.base, name = digits, base
        name = "Ket({}, base={})".format(', '.join(map(str, digits)), base)
        Box.__init__(self, name, qubit ** 0, qubit ** len(digits))

    def dagger(self) -> Bra: return Bra(*self.digits, base=self.base)

class Encode(Box, Circuit):
    def __init__(self, dom=bit):
        obj, = dom.inside
        assert isinstance(obj, Digit)
        Box.__init__("Encode({})".format(n), dom, Ty(Qudit(obj.n)))

    def dagger(self) -> Measure: return Measure(self.cod)

class Measure(Box, Circuit):
    def __init__(self, dom=qubit):
        obj, = dom.inside
        assert isinstance(obj, Qudit)
        Box.__init__("Measure({})".format(n), dom, Ty(Digit(obj.n)))

    def dagger(self) -> Encode: return Encode(self.cod)

class Discard(Box, Circuit):
    def __init__(self, x: Ty):
        Box.__init__("Discard({})".format(x), x, Ty())

    def dagger(self) -> MixedState: return MixedState(self.dom)

class MixedState(Box, Circuit):
    def __init__(self, x: Ty):
        Box.__init__("MixedState({})".format(x), Ty(), x)

    def dagger(self) -> Discard: return Discard(self.cod)
\end{minted}
\end{python}

As discussed in example~\ref{example:pivotal-circuit}, the category $\mathbf{Circ}$ also has a dagger compact closed structure where the cups and caps for qudits are given by scaled Bell states and post-selected Bell measurements respectively.
The cups for digits are given by the result of measuring a scaled Bell state, or equivalently as an (unnormalised) correlated probability distributions, the caps can be thought of as classical post-selection on two digits being equal.
We can freely enrich $\mathbf{Circ}$ in commutative monoids and execute (simulate) a formal sum of circuits by executing (simulating) each circuit and adding up the results.
If we can take formal sums, there's no reason not to also take linear combinations of circuits.
Via the Born rule, we can already define positive real scalars as the evaluation of pure quantum gates on zero qubits, such as the $\sqrt 2$ scalars of our first example~\ref{example:circuit-alive-loves-bob}.
What we're missing are \emph{mixed scalars} which get applied after the Born rule.

\begin{python}\label{listing:mixed-scalars}
{\normalfont Implementation of pure and mixed scalars in $\mathbf{Circ}$.}

\begin{minted}{python}
class Sqrt(Gate):
    def __init__(self, x: float):
        super().__init__(
            "$\\sqrt {}$".format(x), Ty(), Ty(), array=[[math.sqrt(x)]])

class Scalar(Box, Circuit):
    def __init__(self, z: complex, is_pure=False):
        self.z, self.is_pure = z, is_pure
        Box.__init__(
            self, "Scalar({}, is_pure={})".format(z, is_pure), Ty(), Ty())
\end{minted}
\end{python}

\begin{python}
{\normalfont Implementation of the subcategory of $\mathbf{Circ}$ spanned by qubits as a compact closed category.}

\begin{minted}{python}
Digit.l = Digit.r = Qudit.l = Qudit.r = property(lambda self: self)

X, Y, Z, H = (Gate(name, qubit, qubit, array) for name, array in zip("XYZH", [
    [[0, 1], [1, 0]], [[0, -1j], [1j, 0]], [[1, 0], [0, -1]],
    [[1 / sqrt(2), 1 / sqrt(2)], [1 / sqrt(2), -1 / sqrt(2)]]]))
CX = Gate('CX', qubit ** 2, qubit ** 2, [[1, 0, 0, 0],
                                         [0, 1, 0, 0],
                                         [0, 0, 0, 1],
                                         [0, 0, 1, 0]])
@staticmethod
@nesting
def cups(left: Ty, right: Ty):
    if left == right == qubit: return Sqrt(2) @ Ket(0, 0) >> H @ qubit >> CX
    raise NotImplementedError

Circuit.cups, Circuit.caps = cups, lambda left, right: cups(left, right).dagger()
\end{minted}
\end{python}

\py{Circuit} comes with a Boolean property \py{is_pure} which defines the subcategory of pure circuits, i.e. that we can interpret as \py{Tensor}.
The \py{eval} method now comes with an optional Boolean argument \py{mixed}: if \py{not mixed} and \py{is_pure} we apply a \py{Tensor}-valued functor, otherwise a \py{Channel}-valued functor.
The optional \py{backend} argument allows to go from numerical simulation to quantum hardware: it translates a circuit diagram with only \py{Digit} outputs (i.e. all qudits have been measured or discarded) into a \py{pytket.Circuit} before executing it on a quantum device.
As of today, DisCoPy does not implement the execution of \py{Encode} on quantum hardware yet, this would require to decide which quantum gate to perform depending on the result of a previous measurement.
Although simulating qudit circuits is no harder than qubit simulation, standard quantum hardware can only execute qubit circuits so far.

\begin{python}
{\normalfont Implementation of the \py{Circuit.eval} method.}

\begin{minted}{python}
for cls in (Gate, Bra, Ket):                  setattr(cls, "is_pure", True)
for cls in (Encode, Measure, Discard, Mixed): setattr(cls, "is_pure", False)

Circuit.is_pure = property(lambda self:
    all(box.is_pure for box in self.boxes)
    and all(isinstance(obj, Qudit) for obj in (self.dom @ self.cod).inside))

class PureEval(Functor):
    ob = ar = {}
    dom, cod = Category(Ty, Circuit), Category(tuple[int, ...], Tensor[complex])

    def __call__(self, other):
        if isinstance(other, Qudit): return [other.n]
        if isinstance(other, Gate) and not other.is_dagger:
            return Tensor[complex](other.array, self(other.dom), self(other.cod))
        if isinstance(other, Bra): return self(other.dagger()).dagger()
        if isinstance(other, Ket):
            if not other.digits: return Tensor.id([])
            if len(other.digits) == 1:
                inside = [[i == other.digits[0] for i in range(other.base)]]
                return Tensor[complex](inside, [], [other.base])
            head, *tail = other.digits
            return self(Ket(head, base=other.base))\
                @ self(Ket(*tail, base=other.base))
        return super().__call__(other)

class MixedEval(Functor):
    ob = ar = {}
    dom, cod = Category(Ty, Circuit), Category(CQ, Channel)

    def __call__(self, other):
        if isinstance(other, Qudit): return Q([other.n])
        if isinstance(other, Digit): return C([other.n])
        if isinstance(other, Scalar): return Channel([[
            abs(other.z) ** 2 if other.is_pure else other.z]], CQ(), CQ())
        if isinstance(box, (Gate, Bra, Ket)):
            return Channel.double(PureEval()(box))
        if isinstance(box, Encode): return Channel.encode(self(box.dom))
        if isinstance(box, Measure): return Channel.measure(self(box.dom))
        if isinstance(box, Discard): return Channel.discard(self(box.dom))
        if isinstance(box, MixtedState): return self(box.dagger()).dagger()
        return super().__call__(other)

def eval(self, mixed=True, backend=None) -> Tensor | Channel:
    if backend is not None: ...  # Interface with pytket.
    return PureEval()(self) if not mixed and self.is_pure else MixedEval()(self)
\end{minted}
\end{python}

\begin{example}
We can simulate a Bell test experiment by applying the evaluation functor $\mathbf{Circ} \to \mathbf{Channel}$.

\begin{minted}{python}
circuit = Ket(0, 0) >> H @ qubit >> CX >> Measure() @ Measure()
circuit.draw()
\end{minted}
\ctikzfig{img/qnlp/bell-test}
\begin{minted}{python}
assert circuit.eval() == Channel([[.5, 0, 0, .5]], CQ(), C([2, 2]))
\end{minted}
\end{example}


\subsection{Simplifying QNLP models with snake removal}\label{subsection:snake-removal}

Previous sections have spelled out a first definition of QNLP models as monoidal functors $F : \G \to \mathbf{Circ}$.
The main limitation of this approach is the need for post-selection: for each cup in our pregroup reduction we need to post-select on the result of a Bell measurement, which requires to double the number of shots in order to measure accurately.
Another related problem is that we need to load all the word vectors on the quantum device at once, requiring a number of qubits proportional to the length of the sentence.
We may solve both issues at once using the \emph{snake removal} algorithm described in listing~\ref{listing:snake-removal}.
Instead of mapping the grammatical structure of the sentence directly onto the architecture of a quantum circuit, we will first rewrite the pregroup diagram to remove unnecessary snakes.
Indeed, from the rigid structure of $\mathbf{Circ}$ we have that the three-qubit circuit for a snake can be simplified to a one-qubit identity circuit.
This is an abstract way to reformulate the correctness of the post-selected teleportation protocol.
Thus, for each snake removed we are effectively reducing the number of required qubits by two, with half the amount of required post-selection.

Pregroup diagrams themselves have no snakes, only boxes for the dictionary entries followed by cups.
The \emph{autonomisation} procedure of Delpeuch~\cite{Delpeuch19} allows to make the snakes manifest by opening up the boxes and filling them with caps.
Abstractly, it is based on the construction of the free rigid category generated by a given monoidal category, i.e. an autonomisation\footnote
{Autonomous is a synonym of rigid, thus autonomisation could have been called \emph{rigidification} but this would be counterintuitive, since it makes a monoidal category more flexible.}
functor $A : \mathbf{MonCat} \to \mathbf{RigidCat}$ which is left adjoint to the functor $U : \mathbf{RigidCat} \to \mathbf{MonCat}$ which forgets adjoints, cups and caps.
The action of the functor $A$ on objects is simple: given a monoidal category $C \simeq F^M(\Sigma) / R$ presented by a monoidal signature $\Sigma$ and relations $R$, we take the quotient $A(C) = F^R(\Sigma) / R$ of the free rigid category generated by $\Sigma$, seen as a rigid signature with no adjoints.
More explicitly, we start from a monoidal category $C$ that has no adjoints, cups or caps, we end up with a larger category $A(C)$ where we have added them freely: the arrows of $A(C)$ are rigid diagrams with boxes and equations coming from $C$.

The embedding functor $E : C \to A(C)$, which maps every arrow in $C$ to itself in this larger context, is monoidal and faithful~\cite[Theorem~1]{Delpeuch19}: if $E(f) = E(g)$ for two arrows $f, g : x \to y$ in $C$, then we must have $f = g$ to begin with, we cannot prove more equalities by introducing snakes.
In fact, this embedding functor is also full~\cite[Theorem~2]{Delpeuch19}, the map $E : C(x, y) \to A(C)(x, y)$ is an isomorphism for all objects $x, y$ in $C$, its inverse is computed by snake removal.
In particular for a free monoidal category $C = F^M(\Sigma)$, we get the following corollary: for a rigid diagram where the domain and codomain of each box, and of the diagram itself, contain no adjoints, the normal form contains no cups or caps, i.e. all snakes have been removed.
Thus, autonomisation allows to define the semantics of a pregroup grammar $\G$ in a monoidal category $C$ that is not rigid, for example in cartesian or semicartesian categories like $\mathbf{Pyth}$ and $\mathbf{CausalCirc}$.
Indeed, we can now define a rigid functor $F : \G \to A(C)$, apply it to a grammatical sentence $f : w_1 \dots w_n \to s$ to get a rigid diagram $F(f) : 1 \to F(s)$ in $A(C)$.
From fullness we know all snakes can be removed until we get a concrete arrow $\mathtt{normal\_form}(F(f))$ in $C$: the meaning of the sentence $f$.
This autonomisation procedure is implemented in three steps:
\begin{enumerate}
    \item we define a rigid functor $\py{wiring} : \G \to A(F^M(\Sigma))$ from our pregroup grammar to the free rigid category generated by a monoidal signature $\Sigma$,
    \item we apply this functor to a \py{sentence: pregroup.Parsing} then take the normal form to get a monoidal diagram, i.e. without cups, caps or adjoints,
    \item we apply a monoidal functor $\py{G} : F^M(\Sigma) \to C$ into our semantic category $C$ to get the meaning of the sentence $\py{G(wiring(sentence).normal_form())}$.
\end{enumerate}

\begin{example}\label{example:autonomisation}
Let's remove the snakes from ``Alice loves Bob'' by applying a functor $\py{wiring} : \G \to A(F^M(\Sigma))$ for the monoidal signature $\Sigma$ given by boxes $1 \to n$ for ``Alice'' and ``Bob'' and $n \otimes n \to s$ for ``loves''.

\begin{minted}{python}
wiring = rigid.Functor(
    dom=Category(rigid.Ty, pregroup.Parsing),
    cod=Category(rigid.Ty, rigid.Diagram),
    ob=lambda x: x,
    ar={Alice: Box("Alice", Ty(), n), Bob: Box("Bob", Ty(), n),
        loves: Cap(n.r, n) @ Cap(n, n.l) >> n.r @ Box("loves", n @ n, s) @ n.l})

steps = sentence, wiring(sentence), wiring(sentence).normal_form()
drawing.equation(*steps, symbol='$\\mapsto$')
\end{minted}

\ctikzfig{img/qnlp/snake-removal}

The resulting diagram is indeed a monoidal diagram, i.e. we have removed the two snakes.
This means we can apply any monoidal functor to it.
For example, the following functor $G : F^M(\Sigma) \to \mathbf{Pyth}$ was introduced in \cite{FeliceEtAl20} in order to define functorial language games, it sends pregroup diagrams to Python functions that evaluate to themselves.

\begin{minted}{python}
G = monoidal.Functor(
    dom=Category(rigid.Ty, rigid.Diagram),
    cod=Category(tuple[type, ...], Function),
    ob={s: pregroup.Parsing, n: pregroup.Parsing},
    ar={Box("Alice", Ty(), n): lambda: Alice,
        Box("Bob", Ty(), n): lambda: Bob,
        Box("loves", n @ n, s): lambda f, g:
            f @ loves @ g >> Cup(n, n.r) @ s @ Cup(n.l, n)})

assert G(wiring(sentence).normal_form())() == sentence
\end{minted}

We can also apply a functor $G : F^M(\Sigma) \to \mathbf{Circ}$ to simplify the circuit of example~\ref{example:circuit-alive-loves-bob}.

\begin{minted}{python}
G = monoidal.Functor(
    dom=Category(rigid.Ty, rigid.Diagram),
    cod=Category(circuit.Ty, Circuit),
    ob={s: qubit ** 0, n: qubit},
    ar={Box("Alice", Ty(), n): Ket(0), Box("Bob", Ty(), n): Ket(1),
        Box("loves", n @ n, s): CX >> H @ sqrt2 @ X >> Bra(0, 0)})

drawing.equation(F(sentence), G(wiring(sentence).normal_form()))
\end{minted}

\ctikzfig{img/qnlp/snake-removed-circuit}

\begin{minted}{python}
assert F(sentence).eval() == G(wiring(sentence).normal_form()).eval()
\end{minted}
\end{example}

\begin{example}\label{example:autonomisation-who}
We can rewrite noun phrases with subject relative pronouns such as ``Alice who loves Bob'' using the following factorisation for ``who''.
We can then use the Frobenius anatomy of subject relative pronouns as given by Sadrzadeh et al.~\cite{SadrzadehEtAl13} to get a simplified quantum circuit.

\begin{minted}{python}
who = Word("who", n.r @ n @ s.l @ n)
phrase = Alice @ who @ loves @ Bob\
    >> Cup(n, n.r) @ n @ s.l @ Cup(n, n.r) @ s @ Cup(n.l, n) >> n @ Cup(s.l, s)

wiring.ar[who] = Cap(n.r, n)\
    >> n.r @ Box("who_1", n, x @ n)\
    >> n.r @ x @ Cap(s, s.l) @ n
    >> n.r @ Box("who_2", x @ s, n) @ s.l @ n

G.ob[x] = qubit
G.ar[Box("who_1", n, x @ n)] = H @ sqrt2 @ Ket(0) >> CX
G.ar[Box("who_2", x @ s, n)] = Circuit.id(qubit)

rewrite_steps = (
    phrase,
    wiring(phrase),
    wiring(phrase).normal_form(),
    G(wiring(phrase).normal_form()))
drawing.equation(rewrite_steps, symbol='$\\mapsto$')
\end{minted}
\ctikzfig{img/qnlp/snake-removal-who}
\end{example}

\begin{example}
We can always construct a trivial \py{wiring} functor which sends a dictionary entry to the disconnected diagram given by tensoring the iterated transpose of boxes with one wire.

\begin{minted}{python}
def trivial_ar(word: Word) -> rigid.Diagram:
    if len(word.cod) == 1:
        obj, = word.cod.inside
        if obj.z == 0: return
        return Box(word.name, word.cod) if obj.z == 0 else\
            trivial_ar(Word(word.name, word.cod.r)).transpose(left=True)\
            if obj.z < 0 else\
            trivial_ar(Word(word.name, word.cod.l)).transpose(left=False)
    return rigid.Diagram.tensor(*[
        trivial_ar(Word("{}_{}".format(word.name, i), x))
        for i, x in enumerate(word.cod)])

trivial = rigid.Functor(
    dom=Category(rigid.Ty, pregroup.Parsing),
    cod=Category(rigid.Ty, rigid.Diagram),
    ob=lambda x: x, ar=trivial_ar)

steps = sentence, trivial(sentence), trivial(sentence).normal_form()
drawing.equation(*steps, symbol='$\\mapsto$')
\end{minted}
\ctikzfig{img/qnlp/snake-removal-trivial}
\end{example}

The \py{wiring} functor is the key ingredient of the autonomisation recipe as given by Delpeuch~\cite{Delpeuch19}.
For each dictionary entry we need to find some monoidal boxes (i.e. without adjoints) from which we can bend the wires to get the desired pregroup type.
We do not know if a non-trivial autonomisation is possible for arbitrary pregroup types, for example object relative pronouns of type $n^r n n^{ll} s^l$.
In example~\ref{example:autonomisation}, wiring was straightforward because all types had the shape $(x^r) y (z^l)$ for a generating object $y \in X$ and some types $x, z \in X^\star$ which do not contain adjoints.
Let us call types of this shape \emph{dependency types}, we claim\footnote
{A sketch of this statement is available on the nLab~\cite{ToumiNLab21}, a detailed proof will appear in de Felice's thesis~\cite{Felice22}.} that a pregroup grammar with only dependency types is in fact a \emph{dependency grammar} (DG) as defined by Gaifman~\cite{Gaifman65}.
From such a dependency grammar $G = (V, X, D, s)$ with dictionary entries $e = (w, (x^r) y (z^l)) \in D$, we construct a monoidal signature $\Sigma_G$ with generating objects $X$ and boxes $f_e : x z \to y$ together with a rigid functor $\py{wiring} : \G \to A(F^M(\Sigma_G))$ which acts as $e \mapsto \ttcap(x^r) \otimes \ttcap(z) \fcmp x^r \otimes f_e \otimes z^l$.
The resulting normal form is a monoidal diagram where all boxes have exactly one output, thus it is isomorphic to a tree which is called a \emph{dependency tree}.

Dependency grammars are weakly equivalent to context-free grammars~\cite[Theorem~3.11]{Gaifman65} and they are in fact strongly equivalent to a subclass of CFGs characterised by a notion of finite degree~\cite[Theorem~3.10]{Gaifman65}.
This strong equivalence can be computed in logarithmic space, thus the parsing problem for DGs reduces to that of CFGs, it can be solved in polynomial time.
Hence, DGs generate the same languages as CFGs and pregroup grammars but they sit somewhere at the intersection of the two frameworks in terms of the grammatical structures they can generate.
We conjecture that a PG is strongly equivalent to a CFG if and only if its dictionary entries all have dependency types, i.e. DGs would be precisely characterised as the intersection of CFGs and PGs.
In addition to their theoretical interest, DGs have also been applied in practice for natural language processing at industrial scale with the spaCy library~\cite{HonnibalMontani17}.
Going from pregroup grammars to the more restricted dependency grammars, all grammatical structures become tree-shaped and can be interpreted in arbitrary monoidal categories.
In particular, we can now define \emph{causal} QNLP models as functors $F : \G \to A(\mathbf{CausalCirc})$ where the meaning of grammatical sentences is given by quantum circuits without post-selection.
Giving experimental support to this approach is an ongoing research project.


\subsection{DisCoCat models via knowledge graph embedding}\label{subsection:kge}

In the previous section, we have showed how to evaluate a given DisCoCat model on a quantum computer.
But where do we get this model from in the first place?
The first two DisCoCat papers~\cite{ClarkEtAl08,ClarkEtAl10} focused on the mathematical foundations for their models.
Assuming that the meaning for words was given, they showed how to compute the meaning for grammatical sentences.
Grefenstette and Sadrzadeh~\cite{GrefenstetteSadrzadeh11} gave a first implementation of a DisCoCat model for a simple pregroup grammar $G = (V, X, D, s)$ made of common nouns and transitive verbs.
Concretely, their vocabulary is given $V = E + R$ for some finite sets $E$ and $R$ which we may call \emph{entities} and (binary) \emph{relations}.
They take basic types $X = \{ s, n \}$ and dictionary $D = \{ (e, n) \}_{e \in E} \cup \{ (r, n^r s n^l) \}_{r \in R}$.
Thus, every grammatical sentence is of the form $f : x r y \to s$ for what we may call a \emph{triple} of subject-verb-object $(x, r, y) \in L(G) \simeq E \times R \times E$.
They define a DisCoCat model $F : \G \to \mathbf{Mat}_\R$ with $F(s) = 1$ and a hyper-parameter $F(n) = d \in \N$ using the following recipe:
\begin{enumerate}
\item extract a co-occurrence matrix from a corpus of text and compute a $d$-dimensional word vector $F(e) \in \R^d$ for each noun $e \in E$,
\item for each transitive verb $r \in R$, find the set $K_r \sub E \times E$ of all pairs of nouns $(x, y) \in K_r$ such that the sentence $x r y \in E \times R \times E$ occurs in the corpus, then define
$$F(r) = \sum_{(x, y) \in K_r} F(x) \otimes F(y)$$
\end{enumerate}
The meaning of a sentence $f : x r y \to s$ would then be given by the inner product $F(f) = \langle F(x) \otimes F(y) \vert F(r) \rangle$ which we can rewrite as
$$F(f) = \sum_{(x', y') \in K_r} \langle F(x) \vert F(x') \rangle \langle F(y) \vert F(y') \rangle$$
i.e. we take the sum of the similarities between our subject-object pair $(x, y)$ and the pairs $(x', y')$ that appeared in the corpus.
However, the task they aim to solve is \emph{word sense disambiguation} which they cast in terms of \emph{sentence similarity}, but if the meaning of sentences are given by scalars there is no meaningful way compare them.
For example, the verb ``draw'' can be synonymous to ``sketch'' but also to ``pull''.
Thus, they want their model to predict that ``Bob draws diagrams'' is more similar to ``Bob sketches diagrams'' than to ``Bob pulls diagrams''.
Using a spider trick that has been later formalised by Kartsaklis et al.~\cite{KartsaklisEtAl12}, they decide to replace inner product by element-wise multiplication.
This amounts to defining a new functor $F' : \G \to \mathbf{Mat}_\R$ by post-composing verb meanings with spiders, i.e.
$$
F'(n) = F(n) = d, \quad F'(s) = d^2, \quad F'(e) = F(e)
$$ $$
\text{and} \quad F'(r) \s = \s F(r) \ \fcmp \ \spider_{1, 2}(d) \otimes \spider_{1, 2}(d)
$$
Indeed, the element-wise multiplication of two vectors $u, v \in \R^d$ can be defined as $u \odot v = u \otimes v \fcmp \spider_{2, 1}(d)$, from which we get the desired meaning for the sentence:
\ctikzfig{img/qnlp/frobenius-trick}
Now that sentence meanings are given by $d^2$-dimensional vectors rather than scalars, we can compute their similarity with inner products and use this to solve the disambiguation task.

The key observation which motivated our previous dissertation~\cite{Toumi18a} as well as the subsequent articles \cite{CoeckeEtAl18a} and \cite{FeliceEtAl19} is that a corpus $K \sub E \times R \times E$ of such subject-verb-object sentences can be seen as the data for a \emph{knowledge graph}.
In this simplified setting, a DisCoCat model can be seen as an instance of \emph{knowledge graph embedding} (KGE) where we want to find a low-dimensional representation of entities and relations.
The interpretation of a sentence can be used for \emph{link prediction}, where we want generalising the corpus to unseen sentences.
By extending the grammar with the words ``who'' and ``whom'', we can use the same model to solve simple instances of question answering.
As discussed in section~\ref{section:anaphora}, extending the grammar with anaphora which we interpret as spiders then allows to answer any conjunctive query.

Taking this DisCoCat-KGE analogy in the other direction, we can interpret knowledge graph embeddings as DisCoCat models.
Take for example\footnote
{We focus on ComplEx, other examples of KGE as functors are treated in \cite[Section~2.6]{Felice22}.}
the ComplEx model of Trouillon et al.~\cite{TrouillonEtAl16,TrouillonEtAl17}, it is defined by a dimension $d \in \N$, a (normalised) complex vector $u_e \in \C^d$ for each entity $e \in E$ and a complex vector $v_r \in \C^d$ for each relation $r \in R$.
Let us pack this into a dependent pair $\theta \in \Theta = \coprod_{d \in \N} \C^{d (\vert E \vert + \vert R \vert)}$.
The interpretation of a triple $f : x r y \to s$ is given by the scoring function $T_\theta(f) = 2 \text{Re}(\langle u_x, v_r, u_y^\star \rangle)$, where $\langle u, v, w \rangle = \sum_{i \leq d} u_i v_i w_i$ is the tri-linear dot product, $u_y^\star$ is the element-wise conjugate, and $2 \text{Re}$ takes twice\footnote
{We scale the original definition by $2$ in order to avoid cluttering the diagrams with $\frac{1}{2}$ scalars.} the real part of a complex number.
We can reformulate this as the complex-valued DisCoCat model $T_\theta : \G \to \mathbf{Mat}_\C$ given by $T_\theta(s) = 1$, $T_\theta(n) = d^2$ and:
\ctikzfig{img/qnlp/trouillon-trick}
where we draw the conjugate of a vector as the horizontal reflection of its box.
We can use the same spider trick as above to rewrite the trilinear product $\langle u_x, v_r, u_y^\star \rangle$ in terms of post-composition with a three-legged spider.
We can also rewrite the real part of a scalar as half the sum with its conjugate $2 \text{Re}(z) = z + \bar z$, from which we get the desired meaning for sentences:
\ctikzfig{img/qnlp/trouillon-functor}
The meaning of a sentence $f : x r y \to s$ is given by a real scalar which we interpret as true when $T_\theta(f) \geq 0$.
In fact, any knowledge graph $K : E \times R \times E \to \{ \pm 1 \}$ can be written as $K = T_\theta \fcmp \mathtt{sign}$ for the function $\mathtt{sign} : \R \to \{ \pm 1 \}$~\cite[Theorem~4]{TrouillonEtAl17}.
Furthermore, the dimension $d \in \N$ of the model can be bounded by the \emph{sign-rank} of the matrices for each relation~\cite[Proposition~2.5.17]{Felice22}, with theoretical guarantees that $d \ll \vert E \vert$ if the problem is learnable efficiently~\cite{AlonEtAl16a}.

Hence, the knowledge graph embedding can be defined as a space of parameters $\Theta$ together with a function $T_- : \Theta \to [\G, \mathbf{Mat}_\C]$ which sends parameters $\theta \in \Theta$ to functors $T_\theta : \G \to \mathbf{Mat}_\C$.
Now if we are given a training set $\Omega \sub E \times R \times E$ annotated by $Y : \Omega \to \{ \pm 1 \}$ for whether each triple belongs to the knowledge graph\footnote
{When the dataset contains only positive triples, it is common use the \emph{local closed world assumption} where we randomly change either the subject or object to generate negative triples.
This can be improved by \emph{adversarial sampling} methods~\cite{CaiWang18}, akin to \emph{generative adversarial networks} where we train a model to generate hard negative examples.
In \cite{FeliceEtAl20} we investigate how this can be formalised in terms of \emph{functorial language games}.},
we want to find the parameters that best approximate the data:
$$
\theta^\star \s = \s \argmin_{\theta \in \Theta} \
\lambda \lVert \theta \rVert + \sum_{f \in \Omega} \mathtt{loss}(T_\theta(f), Y(f))
$$
where $\lVert \theta \rVert$ is a choice of norm (usually $L^2$) scaled by some regularisation hyper-parameter $\lambda \geq 0$ and $\mathtt{loss} : \R \times \R \to \R^+$ is a choice of loss function, usually the negative log-likelihood of the logistic model $\mathtt{loss}(y, y') = \log (1 + \exp(-yy'))$.
If we fix the dimension $d \in \N$ (i.e. we take it as a hyper-parameter) we can use stochastic gradient descent to compute $\theta^\star$ and hence the optimal model $T_{\theta^\star} : \G \to \mathbf{Mat}_\C$.
Using $T_{\theta^\star}$ to predict the value of triples $f \in (E \times R \times E) - \Omega$ not seen during training, Trouillon et al.~\cite{TrouillonEtAl16} obtained state-of-the-art results on standard benchmarks with both fewer parameters and a lower time complexity than competing models.
Again if we extend the grammar with question words and anaphora, we can answer not only Boolean questions but any conjunctive query.

In the Grefenstette-Sadrzadeh implementation of DisCoCat models, we had to compute the meanings for nouns by some other means (e.g. from a co-occurrence matrix) then use a knowledge graph to lift these to the meaning for verbs.
On the other hand, our KGE approach computes the meanings for all words simultaneously, using only samples from the knowledge graph as training data.
Indeed, once reformulated as a DisCoCat model, training a knowledge graph embedding amounts to \emph{learning a functor} $F : \G \to \mathbf{Mat}_\C$ given only access to pairs $(f, a)$ such that $F(f) = a$.
Generalising this from subject-verb-object sentences to arbitrary grammars, Koziell-Pipe and the author~\cite{ToumiKoziell-Pipe21} coined the term \emph{functorial learning} for this approach to structured machine learning where we want to learn structure-preserving functors from data.
We show how the approach can be extended, from a supervised learning task such as link prediction and question answering to unsupervised \emph{functorial language models}, where we use a functor together with a probabilistic grammar to compute the probability of a missing word given its context.
Together with Clark's recent progress in using transformer models for parsing~\cite{Clark21}, functorial language models open the door to training large scale DisCoCat models end-to-end, i.e. from raw text to functor.

Functorial learning is part of the blooming field at the intersection of machine learning and category theory which is surveyed by Shiebler et al.~\cite{ShieblerEtAl21}.
A starting point was the work of Fong et al.~\cite{FongEtAl19} on characterising \emph{backpropagation as functor} into a category of learners, which was later formalised in terms of \emph{parameterised lenses}~\cite[Lemma~2.13]{CruttwellEtAl21}.
The idea of learning functors via gradient descent first appeared in the work of Gavranović~\cite{Gavranovic19,Gavranovic19a}, where the cyclic generative adversarial networks (cycleGAN) of Zhu et al.~\cite{ZhuEtAl20} are reformulated as functors from a finitely presented category into a category of neural networks.
However, there is some ambiguity in what it means exactly to learn a functor: in our approach, arrows in the domain category (i.e. diagrams generated by the grammar) encode the training data, whereas for Gavranović they encode the architecture of a neural network.
In both cases however, we start from existing machine learning algorithms (ComplEx or cycleGAN), reformulate them in terms of functors before we can generalise them.
In some cases, category theory does provide genuinely new learning algorithms, one example is the \emph{reverse derivative ascent} of Wilson and Zanasi~\cite{WilsonZanasi20} where the notion of \emph{reverse derivative category} is used to generalise gradient descent to Boolean functions.


\subsection{Variational quantum question answering}\label{subsection:vqqa}

The previous section introduced the idea of using gradient descent to learn DisCoCat models $F : \G \to \mathbf{Mat}_\C$ from data, we now discuss how to apply the same functorial learning approach in order to learn QNLP models $F : \G \to \mathbf{Circ}$.
There are two main challenges: 1) we need a way to load our model onto a quantum machine, i.e. encode word embeddings as parameterised quantum circuits, 2) we need a way to train the model, i.e. to compute the optimal parameters in some data-driven task.
As we mentioned in section~\ref{section:discocat-qnlp}, we could solve the first challenge by post-composing a classical model $F : \G \to \mathbf{Mat}_\C$ with any choice of encoding $\mathtt{load} : \mathbf{Mat}_\C \to \mathbf{Circ}$ to get a QNLP model $F \fcmp \mathtt{load} : \G \to \mathbf{Circ}$.
However, this would require circuits with depth exponential in the number of qubits, which are out of reach for the NISQ computers available today.
Instead of learning classical vectors of parameters that we then encode as circuits, we introduce a \emph{variational} quantum algorithm where we learn the parameters of quantum circuits directly.
As for the second challenge, we cannot hope to backpropagate gradients through a quantum circuit in the same way as for classical neural networks.
In this section we pick the easiest alternative: we treat our QNLP model as a black box and use a noisy optimisation algorithm such as the stochastic perturbation stochastic approximation (SPSA) of Spall~\cite{Spall98}.
In the next section, we will open the black box and introduce \emph{diagrammatic differentiation} in order to use the quantum circuits to compute their own gradients.

Our variational algorithm may be summarised in the following recipe for a parameterised circuit-valued functor.
\begin{enumerate}
\item Fix a pregroup grammar $G = (V, X, D, s)$ and use it to parse a dataset $\Omega \sub \coprod_{w_1 \dots w_n \in V^\star} \G(w_1 \dots w_n, s)$ of sentences then annotate them with truth values $Y : \Omega \to \R$.
\item Fix a hyper-parameter $F(x) \in \N$ for the number of qubits representing each basic type $x \in X$.
For simplicity, we will assume that $F(s) = 0$ so that sentences are represented as closed circuits.
\item Choose an \emph{ansatz} for each dictionary entry $(w, t) \in D$, i.e. a function $F_-(w, t) : \Theta_{(w, t)} \to \mathbf{Circ}(0, F(t))$ from some space of parameters $\Theta_{(w, t)}$ to the set of $F(t)$-qubit circuits.
Ideally, we want an ansatz that is shallow enough to run on NISQ machines but still hard to approximate classically, such as the instantaneous quantum polynomials (IQP) of Shepherd and Bremner~\cite{ShepherdBremner09}.
A more practical choice is to use a \emph{hardware-efficient} ansatz such as that of Kandala et al.~\cite{KandalaEtAl17}, where we essentially squeeze as many parameters as possible out of whatever hardware we can get our hands on.
\end{enumerate}
As in the previous section, we can abstract these choices away into one big parameter space $\Theta = \prod_{(w, t) \in D} \Theta_{(w, t)}$ and a function $F_- : \Theta \to [\G, \mathbf{Circ}]$ from parameters to functors.
We can now use SPSA (or any noisy optimisation algorithm of our choice) to approximate the optimal parameters:
$$\theta^\star \s = \argmin_{\theta \in \Theta}
\lambda \lVert \theta \rVert + \sum_{f \in \Omega} \mathtt{loss}(\mathtt{eval}(F(f)), Y(f))$$
where $\mathtt{eval} : \mathbf{Circ}(0, 0) \to \R$ takes closed circuits and returns the result of evaluating them either using a classical simulation or a quantum device.

We can now evaluate the optimal functor $F_{\theta^\star} : \G \to \mathbf{Circ}$ on unseen sentences, or equivalently answer Boolean questions.
This variational approach to question answering was first demonstrated in a classical simulation by Ma et al.~\cite{MaEtAl19} in the restricted case of knowledge graphs, i.e. subject-verb-object sentences.
Because the circuits were simulated classically it was possible to use standard gradient descent, with results comparable to the state-of-the-art.
The first NLP experiment on quantum hardware to appear in print was \cite{MeichanetzidisEtAl20}, where we used functorial learning to solve a toy question-answering task on a Shakespeare-inspired dataset.
We used SPSA for the optimisation and the snake removal functor defined in example~\ref{example:autonomisation-who} to simplify the circuits of sentences with relative pronouns like ``Romeo who loves Juliet dies''.
Although this first experiment answered only yes-no questions, the same framework can be applied to \emph{wh}-questions, with Grover's algorithm yielding a quadratic speedup~\cite{CorreiaEtAl22}.
This functorial learning pipeline was then applied on a larger dataset by Lorenz et al.~\cite{LorenzEtAl21}, demonstrating the convergence of the model and its statistical significance over a random baseline.
The pipeline was packaged into its own Python library, \py{lambeq}~\cite{KartsaklisEtAl21}, which builds upon DisCoPy and state-of-the art parsers.
It has also been adapted from question-answering to machine translation~\cite{AbbaszadeEtAl21,VicenteNieto21}, word-sense disambiguation~\cite{Hoffmann21} and even to generative music~\cite{MirandaEtAl21}.


\section{Diagrammatic differentiation}\label{section:diag-diff}

In this section, we introduce \emph{diagrammatic differentiation}: a graphical notation for computing the derivatives of parameterised diagrams.
On the theoretical side, we generalise the \emph{dual number construction} from rigs to monoidal categories (section~\ref{2-dual-diagrams}).
We then apply this construction to the category of ZX diagrams (section~\ref{2b-differentiating-zx}) and of quantum circuits (section~\ref{3-dual-circuits}).
In section~\ref{4-bubbles} we define the gradient of diagrams with bubbles in terms of the chain rule.
We use this to differentiate quantum circuits with neural networks as classical post-processing.
The theory comes with a DisCoPy implementation, gradients of classical-quantum circuits can then
be simplified using the PyZX library~\cite{KissingerVanDeWetering19}, compiled and executed
on quantum hardware with t$\vert$ket$\rangle$~\cite{SivarajahEtAl20}.

\subsection*{Related Work}

Penrose and Rindler~\cite{PenroseRindler84} used string diagrams to describe the geometry of space-time and an extra piece of notation is introduced: the covariant derivative is represented as a bubble around the tensor to be differentiated.
The same bubble notation for vector calculus has been proposed by Kim et al.~\cite{KimEtAl20}, but they have mainly pedagogical motivations and restrict themselves to the case of three-dimensional Euclidean space.
To the best of our knowledge, our definition is the first formal account of string diagrams with bubbles for derivatives.

Blute et al.~\cite{BluteEtAl06} axiomatised the notion of derivative with \emph{differential categories}.
More recently Cockett et al.~\cite{CockettEtAl19} generalised the notion of back-propagation with \emph{reverse derivative categories}, which have been proposed as a categorical foundation for gradient-based learning~\cite{CruttwellEtAl21}.
These frameworks all define the derivative of a morphism with respect to its domain.
In our setup however, we define the derivative of parametrised morphisms with respect to parameters that are in some sense external to the category.
Investigating the relationship between these two definitions is left to future work.

The work presented in this section has its origin in Yeung's dissertation~\cite{Yeung20}, which focused on the diagrammatic differentiation of ansätze used in quantum machine learning.
In joint work with Yeung and de Felice~\cite{ToumiEtAl21}, we gave this approach both its theoretical basis and its first implementation.
Wang and Yeung~\cite{WangYeung22} later generalised it from differentiation to the integration of ZX diagrams.
They also resolve a technical limitation of our approach: they show how to express any formal sum of ZX diagrams in terms of one diagram.
The same idea appeared independently in Jeandel et al.~\cite{JeandelEtAl22}, who also show how to apply our diagrammatic differentiation method to the Ising Hamiltonian, a key ingredient to many variational quantum algorithms~\cite{Hadfield21}.


\subsection{Dual diagrams}\label{2-dual-diagrams}

Dual numbers were first introduced by Clifford~\cite{Clifford73}, they are a fundamental tool for \emph{automatic differentiation}~\cite{Hoffmann16}, i.e. they allow to compute the derivative of a function automatically from its definition.

Given a commutative rig $\S$, the rig of dual numbers $\D[\S]$ extends $\S$ by adjoining a new element $\epsilon$ such that $\epsilon^2 = 0$.
Abstractly, $\D[\S] = \S[x] / x^2$ is a quotient of the rig of polynomials with coefficients in $\S$.
Concretely, elements of $\D[\S]$ are formal sums $s + s' \epsilon$ where $s$ and $s'$ are scalars in $\S$.
We write $\pi_0, \pi_1 : \D[\S] \to \S$
for the projection on the real and epsilon component respectively.
Addition and multiplication of dual numbers are given by:
\begin{align} \begin{split}\label{linearity}
(a + a' \ \epsilon ) + (b + b' \ \epsilon)
\quad &= \quad (a + b) \s + \s (a + b') \ \epsilon
\end{split}\\
\begin{split}\label{product-rule}
(a + a' \ \epsilon ) \times (b + b' \ \epsilon)
\quad &= \quad (a \times b) \s + \s (a \times b' \ + \ a' \times b) \ \epsilon
\end{split}
\end{align}

A related notion is that of \emph{differential rig}: a rig $\S$ equipped with a derivation, i.e. a map $\partial : \S \to \S$ which preserves sums and satisfies the Leibniz product rule
$\partial(f \times g) = f \times \partial(g) + \partial(f) \times g$ for all $f, g \in \S$.
An equivalent condition is that the map $f \mapsto f + (\partial f) \epsilon$ is a homomorphism of rigs $\S \to \D[\S]$.
The correspondance also works the other way around: given a homorphism $\partial : \S \to \D[\S]$ such that $\pi_0 \circ \partial = \id_\S$, projecting on the epsilon component is a derivation $\pi_1 \circ \partial : \S \to \S$.
The motivating example is the rig of smooth functions $\S = \R \to \R$, where differentiation is a derivation.
Concretely, we can extend any smooth function $f : \R \to \R$ to a function $f : \D[\R] \to \D[\R]$ over the dual numbers defined by:
\begin{equation}\label{dual-numbers-eq}
f(a + a' \epsilon) \quad = \quad f(a) \s + \s a' \times (\partial f)(a) \epsilon
\end{equation}

We can use equations~\ref{linearity}, \ref{product-rule} and \ref{dual-numbers-eq} to derive the usual rules for gradients in terms of dual numbers.
For the identity function we have $\id(a + a' \epsilon) = \id(a) + a' \epsilon$, i.e. $\partial \id = 1$.
For the constant functions we have $c(a + a' \epsilon) = c(a) + 0 \epsilon$, i.e. $\partial c = 0$.
For addition, multiplication and composition of functions, we can derive the following \emph{linearity}, \emph{product} and \emph{chain} rules:
\begin{align} \begin{split}
    (f + g)(a + a' \epsilon)
    \s &= \s (f + g)(a) \s + \s a' \times (\partial f + \partial g)(a) \epsilon
\end{split}\\ \begin{split}
    (f \times g)(a + a' \epsilon)
    \s &= \s (f \times g)(a) \s + \s a' \times (f \times \partial g \ + \ \partial f \times g)(a) \epsilon
\end{split}\\ \begin{split}
    (f \circ g)(a + a' \epsilon)
    \s &= \s (f \circ g)(a) \s + \s a' \times (\partial g \ \times \ \partial f \circ g)(a) \epsilon
\end{split} \end{align}

This generalises to smooth functions $\R^n \to \R^m$, where the partial derivative $\partial_i$ is a derivation for each $i < n$.
The functions $\F_2^n \to \F_2^m$ on the two-element field $\F_2$ with elementwise XOR as sum and conjunction as product also forms a differential rig.
The partial derivative is given by $(\partial_i f)(\vec{x}) = f(\vec{x}_{[x_i \mapsto 0]}) \oplus f(\vec{x}_{[x_i \mapsto 1]})$.
Intuitively, the $\F_2$ gradient $\partial_i f(\vec{x}) \in \F_2^m$ encodes which coordinates of $f(\vec{x})$ actually depend on the input $x_i$.
An example of differential rig that isn't also a ring is given by the set $\N[X]$ of polynomials with natural number coefficients, again each partial derivative is a derivation.

A more exotic example is the rig of Boolean functions with elementwise disjunction as sum and conjunction as product.
Boolean functions $\B^n \to \B^m$ can be represented as tuples of $m$ propositional formulae over $n$ variables.
The partial derivative $\partial_i$ for $i < n$ is defined by induction over the formulae:
for variables we have $\partial_i x_j = \delta_{ij}$, for constants $\partial_i 0 = \partial_i 1 = 0$ and for negation $\partial_i \neg \phi = \neg \partial_i \phi$. The derivative of disjunctions and conjunctions are given by the linerarity and product rules.
Equivalently, the gradient of a propositional formula can be given by $\partial_i \phi = \neg \phi_{[x_i \mapsto 0]} \land \phi_{[x_i \mapsto 1]}$.
Concretely, a model satisfies $\partial_i \phi$ if and only if it satisfies $\phi \leftrightarrow x_i$: the derivative is true when the variable and the formula are positively correlated.
Substituting $x_i$ with its negation, we get that a model satisfies $\partial_i \phi_{[x_i \mapsto \neg x_i]}$ if and only if it satisfies $\phi \leftrightarrow \neg x_i$, i.e. iff variable and formula are anti-correlated.
Note that although $\B$ and $\F_2$ are isomorphic as sets, they are distinct rigs.
Their derivations are related however by $\partial^{\F_2}_i f \mapsto \partial^\B_i \phi \lor \partial^\B_i \phi_{[x_i \mapsto \neg x_i]}$ for $\phi : \B^n \to \B$ the formula corresponding to the function $f : \F_2^n \to \F_2$.
That is, a Boolean function depends on an input variable precisely when either the corresponding formula is positively correlated or anti-correlated.

Our main technical contribution is to generalise dual numbers and derivations from rigs to monoidal categories with sums.
Given a monoidal category $C$ with sums (i.e. enriched in commutative monoids), we define the category $\D[C]$ by adjoining a scalar (i.e. an endomorphism of the monoidal unit) $\epsilon$ and quotienting by\footnote{
In the case when $C$ is not braided we also require the axiom $\epsilon \otimes f = f \otimes \epsilon$.
} $\epsilon \otimes \epsilon = 0$.
Concretely, the objects of $\D[C]$ are the same as those of $C$, the arrows
are given by formal sums $f + f' \epsilon$ of parallel arrows $f, f' \in C$.
Composition and tensor are both given by the product rule:
\begin{align}
    (f + f' \epsilon) \ \fcmp \ (g + g' \epsilon)
    &\s = \s f \ \fcmp \ g \s + \s (f' \fcmp g \ + \ f \fcmp g') \ \epsilon\\
    (f + f' \epsilon) \otimes (g + g' \epsilon)
    &\s = \s f \otimes g \s + \s (f' \otimes g \ + \ f \otimes g') \ \epsilon
\end{align}

We say that a unary operator on homsets $\partial : \coprod_{x,y} C(x, y) \to C(x, y)$ is a derivation whenever it satisfies the product rules for both composition
$\partial (f \fcmp g) = (\partial f) \fcmp g + f \fcmp (\partial g)$ and tensor
$\partial (f \otimes g) = (\partial f) \otimes g + f \otimes (\partial g)$.
An equivalent condition is that the map $f \mapsto f + (\partial f) \epsilon$ is a sum-preserving monoidal functor $C \to \D[C]$.
Again, the correspondance between dual numbers and derivations works the other way around: given a sum-preserving monoidal functor $\partial : C \to \D[C]$ such that
$\pi_0 \circ \partial = \id_{C}$, projecting on the epsilon component gives a derivation $\pi_1 \circ \partial : \coprod_{x,y} C(x, y) \to C(x, y)$.
The following propositions characterise the derivations on the category of matrices valued in a commutative rig $\S$.

\begin{proposition}
Dual matrices are matrices of dual numbers, i.e. we have $\D[\mathbf{Mat}_\S] \simeq \mathbf{Mat}_{\D[\S]}$ for all commutative rigs $\S$.
\end{proposition}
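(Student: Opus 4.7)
The plan is to construct an isomorphism of (CM-enriched, strict symmetric monoidal) categories that is the identity on objects, and whose action on homsets is the obvious ``collect the $\epsilon$ coefficients entry-wise'' bijection. Explicitly, I would define a functor $F \colon \D[\mathbf{Mat}_\S] \to \mathbf{Mat}_{\D[\S]}$ by $F(n) = n$ and $F(M + M'\epsilon)_{ij} = M_{ij} + M'_{ij}\,\epsilon$, together with its inverse $G$ sending a matrix $N$ over $\D[\S]$ to $\pi_0(N) + \pi_1(N)\,\epsilon$, where $\pi_0, \pi_1$ are applied entry-wise. Bijectivity on homsets is then immediate: every $m \times n$ dual-number matrix decomposes uniquely as a real part plus an $\epsilon$-part, and this decomposition is precisely the data of a formal sum in $\D[\mathbf{Mat}_\S]$.

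The core verification is that $F$ preserves composition, tensor (Kronecker product), identities and sums. For composition, a direct calculation using $\epsilon^2 = 0$ and the commutativity of $\S$ (which lets $\epsilon$ pass freely through matrix entries) gives
\[
\sum_{k}(M_{ik} + M'_{ik}\epsilon)(N_{kj} + N'_{kj}\epsilon)
\;=\; \sum_{k} M_{ik}N_{kj} \;+\; \Bigl(\sum_{k} M'_{ik}N_{kj} + M_{ik}N'_{kj}\Bigr)\epsilon,
\]
which matches entry $(i,j)$ of $MN + (M'N + MN')\epsilon$, i.e.\ the product rule defining composition in $\D[\mathbf{Mat}_\S]$. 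An analogous calculation on index pairs $((i_1,i_2),(j_1,j_2))$ using the definition of the Kronecker product handles the tensor. The CM-enrichment is preserved because both sides add formal-sum coefficients entry-wise. Identities and units are trivial since $\id_n$, the unit $1$, and the zero morphism are all sent to themselves.

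It remains to check that $F$ and $G$ are mutually inverse, which follows directly from the pair of identities $\pi_0 \circ (\,\cdot\, + \,\cdot\,\epsilon) = \pi_0$ and $\pi_1 \circ (\,\cdot\, + \,\cdot\,\epsilon) = \pi_1$ together with the uniqueness of the real/$\epsilon$-decomposition in $\D[\S]$; the compatibility of $G$ with composition and tensor is then inherited from $F$.

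The only mildly delicate point is keeping track of the side on which $\epsilon$ is written and verifying that the extra axiom $\epsilon \otimes f = f \otimes \epsilon$ imposed on $\D[C]$ in the non-braided case is automatically satisfied here; this is not really an obstacle, since $\mathbf{Mat}_\S$ is symmetric monoidal and $\S$ is commutative, so $\epsilon$ commutes with every matrix entry and every Kronecker product on the nose. Thus the result is essentially a book-keeping exercise: the isomorphism amounts to the distributive law $\mathbf{Mat}_\S[\epsilon]/\epsilon^2 \cong \mathbf{Mat}_{\S[\epsilon]/\epsilon^2}$, which is a categorical analogue of the familiar fact that matrices over a polynomial ring are polynomials in matrices.
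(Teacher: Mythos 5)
Your proof is correct and follows essentially the same route as the paper, which simply exhibits the entry-wise bijection $\big(\sum_{ij} f_{ij}\ket{j}\bra{i}\big) + \big(\sum_{ij} f'_{ij}\ket{j}\bra{i}\big)\epsilon \leftrightarrow \sum_{ij}(f_{ij}+f'_{ij}\epsilon)\ket{j}\bra{i}$ and leaves the compatibility checks implicit. Your additional verification of the product rule for composition and the Kronecker product is exactly the book-keeping the paper omits.
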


\begin{proof}
The isomorphism is given by
$$\big( \sum_{ij} f_{ij} \ket{j} \bra{i} \big)
\ + \ \big( f'_{ij} \sum_{ij} \ket{j}  \bra{i} \big) \epsilon
\s \longleftrightarrow \s
\sum_{ij} (f_{ij} + f'_{ij} \epsilon) \ket{j} \bra{i}$$
\end{proof}

\begin{proposition}
Derivations on $\mathbf{Mat}_\S$ are in one-to-one correspondance with
derivations on $\S$.
\end{proposition}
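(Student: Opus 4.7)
The plan is to build the correspondence by two explicit constructions and then verify they are mutually inverse. In the forward direction, given a rig derivation $\partial : \S \to \S$, I would define $D_\partial$ on $\mathbf{Mat}_\S$ by elementwise application, $D_\partial(f)_{ij} = \partial(f_{ij})$. Sum preservation is immediate. The Leibniz rule for composition unfolds as
\begin{equation*}
\partial\!\big((f \fcmp g)_{ik}\big) \s = \s \partial\!\big(\textstyle\sum_j f_{ij} g_{jk}\big) \s = \s \textstyle\sum_j \partial(f_{ij})\, g_{jk} \ + \ \textstyle\sum_j f_{ij}\, \partial(g_{jk})
\end{equation*}
using linearity and Leibniz for $\partial$, which is precisely $(D_\partial(f) \fcmp g + f \fcmp D_\partial(g))_{ik}$. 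The same computation, using the entries $(f \otimes g)_{(i,j),(k,l)} = f_{ik} g_{jl}$ of the Kronecker product, yields the Leibniz rule for tensor.

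In the reverse direction, given a categorical derivation $D$ on $\mathbf{Mat}_\S$, I would define $\partial_D : \S \to \S$ as the restriction of $D$ to scalars, i.e.\ to endomorphisms of the monoidal unit $1 \in \N$. Since composition and Kronecker product of $1\times 1$ matrices both agree with multiplication in $\S$, and the sum in the homset $\mathbf{Mat}_\S(1,1)$ agrees with addition in $\S$, the derivation axioms for $\partial_D$ are inherited from those of $D$.

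For the mutual inverse property, the equality $\partial_{D_\partial} = \partial$ is immediate, since elementwise $\partial$ restricted to a $1\times 1$ matrix is just $\partial$ on its unique entry. The crux is to show $D_{\partial_D} = D$, i.e.\ that every categorical derivation is in fact elementwise application of its scalar restriction. My strategy is to decompose an arbitrary matrix as $f = \sum_{ij} f_{ij} \otimes \ket{j}\bra{i}$ using the basis states $\ket{i} : 1 \to n$ and effects $\bra{j} : n \to 1$, then apply sum preservation and the Leibniz rule for tensor to obtain
\begin{equation*}
D(f) \s = \s \textstyle\sum_{ij} \partial_D(f_{ij}) \otimes \ket{j}\bra{i} \ + \ \textstyle\sum_{ij} f_{ij} \otimes D(\ket{j}\bra{i}).
\end{equation*}

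The main obstacle will be showing that the second term vanishes, i.e.\ that $D$ annihilates the $\S$-independent ``structural'' morphisms $\ket{j}\bra{i}$. I expect this to reduce to proving $D(\id_n) = 0$ for all $n$, together with $D(\ket{j}) = D(\bra{i}) = 0$ on basis states and effects. The identity case follows from $D(\id_n) = D(\id_n \fcmp \id_n) = 2 D(\id_n)$ whenever the homset $\mathbf{Mat}_\S(n,n)$ is additively cancellative, which holds for any cancellative rig (in particular any ring, or $\N$, or $\N[X]$). For the basis morphisms, I would combine the idempotency $\ket{j}\bra{j} \fcmp \ket{j}\bra{j} = \ket{j}\bra{j}$, the orthogonality relations $\bra{i}\fcmp\ket{j} = \delta_{ij}$, and the completeness relation $\sum_j \ket{j}\bra{j} = \id_n$ to propagate the vanishing of $D(\id_n)$ down to $D(\ket{j})$ and $D(\bra{i})$. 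The conceptually cleaner route, which I would favour in a final write-up, is to invoke the universal property of $\mathbf{Mat}_\S$ as the free biproduct completion of $\S$ (Section~\ref{subsection:biproducts}): the basis states, effects, and identities generate all of $\mathbf{Mat}_\S$ from scalars under biproduct-preserving operations, so any derivation on the category is rigidly determined by its values on $\S$.
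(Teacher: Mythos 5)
Your forward direction (elementwise application of $\partial$) and the injectivity of the resulting map are fine and match the paper, whose own proof is only a two-sentence assertion of both halves. The genuine gap is in your surjectivity argument: the intermediate claim you aim for --- that every categorical derivation $D$ annihilates the structural morphisms $\ket{j}$, $\bra{i}$, $\id_n$ --- is false for general commutative rigs. Take $\S = \B$ and $\partial = \id_\B$, which is a rig derivation since $x + x = x$ makes both the linearity and Leibniz equations hold; its elementwise extension $D = \id$ is a derivation on $\mathbf{Mat}_\B$ with $D(\ket{j}) = \ket{j} \neq 0$. So any argument concluding $D(\ket{j}\bra{i}) = 0$ proves too much. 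Your hedge (``whenever the homset is additively cancellative'') excludes exactly these rigs, but the proposition is stated for arbitrary commutative $\S$, and $\B$ is one the paper relies on throughout (relations, propositional logic, and the differential rig of Boolean functions introduced earlier in this same section). Even in the cancellative case the reduction does not close: $D(\id_n) = 2D(\id_n)$ does give $D(\id_n) = 0$, and idempotency of $\ket{j}\bra{j}$ constrains $D(\ket{j}\bra{j})$ to be supported on row and column $j$, but the orthogonality and completeness relations only yield sums of terms equal to zero, which forces the individual terms to vanish only if $\S$ is also zerosumfree --- so the argument still does not go through for $\S = \C$.

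The mechanism that actually makes the statement true is different: the structural terms in your decomposition need not vanish, they are \emph{absorbed}, because Leibniz applied to $a = a \times 1$ gives $\partial(a) = a\,\partial(1) + \partial(a)$ for any rig derivation, so the contribution $\sum_{ij} f_{ij}\,\partial_D(1)\,\ket{j}\bra{i}$ is swallowed by $\sum_{ij}\partial_D(f_{ij})\,\ket{j}\bra{i}$. What remains to be shown is that $D(\ket{j}\bra{i})$ is supported only at position $(i,j)$ with entry $\partial_D(1)$, and the relations you list do not obviously deliver this. Your ``conceptually cleaner route'' can be repaired, but not as stated: the universal property of the free biproduct completion classifies \emph{functors}, and a derivation is not one. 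The correct move is to use the paper's own reformulation of a derivation as a sum-preserving monoidal functor $F = \id + D\epsilon : \mathbf{Mat}_\S \to \D[\mathbf{Mat}_\S] \simeq \mathbf{Mat}_{\D[\S]}$ splitting $\pi_0$, observe that any sum-preserving functor between categories with biproducts automatically preserves biproducts (it preserves the equations characterising the injections and projections, since zero morphisms and the completeness sum are preserved), and then invoke the universal property to identify such splittings with rig homomorphisms $\S \to \D[\S]$ splitting $\pi_0$, i.e. with derivations on $\S$.
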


\begin{proof}
A derivation on $\mathbf{Mat}_\S$ is uniquely determined by its action on
scalars in $\S$. Conversely, applying a derivation $\partial : \S \to \S$
entrywise on matrices yields a derivation on $\mathbf{Mat}_\S$.
\end{proof}

DisCoPy implements parameterised matrices with SymPy~\cite{MeurerEtAl17} expressions as entries.
The method \py{Tensor.grad} takes a SymPy variable and applies element-wise symbolic differentiation.

\begin{python}
{\normalfont Implementation of gradients of parameterised tensors with SymPy.}

\begin{minted}{python}
def grad(self: Tensor[sympy.Expr], var: sympy.Symbol):
    return self.map(lambda x: x.diff(var))

Tensor.grad = grad
\end{minted}
\end{python}

\begin{example}
We can check the product rule for tensor and composition of matrices.

\begin{minted}{python}
x = sympy.Symbol('x')
f = Tensor([[x + 1, 2 * x], [x ** 2, 1 / x     ]], [2], [2])
g = Tensor([[1,     2],     [2 * x, -1 / x ** 2]], [2], [2])

assert f.grad(x) == g
assert (f @ g).grad(x)  == f.grad(x) @ g  +  f @ g.grad(x)
assert (f >> g).grad(x) == f.grad(x) >> g + f >> g.grad(x)
\end{minted}
\end{example}

Fix a monoidal signature $\Sigma$ and let $C_\Sigma^+$ be the free monoidal category with sums that it generates, i.e. arrows are formal sums of diagrams as defined in sections~\ref{subsection:dagger-sums-bubbles} and \ref{subsection:monoidal-daggers-sums-bubbles}.
We assume our diagrams are interpreted as matrices, i.e. we fix a sum-preserving monoidal functor $[\![-]\!]  : C_\Sigma^+ \to \mathbf{Mat}_\S$ for $\S$ a commutative rig with a derivation $\partial : \S \to \S$.
Our main two examples are the ZX-calculus of Coecke and Duncan~\cite{CoeckeDuncan08} with smooth functions $\R^n \to \R$ as phases and the algebraic ZX-calculus over $\S$, introduced by Wang~\cite{Wang20}.
Applying the dual number construction to $C_\Sigma^+$, we get the category of \emph{dual diagrams} $\D[C_\Sigma^+]$ which is where diagrammatic differentiation happens.
By the universal property of $C_\Sigma^+$, every derivation $\partial : C_\Sigma^+ \to \D[C_\Sigma^+]$ is uniquely determined by its image on the generating boxes in $\Sigma_1$.
Intuitively, if we're given the derivative for each box, we can compute the derivative for every sum of diagram using the product rule.

We say that the interpretation $[\![-]\!] : C_\Sigma^+ \to \mathbf{Mat}_\S$ \emph{admits diagrammatic differentiation} if there is a derivation $\partial$ on $C_\Sigma^+$ such that $[\![-]\!] \circ \partial = \partial \circ [\![-]\!]$.
That is, the interpretation of the gradient $[\![\partial d]\!]$ coincides with the gradient of the interpretation $\partial [\![d]\!]$ for all sums of diagrams $d \in C_\Sigma^+$.
We depict the gradient $\partial d$ as a bubble surrounding the diagram $d$, as discussed in sections~\ref{subsection:dagger-sums-bubbles} and \ref{subsection:monoidal-daggers-sums-bubbles}.
Once translated to string diagrams, the axioms for derivations on monoidal
categories with sums become:
\ctikzfig{img/diag-diff/2-1a-product-rule}
\ctikzfig{img/diag-diff/2-1b-product-rule}

We implement dual diagrams with a method \py{Diagram.grad} which takes SymPy variables and returns formal sums of diagrams by applying the product rules for tensor and composition.
By default, we assume that boxes are constant, i.e. their gradient is the empty sum.

\begin{python}
{\normalfont Implementation of dual diagrams.}

\begin{minted}{python}
Box.grad = lambda self, var: Sum([], self.dom, self.cod)

def grad(self: Diagram, var: sympy.Symbol):
    if len(self) == 0: return Sum([], self.dom, self.cod)
    left, box, right = self.layers[0]
    return left @ box.grad(var) @ right >> self[1:]\
        + left @ box @ right >> self[1:].grad(var)

Diagram.grad = grad
\end{minted}
\end{python}

\begin{example}
We can override the default \py{grad} method and check the product rule for diagrams.

\begin{minted}{python}
class DataBox(Box):
    def __init__(self, name: str, dom: Ty, cod: Ty, data: sympy.Expr):
        self.data = data
        super().__init__(name, dom, cod)

    def __eq__(self, other):
        if not isinstance(other, DataBox): return super().__eq__(other)
        return super().__eq__(other) and self.data == other.data

    def grad(self, var):
        return DataBox(self.name, self.dom, self.cod, self.data.diff(var))

phi = sympy.Symbol('\\phi')
x, y, z = map(Ty, "xyz")
f, g = DataBox('f', x, y, phi ** 2), DataBox('g', y, z, 1 / phi)

assert (f @ g).grad(phi) == f.grad(phi) @ g + f @ g.grad(phi)
assert (f >> g).grad(phi) == f.grad(phi) >> g + f >> g.grad(phi)
\end{minted}
\end{example}


\subsection{Differentiating the ZX-calculus}\label{2b-differentiating-zx}

This section applies the dual number construction to the diagrams of the ZX-calculus with smooth functions $\alpha : \R^n \to \R$ as phases.
For each number of variables $n \in \N$, we define $\mathbf{ZX}_n$ as the free symmetric category generated by the signature:
$$\Sigma_0 = \{ x \} \s \text{and} \s \Sigma_1 = \{ H : x \to x \} + \{ Z^{m, n}(\alpha) : x^{\otimes m} \to x^{\otimes n} \ \vert \ m, n \in \N, \alpha : \R^n \to \R \}$$
where $H$ is depicted as a yellow box and $Z^{m, n}(\alpha)$ as a green spider.
The red spider is syntactic sugar for a green spider with yellow boxes connected to each leg.
The interpretation $[\![-]\!]  : \mathbf{ZX}_n \to \mathbf{Mat}_\S$ in matrices over $\S = \R^n \to \C$ is given by on objects by $[\![ x ]\!] = 2$ and on arrows by
$[\![ H ]\!] = \frac{1}{\sqrt{2}} \big(\ket{0}\bra{0} + \ket{0}\bra{1} + \ket{1}\bra{0} - \ket{1}\bra{1}\big)$
and $[\![Z^{m, n}(\alpha)]\!] =
e^{-i \alpha / 2} \ket{0}^{\otimes n} \bra{0}^{\otimes m}
+ e^{i \alpha / 2} \ket{1}^{\otimes n} \bra{1}^{\otimes m}$.
Note that we've scaled the standard interpretation of the green spider by a global phase to match the usual definition of rotation gates in quantum circuits.
For $n = 0$ we get $\mathbf{ZX}_0 = \mathbf{ZX}$ the ZX-calculus with no parameters.
By currying, any ZX diagram $d \in \mathbf{ZX}_n$ can be seen as a function $d : \R^n \to \text{Ar}(\mathbf{ZX})$ such that $[\![-]\!] \circ d : \R^n \to \mathbf{Mat}_\C$ is smooth.

\begin{lemma}\label{lemma-scalars}
A function $s : \R^n \to \C$ can be drawn as a scalar diagram in $\mathbf{ZX}_n$ if and only if it is bounded.
\end{lemma}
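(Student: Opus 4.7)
The plan is to prove the forward direction by a direct uniform bound on the elementary generators, and the converse by a constructive realisation using a polar-type decomposition combined with the universality of ZX for $2 \times 2$ unitaries.

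For the forward direction, every generator of $\mathbf{ZX}_n$ has matrix entries of modulus at most $1$: the Hadamard $H$ has entries $\pm 1/\sqrt{2}$, and each spider $Z^{m,n}(\alpha)$ has entries in $\{0, e^{\pm i\alpha/2}\}$. The interpretation $[\![d]\!](x)$ of a closed diagram $d$ is a finite sum of finite products of such entries, with the number of summands and factors depending only on the combinatorial shape of $d$, not on the parameters $x \in \R^n$. Hence there is a constant $C(d)$, independent of $x$, such that $|[\![d]\!](x)| \le C(d)$ uniformly in $x$, so every scalar diagram represents a bounded function.

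For the converse, given a bounded smooth $s : \R^n \to \C$, I would first rescale by a constant (realisable as a product of scalar diagrams $Z^{0,0}(\alpha_i) = 2\cos(\alpha_i/2)$ with constant $\alpha_i$) so that $|s(x)| < 1$ strictly. Then $t(x) = \sqrt{1 - |s(x)|^2}$ is smooth, and
\[ U(x) \s = \s \begin{pmatrix} s(x) & -\overline{t(x)} \\ t(x) & \overline{s(x)} \end{pmatrix} \]
defines a smooth family of $2 \times 2$ unitaries with $\bra{0} U(x) \ket{0} = s(x)$. By the universality of ZX for single-qubit unitaries with smooth phase parameters, $U$ is the interpretation of some $D \in \mathbf{ZX}_n(x,x)$, and sandwiching $D$ between suitable spider states and effects projecting onto $\ket{0}\bra{0}$ yields a scalar diagram whose interpretation is $s$.

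The main obstacle lies in this last step: the Euler-angle decomposition underlying ZX universality is not globally smooth in $U$, because of the familiar coordinate singularities (``gimbal lock''). The plan to overcome this is to exploit the freedom to pre- and post-compose $U(x)$ with constant unitaries, which translates diagrammatically to adding spiders with constant phases. A generic choice of such constants shifts the singular locus of any fixed Euler chart outside the image of $x \mapsto U(x)$; after the initial rescaling that confines $U$ to a small open subset of $U(2)$, a single chart then suffices globally, and gluing this local smooth realisation with the constant unitaries produces the required scalar diagram.
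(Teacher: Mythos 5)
Your ``only if'' direction (every scalar diagram is bounded) is correct and in the same spirit as the paper's proof: the paper expands each spider as a sum of two disconnected terms and bounds the resulting $2^k$ products of bone-shaped scalars, while you bound the tensor contraction directly; both yield a uniform bound of the form $2^{O(\text{size})}$ independent of the parameters.

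The ``if'' direction is where you diverge from the paper and where the gap is. The paper's construction is a one-liner: if $|s(\theta)| \le 2^k$, write $s(\theta) = 2^k \cos(\alpha(\theta)/2)\, e^{i\beta(\theta)}$ and read off the phases $\alpha, \beta : \R^n \to \R$ pointwise --- the lemma is stated for arbitrary bounded functions, so a pointwise polar decomposition suffices and no unitarity or Euler decomposition is needed. Your detour through $U(x)$ leaves its crucial step unproved. First, the claim that rescaling ``confines $U$ to a small open subset of $U(2)$'' is false as stated: for $|s| \le 1/2$ the family $U_s$ contains the antidiagonal unitary $U_0$ together with unitaries at distance of order $1$ from it, and in particular it meets the singular locus of the untranslated Euler chart at $s = 0$. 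Second, ``a generic choice of constants shifts the singular locus outside the image'' is an assertion, not an argument, and it is the entire content of the step. The step can in fact be closed: taking $W$ to be the real rotation by $\pi/4$, one checks $(WU_s)_{00} = (s + \sqrt{1 - |s|^2})/\sqrt{2} \neq 0$ and $(WU_s)_{01} = (\bar{s} - \sqrt{1 - |s|^2})/\sqrt{2} \neq 0$ for all $|s| \le 1/2$, so $WU_s$ is never diagonal nor antidiagonal, and since $\R^n$ is simply connected the arguments of these nonvanishing entries admit global smooth branches, hence global smooth Euler angles. None of this is in your proof, so as written the converse is incomplete. One merit of your route worth recording: if one insists that phases be smooth (as the surrounding text of the paper suggests), the paper's pointwise construction degenerates at zeros of $s$, whereas your unitary $U(x)$ remains smooth there; once the Euler step is actually carried out, your argument establishes a strictly stronger, smoothness-preserving version of the lemma.
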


\begin{proof}
Generalising \cite[P.~8.101]{CoeckeKissinger17} to parametrised scalars, if there is a $k \in \N$ with $\vert s(\theta) \vert \leq 2^k$ for all $\theta \in \R^n$ then there are parametrised phases $\alpha, \beta : \R^n \to \R$ such that

\ctikzfig{img/diag-diff/2-2-bounded-lemma}

In the other direction, take any scalar diagram $d$ in $\mathbf{ZX}_n$.
Let $k$ be the number of spider in the diagram and $l$ the maximum number
of legs. By decomposing each spider as a sum of two disconnected diagrams,
we can write $d$ as a sum of $2^k$ diagrams. Each term of the sum is a product
of at most $\frac{1}{2} \times k \times l$ bone-shaped scalars. Each bone is
bounded by $2$, thus $[\![d]\!] : \R^n \to \C$ is bounded by $2^{k \times l}$.
\end{proof}

\begin{lemma}\label{lemma-rotations}
In $\mathbf{ZX}_n$, we have $\tikzfig{img/diag-diff/2-3a-lemma-rotation}$
for all affine $\alpha : \R^n \to \R$.
\end{lemma}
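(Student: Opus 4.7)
The plan is to prove this by reducing the statement to spider fusion, the defining algebraic identity of the ZX-calculus. First I would write the affine function explicitly as $\alpha(\vec{\theta}) = \alpha_0 + \sum_{i=1}^n \alpha_i \theta_i$ with constants $\alpha_0, \alpha_1, \dots, \alpha_n \in \R$, so that the phase of the spider decomposes as a sum of $n+1$ phases, each of which is either a constant or a single-variable linear function.

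Next, I would invoke spider fusion in reverse: any green $Z$-spider with phase $\beta + \gamma$ is equal to two green spiders with phases $\beta$ and $\gamma$ connected by a single wire. This is an instance of the general spider fusion law $\spider_{\phi+\phi', a+b, c+d}(x) = \spider_{\phi, a, c+1}(x) \otimes x^b \fcmp x^c \otimes \spider_{\phi', 1+b, d}(x)$ recalled in section~\ref{subsection:hypergraph}, specialised to the ZX generators. Iterating this $n$ times on $Z^{m,n}(\alpha)$, I obtain a chain of $n+1$ connected spiders, one labelled by the constant $\alpha_0$ and one labelled by each linear term $\alpha_i \theta_i$, which is the shape I expect the right-hand side of the tikzfigure to display.

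To certify the equality, I would check commutativity of both sides with the interpretation functor $[\![-]\!] : \mathbf{ZX}_n \to \mathbf{Mat}_{\R^n\to\C}$. The LHS evaluates pointwise at $\vec\theta$ to $e^{-i\alpha(\vec\theta)/2}\ket{0}^{\otimes n}\bra{0}^{\otimes m} + e^{i\alpha(\vec\theta)/2}\ket{1}^{\otimes n}\bra{1}^{\otimes m}$, and by $e^{i(\beta+\gamma)/2} = e^{i\beta/2} e^{i\gamma/2}$ this factorises into the Hadamard-like product over the $n+1$ affine summands, which is exactly the interpretation of the chain of fused spiders. By faithfulness of the interpretation on the relevant fragment, or directly by appealing to spider fusion being a derived rule of $\mathbf{ZX}_n$, we conclude equality already at the diagrammatic level.

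The main obstacle is essentially bookkeeping rather than mathematical depth: to state the exact shape of the RHS diagram one must decide how to distribute the $m$ inputs and $n$ outputs across the $n+1$ fused spiders (e.g.\ putting all inputs and outputs on the two outermost spiders and giving each middle spider a single leg), and to handle the degenerate case $n=0$ where $\alpha$ is constant and the chain collapses to a single spider. Provided this choice is made consistently with the convention used later for diagrammatic differentiation in section~\ref{2b-differentiating-zx}, the proof reduces to a straightforward induction on $n$ using spider fusion as the only nontrivial ingredient.
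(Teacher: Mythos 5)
There is a genuine gap here: you have proven the wrong statement. Lemma~\ref{lemma-rotations} is not a spider-fusion decomposition of $Z^{m,n}(\alpha)$ into a chain of spiders indexed by the affine summands of $\alpha$; it is the \emph{derivative identity} for a single-qubit $Z$ rotation, namely $\partial\, Z(\alpha) = \frac{\partial\alpha}{2}\, Z(\alpha + \pi)$, where the scalar $\frac{\partial\alpha}{2}$ is itself drawn as a diagram. The context makes this unambiguous: the lemma sits in the section on differentiating the ZX-calculus, immediately after Lemma~\ref{lemma-scalars} (which characterises the drawable scalars as the bounded ones), and the proof of Theorem~\ref{theorem-zx-diag-diff} explicitly says it extends the lemma ``from single qubit rotations to arbitrary many legs using spider fusion'' --- so spider fusion is the theorem's job, not the lemma's, and the lemma concerns a single-legged rotation. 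Your proposal never differentiates anything, never uses Lemma~\ref{lemma-scalars}, and uses the affineness hypothesis only to split $\alpha$ into summands, whereas its actual role is to guarantee that $\partial\alpha$ is \emph{constant}, hence bounded, hence representable as a scalar diagram in $\mathbf{ZX}_n$ by Lemma~\ref{lemma-scalars}. (For a general smooth bounded $\alpha$ the lemma fails precisely because $\partial\alpha$ may be unbounded, as the paper notes with $\alpha(\theta)=\sin\theta^2$.)

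The correct argument is a one-line computation on interpretations: $\partial\bigl(e^{-i\alpha/2}\ket{0} + e^{i\alpha/2}\ket{1}\bigr) = \frac{i\,\partial\alpha}{2}\bigl(-e^{-i\alpha/2}\ket{0} + e^{i\alpha/2}\ket{1}\bigr) = \frac{\partial\alpha}{2}\bigl(e^{-i(\alpha+\pi)/2}\ket{0} + e^{i(\alpha+\pi)/2}\ket{1}\bigr)$, i.e.\ the derivative of the rotation is the rotation shifted by $\pi$, scaled by $\frac{\partial\alpha}{2}$, and affineness is invoked only to make that scalar drawable. Your spider-fusion decomposition is a true fact about $\mathbf{ZX}_n$, but it establishes nothing about $\partial Z(\alpha)$ and so cannot substitute for the paper's proof.
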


\begin{proof}
Because $\alpha$ is affine we have that $\partial \alpha$ is constant, hence bounded and from lemma~\ref{lemma-scalars} we know it can be drawn in $\mathbf{ZX}_n$.
\begin{align*}
\partial [\![ Z(\alpha) ]\!]
&= \partial \big( e^{-i \alpha / 2} \ket{0}+ e^{i \alpha / 2} \ket{1}\big)\\
&= \frac{i\partial\alpha}{2}\big(-e^{-i \alpha / 2} \ket{0} + e^{i\alpha / 2} \ket{1}\big)\\
&= \frac{\partial\alpha}{2}\big(e^{-i\frac{\alpha+\pi}{2}} \ket{0} + e^{i\frac{\alpha+\pi}{2}} \ket{1}\big)
\end{align*}
\end{proof}

\begin{theorem}\label{theorem-zx-diag-diff}
The ZX-calculus with affine maps $\R^n \to \R$ as phases admits diagrammatic
differentiation.
\end{theorem}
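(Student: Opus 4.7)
The plan is to define the derivation $\partial$ on the generating boxes of $\mathbf{ZX}_n$ and then invoke the universal property of the free monoidal category with sums to extend it uniquely. Since the interpretation $[\![-]\!]$ and the putative derivation $\partial$ on $\mathbf{ZX}_n^+$ are both sum-preserving monoidal functors when composed appropriately, and since both sides of the equation $[\![-]\!] \circ \partial = \partial \circ [\![-]\!]$ are derivations, it will suffice to verify the equality on generators.

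First, I would set $\partial H = 0$, which is consistent with $[\![H]\!]$ being a constant matrix. The interesting generators are the green spiders $Z^{m,n}(\alpha)$ with $\alpha : \R^n \to \R$ affine. For the one-legged case, Lemma~\ref{lemma-rotations} already provides a diagrammatic formula for $\partial Z^{0,1}(\alpha)$ as a scalar multiple of $Z^{0,1}(\alpha + \pi)$, where the scalar $\frac{1}{2}\partial_i\alpha$ is a \emph{constant} (this is where affinity is essential) and hence bounded, so by Lemma~\ref{lemma-scalars} it is realisable as a parameter-free scalar diagram in $\mathbf{ZX}$. To handle the general case, I would use spider fusion to write
\[ Z^{m,n}(\alpha) \;=\; Z^{m,n+1}(0) \,\fcmp\, \bigl(\id^{\otimes n} \otimes Z^{1,0}(\alpha)\bigr), \]
or any analogous decomposition, so that the phase is concentrated in a single phase-carrying spider attached to a phaseless one. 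Since phaseless spiders have constant interpretation, applying the product rule reduces $\partial Z^{m,n}(\alpha)$ to an expression involving $\partial Z^{1,0}(\alpha)$ (equivalently $\partial Z^{0,1}(\alpha)$ by the dagger), which we have already handled. This defines $\partial$ on every generator as a genuine diagram in $\mathbf{ZX}_n^+$.

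Next I would extend $\partial$ to arbitrary diagrams by imposing the product rules for composition and tensor, which by the universal property of the free monoidal category with sums yields a well-defined map $\partial : \mathbf{ZX}_n^+ \to \D[\mathbf{ZX}_n^+]$ (equivalently, a sum-preserving monoidal functor into dual diagrams). Finally, to verify $[\![-]\!] \circ \partial = \partial \circ [\![-]\!]$, I would check the equation box-by-box: for $H$ both sides vanish, and for $Z^{m,n}(\alpha)$ the left-hand side is the interpretation of the diagram we constructed, while the right-hand side is the elementwise derivative of $[\![Z^{m,n}(\alpha)]\!]$. The computation of the latter reduces, via the tensor-product structure of the spider interpretation, to the scalar computation carried out in the proof of Lemma~\ref{lemma-rotations}, and the two match by construction.

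The main obstacle I anticipate is bookkeeping rather than conceptual: choosing a spider-fusion decomposition of $Z^{m,n}(\alpha)$ that is manifestly symmetric under the dagger and under leg-permutations, so that the resulting diagram for $\partial Z^{m,n}(\alpha)$ does not depend on arbitrary choices (or equivalently, so that any two such choices are provably equal via spider fusion). Everything else — the reduction to $\partial Z^{0,1}(\alpha)$, the use of affinity to guarantee that $\partial\alpha$ is constant and hence drawable by Lemma~\ref{lemma-scalars}, and the final check that matrix and diagrammatic derivatives agree — is a direct application of the lemmas already established.
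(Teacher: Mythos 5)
Your proposal is correct and follows essentially the same route as the paper: set $\partial H = 0$, use Lemma~\ref{lemma-rotations} (with affinity guaranteeing $\partial\alpha$ is constant, hence drawable by Lemma~\ref{lemma-scalars}) for the single-legged rotation, and extend to arbitrary-legged spiders by spider fusion combined with the product rule. The additional bookkeeping you describe about the universal property and independence of the fusion decomposition is implicit in the paper's (very terse) proof but does not constitute a different argument.
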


\begin{proof}
The Hadamard $H$ has derivative zero.
For the green spiders, we can extend lemma~\ref{lemma-rotations} from
single qubit rotations to arbitrary many legs using spider fusion:
\ctikzfig{img/diag-diff/2-4-zx-theorem}
\end{proof}

Note that there is no diagrammatic differentiation for the ZX-calculus with
smooth maps as phases, even when restricted to bounded functions.
Take for example $\alpha : \R \to \R$ with $\alpha(\theta) = \sin \theta^2$,
it is smooth and bounded by $1$ but its derivative $\partial \alpha$ is
unbounded.
Thus, from lemma~\ref{lemma-scalars} we know it cannot be represented as a
scalar diagram in $\mathbf{ZX}_1$: there can be no diagrammatic
differentiation $\partial : \mathbf{ZX}_1 \to \D[\mathbf{ZX}_1]$.
In such cases, we can always extend the signature by adjoining a new box
for each derivative.

\begin{proposition}
For every interpretation $[\![-]\!] : C_\Sigma^+ \to \mathbf{Mat}_\S$,
there is an extended signature $\Sigma' \supset \Sigma$
and interpretation $[\![-]\!] : C_{\Sigma'}^+ \to \mathbf{Mat}_\S$
such that $C_{\Sigma'}^+$ admits digrammatic differentiation.
\end{proposition}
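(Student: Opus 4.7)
The plan is to enlarge $\Sigma$ by freely adjoining a new box for every iterated derivative of every generator, and to extend the interpretation in the obvious way. Concretely, I would set
\[
\Sigma'_0 \;=\; \Sigma_0, \qquad
\Sigma'_1 \;=\; \{\, \partial^{n} f \mid f \in \Sigma_1,\ n \in \N \,\},
\]
where $\partial^{0} f$ is identified with $f$ itself and $\partial^{n} f$ is a fresh box with the same domain and codomain as $f$. The extended interpretation $[\![-]\!] : C_{\Sigma'}^{+} \to \mathbf{Mat}_{\S}$ is determined by
\[
[\![\partial^{n} f]\!] \;=\; \partial^{n}\, [\![f]\!],
\]
where on the right-hand side $\partial$ denotes the entrywise derivation on $\mathbf{Mat}_{\S}$ induced by $\partial : \S \to \S$ (which is well-defined and is a derivation by the proposition preceding Theorem~\ref{theorem-zx-diag-diff}).

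Next I would define the candidate derivation on $C_{\Sigma'}^{+}$. By the universal property of the free sum-enriched monoidal category, it suffices to specify its value on generating boxes: set $\partial(\partial^{n} f) = \partial^{n+1} f$ for every $f \in \Sigma_1$ and $n \in \N$, and extend by the linearity and product rules for composition and tensor. This is well-defined precisely because $\Sigma'_1$ is closed under the formal symbol $\partial^{(-)}$, which was the whole point of the enlargement.

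Finally I would verify the compatibility condition $[\![-]\!] \fcmp \partial = \partial \fcmp [\![-]\!]$. Both sides are sum-preserving monoidal functors from $C_{\Sigma'}^{+}$ to $\D[\mathbf{Mat}_{\S}]$ when paired with the identity component in degree zero, so by freeness it is enough to check agreement on the generators $\partial^{n} f$. There the left-hand side gives $[\![\partial^{n+1} f]\!] = \partial^{n+1}[\![f]\!]$ by definition, while the right-hand side gives $\partial\, \partial^{n}[\![f]\!] = \partial^{n+1}[\![f]\!]$ because $\partial$ on $\mathbf{Mat}_{\S}$ is a derivation and is applied $n+1$ times. The two sides coincide, so $C_{\Sigma'}^{+}$ admits diagrammatic differentiation.

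The main obstacle is not an obstacle so much as a bookkeeping point: one must be careful that the formal symbols $\partial^{n} f$ really are distinct generators and that the derivation defined on $C_{\Sigma'}^{+}$ by the product rule is total, i.e.\ that every diagram has a well-defined gradient. Both are immediate from the inductive definition of the free category, but they are the places where a naive attempt to work only with the original $\Sigma$ breaks down (as witnessed by the $\alpha(\theta) = \sin \theta^{2}$ example just before the proposition).
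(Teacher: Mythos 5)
Your proposal is correct and takes essentially the same approach as the paper: the paper likewise defines $\Sigma' = \bigcup_{n \in \N} \Sigma^n$ with $\Sigma^{n+1} = \Sigma^n \cup \{\partial f \mid f \in \Sigma^n\}$ and $[\![\partial f]\!] = \partial [\![f]\!]$, which yields exactly your generators $\partial^n f$. Your explicit verification of the compatibility condition on generators is a welcome addition that the paper leaves implicit.
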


\begin{proof}
Let $\Sigma' = \cup_{n \in \N} \Sigma^n$ where $\Sigma^0 = \Sigma$
and $\Sigma^{n + 1} = \Sigma^n \cup \{ \partial f \ \vert \ f \in \Sigma^n \}$
with $[\![\partial f]\!] = \partial [\![f]\!]$.
\end{proof}

The issue of being able to represent arbitrary scalars disappears if we work
with the algebraic ZX-calculus instead. Furthermore, we can generalise
from $\S = \R^n \to \C$ to any commutative rig.
We define the category $\mathbf{ZX}_\S$ as the free symmetric category generated by the signature given in \cite[Table~2]{Wang20} and the interpretation $[\![-]\!] : \mathbf{ZX}_\S \to \mathbf{Mat}_\S$ given in \cite[§6]{Wang20}.
In particular, there is a green square $R_Z^{m, n}(a) \in \Sigma_1$ for each $a \in \S$ and $m, n \in \N$ with interpretation
$$[\![R_Z^{m, n}(a)]\!] = \ket{0}^{\otimes n} \bra{0}^{\otimes m} + a \ket{1}^{\otimes n} \bra{1}^{\otimes m}$$
Let $\mathbf{ZX}_\S^+$ be the category of formal sums of algebraic ZX
diagrams over $\S$.

\begin{theorem}
Diagrammatic derivations for the interpretation $[\![-]\!] : \mathbf{ZX}_\S^+ \to \mathbf{Mat}_\S$
are in one-to-one correspondance with rig derivations $\partial : \S \to \S$.
\end{theorem}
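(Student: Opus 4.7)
The plan is to exhibit the bijection concretely by restricting on one side and extending by universality on the other, then checking compatibility with the interpretation using Wang's completeness of the algebraic ZX-calculus.

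In the forward direction, suppose $\partial : \mathbf{ZX}_\S^+ \to \D[\mathbf{ZX}_\S^+]$ is a diagrammatic derivation compatible with $[\![-]\!]$. Restricting to the endomorphisms of the monoidal unit yields a sum-preserving unary operator on the commutative monoid of scalars. Composition of scalars is rig multiplication, so this restriction satisfies both the linearity and Leibniz axioms of a rig derivation on $\mathbf{ZX}_\S^+(1,1)$. Post-composing with $[\![-]\!]$ and using the compatibility square $[\![-]\!] \circ \partial = \partial \circ [\![-]\!]$ together with the earlier proposition identifying derivations on $\mathbf{Mat}_\S$ with derivations on $\S$, we obtain the desired rig derivation on $\S$.

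In the backward direction, start from a rig derivation $\partial : \S \to \S$. Because $\mathbf{ZX}_\S^+$ is the free symmetric monoidal category with sums on Wang's signature modulo the algebraic ZX axioms, a derivation on $\mathbf{ZX}_\S^+$ is determined by its value on each generator, subject to compatibility with the axioms. On the generators that do not depend on $\S$ (Hadamard, cups, caps, swaps, and the phase-free spiders), we set the derivative to zero. The only $\S$-indexed generator is the green square $R_Z^{m,n}(a)$, and I would set
\[
\partial R_Z^{m,n}(a) \ = \ \partial a \cdot \bigl(\ket{1}^{\otimes n}\bra{1}^{\otimes m}\bigr),
\]
expressed diagrammatically in $\mathbf{ZX}_\S$ as a formal scalar $R_Z^{0,0}(\partial a)$ tensored with the appropriate red states and effects that project onto the $\ket{1}$-component (as in the proof of Theorem~\ref{theorem-zx-diag-diff}, adapted to the algebraic setting). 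This choice is forced by the requirement $[\![-]\!] \circ \partial = \partial \circ [\![-]\!]$, since $[\![R_Z^{m,n}(a)]\!]$ is affine in $a$.

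The main obstacle is verifying well-definedness, i.e.\ that extending $\partial$ by the product rules gives the same answer on both sides of every algebraic ZX axiom. Rather than checking each axiom by hand, the plan is to lift the problem to $\mathbf{Mat}_\S$: by Wang's completeness theorem, two diagrams in $\mathbf{ZX}_\S^+$ are equal iff their interpretations in $\mathbf{Mat}_{\D[\S]} \simeq \D[\mathbf{Mat}_\S]$ agree, and the compatibility square reduces the consistency check to the already-established fact that entrywise application of $\partial$ is a derivation on $\mathbf{Mat}_\S$. Finally, the two constructions are mutually inverse: restricting the extension to scalars recovers $\partial : \S \to \S$ by construction, and conversely, any compatible diagrammatic derivation must act on $R_Z^{m,n}(a)$ as above, since its interpretation is fixed and $[\![-]\!]$ is faithful up to the quotient by the axioms. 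This delivers the bijection.
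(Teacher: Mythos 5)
Your proposal is correct and takes essentially the same route as the paper's (much terser) proof: both hinge on forcing $\partial R_Z^{m,n}(a) = (\partial a)\,\ket{1}^{\otimes n}\bra{1}^{\otimes m}$, realising $\partial a$ as a scalar diagram, and observing that a compatible diagrammatic derivation is uniquely determined by its action on the scalars $R_Z^{1,0}(a)\ket{1}$. The well-definedness check via Wang's completeness theorem is a detail the paper leaves implicit; otherwise the two arguments coincide.
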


\begin{proof}
Given a derivation $\partial$ on $\S$, we have
$\partial [\![R_Z^{m, n}(a)]\!]
= (\partial a) \ket{1}^{\otimes n} \bra{1}^{\otimes m}$
and $\partial a$ can be represented by the scalar diagram
$R_Z^{1, 0}(\partial a) \ket{1}$.
In the other direction, a diagrammatic derivation $\partial$ on
$\mathbf{ZX}_\S^+$ is uniquely determined by its action on scalars
$R_Z^{1, 0}(a) \ket{1}$ for $a \in \S$.
\end{proof}

One application of diagrammatic differentiation is to solve
differential equations between diagrams. As a first step,
we apply Stone's theorem \cite{Stone32} on one-parameter unitary groups
to the ZX-calculus.

\begin{definition}
A one-parameter unitary group is a unitary matrix $U : n \to n$
in $\mathbf{Mat}_{\R \to \C}$ with $U(0) = \id_n$ and $U(\theta) U(\theta') = U(\theta + \theta')$
for all $\theta, \theta' \in \R$. It is strongly continuous when
$\lim_{\theta \to \theta_0} U(\theta) = U(\theta_0)$ for all $\theta_0 \in \R$.

We say a one-parameter diagram $d : x^{\otimes n} \to x^{\otimes n}$
is a unitary group if its interpretation $[\![d]\!]$ is.
\end{definition}

\begin{remark}
The interpretation of diagrams with smooth maps as phases are necessarily strongly continuous.
\end{remark}

\begin{theorem}[Stone]
There is a one-to-one correspondance between strongly continuous one-parameter
unitary groups $U : n \to n$ in $\mathbf{Mat}_{\R \to \C}$ and self-adjoint
matrices $H : n \to n$ in $\mathbf{Mat}_{\C}$. The bijection is given
explicitly by $U(\theta) = \exp(i \theta H)$ and $H = - i (\partial U)(0)$,
translated in terms of diagrams with bubbles we get:
\ctikzfig{img/diag-diff/2-5-stone-theorem}
\end{theorem}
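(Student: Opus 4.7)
The plan is to prove Stone's theorem in the finite-dimensional matrix setting, which avoids the functional-analytic subtleties of the general Hilbert space version and reduces essentially to facts about the matrix exponential and linear ODEs. I would break the argument into three steps: (i) from generator to group, (ii) from group to generator, and (iii) verifying that the two assignments are mutually inverse.

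For the first direction, starting from a self-adjoint $H : n \to n$ in $\mathbf{Mat}_\C$, I would define $U(\theta) = \exp(i \theta H)$ via the norm-convergent power series. The identity $U(0) = \id_n$ is immediate, the group law $U(\theta) U(\theta') = U(\theta + \theta')$ follows because $i\theta H$ and $i\theta' H$ commute (so the Baker–Campbell–Hausdorff series collapses to a sum), and unitarity follows from $U(\theta)^\dagger = \exp(-i \theta H^\dagger) = \exp(-i\theta H) = U(\theta)^{-1}$. Strong continuity follows from the fact that the matrix exponential is analytic, hence smooth in $\theta$, so by lemma~\ref{lemma-rotations} and theorem~\ref{theorem-zx-diag-diff} we can interpret $U$ as an arrow in $\mathbf{Mat}_{\R \to \C}$ admitting diagrammatic differentiation.

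For the converse, given a strongly continuous one-parameter unitary group $U$, I would first upgrade strong continuity to smoothness: in finite dimensions this is standard and follows from a mollification argument, namely $\int_0^\epsilon U(s) ds$ is invertible for small $\epsilon$ because it approaches $\epsilon \cdot \id_n$, and using the group law this yields smoothness of $U$. Define $H = -i (\partial U)(0)$. Self-adjointness of $H$ drops out of differentiating the unitarity relation $U(\theta)^\dagger U(\theta) = \id_n$ at $\theta = 0$, which gives $(\partial U)(0)^\dagger + (\partial U)(0) = 0$, hence $H^\dagger = H$. Differentiating the group law $U(\theta + h) = U(\theta) U(h)$ in $h$ at $h = 0$ gives the linear ODE $\partial U(\theta) = U(\theta) \cdot iH$ with initial condition $U(0) = \id_n$, whose unique solution is $U(\theta) = \exp(i\theta H)$.

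Step (iii) is then a short check: starting from $H$, forming $U(\theta) = \exp(i\theta H)$, and computing $-i(\partial U)(0)$ recovers $H$ by termwise differentiation of the power series; starting from $U$, extracting $H$ and re-exponentiating recovers $U$ by the uniqueness of solutions to the ODE above. The main obstacle I anticipate is the upgrade from strong continuity to differentiability at zero, which in infinite dimensions requires delicate spectral theory but in our $n \times n$ setting reduces to an elementary averaging trick; the rest is bookkeeping. Once the bijection is established on the level of matrices, the diagrammatic reformulation follows immediately by translating $\partial U$ as the bubble notation of section~\ref{2-dual-diagrams} and reading $H$ as a scalar self-adjoint diagram via lemma~\ref{lemma-scalars}.
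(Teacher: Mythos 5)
Your proof is mathematically correct, but note that the paper itself offers no proof of this statement at all: it is stated as a classical result with a citation to Stone's 1932 paper, and the only thing the surrounding text actually proves is the corollary that follows (characterising one-parameter unitary groups of ZX diagrams via $\partial d = i h \fcmp d$). So there is no in-paper argument to compare against; what you have written is a self-contained finite-dimensional proof that the paper leaves implicit. Your three steps are the standard and correct route: the forward direction via the power series and commutativity of $i\theta H$ with $i\theta' H$, the converse via the averaging trick $\bigl(\int_0^\epsilon U(s)\,ds\bigr)^{-1}$ to bootstrap continuity to smoothness, self-adjointness of $H$ from differentiating $U(\theta)^\dagger U(\theta) = \id_n$, and uniqueness of solutions to $\partial U = U \cdot iH$. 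Two small misattributions are worth fixing: you do not need lemma~\ref{lemma-rotations} or theorem~\ref{theorem-zx-diag-diff} to regard $U$ as an arrow of $\mathbf{Mat}_{\R \to \C}$ (that category is just matrices of functions $\R \to \C$, so any smooth matrix-valued function qualifies), and lemma~\ref{lemma-scalars} concerns \emph{scalar} diagrams whereas $H$ is an $n \times n$ matrix; those lemmas become relevant only for the ZX-level corollary, not for the matrix-level bijection. The diagrammatic reformulation is, as you say, pure notation: the bubble is the derivation $\partial$ of section~\ref{2-dual-diagrams} applied to $U$, evaluated at $0$ and multiplied by $-i$.
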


\begin{corollary}
A one-parameter diagram $d : x^{\otimes n} \to x^{\otimes n}$ in
$\mathbf{ZX}_1$ is a unitary group iff there is a constant
self-adjoint diagram
$h : x^{\otimes n} \to x^{\otimes n}$ such that $\partial d = i h \fcmp d$.
\end{corollary}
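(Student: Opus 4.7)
The plan is to transfer the preceding Stone's theorem through the interpretation functor $[\![-]\!]  : \mathbf{ZX}_1 \to \mathbf{Mat}_{\R \to \C}$, exploiting the fact that it is sum-preserving, monoidal, and intertwines diagrammatic and matrix differentiation by theorem~\ref{theorem-zx-diag-diff}. In this way the corollary reduces to Stone's theorem plus completeness of the ZX-calculus, which is what allows us to lift a matrix statement to a diagrammatic one.

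For the forward direction, suppose $d$ is a one-parameter unitary group. Then $U := [\![d]\!]$ is a strongly continuous one-parameter unitary group in $\mathbf{Mat}_{\R \to \C}$, the continuity coming from smoothness of the phases in $\mathbf{ZX}_1$. Stone's theorem yields a self-adjoint matrix $H = -i\,(\partial U)(0)$ with $U(\theta) = \exp(i\theta H)$. Appealing to completeness of the ZX-calculus for qubit quantum mechanics, I pick a constant diagram $h$ in $\mathbf{ZX}$ with $[\![h]\!] = H$; self-adjointness of $H$ lets me choose $h$ so that $h^\dagger = h$ at the level of diagrams, using the dagger inherited on $\mathbf{ZX}_1$. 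Differentiating $U(\theta) = \exp(i\theta H)$ gives $(\partial U)(\theta) = iH\,U(\theta)$, hence $[\![\partial d]\!] = [\![i\,h \fcmp d]\!]$, and completeness of ZX promotes this matrix equality to the diagrammatic equation $\partial d = i\,h \fcmp d$.

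For the backward direction, suppose a constant self-adjoint $h$ with $\partial d = i\,h \fcmp d$ is given. Applying $[\![-]\!]$ and writing $U = [\![d]\!]$, $H = [\![h]\!]$, this becomes the linear ODE $\partial U = iH\,U$ with $H$ self-adjoint, whose unique solution is $U(\theta) = \exp(i\theta H)\,U(0)$. Provided the initial condition $U(0) = \id_n$ holds, $U$ is a strongly continuous one-parameter unitary group, so $d$ is a unitary group in the sense of the preceding definition. The initial condition is part of the definition of a unitary group, so in the backward direction one should either assume it as an implicit hypothesis on $d$ or, more cleanly, restate the corollary with $[\![d]\!](0) = \id_n$ as a side condition.

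The main obstacle will be the lifting step from matrices to diagrams: both directions rely essentially on completeness of the ZX-calculus to turn a self-adjoint matrix $H$ into a self-adjoint constant diagram $h$ and to upgrade matrix equalities obtained from Stone's theorem into diagrammatic ones. A secondary subtlety is handling the initial condition in the backward direction, which is not captured by the first-order differential equation $\partial d = i\,h \fcmp d$ alone and must be threaded through as an additional hypothesis or derived from a strengthened notion of one-parameter diagram.
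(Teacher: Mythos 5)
Your argument is correct in substance but takes a different route from the paper's. The paper's proof is purely syntactic and very terse: in the forward direction it computes $\partial d$ using the diagrammatic differentiation of theorem~\ref{theorem-zx-diag-diff} and extracts $h$ by pattern matching on the resulting diagram (as in the two examples that follow, where $h$ is read off directly from the bubbled equation); in the converse it simply constructs $d = \exp(i\theta h)$ from a given self-adjoint $h$. You instead work semantically: push $d$ through $[\![-]\!]$, apply Stone's theorem to $U = [\![d]\!]$, and lift $H$ back to a diagram. What your route buys is a clean reduction to a classical theorem; what it costs is the reliance on \emph{universality} of the ZX-calculus to realise $H$ as a constant diagram $h$ and on \emph{completeness} to promote the matrix identity $[\![\partial d]\!] = [\![i\,h \fcmp d]\!]$ to a diagrammatic equation --- you should keep these two properties distinct, since finding $h$ uses universality while upgrading the equality uses completeness (both hold for qubit ZX, and the paper cites the completeness results, but the paper's own proof needs neither because it stays syntactic). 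Your observation about the backward direction is well taken and sharper than the paper's treatment: the ODE $\partial U = iHU$ only gives $U(\theta) = \exp(i\theta H)\,U(0)$, so the literal ``iff'' requires the initial condition $[\![d]\!](0) = \id$, which the paper sidesteps by constructing $d$ from $h$ rather than taking $d$ as given. One further caveat worth making explicit: $\partial d$ is only guaranteed to exist for affine phases (theorem~\ref{theorem-zx-diag-diff}), so the strong continuity you invoke should be read in that restricted setting or in the extended signature.
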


\begin{proof}
Given the diagram for a unitary group $d$, we compute its diagrammatic
differentiation $\partial d$ and get $h$ by pattern matching.
Conversely given a self-adjoint $h$, the diagram $d = \exp(i \theta h)$
is a unitary group.
\end{proof}

\begin{example}
Let $d = R_z(\alpha) \otimes R_x(\alpha)$ for a smooth $\alpha : \R \to \R$, then the following implies $d(\theta) = \exp(i \theta h)$ for $h = - i \frac{\partial \alpha}{2}(Z \otimes I + I \otimes X)$.
\ctikzfig{img/diag-diff/2-6-simple-example}
\end{example}

\begin{example}
Let $d = P(\alpha, ZX)$ be a Pauli gadget as in \cite[Definition~4.1]{CowtanEtAl20a} then
the following implies
$d(\theta) = \exp(i \theta h)$ for $h = -i \frac{\partial \alpha}{2} Z \otimes X$.
\ctikzfig{img/diag-diff/2-7-pauli-gadget}
\end{example}


\subsection{Differentiating quantum circuits}\label{3-dual-circuits}

In this section, we extend diagrammatic differentiation to the category $\mathbf{Circ}$ of mixed quantum circuits as defined in section~\ref{section:mixed-circuits}.
Recall that the definition of $\mathbf{Circ}$ depends on a choice of gateset, here we assume that this gateset is interpreted as complex matrices parameterised by some number $n \in \N$ of real variables.
That is, we fix an interpretation $[\![-]\!] : \mathbf{Circ} \to \mathbf{Mat}_\S$ for $\S = \R^n \to \C$.
In this context, diagrammatic derivations correspond to the notion of gradient
recipe for parametrised quantum gates introduced by Schuld et al.~\cite{SchuldEtAl19}.

Let $\mathbf{Circ}^+$ be the category with formal sums of mixed quantum circuits as arrows, i.e. the free commutative-monoid-enrichment of $\mathbf{Circ}$.
Again, we want to find a diagrammatic derivation
$\partial : \mathbf{Circ}^+ \to \D[\mathbf{Circ}^+]$
which commutes with the interpretation, i.e. such that
$[\![\partial \hat{f}]\!] = \partial [\![\hat{f}]\!] =
\partial \big( \overline{[\![f]\!]} \otimes [\![f]\!] \big)$
for all circuits $f \in \mathbf{Circ}$.
Note that a diagrammatic derivation for the interpretation of pure quantum circuits does not in general lift to one for mixed quantum circuits.
Indeed, using the product rule we get
$\partial \big( \overline{[\![f]\!]} \otimes [\![f]\!] \big)
\s = \s \partial \overline{[\![f]\!]} \otimes [\![f]\!]
\ + \ \overline{[\![f]\!]} \otimes \partial [\![f]\!]
\s \neq \s \overline{[\![\partial f]\!]} \otimes [\![\partial f]\!]$.

Hence we need equations, called gradient recipes, to rewrite the gradient of a
pure map $\partial [\![\hat{f}]\!]$ as the pure map of a gradient
$[\![\partial \hat{f}]\!]$.
In the special case of Hermitian operators with at most two unique eigenvalues,
gradient recipes are given by the parameter-shift rule. In the general case
where the parameter-shift rule does not apply, gradient recipes require the
introduction of an ancilla qubit.

\begin{theorem}[\cite{SchuldEtAl19}]
For a one-parameter unitary group $f$ with
$[\![f(\theta)]\!] = \exp (i \theta H)$, if $H$ has at most two eigenvalues
$\pm r$, then there is a shift $s \in [0, 2 \pi)$ such that
$[\![r\big(f(\theta + s) - f(\theta - s)\big)]\!] = \partial [\![f(\theta)]\!]$.
\end{theorem}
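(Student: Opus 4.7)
The plan is to reduce the statement to a direct matrix calculation about the one-parameter group $U(\theta) = \exp(i\theta H)$, since both sides of the claimed identity live in $\mathbf{Mat}_\S$ and the diagrammatic derivation $\partial$ here is just ordinary differentiation of matrix-valued functions of $\theta$. The crucial algebraic fact is that if $H$ has spectrum contained in $\{+r, -r\}$, then its minimal polynomial divides $X^2 - r^2$, so $H^2 = r^2\, I$. This is the only place the two-eigenvalue hypothesis is used, and it is exactly what will collapse the exponential series into a closed-form Euler-type expansion.

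First I would expand the matrix exponential using $H^{2k} = r^{2k} I$ and $H^{2k+1} = r^{2k} H$, which gives
\begin{equation*}
[\![f(\theta)]\!] \; = \; \exp(i \theta H) \; = \; \cos(\theta r)\, I \; + \; \tfrac{i}{r}\sin(\theta r)\, H .
\end{equation*}
Differentiating this closed form (or equivalently using $\partial_\theta \exp(i\theta H) = iH\exp(i\theta H)$ and applying $H^2 = r^2 I$ once) yields
\begin{equation*}
\partial [\![f(\theta)]\!] \; = \; -\,r\sin(\theta r)\, I \; + \; i\cos(\theta r)\, H .
\end{equation*}

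Next I would expand the shifted combination using the same formula and the prosthaphaeresis identities $\cos(\alpha + \beta) - \cos(\alpha - \beta) = -2\sin\alpha\sin\beta$ and $\sin(\alpha + \beta) - \sin(\alpha - \beta) = 2\cos\alpha\sin\beta$, obtaining
\begin{equation*}
[\![r\bigl(f(\theta+s) - f(\theta-s)\bigr)]\!] \; = \; -\,2r\sin(sr)\sin(\theta r)\, I \; + \; 2i\sin(sr)\cos(\theta r)\, H .
\end{equation*}
Matching coefficients of $I$ and $H$ with the expression for $\partial[\![f(\theta)]\!]$ reduces the problem to the single scalar equation $2\sin(sr) = 1$, whose smallest positive solution is $s = \pi/(6r)$, which lies in $[0, 2\pi)$ (and indeed in $[0, \pi/(6r)]$ when $r \geq 1/12$; otherwise any branch of the arcsine modulo $2\pi/r$ works, so long as it lands in $[0, 2\pi)$).

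I do not expect any real obstacle: the argument is essentially the standard derivation of the parameter-shift rule of Mitarai et al.\ and Schuld et al., recast so that the shift is adapted to the eigenvalue $r$ rather than the normalised case $r = 1/2$. The only subtlety to flag is the choice of $s$ when $r$ is very small, where one must confirm that some solution of $\sin(sr) = 1/2$ still fits in the window $[0, 2\pi)$; since $\sin$ attains $1/2$ infinitely often and the window $[0,2\pi r)$ of $sr$-values available covers a full period only when $r \geq 1$, one should note that the equation always has at least one solution in $[0, 2\pi)$ because $\pi/(6r) < 2\pi$ iff $r > 1/12$, and for $r \leq 1/12$ one uses $sr = 5\pi/6 + 2\pi k$ for suitable $k$. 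Everything else is routine trigonometric bookkeeping.
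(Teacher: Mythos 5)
The paper does not actually prove this theorem --- it imports it from Schuld et al.\ as a citation --- so your calculation is the only proof on the table, and its core is sound: the two-eigenvalue hypothesis gives $H^2 = r^2 I$, hence $[\![f(\theta)]\!] = \cos(\theta r)\, I + \tfrac{i}{r}\sin(\theta r)\, H$, and matching the coefficients of $I$ and $H$ between $\partial[\![f(\theta)]\!]$ and $[\![r(f(\theta+s)-f(\theta-s))]\!]$ does reduce the whole statement to the single equation $2\sin(sr)=1$. The genuine gap is in your handling of the existence of $s\in[0,2\pi)$. If $r\le 1/12$ then $sr<\pi/6$ for \emph{every} $s\in[0,2\pi)$, so $\sin(sr)<1/2$ and no admissible shift exists at all; your proposed escape ``$sr=5\pi/6+2\pi k$ for suitable $k$'' only makes $s$ larger, so that sentence is self-contradictory and cannot rescue the claim. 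The honest conclusion is that the theorem as literally printed needs $r>1/12$ (harmless in every application in the paper, where $r\ge 1/2$), or equivalently that the constraint $s\in[0,2\pi)$ should be read as $s>0$.

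One further point you should flag rather than gloss over: your shift $s=\pi/(6r)$ is correct for the statement as written, i.e.\ for the bare unitary $\exp(i\theta H)$, but it is \emph{not} the shift of the standard Schuld--Mitarai parameter-shift rule, which is $\pi/(4r)$ and concerns the doubled map $\hat f=\bar f\otimes f$ (equivalently, expectation values); there the same computation yields $\sin(2sr)=1$ instead of $\sin(sr)=1/2$. The paper's own corollary and its \texttt{Rz.grad} listing use the shift $1/4$ of a turn, i.e.\ the doubled-map version. So your argument does establish the theorem as printed, but the closing claim that it is ``essentially the standard derivation \dots recast'' conflates two versions of the rule that genuinely have different shifts.
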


\begin{corollary}
Mixed quantum circuits with parametrised ZX diagrams as gateset admit diagrammatic differentiation.
\end{corollary}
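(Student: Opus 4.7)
The plan is to extend Theorem~2.6 (diagrammatic differentiation for the ZX-calculus with affine phases) from pure to mixed circuits by reading the parameter-shift rule of Theorem~3.1 as the gradient recipe for each parameterised generator of $\mathbf{Circ}^+$. Since any diagrammatic derivation is determined by its action on the generating gates (and then extended by linearity and the product rules for composition and tensor), it suffices to define $\partial \hat g$ for each generator $\hat g$ of $\mathbf{Circ}$ and verify the coherence square $[\![-]\!] \circ \partial = \partial \circ [\![-]\!]$ locally. Non-parameterised generators (Hadamard, CNOT, Pauli gates, $\mathtt{encode}$, $\mathtt{measure}$, $\mathtt{discard}$) are assigned gradient zero, for which the square commutes trivially.

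The parameterised generators are the doubles $\hat g(\theta) = \overline{g(\theta)} \otimes g(\theta)$ of single-qubit rotations coming from Z- and X-spiders with affine phases; the reduction from multi-legged spiders to the single-qubit case proceeds via spider fusion exactly as in the proof of Theorem~2.6. For a rotation $g(\theta) = e^{i \theta P / 2}$ with $P$ a Pauli, the generator $P/2$ has spectrum $\{\pm 1/2\}$, so Theorem~3.1 applies and yields a pure gradient recipe $\partial g(\theta) = r(g(\theta+s) - g(\theta-s))$ for suitable $r, s$, interpretable as a formal sum of shifted pure ZX diagrams. Applying the product rule for derivations on the double gives $\partial \hat g = (\partial \bar g) \otimes g + \bar g \otimes (\partial g)$, producing $\partial \hat g$ as a formal sum in $\D[\mathbf{Circ}^+]$ built from tensor products of pure circuits evaluated at shifted parameters.

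The coherence condition $[\![\partial \hat g]\!] = \partial [\![\hat g]\!]$ then reduces to the pure parameter-shift identity applied separately to each factor of the double, combined with the product rule in $\mathbf{Mat}_{\R^n \to \C}$ and the sum- and tensor-preservation of the interpretation functor. Once established on generators, the derivation extends uniquely to all of $\mathbf{Circ}^+$ by the universal property of free sum-enriched monoidal categories, delivering the required $\partial : \mathbf{Circ}^+ \to \D[\mathbf{Circ}^+]$.

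The main obstacle is that the cross-terms $\overline{g(\theta{+}s)} \otimes g(\theta{-}s)$ and $\overline{g(\theta{-}s)} \otimes g(\theta{+}s)$ appearing in $\partial \hat g$ are \emph{not} themselves doubles and therefore not CPTP; they are legitimate formal elements of $\D[\mathbf{Circ}^+]$ thanks to the CM-enrichment, but they cannot be executed on hardware as written. To turn the categorical gradient into an operational recipe one must re-absorb each cross-term into a Hadamard-test-style construction with an ancilla qubit and post-selection, as the paragraph following Theorem~3.1 anticipates. This step preserves correctness of the interpretation but enlarges the qubit count, and is the crux of the phrase ``gradient recipes require the introduction of an ancilla qubit''.
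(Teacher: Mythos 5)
Your overall strategy --- fix the derivation on generators, send the unparameterised ones to zero, reduce many-legged spiders to the two-legged case by spider fusion, and extend by the universal property of the free sum-enriched monoidal category --- matches the paper's. The gap is in the step where you actually differentiate a parameterised generator. You apply the parameter-shift rule to each \emph{pure} factor of the double and then expand by the product rule, obtaining $\partial \hat{g} = (\partial \overline{g}) \otimes g + \overline{g} \otimes (\partial g)$ with cross-terms such as $\overline{g(\theta+s)} \otimes g(\theta)$. Contrary to your claim, these are not ``legitimate formal elements of $\D[\mathbf{Circ}^+]$'': the CM-enrichment only provides formal sums of mixed circuits, every mixed circuit is interpreted as a completely positive map, and hence every formal real combination of mixed circuits is interpreted as a Hermitian-preserving map --- whereas $\rho \mapsto g(\theta)\,\rho\,g(\theta+s)^\dagger$ is not Hermitian-preserving, since taking adjoints swaps the two shifts. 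So a single cross-term cannot be realised by any mixed circuit, with or without ancillas and post-selection, and your candidate $\partial \hat{g}$ is not exhibited as an arrow of $\D[\mathbf{Circ}^+]$. The deferred ``Hadamard-test'' repair would have to re-express the \emph{paired} sums of conjugate cross-terms as combinations of mixed circuits, a nontrivial construction you do not carry out. This is precisely the failure mode the paper flags immediately before the parameter-shift theorem: a diagrammatic derivation for pure circuits does not lift to mixed circuits through the product rule on the double, which is the very reason gradient recipes are introduced.

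The paper's proof avoids cross-terms entirely by applying the parameter-shift rule to the doubled gate itself: the derivative of the two-legged doubled $Z$-spider is $\partial [\![\hat{Z}(\alpha)]\!] = r \bigl( [\![\hat{Z}(\alpha + s)]\!] - [\![\hat{Z}(\alpha - s)]\!] \bigr)$, a formal difference of genuine mixed circuits (using the $-1$ mixed scalar of the accompanying remark), which then extends to arbitrarily many legs by spider fusion exactly as in the ZX theorem. Note that the shift solving this doubled identity is not the one solving the pure identity you invoke, so the two recipes cannot be conflated; the ancilla construction is only needed in the general case where the parameter-shift rule does not apply, which is not the case for ZX spiders.
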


\begin{proof}
The $Z$ rotation has eigenvalues $\pm 1$, hence the spiders with two legs have
diagrammatic differentiation given by the parameter-shift rule:

\ctikzfig{img/diag-diff/3-2-param-shift}

As for theorem~\ref{theorem-zx-diag-diff}, this extends to
arbitrary-many legs using spider fusion.
\end{proof}

\begin{remark}
In order to encode the subtraction of the parameter shift-rule diagrammatically, we
need either to consider formal sums with minus signs (a.k.a. enrichment in
abelian groups) or simply to extend the signature with the $-1$ scalar.
Such \emph{mixed scalars} are implemented in listing~\ref{listing:mixed-scalars}.

\begin{minted}{python}
Circuit.__sub__ = lambda self, other: self + Scalar(-1) @ other
\end{minted}
\end{remark}

\begin{example}
The quantum enhanced feature spaces of Havlicek et al.~\cite{HavlicekEtAl19} are parametrised
classical-quantum circuits.
The quantum classifier can be drawn as a diagram:

\ctikzfig{img/diag-diff/3-3-quantum-enhanced}

where $U(\vec{x})$ depends on the input, $W(\vec{\theta})$ depends on the
trainable parameters and $f$ is a fixed Boolean function encoded as a linear map.
\end{example}

\begin{example}
We can define a class for a parameterised quantum gate and override the default \py{grad} with its gradient recipe.

\begin{minted}{python}
class Rz(Gate):
    def __init__(self, phase: sympy.Expr):
        self.phase = phase
        half_theta = sympy.pi * phase
        array = [[sympy.exp(-1j * half_theta), 0],
                 [0, sympy.exp(1j * half_theta)]]
        super().__init__("Rz({})".format(phase), qubit, qubit, array)

    def grad(self):
        s = Scalar(sympy.pi * self.phase.diff(var))
        return s @ Rz(self.phase + .25) - s @ Rz(self.phase - .25))

phi = sympy.Symbol('\\phi')
circuit = Ket(0, 0) >> Rz(phi + 1) @ Rz(2 * phi - .5) >> Measure() @ Measure()

assert circuit.grad(phi).eval() == circuit.eval().grad(phi)
\end{minted}
\end{example}


\subsection{Bubbles and the chain rule} \label{4-bubbles}

Sections~\ref{subsection:dagger-sums-bubbles} and \ref{subsection:monoidal-daggers-sums-bubbles} have introduced \emph{bubbles}, diagrammatic gadgets for representing arbitrary operators on the homsets of a monoidal category.
In particular, the element-wise application of any rig-valued function $\beta : \S \to \S$ yields a bubble on the category of matrices $\beta : \mathbf{Mat}_\S(m, n) \to \mathbf{Mat}_\S(m, n)$.
When the bubble has a derivative $\partial \beta$, we may define the gradient of bubbled diagrams with the chain rule $\partial(\beta(f)) = (\partial \beta)(f) \times \partial f$.
In order to make sense of the multiplication, we assume that the homsets of our category have a product on homsets which is compatible with the sum\footnote
{That is, each homset forms a rig. Note that we do not assume that products are compatible with composition, in other words $\mathbf{C}$ need not be rig-enriched.}
and which commutes with the tensor.
For example, each homset $\mathbf{Mat}_\S(m, n)$ is a rig with entrywise sums and products.
More generally, the homsets of any hypergraph category with sums form a rig, where the product is given by pre/post-composition with the co/monoid structure.
We get the following equation:
$$\tikzfig{img/diag-diff/4-2-chain-rule}$$
For scalar diagrams, spiders are empty diagrams and the
equation simplifies to the usual chain rule.

As we have seen in example~\ref{example:neural-net}, we can encode the architecture of any neural network as a diagram with sums and bubbles for the non-linearity.
Thus, we can draw both a parametrised quantum circuit and its classical
post-processing as one bubbled diagram. By applying the
product rule to the quantum circuit and the chain rule to its post-processing,
we can compute a diagram for the overall gradient. This applies to
parametrised quantum circuits seen as machine learning models
\cite{BenedettiEtAl19}, to the patterns of measurement-based quantum
computing seen as ZX-diagrams \cite{DuncanPerdrix10} as well as quantum
natural language processing.

For now, we have only defined gradients of diagrams with respect to one
parameter at a time. In future work, we plan to extend our definition to
compute the Jacobian of a tensor with respect to a vector of variables.
Other promising directions for research include the study of diagrammatic
differential equations, opening the door to studying e.g. the Schrödinger equation with diagrams.


\section{Conclusion} \label{section:conclusion}

This chapter built on the category theory of chapter~\ref{chapter:discopy} to lay the mathemetical foundations of \emph{quantum natural language processing} (QNLP).
We defined QNLP models as \emph{parameterised monoidal functors} $F_\theta : \G \to \mathbf{Circ}$ from a category $\G$ of grammatical derivations to a category $\mathbf{Circ}$ of quantum circuits.
Given the grammatical structure $f : w_1 \dots w_n \to s \in \G$ for a sentence, the QNLP model produces a parameterised quantum circuit $F_\theta(f)$ which computes the meaning of that sentence.
Using a hybrid classical-quantum algorithm, we computed the optimal parameters $\theta \in \Theta$ in a simple supervised learning task: answering yes-no questions from a toy dataset based on Shakespeare's \emph{Romeo and Juliet}.
In short, we performed the first NLP experiment on quantum hardware.

Our QNLP models can be understood as a quantum implementation of \emph{Categorical Compositional Distributional} (DisCoCat) models~\cite{ClarkEtAl08}.
While the fields of NLP and artificial intelligence are torn apart between the symbolic approach of logic-based expert systems and the connectionist approach of black-box neural networks, DisCoCat models offer the best of worlds.
Indeed, they bring together formal grammar and distributional semantics into one NLP model in the form of a monoidal functor from syntax to meaning.
However, the conceptual advantage of DisCoCat models comes at a price: they can be exponentially hard to evaluate on a classical computer.
This is where QNLP comes in: quantum computers may allow to compute approximations of DisCoCat models exponentially faster than any classical computer.
We demonstrated that our framework applies both to the large-scale fault-tolerant regime where we can load our data onto a qRAM, and to the NISQ era where we have only a few noisy qubits.

The field of QNLP is still in its infancy and there remains much work to be done both on the theoretical side (can we prove that QNLP models offer a significant advantage compared to their classical counterpart?) and on the experimental side (can we demonstrate this advantage on real-world data?).
We argue that the biggest challenge is not so much how to evaluate NLP models on quantum hardware, but how to train them.
Indeed, if the architecture of our parameterised quantum circuit is random then the probability of non-zero gradients is exponentially small in the number of qubits and training our model may take an exponential time: this is the so-called \emph{barren plateau} phenomenon.
Thankfully the quantum circuits for QNLP models are not random: we can exploit their internal structure, which comes from the grammatical structure of sentences.
As a first step in that direction, we introduced \emph{diagrammatic differentiation} as a graphical notation for computing the gradient of QNLP models and parameterised diagrams in general.

We conclude with some directions for future work.
An obvious direction would be to generalise QNLP models either in their domain or their codomain, i.e. syntax or semantics.
On the syntax side, we may consider grammatical frameworks other than the pregroup grammars used in this thesis.
Promising candidates include the Lambek calculus with a relevant modality that we used in previous work~\cite{McPheatEtAl21} to model anaphora, and the related \emph{text grammar} proposed by Coecke~\cite{Coecke21} and Wang~\cite{CoeckeWang21}.
On the semantics side, we may generalise our definition of QNLP models beyond the standard quantum circuit model, experimenting with different kinds of quantum hardware such as linear optical quantum computers~\cite{KokEtAl07} or analog quantum computers based on neutral atoms~\cite{HenrietEtAl20}.

After syntax and semantics, it is natural to explore the \emph{pragmatics} of QNLP: how do we train our models and what kind of tasks do we solve?
An important direction would be to remove the need for labeled data with a form of self-supervised learning such as \emph{functorial language models}~\cite{ToumiKoziell-Pipe21}.
This requires our models to predict missing words rather than binary labels, a subroutine that will also be necessary for language generation and automated translation.
Our framework would also be suitable to \emph{generative adversarial} modeling, where we train a pair of models against each other in what can be formalised as a \emph{functorial language game}~\cite{FeliceEtAl20}.
In this game-theoretic setup, it becomes possible to imagine the two players, i.e. the QNLP models for generator and discriminator, sharing an entangled quantum state and communicating with \emph{quantum pseudo-telepathy}~\cite{BrassardEtAl05}.
While most of this thesis focused on the \emph{computational} advantage of QNLP models, these quantum language games would exhibit a form of \emph{communication} advantage: quantum players can solve distributed problems beyond what is possible with classical means.
Hence we may expect quantum machines to have conversations incomprehensible to classical minds.


\begin{acknowledgements}
\addcontentsline{toc}{chapter}{Acknowledgements}

My first words of gratitude go to my supervisor Bob Coecke.
He was the mentor who guided me through to the perks of academia, the architect of the Oxford quantum group in which I learnt so much as well as the guru who indoctrinated me in the science of string diagrams.
After he quit his academic position in the middle of my thesis, he also became the most relaxed and enthusiastic boss in all of quantum industry, creating a new research group from scratch to build upon the work we had started together.
I must also thank my co-supervisor Dan Marsden, who played a crucial role during my masters and the beginning of my DPhil, taming my enthusiasm with his pragmatism and rigour.

Although the words are my own, the work presented in this thesis was of a collective nature.
Thus it is in order to acknowledge my co-authors, first and foremost my academic twin Giovanni de Felice.
I would never have written this thesis without the friendship we have built through more than five years of working, cooking and partying together.
Our QNLP experiments would not have come to life without Konstantinos Meichanetzidis, his physicist mindset and his patience to let us explain our abstract nonsense.
I would also like to thank my academic cousins Richie Yeung and Alex Koziell-Pipe, I trust them to keep the spirit of the quantum group alive.

I am indebted to my examiners Sam Staton and Michael Moortgat for their careful reading of my manuscript, their feedback helped much to improve this thesis.
I am grateful to my college advisor Samson Abramsky for his discreet yet thought-provoking supervision.
Aleks Kissinger and Mehrnoosh Sadrzadeh have also provided me with valuable feedback during my transfer of status.
I want to thank Pawel Sobocinski for inviting me at TalTech where I could present many of the ideas of this thesis, but also for introducing me to string diagrams in the first place in Cali, Columbia over seven years ago.

I am grateful to Simon Harrisson for his generous support through the Wolfson Harrison UK Research Council Quantum Foundation Scholarship.
My DPhil was also supported by the Oxford-DeepMind Graduate Scholarship.
I also thank Cambridge Quantum Computing for taking me as a part-time research scientist.
This brought a new industrial dimension to my research, but also made it possible for me to live with my family in Paris during troubled pandemic times.

On a more personal side, special thanks go to my friends: Tommaso Salvatori who proofread my introduction to NLP, the Wolfson Secret Santa, la Confinerie du 117 and many others who will recognise themselves.
Very special thanks to my family: Astrid, Romeo, my parents Godeleine and Imad, my siblings Raphaël, Maylis and Xavier, my grandfather Etienne (RIP), my grandmothers Véronique and Habiba.
\end{acknowledgements}

\renewcommand*\MakeUppercase[1]{#1}%
\setlength{\baselineskip}{0pt} 
\printbibliography[heading=bibintoc,title=References]
\end{document}